\documentclass[11pt,a4paper,reqno]{amsart}
\pdfoutput=1
\usepackage{amsmath,amsthm,latexsym,amssymb,wasysym}
\usepackage{numprint}
\usepackage[margin=1.0in,tmargin=0.9in,bmargin=0.80in]{geometry}
\usepackage{hyperref}
\usepackage{bbm}
\usepackage{xargs}
\usepackage{array}
\usepackage{enumerate}
\usepackage{mathrsfs}
\usepackage{enumitem}
\usepackage{subcaption}
\setlist[enumerate]{label={\upshape(\roman*)}}
\usepackage[textsize=footnotesize]{todonotes}
\usepackage[flushleft]{threeparttable}
\usepackage{multirow}
\usepackage{longtable}
\usepackage{times}
\usepackage{microtype}
\usepackage[numbers]{natbib}
\usepackage[normalem]{ulem}
\usepackage{amssymb}
\usepackage{booktabs} 
\usepackage{lscape} 
\usepackage[figuresright]{rotating}
\usepackage{graphicx} 
\usepackage{verbatim}
\newcommand\iid{i.i.d.}
\allowdisplaybreaks[3]

\npthousandsep{,}
\definecolor{bred}{rgb}{0.8,0,0}
\hypersetup{colorlinks,linkcolor={blue},citecolor={bred},urlcolor={blue}}

\newtheorem{theorem}{Theorem}[section]
\newtheorem{proposition}[theorem]{Proposition}
\newtheorem{lemma}[theorem]{Lemma}
\newtheorem{corollary}[theorem]{Corollary}
\newtheorem{remark}[theorem]{Remark}
\newtheorem{definition}[theorem]{Definition}

\newtheorem{assumption}{Assumption}

\def\rmd{\mathrm{d}}

\newcommand{\N}{\mathbb{N}}
\newcommand{\R}{\mathbb{R}}


\allowdisplaybreaks

\begin{document}

\title[]{Non-asymptotic convergence analysis of\\ the stochastic gradient Hamiltonian Monte Carlo algorithm\\ with discontinuous stochastic gradient\\ with applications to training of ReLU neural networks}

\author[L. Liang]{Luxu Liang}
\author[A. Neufeld]{Ariel Neufeld}
\author[Y. Zhang]{Ying Zhang}

\address{School of Mathematics, Renmin University of China, Beijing, China}
\email{lianglux@ruc.edu.cn}

\address{Division of Mathematical Sciences, Nanyang Technological University, 21 Nanyang Link, 637371 Singapore}
\email{ariel.neufeld@ntu.edu.sg}

\address{Financial Technology Thrust, Society Hub, The Hong Kong University of Science and Technology (Guangzhou), Guangzhou, China}
\email{yingzhang@hkust-gz.edu.cn}

\date{}
\thanks{Financial supports by the MOE AcRF Tier 2 Grant MOE-T2EP20222-0013 and the Guangzhou-HKUST(GZ) Joint Funding 
Programs (No. 2024A03J0630 and No. 2025A03J3322) are gratefully acknowledged.}

\begin{abstract}
    In this paper, we provide a non-asymptotic analysis of the convergence of the stochastic gradient Hamiltonian Monte Carlo (SGHMC) algorithm to a target measure in Wasserstein-1 and Wasserstein-2 distance. Crucially, compared to the existing literature on SGHMC, we allow its stochastic gradient to be discontinuous.  This allows us to provide explicit upper bounds, which can be controlled to be arbitrarily small, for the expected excess risk of non-convex stochastic optimization problems with discontinuous stochastic gradients, including, among others, the training of neural networks with ReLU activation function. To illustrate the applicability of our main results, we consider numerical experiments on quantile estimation and on several optimization problems involving ReLU neural networks relevant in finance and artificial intelligence.
\end{abstract}
\maketitle

\section{Introduction}\label{sec:1}
\noindent In this paper, we consider the nonconvex stochastic optimization problem
\begin{equation}\label{opt}
\operatorname{minimize}\quad\mathbb{R}^d \ni \theta \mapsto u(\theta):=\mathbb{E}_{X \sim \mathcal{D}}[U(\theta, X)],
\end{equation}
where $U: \mathbb{R}^{d} \times \mathbb{R}^{m}\rightarrow \mathbb{R}$ is a measurable function and $X$ is an $m$-dimensional random variable with some known probability distribution $\mathcal{D}$. Given a set of observations, our goal is to construct an estimator $\hat{\theta}$ such that the expected excess risk given by
$$
\mathbb{E}[u(\hat{\theta})]-\inf _{\theta \in \mathbb{R}^d} u(\theta)
$$
is minimized. It is well known that computing an approximate global minimizer of $u$ is closely linked to sampling from the distribution $\pi_\beta(\mathrm{d} \theta) \propto \exp (-\beta u(\theta)) \mathrm{d} \theta$ which concentrates around the global minimizers of $u$ for sufficiently large $ \beta$, see~\cite{hwang1980laplace}. Thus, the optimization problem~\eqref{opt} can be converted to the problem of sampling from $\pi_{\beta}$. 

As $\pi_{\beta}$ is the unique invariant measure of the Langevin stochastic differential equation (SDE) given by
\begin{align}\label{SDE}
Z_0 = \theta_0, \quad \mathrm{d} Z_t=-h\left(Z_t\right) \mathrm{d} t+\sqrt{2\beta^{-1}} \mathrm{d} B_t,\quad t\in \mathbb{R}_{+},
\end{align} 
where $\theta_0$ is an $\mathbb{R}^d$-valued random variable, $h:=\nabla u$, $\beta>0$ is the so-called inverse temperature parameter, and $\left\{B_t\right\}_{t\geq 0}$ is a $d$-dimensional Brownian motion, sampling from $\pi_{\beta}$ can be achieved by employing numerical algorithms that track~\eqref{SDE}, see~\cite{hwang1980laplace}. One of the widely used method is the stochastic gradient Langevin dynamics (SGLD) algorithm given by
$$
\begin{aligned}
\theta_0^{\mathrm{SGLD}}:=\theta_0, \quad \theta_{n+1}^{\mathrm{SGLD}}=\theta_n^{\mathrm{SGLD}}-\eta H\left(\theta_n^{\mathrm{SGLD}}, X_{n+1}\right)+\sqrt{2 \eta \beta^{-1}} \xi_{n+1},
\end{aligned}
$$
where $\eta > 0$ is the step size, $\left(X_n\right)_{n \in \mathbb{N}_0}$ is the data sequence being \iid\ copies of $X$, $H$ is an unbiased estimator of $h$, i.e., $h(\theta) = \mathbb{E}\left[H(\theta, X_0)\right]$ for any $\theta \in \mathbb{R}^d$, called stochastic gradient of $u$, and $\left\{\xi_n\right\}_{n \in \mathbb{N}_0}$ is a sequence of i.i.d.\ standard Gaussian random variables in $\mathbb{R}^{d}$. Recently, this method has been popular in nonconvex optimization and Bayesian inference with well-established convergence results, see, e.g.,~\cite{barkhagen2021stochastic, chau2021stochastic, chu2024nonasymptotic, farghly2021time, hu2020non, li2016high, neufeld2022non, neufeld2024robust, raginsky2017non, welling2011bayesian, xu2018global, zhang2023nonasymptotic} and references therein.


In this paper, we focus on an alternative method to tackle the problem of sampling from $\pi_{\beta}$. We consider the second-order (underdamped) Langevin SDE given by 
\begin{equation}\label{SDE2}
\begin{aligned}
& \mathrm{d} V_t=-\gamma V_t \mathrm{~d} t-h\left(\theta_t\right) \mathrm{d} t+\sqrt{2 \gamma\beta^{-1}} \mathrm{d} B_t, \\
& \mathrm{~d} \theta_t=V_t \mathrm{~d} t,
\end{aligned}
\end{equation}
where $\left(\theta_t, V_t\right)_{t \in \mathbb{R}_{+}}$ are called the position and momentum process, respectively, and $\gamma > 0$ is the friction coefficient. Under suitable assumptions, SDE~\eqref{SDE2} admits a unique invariant measure (see, e.g.~\cite{pavliotis2014stochastic})
\begin{equation}\label{pi_intro}
\bar{\pi}_\beta( \theta,  v) \propto \exp \left(-\beta\left(u(\theta) + \frac{1}{2}\lvert v\rvert^2\right)\right).
\end{equation}
We note that the marginal distribution of~\eqref{pi_intro} in  $\theta$ is exactly the target measure $\pi_{\beta}$, thus, sampling from~\eqref{pi_intro} in the extended space while only considering the samples from the first coordinate (i.e., from $\theta$) is equivalent to sampling from $\pi_{\beta}$. We consider the stochastic gradient Hamiltonian Monte Carlo (SGHMC) algorithm given by
\begin{equation}\label{SGHMC_intro}
\begin{aligned}
V_0^{\eta}&:=V_0, \quad V_{n+1}^{\eta}  = V_{n}^{\eta} - \eta\left[\gamma  V_{n}^{\eta}+H(\theta_{n}^{\eta}, X_{n+1})\right]+\sqrt{2 \gamma \eta \beta^{-1}} \xi_{n+1}, \\ 
\theta_0^{\eta}&:=\theta_0, \quad \theta_{n+1}^{\eta}  = \theta_{n}^{\eta} + \eta V_{n}^{\eta},
\end{aligned}
\end{equation}
where $\eta > 0$ is the step size. Note that while SGLD can be viewed as the analogue of the stochastic gradient in Markov Chain Monte Carlo (MCMC) literatures, SGHMC can be viewed as the analogue of stochastic gradient methods augmented with momentum. Several numerical studies have demonstrated that SGHMC algorithm is superior over SGLD algorithm in term of robust updates w.r.t.\ gradient estimation noise, see, e.g.,~\cite{chen2014stochastic, liu2020improved, wangenhancing}. Theoretical results for the SGHMC algorithm~\eqref{SGHMC_intro} have been established under the condition that the stochastic gradient is assumed to be (locally) Lipschitz continuous, see, e.g.~\cite{akyildiz2020nonasymptotic, chau2022stochastic, gao2022}. However, these assumptions fail to accommodate discontinuous stochastic gradients used in practice, such as the signed regressor, signed error, sign-sign algorithms, and optimization problems involving neural networks with ReLU activation function, see, e.g.,~\cite{eweda1990analysis, eweda1995convergence, lim2024non}. To tackle these problems, we consider the case where $H$ is discontinuous and only satisfies a certain continuity \textit{in average} condition (see Assumption~\ref{asm:A4} below). We then provide non-asymptotic results in Wasserstein-1 and Wasserstein-2 distance between the law of the $n$-th iterate of the SGHMC algorithm and the target distribution $\bar{\pi}_\beta$. This further allows us to provide a non-asymptotic upper bound for the expected excess risk of the associated optimization problem~\eqref{opt}. 

To illustrate the applicability of our results, we present three key examples in statistical machine learning, namely, quantile estimation, optimization involving ReLU neural networks, and hedging under asymmetric risk, where each of the corresponding functions $H$ are discontinuous. Numerical results support our theoretical findings and demonstrate the superiority of the SGHMC algorithm over its SGLD counterpart, which align with the findings of~\cite{chen2015convergence, chen2014stochastic, wangenhancing}. 


We conclude this section by introducing some notations. For $1 \leq p<\infty$, $L^p$ is used to denote the usual space of $p$-integrable real-valued random variables. For an integer $d \geq 1$, the Borel sigma-algebra of $\mathbb{R}^d$ is denoted by $\mathcal{B}\left(\mathbb{R}^d\right)$. We also denote all non-negative real numbers by $\mathbb{R}_{+}$.
For a positive real number $a$, we denote by $\lfloor a\rfloor$ its integer part, and define $\lceil a\rceil:=\lfloor a\rfloor+1$. For any matrix $M \in \mathbb{R}^{p \times p}$, denote by $\operatorname{diag}(M):=\left(M_{11}, \ldots, M_{p p}\right)$ the vector of the diagonal elements of $M$, and denote by $M^{\top}$ its transpose. Denote by $I_p$ the $p \times p$ identity matrix. We denote the dot product by $\langle\cdot, \cdot\rangle$ while $|\cdot|$ denotes the associated norm. The set of probability measures defined on space $\left(\mathbb{R}^d, \mathcal{B}\left(\mathbb{R}^d\right)\right)$ is denoted by $\mathcal{P}\left(\mathbb{R}^d\right)$. For an $\mathbb{R}^d$-valued random variable $X$, $\mathcal{L}(X)$ and $\mathbb{E}[X]$ are used to denote its law and its expectation respectively. We denote by $\mathbb{N}_0$ the non-negative integers including zero. For $\mu, \nu \in \mathcal{P}\left(\mathbb{R}^d\right)$, let $\mathcal{C}(\mu, \nu)$ denote the set of probability measures $\Gamma$ on $\mathcal{B}\left(\mathbb{R}^{2 d}\right)$ such that its marginals are $\mu, \nu$. Finally, for any $p \geq 1$, we denote by $\mathcal{P}_{p}\left(\mathbb{R}^d\right)$ the probability measure in $\mathcal{P}\left(\mathbb{R}^d\right)$ with $p$-th finite moments. For any $p \geq 1$, $\mu, \nu \in \mathcal{P}_{p}\left(\mathbb{R}^d\right)$, we denote the Wasserstein distance of order $p \geq 1$
$$
W_p(\mu, \nu):=\inf _{\Gamma \in \mathcal{C}(\mu, \nu)}\left(\int_{\mathbb{R}^d} \int_{\mathbb{R}^d}\left|\theta-\theta^{\prime}\right|^p \Gamma\left(\mathrm{d} \theta, \mathrm{d} \theta^{\prime}\right)\right)^{1 / p}.
$$

\section{Setting and Assumptions}\label{sec:2}
\subsection{Setting}
Let $U: \mathbb{R}^d \times \mathbb{R}^m \rightarrow \mathbb{R}_{+}$ be a Borel measurable function, and let $X$ be an $\mathbb{R}^m$-valued random variable defined on the probability space $(\Omega, \mathcal{F}, \mathbb{P})$ with probability law $\mathcal{L}(X)$ satisfying $\mathbb{E}[|U(\theta, X)|]<~\infty$ for all $\theta \in \mathbb{R}^d$. We assume that $u: \mathbb{R}^d \rightarrow \mathbb{R}$ defined by $u(\theta):=\mathbb{E}[U(\theta, X)], \theta \in \mathbb{R}^d$, is a continuously differentiable function, and denote by $h:=\nabla u$ its gradient. In addition, for any $\beta>0$, we define
\begin{equation}\label{eq:pibetaexp}
\pi_{\beta}(\mathrm{d} \theta) := \frac{e^{-\beta u(\theta)}}{\int_{\R^d} e^{-\beta u(\theta)} \, \rmd \theta}\mathrm{d} \theta,
\end{equation}
where we assume $\int_{\R^d} e^{-\beta u(\theta)} \, \rmd \theta <\infty$. Denote by $(\mathcal{G}_n)_{n\in\N_0}$ a given filtration representing the flow of past information, and denote by $\mathcal{G}_{\infty} := \sigma(\bigcup_{n \in \N_0} \mathcal{G}_n)$. Moreover, let $(X_n)_{n\in\N_0}$ be a $(\mathcal{G}_n)$-adapted process such that $(X_n)_{n\in\N_0}$ is a sequence of i.i.d. $\R^m$-valued random variables with probability law $\mathcal{L}(X)$ and let $\left(\xi_n\right)_{n \in \mathbb{N}_0}$ be a sequence of independent standard d-dimensional Gaussian random variables. 

In this paper, we consider the SGHMC algorithm given by 

\begin{align}\label{SGHMC 1}
V_0^{\eta}&:=V_0, \quad V_{n+1}^{\eta}  = V_{n}^{\eta} - \eta\left[\gamma  V_{n}^{\eta}+H(\theta_{n}^{\eta}, X_{n+1})\right]+\sqrt{2 \gamma \eta \beta^{-1}} \xi_{n+1}, \\ \label{SGHMC 2}
\theta_0^{\eta}&:=\theta_0, \quad \theta_{n+1}^{\eta}  = \theta_{n}^{\eta} + \eta V_{n}^{\eta},
\end{align}
where $\eta>0$ is the step-size, $\gamma>0$ is the friction coefficient, $\beta > 0$ is the inverse temperature parameter, and $\mathbb{E}\left[H\left(\theta, X_0\right)\right]=h(\theta)$ for every $\theta \in \mathbb{R}^d$. We assume throughout the paper that the $\R^d$-valued random variables $\theta_0$, $V_0$ (initial condition), $\mathcal{G}_{\infty}$, and $(\xi_{n})_{n\in\N_0}$ are independent. Moreover, we assume that $\theta_0$ and $V_0$ satisfy $\mathbb{E}\left[\left|\theta_0\right|^4\right] + \mathbb{E}\left[\left|V_0\right|^4\right]<\infty$.

\subsection{Assumptions}\label{sec:asm}

\begin{assumption} \label{asm:A3}
    The function $H: \mathbb{R}^d \times \mathbb{R}^m \rightarrow \mathbb{R}^d$ takes the form of 
    \begin{equation*}
    H(\theta, x) = F(\theta, x) + G(\theta, x), 
    \end{equation*}
    where 
    \begin{enumerate}
	\item $F: \mathbb{R}^d \times \mathbb{R}^m \rightarrow \mathbb{R}^d$ satisfies that there exist $L_1, L_2 > 0$ and $\rho > 0$ such that for any $\left(\theta, x\right), \left(\theta^{\prime}, x^{\prime}\right) \in \mathbb{R}^d \times \mathbb{R}^m$,
        $$\lvert F(\theta, x) - F(\theta^{\prime}, x^{\prime})\rvert \leq (1 + \lvert x \rvert + \lvert x^{\prime}\rvert)^{\rho}(L_1\lvert\theta - \theta^{\prime}\rvert + L_2\lvert x - x^{\prime}\rvert).$$
	\item $G: \mathbb{R}^d \times \mathbb{R}^m \rightarrow \mathbb{R}^d$ is measurable and bounded in $\theta$, i.e., there exist a measurable function $\bar{K}_1: \mathbb{R}^m \rightarrow (0, \infty)$ such that for any $\left(\theta, x\right) \in \mathbb{R}^d \times \mathbb{R}^m$, $$\lvert G(\theta, x)\rvert \leq \bar{K}_1(x).$$
    \end{enumerate}
    
\end{assumption}

\begin{remark}\label{rmk:1}
By Assumption~\ref{asm:A3}, we have $$\lvert H(\theta, x)\rvert \leq (1 + \lvert x\rvert) ^{\rho + 1} (L_1\lvert\theta\rvert + L_2) + F_{\ast}(x), $$ where $F_{\ast}(x) :=  2 \lvert \bar{K}_1(0)\rvert + \lvert \bar{K}_1(x)\rvert + \lvert F(0, 0)\rvert$.
\end{remark}

\begin{assumption} \label{asm:A2}
    The process $\left\{X_n\right\}_{n \in \N_0}$ is i.i.d. with $\mathbb{E}\left[\lvert X_0 \rvert^{4(\rho + 1)}\right] < \infty$ and $\mathbb{E}\left[\lvert \bar{K}_1(X_0)\rvert^2\right] < \infty$, where $\rho > 0$ and $\bar{K}_1: \mathbb{R}^m \rightarrow (0, \infty)$ is introduced in Assumption~\ref{asm:A3}.
    
\end{assumption}

\begin{assumption} \label{asm:A4}
    There exists a positive constant $L>0$ such that, for all $\theta, \theta^{\prime} \in \mathbb{R}^d$, $$\mathbb{E}\left[\lvert H(\theta, X_0) - H(\theta^{\prime}, X_0)\rvert\right] \leq L\lvert\theta - \theta^{\prime}\rvert.$$
\end{assumption}

\begin{remark}\label{rmk:2}
By Assumption~\ref{asm:A4}, we have $$\lvert h(\theta) - h(\theta^{\prime})\rvert \leq L\lvert \theta - \theta^{\prime}\rvert, $$ which implies, by leting  $\theta^{\prime} = 0$, that $$\lvert h(\theta)\rvert \leq L\lvert\theta\rvert + \lvert h(0)\rvert.$$
\end{remark}

\begin{assumption} \label{asm:A5}
    There exist measurable functions $A:\mathbb{R}^m \rightarrow \mathbb{R}^{d\times d}$ and $B:\mathbb{R}^m \rightarrow \mathbb{R}$ such that the following hold: for any $\left(\theta, x\right) \in \mathbb{R}^d \times \mathbb{R}^m$,
        \begin{enumerate}[itemsep=5pt,topsep=5pt,parsep=2pt]
	\item $\left\langle \theta, A(x)\theta \right\rangle \geq 0$ and $\left\langle F(\theta, x),\theta \right\rangle \geq \left\langle\theta, A(x)\theta \right\rangle- B\left(x\right)$,
        \item The smallest eigenvalue of $\mathbb{E}\left[\lvert A(X_0)\rvert\right]$ is a positive real number $a > 0$ and $\mathbb{E}\left[\lvert B(X_0)\rvert\right] = b \geq 0$.
    \end{enumerate}
\end{assumption}

\begin{remark}\label{rmk:3}
By Assumption~\ref{asm:A3},~\ref{asm:A2}, and~\ref{asm:A5}, we have $$\langle h(\theta), \theta\rangle \geq a^{\prime}\lvert\theta\rvert^2 - b^{\prime},$$ where $a^{\prime} := \frac{a}{2} , b^{\prime} := b + \frac{1}{2a}\mathbb{E}\left[\lvert \bar{K}_1(X_0)\rvert^2\right]$.
\end{remark}

\begin{remark}\label{rmk:4}
By Remark~\ref{rmk:2} and~\ref{rmk:3}, for any $\theta \in \mathbb{R}^d$, we obtain, 
    $$\langle h(\theta), \theta\rangle \geq 2\lambda (u(\theta) + \frac{\gamma^2}{4} \lvert\theta\rvert^2) - 2A_c / \beta,$$
where $A_c := \frac{\beta}{2} (2\lambda u(0) + 2\lambda L\lvert h(0)\rvert + b^{\prime})$ and $\lambda := \min\left\{\frac{1}{4}, \frac{a^{\prime}}{L + 2L\lvert h(0)\rvert + \frac{\gamma^2}{2}}\right\}$.
\end{remark}

\noindent Detailed proofs of the remarks in this section can be found in Appendix~\ref{Appendix: B}.

\section{Main results and overview}\label{sec:3}
\noindent We first define 
\begin{equation}\label{eta_max}
\eta_{\max} := \min\left\{1,\frac{2}{\gamma}, \frac{\gamma\lambda}{2K_1}, \frac{K_3}{K_2}, \frac{\lambda\gamma}{2\tilde{K}}\right\}> 0,
\end{equation}
where $\gamma$ is introduced in the SGHMC algorithm~\eqref{SGHMC 1}-\eqref{SGHMC 2}, $\lambda$ is defined in Remark~\ref{rmk:4}, the constants $K_1$, $K_2$, and $K_3$ are explicitly given in~\eqref{K_1},~\eqref{K_2}, and~\eqref{K_3} whereas $\tilde{K}$ is explicitly given in~\eqref{tilde_K}. 

Then, under the assumptions presented in Section~\ref{sec:asm}, we obtain the following non-asymptotic upper bound in Wasserstein-2 distance.
\begin{theorem}\label{thm:2.1}
    Let Assumption~\ref{asm:A3}-\ref{asm:A5} hold. Then, for any $\beta > 0$, there exist constants $C_1^{\star}, C_2^{\star}, C_3^{\star}, C_4^{\star} > 0$ such that, for every $n \in \mathbb{N}_0$,  $0 < \eta \leq \eta_{\max}$ with $\eta_{\max}$ defined in~\eqref{eta_max}, we obtain 
$$
W_2\Big(\mathcal{L}\left(\theta_n^\eta, V_n^\eta\right), \bar{\pi}_\beta\Big) \leq C_1^{\star} \eta^{1 / 2}+C_2^{\star} \eta^{1 / 4}+C_3^{\star} e^{-C_4^{\star} \eta n}, 
$$
where $C_1^{\star}, C_2^{\star}, C_3^{\star}$, and $C_4^{\star}$ are made explicit and are summarized in Table~\ref{table 1}. 

In particular, for any $\epsilon > 0$, if we choose $\eta \leq \min\left\{\frac{\epsilon^2}{9 C_1^{\star 2}}, \frac{\epsilon^4}{81 C_2^{\star 4}}, \eta_{\max}\right\}$ and $n \geq \frac{\ln \left(3 C_3^{\star} / \epsilon \right)}{C_4^{\star} \min\left\{\frac{\epsilon^2}{9 C_1^{\star 2}}, \frac{\epsilon^4}{81 C_2^{\star 4}}, \eta_{\max}\right\}}$, then we obtain ${W_2\left(\mathcal{L}\left(\theta_n^\eta, V_n^\eta\right), \bar{\pi}_\beta\right) \leq \epsilon}$.
\end{theorem}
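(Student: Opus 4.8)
The plan is to prove the bound by the familiar ``divide and conquer'' strategy, splitting the distance to the invariant measure into a continuous-time ergodicity error and a discretization error. Let $(\zeta_t, V_t)_{t \geq 0}$ denote the continuous-time underdamped Langevin diffusion~\eqref{SDE2} with drift $h = \nabla u$, initialized at $(\theta_0, V_0)$ and driven by the same Brownian motion underlying the algorithm, and recall that $\bar{\pi}_\beta$ is its unique invariant measure. By the triangle inequality for $W_2$,
\begin{equation*}
W_2\bigl(\mathcal{L}(\theta_n^\eta, V_n^\eta), \bar{\pi}_\beta\bigr) \leq W_2\bigl(\mathcal{L}(\theta_n^\eta, V_n^\eta), \mathcal{L}(\zeta_{n\eta}, V_{n\eta})\bigr) + W_2\bigl(\mathcal{L}(\zeta_{n\eta}, V_{n\eta}), \bar{\pi}_\beta\bigr),
\end{equation*}
and I would bound the two summands separately: the first yields the polynomial-in-$\eta$ terms $C_1^{\star}\eta^{1/2} + C_2^{\star}\eta^{1/4}$, and the second yields the exponential term $C_3^{\star} e^{-C_4^{\star}\eta n}$, with $T = \eta n$ playing the role of physical time.

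First I would establish uniform-in-$n$ second and fourth moment bounds for the iterates $(\theta_n^\eta, V_n^\eta)$ and matching moment bounds for $(\zeta_t, V_t)$. For the discrete scheme these follow from a Lyapunov/drift argument: the dissipativity of Remark~\ref{rmk:4}, the growth bound of Remark~\ref{rmk:1}, and the moment assumption $\mathbb{E}[|X_0|^{4(\rho+1)}] < \infty$ of Assumption~\ref{asm:A2} produce a one-step contraction of a quadratic (resp.\ quartic) Lyapunov function once $\eta \leq \eta_{\max}$; the thresholds in~\eqref{eta_max} are chosen precisely to secure this contraction. For the ergodicity term I would invoke an exponential contraction in $W_2$ for the kinetic Langevin dynamics. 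Since $u$ is only dissipative and $h$ merely Lipschitz (Remark~\ref{rmk:2}) rather than convex, the contraction cannot come from convexity; instead I would use a coupling in a suitably twisted Euclidean metric on the phase space $(\theta, v)$ carrying a cross term between position and momentum, which under the dissipativity of Remark~\ref{rmk:4} yields $W_2(\mathcal{L}(\zeta_t, V_t), \bar{\pi}_\beta) \leq C e^{-c t}$ for explicit $C, c > 0$. Evaluating at $t = \eta n$ gives the term $C_3^{\star} e^{-C_4^{\star}\eta n}$.

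The heart of the proof, and the main obstacle, is the discretization error, where the discontinuity of $H$ must be handled. I would couple the algorithm (via its continuous-time interpolation) with the diffusion synchronously, using the same Brownian increments and the same data $X_{n+1}$, and derive a recursive $L^2$ estimate for the coupled differences through a discrete Grönwall inequality. The local error over a step $[n\eta, (n+1)\eta]$ splits into: (i) a martingale part from the stochastic-gradient noise $H(\theta_n^\eta, X_{n+1}) - h(\theta_n^\eta)$, whose accumulation over $\lfloor T/\eta \rfloor$ conditionally independent steps contributes $O(\eta^{1/2})$; (ii) a smooth Euler-type term controlled by the Lipschitz continuity of $h$ (Remark~\ref{rmk:2}), also $O(\eta^{1/2})$; and (iii) the genuinely discontinuous contribution. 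For (iii), Assumption~\ref{asm:A4} provides only the $L^1$ bound $\mathbb{E}|H(\theta, X_0) - H(\theta', X_0)| \leq L|\theta - \theta'|$, whereas $W_2$ demands an $L^2$ estimate. I would bridge this gap by interpolation: Hölder's inequality combined with the polynomial growth of $H$ from Remark~\ref{rmk:1} (and the higher moments secured above) converts the $L^1$ control into an $L^2$ control at the cost of a square root, which is exactly the origin of the slower rate $C_2^{\star}\eta^{1/4}$. Assembling the per-step estimates and combining with the ergodicity bound yields the claimed three-term inequality. I expect the delicate points to be the bookkeeping that keeps the discontinuous contribution isolated so that only it incurs the $\eta^{1/4}$ penalty, and making the constants in the twisted-metric contraction explicit.

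Finally, the ``in particular'' statement is elementary. Given $\epsilon > 0$, choosing $\eta$ small enough that $C_1^{\star}\eta^{1/2} \leq \epsilon/3$ and $C_2^{\star}\eta^{1/4} \leq \epsilon/3$, that is $\eta \leq \min\{\epsilon^2/(9 C_1^{\star 2}),\, \epsilon^4/(81 C_2^{\star 4}),\, \eta_{\max}\}$, and then $n$ large enough that $C_3^{\star} e^{-C_4^{\star}\eta n} \leq \epsilon/3$, that is $n \geq \ln(3 C_3^{\star}/\epsilon)/(C_4^{\star}\eta)$, forces each of the three terms below $\epsilon/3$; substituting the minimal admissible value of $\eta$ into the requirement on $n$ gives the stated lower bound and hence $W_2(\mathcal{L}(\theta_n^\eta, V_n^\eta), \bar{\pi}_\beta) \leq \epsilon$.
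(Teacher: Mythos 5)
Your decomposition is the natural first attempt, but it contains a genuine gap: with the two-term split
$W_2(\mathcal{L}(\theta_n^\eta,V_n^\eta),\bar{\pi}_\beta)\le W_2(\mathcal{L}(\theta_n^\eta,V_n^\eta),\mathcal{L}(\zeta_{n\eta},V_{n\eta}))+W_2(\mathcal{L}(\zeta_{n\eta},V_{n\eta}),\bar{\pi}_\beta)$
and a synchronous coupling run over the \emph{entire} horizon $[0,n\eta]$, the discrete Gr\"onwall inequality produces a discretization error of order $e^{Cn\eta}\eta^{1/2}$, not $C_1^{\star}\eta^{1/2}$. Since $h$ is only Lipschitz and dissipative (Remarks~\ref{rmk:2} and~\ref{rmk:4}), not one-sided contractive, the synchronously coupled difference process has no damping mechanism and the Gr\"onwall constant grows exponentially in the physical time $n\eta$; uniform-in-$n$ moment bounds do not repair this, because they control each marginal but not the coupling error. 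Consequently your constants $C_1^{\star},C_2^{\star}$ would depend on $n$, contradicting the uniformity claimed in the theorem: for fixed $\eta$ your first term blows up as $n\to\infty$ while the theorem's bound stays bounded. The device that the paper uses to remove the horizon dependence --- and which is absent from your proposal --- is the three-term splitting~\eqref{sec4:1} built on the \emph{restarted} auxiliary diffusions $(\bar{\zeta}_t^{\eta,n},\bar{Z}_t^{\eta,n})$ of Definition~\ref{def:1}: the synchronous coupling (Lemma~\ref{lemma:4.1}) is only ever run over one unit time window $T=\lfloor 1/\eta\rfloor$, where it gives $C_1^{\star}\eta^{1/2}$ uniformly thanks to the uniform moment bounds, and the accumulated restart errors are then summed geometrically using the exponential contraction of the continuous dynamics (Lemma~\ref{lemma:4.2}, via~\eqref{sum}).

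A second, related inaccuracy is your attribution of the $\eta^{1/4}$ term to an $L^1$-to-$L^2$ interpolation of the discontinuous gradient through Assumption~\ref{asm:A4}. In the paper the discontinuity costs nothing in the rate of the one-window discretization estimate: the difference $H-h$ enters as conditionally centered noise whose variance is bounded via Assumption~\ref{asm:A3} (Lemma~\ref{lemmac.1}), and differences of $h$ are handled with the averaged Lipschitz property, so Lemma~\ref{lemma:4.1} already achieves $O(\eta^{1/2})$. The slower rate $\eta^{1/4}$ instead originates in Lemma~\ref{lemma:4.2}, where the telescoped restart errors must pass through the contraction semi-metric $\mathcal{W}_\rho$: the contraction of Proposition~\ref{prop:3.6} holds in $\mathcal{W}_\rho$ (an Eberle-type concave, Lyapunov-weighted semi-metric --- a plain twisted Euclidean metric does not contract in the non-convex setting), and converting a $W_2$-error of size $C_1^{\star}\eta^{1/2}$ into $\mathcal{W}_\rho$ and back into $W_2$ costs the exponent $1/p$ with $p=2$, i.e.\ a square root. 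Your plan as written therefore misses the key structural idea and mislocates the source of the dominant error term; the concluding ``in particular'' computation, on the other hand, is correct and matches the paper.
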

Also, an analogous result in Wasserstein-1 distance can be obtained.
\begin{corollary}\label{cor:2.2}
    Let Assumption~\ref{asm:A3}-\ref{asm:A5} hold. Then, for any $\beta > 0$, there exist constants $C_1^{\ast}, C_2^{\ast}$, $C_3^{\ast} > 0$ such that, for every $n \in \mathbb{N}_0$,  $0 < \eta \leq \eta_{\max}$ with $\eta_{\max}$ defined in~\eqref{eta_max}, we obtain 
$$
W_1\Big(\mathcal{L}\left(\theta_n^\eta, V_n^\eta\right), \bar{\pi}_\beta\Big) \leq C_1^{\ast} \eta^{1 / 2} + C_2^{\ast} e^{-C_3^{\ast} \eta n}, 
$$
where $C_1^{\ast}, C_2^{\ast}$, and $C_3^{\ast}$ are made explicit and are summarized in Table~\ref{table 1}. 

In particular, for any $\epsilon > 0$, if we choose $\eta \leq \min\left\{\frac{\epsilon^2}{4 C_1^{*2}}, \eta_{\max}\right\}$ and $n \geq \max\left\{\frac{4 C_1^{* 2} \ln\left(2 C_2^{\ast} / \epsilon\right)}{\epsilon^2 C_3^*}, \frac{ \ln\left(2 C_2^{\ast} / \epsilon\right)}{C_3^* \eta_{\max}}\right\}$, then we obtain ${W_1\big(\mathcal{L}\left(\bar{\theta}_{n}^\eta, \bar{V}_{n}^\eta\right), \bar{\pi}_\beta\big) \leq \epsilon}$.
\end{corollary}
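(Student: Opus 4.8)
The plan is to obtain the $W_1$ bound through the same decomposition that underlies Theorem~\ref{thm:2.1}, but to track the Wasserstein-$1$ distance throughout rather than pass through the naive inequality $W_1 \le W_2$: the latter would import the $\eta^{1/4}$ term from Theorem~\ref{thm:2.1} and hence be strictly worse than the claimed $\eta^{1/2}$ rate. (Indeed, the $\eta^{1/4}$ in the $W_2$ bound arises from a moment-interpolation step of the form $W_2 \le C\,W_1^{1/2}$ that upgrades a $W_1$-level control of the stochastic-gradient error to $W_2$; working directly in $W_1$ avoids this loss.) Concretely, I would introduce the continuous-time underdamped Langevin diffusion $(\theta_t, V_t)$ from~\eqref{SDE2}, started from the algorithm's initial condition $(\theta_0, V_0)$ and having invariant law $\bar\pi_\beta$, together with a continuous-time interpolation of the SGHMC iterates~\eqref{SGHMC 1}--\eqref{SGHMC 2}, and split via the triangle inequality
\[
W_1\big(\mathcal{L}(\theta_n^\eta, V_n^\eta), \bar\pi_\beta\big) \le W_1\big(\mathcal{L}(\theta_n^\eta, V_n^\eta), \mathcal{L}(\theta_{\eta n}, V_{\eta n})\big) + W_1\big(\mathcal{L}(\theta_{\eta n}, V_{\eta n}), \bar\pi_\beta\big).
\]
The first term is the numerical (discretization plus stochastic-gradient) error and will produce $C_1^{\ast} \eta^{1/2}$; the second is the relaxation of the diffusion to equilibrium and will produce $C_2^{\ast} e^{-C_3^{\ast} \eta n}$.

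For the relaxation term, I would invoke an exponential contraction estimate in $W_1$ for the kinetic Langevin diffusion~\eqref{SDE2}. Since the drift $h$ is globally Lipschitz (Remark~\ref{rmk:2}) and dissipative (Remarks~\ref{rmk:3}--\ref{rmk:4} supply the drift condition $\langle h(\theta),\theta\rangle \ge a'|\theta|^2 - b'$), one obtains $W_1(\mathcal{L}(\theta_t, V_t), \bar\pi_\beta) \le c\, e^{-\mu t}\, W_1(\mathcal{L}(\theta_0,V_0), \bar\pi_\beta)$ for explicit $c, \mu > 0$. Because the system is hypoelliptic---the Brownian noise enters only the momentum equation---a synchronous coupling under a plain Euclidean metric does not contract for a non-convex $u$, so the estimate is produced by a reflection coupling in suitably twisted coordinates together with a concave modification of the distance (in the spirit of Eberle--Guillin--Zimmer), which mixes position and momentum to restore contractivity. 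Evaluating at $t = \eta n$ and bounding the initial $W_1$-distance via the fourth-moment assumptions on $(\theta_0, V_0)$ yields the exponential term.

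For the numerical term, I would run a synchronous coupling of the interpolated algorithm and the diffusion~\eqref{SDE2}, driven by the same Brownian motion and started at the same point, and estimate the $L^1$-norm of the coupled difference by a Grönwall argument. The crux is controlling, in $L^1$, the drift mismatch between $H(\theta_n^\eta, X_{n+1})$ used by the algorithm and $h(\theta_s)$ used by the diffusion. This is exactly where Assumption~\ref{asm:A4} (continuity \emph{in average}) is essential: although $H$ is discontinuous in $\theta$, conditioning on the past and taking expectation over $X_{n+1}$ returns $h$, and the residual is bounded by $L\,\mathbb{E}|\theta_n^\eta - \theta_s|$ plus a term controlled by the uniform fourth-moment bounds on the iterates (available from Assumptions~\ref{asm:A3}--\ref{asm:A5} together with $0 < \eta \le \eta_{\max}$, cf.~\eqref{eta_max}). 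The one-step mismatch scales like $\eta^{1/2}$ after accounting for the $\sqrt\eta$ Brownian increment and the frozen-drift error over $[\eta n, \eta(n+1)]$, and Grönwall over $[0, \eta n]$, combined with the contraction above, keeps the accumulated error at order $\eta^{1/2}$ uniformly in $n$, giving $C_1^{\ast}\eta^{1/2}$. Combining the two estimates yields the stated bound, and the explicit $\epsilon$-complexity follows by forcing $C_1^{\ast}\eta^{1/2} \le \epsilon/2$ and $C_2^{\ast} e^{-C_3^{\ast}\eta n} \le \epsilon/2$ and solving for $\eta$ and $n$.

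I expect the main obstacle to be the interplay, in the numerical term, between the \emph{discontinuity} of the stochastic gradient and the \emph{degeneracy} of the noise: the discontinuity forces every gradient comparison to be carried out in expectation through Assumption~\ref{asm:A4} (pointwise Lipschitz estimates being unavailable), while hypoellipticity means the position error is driven only indirectly through the momentum. The coupling, the moment bounds, and the twisted metric used for contraction therefore have to be organized consistently so that the averaged drift control and the Grönwall iteration do not degrade the $\eta^{1/2}$ rate.
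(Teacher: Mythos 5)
You have the right ingredients in hand (Eberle-type coupling for exponential contraction, synchronous coupling plus Gr\"onwall for the discretization error, Assumption~\ref{asm:A4} to compare gradients only in average, and the observation that one should not pass through $W_1 \le W_2$, which would import the $\eta^{1/4}$ term), but your decomposition contains a genuine gap that the paper's extra machinery exists precisely to close. You compare the algorithm with the diffusion \emph{started at time $0$} and claim that the numerical term $W_1\big(\mathcal{L}(\theta_n^\eta, V_n^\eta), \mathcal{L}(\theta_{\eta n}, V_{\eta n})\big)$ is $O(\eta^{1/2})$ \emph{uniformly in $n$} by a synchronous coupling and Gr\"onwall over $[0,\eta n]$. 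This fails: the Gr\"onwall factor over a horizon of length $\eta n$ is $e^{C\eta n}$, and the zero-mean stochastic-gradient fluctuations accumulate as a martingale whose $L^2$-norm grows like $\sqrt{\eta}\,\sqrt{\eta n}$, so this term grows with $n$ --- this is exactly the defect of the bound of~\cite{gao2022}, which is of order $(n\eta)^{3/2}\sqrt{\log(n\eta)}$, that the present paper removes. Your remark that the Gr\"onwall step is ``combined with the contraction above'' does not repair it: the contraction you establish compares two copies of the \emph{diffusion}, and nothing in your two-term splitting lets it act on the coupling error between the \emph{algorithm} and the diffusion. There is also a quantitative inconsistency in your own accounting: a per-step mismatch of order $\eta^{1/2}$ damped by a per-step contraction factor $e^{-c\eta}$ sums to order $\eta^{1/2}\left(1-e^{-c\eta}\right)^{-1} \asymp \eta^{-1/2}$, not $\eta^{1/2}$.

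What the paper actually does (following~\cite{chau2022stochastic}) is a three-term decomposition~\eqref{sec4:1} built on auxiliary diffusions \emph{restarted from the algorithm's own state} at each time $nT$, $T = \lfloor 1/\eta \rfloor$ (Definition~\ref{def:1}): (i) the interpolated algorithm versus the restarted diffusion over the current interval of unit physical length, where Gr\"onwall is harmless and the martingale orthogonality of the gradient noise over that interval gives $O(\eta^{1/2})$ in $W_2$ (Lemma~\ref{lemma:4.1}); (ii) the restarted diffusion versus the diffusion from time $0$, handled by telescoping over the $n$ restart times and damping the $k$-th per-interval error by the contraction factor $e^{-\dot{c}\eta(t-kT)}$ from Proposition~\ref{prop:3.6}, so that the accumulated error is a geometric series, hence uniform in $n$ (Lemma~\ref{lemma:4.2}) --- this is also the step where $W_1$ keeps the rate $\eta^{1/2}$ while $W_2$ degrades to $\eta^{1/4}$ through the square root in the $\mathcal{W}_\rho$-to-$W_2$ conversion; (iii) the diffusion versus $\bar{\pi}_\beta$ (Lemma~\ref{lemma:4.3}), which is the only part of your splitting that survives as is. Without this restart-and-telescope structure (or some substitute that genuinely injects the contraction into the discretization estimate), your first term cannot be made uniform in $n$, and the claimed bound does not follow.
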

By using the convergence result in Wasserstein-2 distance as presented in Theorem~\ref{thm:2.1}, we can obtain an upper bound for the expected excess risk $\mathbb{E}[u(\theta_n^\eta)]-\inf _{\theta \in \mathbb{R}^d} u(\theta)$.
\begin{theorem}\label{thm:2.2}
Let Assumption~\ref{asm:A3}-\ref{asm:A5} hold. Then, for any $\beta > 0$, there exist constants $\bar{C}_1^{\star}, \bar{C}_2^{\star}, \bar{C}_3^{\star}, \bar{C}_4^{\star} > 0$ such that, for every $n \in \mathbb{N}_0$, $0 < \eta \leq \eta_{\max}$ with $\eta_{\max}$ defined in~\eqref{eta_max}, we obtain 
$$
\mathbb{E}\left[u\left(\theta_n^\eta\right)\right] - \inf _{\theta \in \mathbb{R}^d} u(\theta) \leq \bar{C}_1^{\star} \eta^{1 / 2}+\bar{C}_2^{\star} \eta^{1 / 4}+\bar{C}_3^{\star} e^{-\bar{C}_4^{\star} \eta n}+\frac{d}{2 \beta} \log \left(\frac{8 e L}{\gamma^2 \lambda(1 - 2\lambda)}\left(\frac{A_c}{d} + 1\right)\right),
$$
where $\bar{C}_1^{\star}, \bar{C}_2^{\star}, \bar{C}_3^{\star}$, and $\bar{C}_4^{\star}$ are given in Table~\ref{table 1}. 

In particular, for any $\epsilon > 0$, if we first choose $\beta \geq \max\left\{\frac{16 d^2}{\epsilon^2}, \frac{4d}{\epsilon} \log\left(\frac{8eL}{\gamma^2 \lambda(1 - 2\lambda)}\left(\frac{\lambda u(0) + \lambda L\lvert h(0)\rvert + b^{\prime} / 2}{d} + 1\right) \right)\right\}$, then choose $\eta \leq \min\left\{\frac{\epsilon^2}{16\bar{C}_1^{\star 2}}, \frac{\epsilon^4}{256\bar{C}_2^{\star 4}}, \eta_{\max}\right\}$ and $n \geq \frac{\ln(4 \bar{C}_3^{\star} / \epsilon)}{\bar{C}_4^{\star}\min\left\{\frac{\epsilon^2}{16\bar{C}_1^{\star 2}}, \frac{\epsilon^4}{256\bar{C}_2^{\star 4}}, \eta_{\max}\right\}}$, then we obtain ${\mathbb{E}\left[u\left(\theta_n^\eta\right)\right] - \inf _{\theta \in \mathbb{R}^d} u(\theta) \leq \epsilon}$.
\end{theorem}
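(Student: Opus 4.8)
The plan is to split the excess risk into a \emph{sampling error} and a \emph{Gibbs suboptimality} term via
$$\mathbb{E}\left[u(\theta_n^\eta)\right] - \inf_{\theta\in\mathbb{R}^d}u(\theta) = \underbrace{\left(\mathbb{E}\left[u(\theta_n^\eta)\right] - \int_{\mathbb{R}^d}u\,\mathrm{d}\pi_\beta\right)}_{=:T_1} + \underbrace{\left(\int_{\mathbb{R}^d}u\,\mathrm{d}\pi_\beta - \inf_{\theta\in\mathbb{R}^d}u(\theta)\right)}_{=:T_2},$$
and to bound $T_1$ through the Wasserstein-2 estimate of Theorem~\ref{thm:2.1} and $T_2$ through a maximum-entropy argument for the Gibbs measure $\pi_\beta$ (the $\theta$-marginal of $\bar\pi_\beta$). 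Note that $T_2$ is independent of $n$ and $\eta$ and will produce the additive $\tfrac{d}{2\beta}\log(\cdots)$ term.

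For $T_1$, I would recall from Remark~\ref{rmk:2} that $h=\nabla u$ is globally $L$-Lipschitz, so $u$ admits the descent bound $u(\theta) \le u(\theta') + \langle h(\theta'),\theta-\theta'\rangle + \tfrac{L}{2}|\theta-\theta'|^2$. Let $\Gamma^\star$ be an optimal coupling attaining $W_2(\mathcal{L}(\theta_n^\eta,V_n^\eta),\bar\pi_\beta)$, with coupled pair $((\theta_n^\eta,V_n^\eta),(\theta_\infty,V_\infty))$ and $\theta_\infty\sim\pi_\beta$. Taking expectations and applying Cauchy--Schwarz gives
$$T_1 \le \sqrt{\mathbb{E}_{\pi_\beta}\!\left[|h(\theta_\infty)|^2\right]}\,\big(\mathbb{E}_{\Gamma^\star}|\theta_n^\eta-\theta_\infty|^2\big)^{1/2} + \tfrac{L}{2}\,\mathbb{E}_{\Gamma^\star}|\theta_n^\eta-\theta_\infty|^2.$$
Since $\mathbb{E}_{\Gamma^\star}|\theta_n^\eta-\theta_\infty|^2 \le \mathbb{E}_{\Gamma^\star}\big[|\theta_n^\eta-\theta_\infty|^2+|V_n^\eta-V_\infty|^2\big] = W_2(\mathcal{L}(\theta_n^\eta,V_n^\eta),\bar\pi_\beta)^2$, the marginal distance is controlled by the extended one, and $\mathbb{E}_{\pi_\beta}|h|^2<\infty$ follows from the linear growth $|h(\theta)|\le L|\theta|+|h(0)|$ of Remark~\ref{rmk:2} and finiteness of $\mathbb{E}_{\pi_\beta}|\theta|^2$ (established below). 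Inserting the bound of Theorem~\ref{thm:2.1} and expanding, the quadratic contribution turns $\eta^{1/2}$ into $\eta$ and $\eta^{1/4}$ into $\eta^{1/2}$, so collecting leading powers yields $T_1 \le \bar C_1^\star\eta^{1/2}+\bar C_2^\star\eta^{1/4}+\bar C_3^\star e^{-\bar C_4^\star\eta n}$ with $\bar C_i^\star$ built from $C_i^\star$, $L$, and $\mathbb{E}_{\pi_\beta}|h|^2$.

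For $T_2$, I would use the standard entropy comparison. Writing $Z_\beta=\int e^{-\beta u}$ and $H(\pi_\beta)$ for the differential entropy, one has $\int u\,\mathrm{d}\pi_\beta = \beta^{-1}(H(\pi_\beta)-\log Z_\beta)$; bounding $H(\pi_\beta)\le\frac{d}{2}\log(\frac{2\pi e}{d}\mathbb{E}_{\pi_\beta}|\theta|^2)$ by a Gaussian with matched second moment and lower bounding $Z_\beta\ge e^{-\beta\inf u}(2\pi/(\beta L))^{d/2}$ via $u(\theta)\le\inf u+\frac{L}{2}|\theta-\theta^\star|^2$ gives $T_2\le\frac{d}{2\beta}\log\!\big(\frac{e\beta L}{d}\mathbb{E}_{\pi_\beta}|\theta|^2\big)$. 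It remains to bound the second moment: the stationarity identity $\mathbb{E}_{\pi_\beta}[\langle h(\theta),\theta\rangle]=d/\beta$ (integration by parts against $\pi_\beta$, whose tails decay by coercivity) together with the dissipativity of Remark~\ref{rmk:4}, $\langle h(\theta),\theta\rangle\ge 2\lambda u(\theta)+\frac{\gamma^2\lambda}{2}|\theta|^2-2A_c/\beta$, and $u\ge0$ yields $\mathbb{E}_{\pi_\beta}|\theta|^2\le \frac{2(d+2A_c)}{\gamma^2\lambda\beta}$; a more careful retention of the $2\lambda\,\mathbb{E}_{\pi_\beta}[u]$ term in this bookkeeping is what produces the stated constant $\frac{8eL}{\gamma^2\lambda(1-2\lambda)}(\frac{A_c}{d}+1)$ with its $(1-2\lambda)$ factor, giving $T_2\le\frac{d}{2\beta}\log\!\big(\frac{8eL}{\gamma^2\lambda(1-2\lambda)}(\frac{A_c}{d}+1)\big)$.

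Adding the two bounds gives the displayed estimate. For the ``in particular'' claim I would first choose $\beta$ so that $T_2\le\epsilon/4$: since $A_c$ scales linearly in $\beta$, the logarithm grows like $\log\beta$, so the two-part lower bound on $\beta$ handles the $\log\beta$ growth separately (the $16d^2/\epsilon^2$ part, via $\log\beta\lesssim\sqrt\beta$) from the $\beta$-independent constant (the $\frac{4d}{\epsilon}\log(\cdots)$ part); then choose $\eta\le\min\{\epsilon^2/(16\bar C_1^{\star2}),\epsilon^4/(256\bar C_2^{\star4}),\eta_{\max}\}$ so that $\bar C_1^\star\eta^{1/2}$ and $\bar C_2^\star\eta^{1/4}$ are each $\le\epsilon/4$, and finally take $n$ large enough that $\bar C_3^\star e^{-\bar C_4^\star\eta n}\le\epsilon/4$. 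The main obstacle I anticipate is the $T_2$ estimate --- pinning down the precise constant through the coupled entropy/second-moment bookkeeping --- together with verifying the $\log\beta$-versus-$\beta$ trade-off that makes the two-part choice of $\beta$ work; the $T_1$ part is essentially mechanical once Theorem~\ref{thm:2.1} and the $L$-smoothness of $u$ are in hand.
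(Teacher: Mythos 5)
Your proposal is correct and shares the paper's overall architecture --- the identical decomposition into a sampling term and a Gibbs-suboptimality term, the same max-entropy/normalizing-constant estimate for the latter (cf.\ Lemma~\ref{lemma:4.5}), and the same order of parameter choices with the $\log(\beta+1)/\beta\le 1/\sqrt{\beta}$ device --- but it swaps out both key sub-lemmas. For the sampling term, the paper (Lemma~\ref{lemma:4.4}) uses the Polyanskiy--Wu-type weighted Lipschitz estimate $\lvert u(x)-u(y)\rvert \le \left(\tfrac{L}{2}(\lvert x\rvert+\lvert y\rvert)+\lvert h(0)\rvert\right)\lvert x-y\rvert$ plus H\"older, producing a bound that is \emph{linear} in $W_2$, so the constants of Table~\ref{table 1} are inherited verbatim from Theorem~\ref{thm:2.1}; your descent-lemma route creates an additional $\tfrac{L}{2}W_2^2$ contribution that must be re-absorbed --- harmless, since $\eta\le 1$ lets $\eta$, $\eta^{1/2}$, $e^{-2C_4^{\star}\eta n}$ be dominated by $\eta^{1/2}$, $\eta^{1/4}$, $e^{-C_4^{\star}\eta n}$ --- at the price of constants that no longer literally match Table~\ref{table 1} (only existence is needed, so this is acceptable). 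For $\mathbb{E}_{\pi_\beta}\left[\lvert\theta\rvert^2\right]$, the paper takes $t\to\infty$ in the continuous-time moment bound of Lemma~\ref{lemma:3.2} (via $W_2$-convergence and Fatou), obtaining $8(d+A_c)/(\gamma^2\lambda(1-2\lambda)\beta)$, which is the actual source of the $(1-2\lambda)$ in the stated constant; your integration-by-parts identity $\mathbb{E}_{\pi_\beta}\left[\langle h(\theta),\theta\rangle\right]=d/\beta$ combined with Remark~\ref{rmk:4} and $u\ge 0$ gives the sharper $2(d+2A_c)/(\gamma^2\lambda\beta)$, so your remark that careful bookkeeping of the $2\lambda\,\mathbb{E}_{\pi_\beta}[u]$ term ``produces'' the $(1-2\lambda)$ factor is inaccurate --- but immaterial, since $\lambda\le 1/4$ gives $4eL/(\gamma^2\lambda)\le 8eL/\left(\gamma^2\lambda(1-2\lambda)\right)$ and monotonicity of $\log$ then yields the stated bound. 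The one step you should make explicit is that the boundary term in your integration by parts vanishes; this follows from the Gaussian-type tail decay $u(\theta)\ge \tfrac{a^{\prime}}{3}\lvert\theta\rvert^2 - \tfrac{b^{\prime}}{2}\log 3$ implied by dissipativity, exactly as recorded in~\eqref{lemma_A_3}.
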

\subsection{Related papers and discussion}\label{sec:3.1}

In this section, we compare our work with the most related ones~\cite{akyildiz2020nonasymptotic, chau2022stochastic, gao2022}.

In~\cite{gao2022}, the authors provide a non-asymptotic upper bound in Wasserstein-2 distance between the law of the SGHMC algorithm~\eqref{SGHMC 1}-\eqref{SGHMC 2} and of the underdamped Langevin SDE~\eqref{underdamped Langevin SDE} being $\mathcal{O}\big((\delta^{1 / 4}+\eta^{1 / 4})\left(n \eta\right)^{3/2}\sqrt{\log (n \eta)} + \left(n \eta\right)\sqrt{\eta}\big)$, where $\delta > 0$ is the gradient noise level independent of the stepsize and $n$ is the number of iterations.~\cite{chau2022stochastic} improved the upper bound for SGHMC in~\cite{gao2022} significantly to an order of $\mathcal{O}\left(\delta^{1 / 4}+\eta^{1 / 4}\right)$ in the sense that it does not depend on the number of iterations. The results in~\cite{chau2022stochastic} and~\cite{gao2022} are obtained under the same set of assumptions~\cite[Assumption i-v]{gao2022}.~\cite[Assumption ii]{gao2022} assumes that $H$ is (global) Lipschitz continuous in $\theta$ uniformly with respect to $x$. In our case, in Assumption~\ref{asm:A3}, we split $H$ into two parts, i.e., $H = F + G$, where $F$ is jointly locally Lipschitz continuous and $G$ is only required to be measurable, without assuming any continuity assumption. In particular, we do not impose $H$ to be continuous. Instead, we only assume $H$ to satisfy a continuity \textit{in average} condition so as to allow discontinuous $H$ in Assumption~\ref{asm:A4}. By allowing $H$ to be discontinuous, we can cover important optimization problems involving neural networks with ReLU activation function as well as quantile estimation. 
\cite[Assumption iii]{gao2022} assume the dissipativity assumption (which comes from the theory of dynamical systems, see~\cite{stuart1998dynamical}) whereas we impose a weaker local dissipativity condition in Assumption~\ref{asm:A5} to allow the dependence on the data stream. In addition,~\cite[Assumption v]{gao2022} requires the finiteness of an exponential moment of the initial value, while we require only polynomial moments of the initial value in Assumption~\ref{asm:A2}. 

We highlight that, under~\cite[Assumption i-v]{chau2022stochastic, gao2022}, the authors obtain non-asymptotic upper bounds that cannot be made to vanish by selecting an arbitrarily small step size $\eta > 0$, since $\delta > 0$ is predefined and independent of $\eta$. However, under our Assumptions~\ref{asm:A3}-\ref{asm:A5}, our non-asymptotic convergence bounds can be made arbitrarily small by choosing appropriate parameters.

We now compare our work with~\cite{akyildiz2020nonasymptotic}.~\cite{akyildiz2020nonasymptotic} provides convergence results in Wasserstein-2 distance under the assumptions~\cite[Assumption 2.1-2.4]{akyildiz2020nonasymptotic}. In~\cite[Assumption 2.1, 2.2]{akyildiz2020nonasymptotic}, the authors assume that the stochastic gradients are unbiased with finite moments which are the same as our Assumption~\ref{asm:A2}.~\cite[Assumption 2.3]{akyildiz2020nonasymptotic} states a local Lipschitz condition of $H$ in $x$ and a global Lipschitz condition in $\theta$, while we split $H$ into two parts $F$ and $G$ in Assumption~\ref{asm:A3}. More precisely, we assume $F$ being jointly local Lipschitz continuous and $G$ only being bounded in $\theta$ without requiring it to be continuous. Furthermore, we only assume a continuity \textit{in average} condition of $H$ in Assumption~\ref{asm:A4}, which allows for discontinuous $H$. In~\cite[Assumption 2.4]{akyildiz2020nonasymptotic}, authors assume a local dissipativity condition whereas we have similar assumption in Assumption~\ref{asm:A5}. Under our Assumptions~\ref{asm:A3}-\ref{asm:A5}, we obtain similar convergence results as those of~\cite{akyildiz2020nonasymptotic} with the same rates of convergence in Wasserstein distance. However, we highlight that our results are applicable to optimization problems with discontinuous stochastic gradients, which can not be covered by the results in~\cite{akyildiz2020nonasymptotic}.

Moreover, we comment on our Assumption~\ref{asm:A4}. Assumption~\ref{asm:A4} is similar to~\cite[Eq.\ (6)]{chau2019fixed} and~\cite{fort2016convergence, sabanis2020fully}, which allows for discontinuous $H$ for the corresponding stochastic gradient descent (SGD) and SGLD algorithms. However, we note that the analysis of the SGHMC algorithm involves studying the underdamped Langevin diffusion and its associated discrete time scheme on $\mathbb{R}^{2d}$, which necessitates the use of different techniques compared to those employed in~\cite{fort2016convergence, sabanis2020fully}. 

Furthermore, from a technical point of view, we adopt a splitting procedure similar to that of \cite{akyildiz2020nonasymptotic, chau2022stochastic} in order to upper bound the Wasserstein distance between the distribution of the SGHMC algorithm and the unique invariant measure $\bar{\pi}_\beta$ of SDE \eqref{SDE2}, see \eqref{sec4:1}. However, due to the different setting and assumptions compared to \cite{akyildiz2020nonasymptotic, chau2022stochastic}, we develop different techniques to upper bound the terms in the splitting \eqref{sec4:1}.

\begin{landscape}
\begin{table}[htbp]
\renewcommand{\arraystretch}{2}
\centering
\begin{threeparttable}
\scriptsize
\begin{tabular}{@{}ccll@{}} 
\toprule 
\multicolumn{2}{c}{Constant}  &\multicolumn{1}{c}{Full expression} &\multicolumn{1}{c}{Dependence on dimension}\\ 
\midrule
	\multirow{7}{*}{Theorem~\ref{thm:2.1}}			&\multirow{3}{*}{$C_1^{\star}$}		& $\bigg(4 e^{4 \gamma^2} \left(3 \gamma^2 C_v+6 \mathbb{E}\left[\left(1+\left|X_0\right|\right)^{2(\rho+1)}\right]\left(C_\theta L_1^2+2 L_2^2\right)+12 \mathbb{E}\left[F_*^2\left(X_0\right)\right]+6 \gamma \beta^{-1} d\right)$										& \multirow{3}{*}{$\mathcal{O}\left(d^{1 / 2}\right)$}\\
      																	&											&$ +\left(4 + 2 4 e^{4 \gamma^2}\right) \bigg(8 L_2^2\left(\mathbb{E}\left[\left(1+\left|X_0\right|+\left|\mathbb{E}\left[X_0\right]\right|\right)^{2 \rho}\left|X_0-\mathbb{E}\left[X_0\right]\right|^2\right]\right)$		& \\& &$ + 16\left(\mathbb{E}\left[\bar{K}_1^2\left(X_0\right)+\bar{K}_1^2\left(\mathbb{E}\left[X_0\right]\right)\right]\right)\bigg) \bigg)^{1 / 2} e^{4 e^{4 \gamma^2} L^2 / 2+\gamma^2}$		& \\ \cline{2-4}
                       
                                                        &\multirow{2}{*}{$C_2^{\star}$}		& $ \bigg(2^{1 / 2} e^{1+\Lambda_c / 2} \frac{1+\gamma}{\min \left\{1, \alpha_c\right\}}\left(\max \left\{1,4 \frac{1}{\min \left\{1, R_1\right\}}\left(1+2 \alpha_c+2 \alpha_c^2\right)\left(d+A_c\right) \beta^{-1} \gamma^{-1} c_2^{-1}\right\}\right)^{1 / 2}\bigg)$										& \multirow{2}{*}{$\mathcal{O}\left(d^{5 / 4} e^{\mathcal{O}(d)}\right)$}\\
      																	&											&$ \times \sqrt{3 \max \left\{1+\alpha_c, \gamma^{-1}\right\}}\left(1 - e^{ - C_4^{\star}\eta_{\max} / 2}\right)^{-1} \sqrt{C_1^{\star}\left(1+\varepsilon_c C_{\mathcal{V}} + \varepsilon_c C_{\mathcal{V}}^{\prime}\right)}$		& \\ \cline{2-4}
                       
                                                        &$C_3^{\star}$		& $ \frac{\sqrt{2}(1+\gamma) e^{1+\Lambda_c / 2}}{\min \left\{1, \alpha_c\right\}}\left(\max \left\{1,4 \frac{\left(1+2 \alpha+2 \alpha_c^2\right)\left(d+A_c\right)}{\min \left\{1, R_1\right\} \beta \gamma \dot{c}}\right\}\right)^{1 / 2} \sqrt{\mathcal{W}_\rho\left(\mu_0, \bar{\pi}_\beta\right)}$		& $\mathcal{O}\left(e^{\mathcal{O}(d)}\right)$  \\ \cline{2-4}
                                                        
                                                        &$C_4^{\star}$		& $ \frac{\gamma}{768} \min \left\{\lambda L \gamma^{-2}, \Lambda_c^{1 / 2} e^{-\Lambda_c} L \gamma^{-2}, \Lambda_c^{1 / 2} e^{-\Lambda_c}\right\}$  & $\mathcal{O}\left(d^{1 / 2} e^{-\mathcal{O}(d)}\right)$  \\ \hline
                       
	\multirow{4}{*}{Corollary~\ref{cor:2.2}}		 &\multirow{2}{*}{$C_1^{\ast}$}		& $ C_1^{\star} + 6 C_1^{\star} e^{2+\Lambda_c} \frac{1+\gamma}{\min \left\{1, \alpha_c\right\}} \max \left\{1,4 \frac{\max \left\{1, R_1^{-1}\right\}}{\min \left\{1, R_1\right\}}\left(1+2 \alpha_c+2 \alpha_c^2\right)\left(d+A_c\right) \beta^{-1} \gamma^{-1} c_1^{-1}\right\}$										& \multirow{2}{*}{$\mathcal{O}\left(d^{1 / 2}e^{\mathcal{O}(d)}\right)$}\\										&											&$ \times \max \left\{1+\alpha_c, \gamma^{-1}\right\}\left(1-e^{- \left(\frac{\gamma}{384} \min \left\{\lambda_c L \gamma^{-2}, \Lambda_c^{1 / 2} e^{-\Lambda_c} L \gamma^{-2}, \Lambda_c^{1 / 2} e^{-\Lambda_c}\right\}\right) \eta_{\max } / 2}\right)^{-1} \left(1+\varepsilon_c\left(C_{\mathcal{V}}+C_{\mathcal{V}}^{\prime}\right)\right)$		& \\ \cline{2-4}

																&$ C_2^{\ast}$									&$\bigg(2 e^{2+\Lambda_c} \frac{1+\gamma}{\min \left\{1, \alpha_c\right\}} \max \left\{1,4 \frac{\max \left\{1, R_1^{-1}\right\}}{\min \left\{1, R_1\right\}}\left(1+2 \alpha_c+2 \alpha_c^2\right)\left(d+A_c\right) \beta^{-1} \gamma^{-1} c_1^{-1}\right\}\bigg) \mathcal{W}_\rho\left(\mu_0, \bar{\pi}_\beta\right)$&
																	$\mathcal{O}\left(e^{\mathcal{O}(d)}\right)$\\\cline{2-4}
                 
																&$ C_3^{\ast}$									&$\frac{\gamma}{384} \min \left\{\lambda_c L \gamma^{-2}, \Lambda_c^{1 / 2} e^{-\Lambda_c} L \gamma^{-2}, \Lambda_c^{1 / 2} e^{-\Lambda_c}\right\}$				&
																	$\mathcal{O}\left(d^{1 / 2} e^{-\mathcal{O}(d)}\right)$\\\hline
                 
	\multirow{4}{*}{Theorem~\ref{thm:2.2}}			        &$\bar{C}_1^{\star}$		& $ \left(L \max \left\{C_\theta^c, C_\theta\right\}+|h(0)|\right) C_1^{\star}$		& $\mathcal{O}\left(d^{3 / 2}\right)$  \\ \cline{2-4}
                                                                &$\bar{C}_2^{\star}$		& $ \left(L \max \left\{C_\theta^c, C_\theta\right\}+|h(0)|\right)C_2^{\star}$		& $\mathcal{O}\left(d^{9 / 4} e^{\mathcal{O}(d)}\right)$  \\ \cline{2-4}
                                                                &$\bar{C}_3^{\star}$		& $ \left(L \max \left\{C_\theta^c, C_\theta\right\}+|h(0)|\right)C_3^{\star}$		& $\mathcal{O}\left(e^{\mathcal{O}(d)}\right)$  \\ \cline{2-4}
                                                                &$\bar{C}_4^{\star}$		& $ \frac{\gamma}{768} \min \left\{\lambda L \gamma^{-2}, \Lambda_c^{1 / 2} e^{-\Lambda_c} L \gamma^{-2}, \Lambda_c^{1 / 2} e^{-\Lambda_c}\right\}$		& $\mathcal{O}\left(d^{1 / 2} e^{-\mathcal{O}(d)}\right)$  \\ \bottomrule

    \end{tabular}
    \end{threeparttable}
\caption{Explicit expressions of the constants in Theorem~\ref{thm:2.1}, Corollary~\ref{cor:2.2}, and Theorem~\ref{thm:2.2}}
\label{table 1}
\end{table}
\end{landscape}

\section{Applications}\label{sec:6}
\noindent In this section, we apply the SGHMC algorithm together with our theoretical results to several problems relevant in practice, including quantile estimation and regularized optimization problems involving neural networks in order to illustrate the convergence properties and wide applicability of the SGHMC algorithm for optimization problems with discontinuous stochastic gradient. In Section~\ref{sec:6.1}, we apply the SGHMC algorithm to quantile estimation, covering the Gaussian distribution, logistic distribution, and Gumbel distribution. Then, in Section~\ref{sec:6.2}, we apply the SGHMC algorithm to train ReLU neural networks in order to solve certain regularized optimization problems. First, in Section~\ref{transfer_learning}, we provide an example involving the use of a two-hidden-layer feed forward neural network in the transfer learning setting, where (part of) its weight and bias parameters are sourced from a pre-trained model. Transfer learning is the improvement of learning in a new task through the transfer of knowledge from a related task that has already been learned, which is widely employed especially in the network-based tasks, see, e.g.,~\cite{goodfellow2016deep} and references therein. Moreover, in Section~\ref{sec:6.3}, we apply the SGHMC algorithm to solve the problem of hedging under asymmetric risk, where the SGHMC is used to train the neural networks. Datasets are generated from the discrete-time version of the multidimensional Black-Scholes-Merton (BSM) model. We further conduct numerical experiments on regression and classification problems on real-world datasets, see Section~\ref{sec:6.2.2}. Crucially, we show that the quantile estimation and optimization problem in the transfer learning setting satisfy our Assumptions~\ref{asm:A3}-\ref{asm:A5}, hence Theorem~\ref{thm:2.2} provides a theoretical guarantee for SGHMC to solve the optimization problems under consideration. Furthermore, by providing comparisons of SGHMC with other optimizers including SGLD, TUSLA, ADAM, AMSGrad, and RMSProp, we demonstrate numerically the superior performance of the SGHMC algorithm. The Python code for all the experiments in this paper is available at~\href{https://github.com/LuxLiang/SGHMC\_discontinuous}{https://github.com/LuxLiang/SGHMC\_discontinuous}.
\subsection{Quantile Estimation with \texorpdfstring{$L^2$}~-Regularization}\label{sec:6.1}
Quantiles are frequently used to assess risk in a wide range of application areas, such as finance, nuclear engineering, and service industries (see~\cite{austin2003quantile, dong2020tutorial, jorion2007value, wu2005bahadur} and references therein). In this example, our goal is to identify the quantile $q\in\left(0, 1\right)$ of a given random variable under certain distributions, which is also studied in~\cite{bardou2009computing, egloff2010quantile, fort2016convergence}. To this end, we consider the following $L^2$-regularized optimization problem:
\begin{equation}\label{op:qe}
\operatorname{minimize} \quad\mathbb{R}^d \ni \theta \mapsto u(\theta):=\mathbb{E}\left[l_{q}(X-\theta)\right]+ \lambda_r \lvert\theta\rvert^{2},
\end{equation}
where $X$ is the input random variable, $\theta \in \mathbb{R}^{d}$ is the parameter to be optimized, $\lambda_r$ is a positive regularization constant, and 
$$
l_{q}(s) := (q - \mathbbm{1}_{\{s < 0\}}) s, \quad s \in \mathbb{R},
$$
with $0 < q < 1$. Then, for any $\left(\theta, x \right) \in \mathbb{R}\times\mathbb{R}$, we define $H(\theta, x):=G(\theta, x)+F(\theta, x)$ with
\begin{equation}\label{SG_exp1}
F(\theta, x):= 2\lambda_r \theta, \quad G(\theta, x):= -q + \mathbbm{1}_{\{x < \theta\}}.
\end{equation}
\begin{proposition}\label{prop:6.1}
Assume that $X$ has bounded density with respect to the Lebesgue measure on $\mathbb{R}$ and has finite fourth moment. Then, the optimization problem~\eqref{op:qe} with its associated stochastic gradient defined in~\eqref{SG_exp1} satisfies Assumptions~\ref{asm:A3}-\ref{asm:A5}.
\end{proposition}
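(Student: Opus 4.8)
The plan is to verify the four assumptions one at a time for the decomposition $H(\theta,x)=F(\theta,x)+G(\theta,x)$ with $F(\theta,x)=2\lambda_r\theta$ and $G(\theta,x)=-q+\mathbbm{1}_{\{x<\theta\}}$ from~\eqref{SG_exp1}, exploiting that $F$ is linear in $\theta$ and independent of $x$, while $G$ is uniformly bounded and carries the only discontinuity. Throughout I write $F_X$ for the cumulative distribution function of $X$ and let $M:=\|f\|_\infty<\infty$ denote the (assumed finite) sup-norm of its density $f$.

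Assumption~\ref{asm:A3} and Assumption~\ref{asm:A2} form the routine part. Since $F(\theta,x)-F(\theta',x')=2\lambda_r(\theta-\theta')$ does not depend on $x,x'$, the bound in Assumption~\ref{asm:A3}(i) holds with $L_1=2\lambda_r$, $L_2=0$ and any $\rho>0$ (using $(1+|x|+|x'|)^\rho\ge 1$); I would fix one such $\rho$, taken small. For $G$, since $0<q<1$ we have $|G(\theta,x)|=|-q+\mathbbm{1}_{\{x<\theta\}}|\le q+1$, so Assumption~\ref{asm:A3}(ii) holds with the constant bound $\bar K_1(x):=q+1$. This same bound gives $\mathbb{E}[|\bar K_1(X_0)|^2]=(q+1)^2<\infty$, and the moment hypothesis on $X$ (all orders being finite in the light-tailed cases of Section~\ref{sec:6.1}) yields $\mathbb{E}[|X_0|^{4(\rho+1)}]<\infty$ for the chosen $\rho$, so Assumption~\ref{asm:A2} holds.

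The crux is Assumption~\ref{asm:A4}, and this is where the bounded-density hypothesis does the work. For $\theta,\theta'\in\mathbb{R}$ I would split
\[
\mathbb{E}\big[|H(\theta,X)-H(\theta',X)|\big]\le 2\lambda_r|\theta-\theta'|+\mathbb{E}\big[|\mathbbm{1}_{\{X<\theta\}}-\mathbbm{1}_{\{X<\theta'\}}|\big].
\]
The first term comes from $F$; for the second, observing that the two indicators differ exactly when $X$ lies between $\theta$ and $\theta'$,
\[
\mathbb{E}\big[|\mathbbm{1}_{\{X<\theta\}}-\mathbbm{1}_{\{X<\theta'\}}|\big]=|F_X(\theta)-F_X(\theta')|=\int_{\theta\wedge\theta'}^{\theta\vee\theta'} f(x)\,\mathrm{d}x\le M|\theta-\theta'|.
\]
Hence Assumption~\ref{asm:A4} holds with $L=2\lambda_r+M$. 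This step is the whole point of the continuity-\emph{in-average} formulation: the map $\theta\mapsto G(\theta,x)$ jumps, so no pointwise Lipschitz bound is available, but after integrating against the bounded density the jump is smoothed into a genuine Lipschitz estimate.

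Finally, for Assumption~\ref{asm:A5} I would take $A(x):=2\lambda_r I_d$ and $B(x):=0$. Then $\langle\theta,A(x)\theta\rangle=2\lambda_r|\theta|^2\ge 0$ and $\langle F(\theta,x),\theta\rangle=2\lambda_r|\theta|^2=\langle\theta,A(x)\theta\rangle\ge\langle\theta,A(x)\theta\rangle-B(x)$, giving part (i); part (ii) follows since $\mathbb{E}[|A(X_0)|]=2\lambda_r I_d$ has smallest eigenvalue $a=2\lambda_r>0$ and $\mathbb{E}[|B(X_0)|]=b=0$. It remains to record that $H$ is indeed the stochastic gradient of $u$: differentiating gives $l_q'(s)=q-\mathbbm{1}_{\{s<0\}}$ a.e., so $\mathbb{E}[H(\theta,X)]=2\lambda_r\theta-q+F_X(\theta)=h(\theta)$, which also confirms $u\in C^1$ in the continuous-density cases (Gaussian, logistic, Gumbel) treated in Section~\ref{sec:6.1}. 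The only genuinely delicate point is Assumption~\ref{asm:A4}; everything else is immediate from linearity of $F$ and boundedness of $G$.
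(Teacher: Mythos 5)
Your proposal follows essentially the same route as the paper's own proof: the same decomposition with $L_1=2\lambda_r$, $L_2=0$ and a constant bound on $G$ for Assumption~\ref{asm:A3}, the same choice $A(x)=2\lambda_r I_d$, $B(x)=0$ for Assumption~\ref{asm:A5}, and the same key step for Assumption~\ref{asm:A4}, namely smoothing the indicator jump by integrating against the bounded density. The paper writes this last estimate as $\mathbb{E}\big[\mathbbm{1}_{\{\theta'\le X_0<\theta\}}\big]+\mathbb{E}\big[\mathbbm{1}_{\{\theta\le X_0<\theta'\}}\big]\le 2\bar c_d\,|\theta-\theta'|$, giving $L=2(\lambda_r+\bar c_d)$; your CDF formulation $|F_X(\theta)-F_X(\theta')|\le M|\theta-\theta'|$ is the identical argument with a marginally sharper constant $L=2\lambda_r+M$. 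Your extra check that $\mathbb{E}[H(\theta,X)]=2\lambda_r\theta-q+F_X(\theta)=h(\theta)$ is correct and a worthwhile addition the paper leaves implicit.

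The one place your write-up does not close under the proposition's stated hypotheses is Assumption~\ref{asm:A2}. By insisting on a strictly positive $\rho$ (as the letter of Assumption~\ref{asm:A3} demands), you commit yourself to $\mathbb{E}\big[|X_0|^{4(\rho+1)}\big]<\infty$, which is strictly more than the finite fourth moment the proposition assumes; your appeal to ``all orders being finite in the light-tailed cases of Section~\ref{sec:6.1}'' imports information not contained in the hypotheses, so as written your argument covers only those specific examples rather than the proposition as stated. The paper sidesteps this by taking $\rho=0$, for which the prefactor $(1+|x|+|x'|)^\rho$ is vacuous and the moment requirement in Assumption~\ref{asm:A2} is exactly a finite fourth moment --- technically outside the stipulated range $\rho>0$, but clearly the intended reading here, since $F(\theta,x)=2\lambda_r\theta$ satisfies the Lipschitz bound with no $x$-dependent factor at all. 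To repair your version, either state explicitly that you work with $\rho=0$ (observing that the Assumption~\ref{asm:A3} inequality then holds a fortiori for every $\rho>0$), or keep $\rho>0$ and strengthen the proposition's moment hypothesis accordingly; without one of these, this step is a small but genuine gap.
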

\begin{proof}
See Appendix~\ref{pf_prop:7.1}.
\end{proof}
\noindent Therefore, by using Theorem~\ref{thm:2.2}, we can, for any given precision level $\epsilon > 0$, construct an estimator using the SGHMC algorithm, which minimizes the corresponding expected excess risk.

For the numerical experiments, we estimate quantiles for several target distributions, including Guassian distribution $\mathcal{N}\left(\mu, \sigma\right)$, Logistic distribution $L \left(\alpha, \beta\right)$, and Gumbel distribution $\mathcal{G}\left(\mu, \beta\right)$ with respect to corresponding parameters $\left(\mu, \sigma\right)$, $\left(\alpha, \beta\right)$, and $\left(\mu, \beta\right)$. We note that the true $q$-quantile values for Logistic and Gumbel distributions are $\alpha + \beta \log \frac{q}{(1-q)}$ and $\mu-\beta \ln (-\ln q)$, respectively, which we use to obtain the optimal values for the comparison with our numerical results. These true values are recorded as $Q^{\ast}$ in Table~\ref{table2}. For initialization, we set $\theta_0\sim \mathcal{N}\left(0, 1\right), v_0 \sim \mathcal{N}\left(0, 1\right)$, $\beta=10^{10}$, $\gamma=0.5$, $\lambda_r=10^{-5}$, $\eta=10^{-3}$ (satisfying the step restriction given in~\eqref{eta_max}). The experiments are run using $5$ different seeds that control the randomness of initialization and sampling. In Figure~\ref{excess_risk}, by taking Logistic distribution ($\alpha=0, \beta=1$) as an example, solid lines in the left graph show the mean of the path of $\mathbb{E}\left[u(\theta_n)\right] - \inf _{\theta \in \mathbb{R}^d} u(\theta)$ with $q = 0.95$, while the shaded regions indicate the range over the random seeds. Moreover, the reference line in the right graph has a slope of 0.5 and the dashed line shows that the rate of convergence of the SGHMC algorithm is 0.594, which aligns with our theoretical results.  One notes that the samples from $\pi_{\beta}$ is generated by running the SGHMC algorithm with $\eta = 10^{-5}$. In Table~\ref{table2}, to compare the performance between SGHMC and SGLD, we denote by $Q_{\text{SGHMC}}$ and $T_{\text{SGHMC}}$ the numerical approximations of the values of the corresponding optimization problem~\eqref{op:qe} using the SGHMC algorithm~\eqref{SGHMC 1} and~\eqref{SGHMC 2} and their running time\footnote{The running time is computed as follows. We first select the random seed and calculate $\mathbb{E}\left[u(\theta_n)\right] - \inf _{\theta \in \mathbb{R}^d} u(\theta)$. Once $\mathbb{E}\left[u(\theta_n)\right] - \inf _{\theta \in \mathbb{R}^d} u(\theta)$ is less than $\epsilon=0.0001$, we record the total time that the algorithm runs. Finally, we average all the results over different seeds.}, respectively, whereas $Q_{\text{SGLD}}$ and $T_{\text{SGLD}}$ are the corresponding values and running time for the SGLD algorithm. Each approximation and running time in the table are obtained based on 5 different seeds and at most $10^{7}$ iterations followed by its mean squared error shown in brackets. We notice that, compared to the SGLD algorithm, it requires less time for the SGHMC algorithm to converge. In addition, the results obtained by SGHMC have lower mean square error.

\begin{figure}[t]
	\centering
	\begin{minipage}{0.49\linewidth}
		\centering
		\includegraphics[width=1\linewidth]{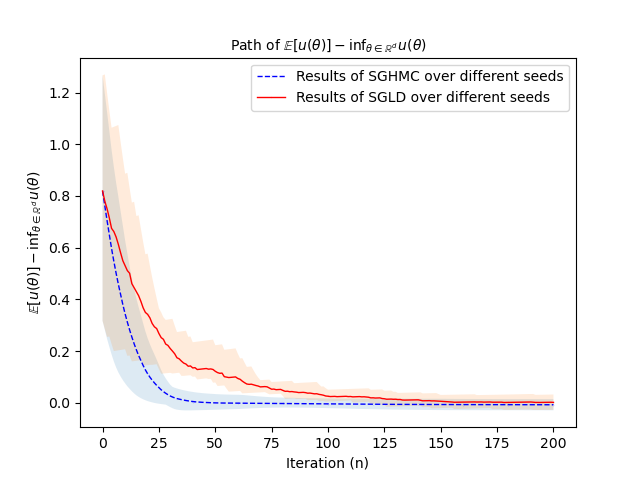}
	\end{minipage} 
	\begin{minipage}{0.49\linewidth}
		\centering
		\includegraphics[width=1\linewidth]{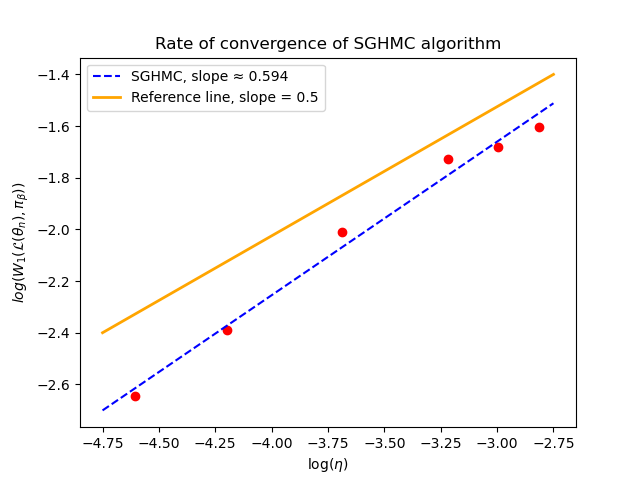}
	\end{minipage}
 \caption{Expected excess risk of the optimization problem~\eqref{op:qe} by using SGHMC and SGLD algorithm (left) and the rate of convergence of the SGHMC algorithm based on 100000 samples (right).}
 \label{excess_risk}
\end{figure}

\begin{table}[t]
\footnotesize
\centering
\scalebox{0.93}{
\begin{tabular}{c|ccccc|ccccc}
 \hline
 \hline
& & &$q = 0.95$ & &  && $q = 0.99$&&\\ 
 \cline{2-11}
  & $Q^*$ &$Q_{\text{SGHMC}}$ &$Q_{\text{SGLD}}$& $T_{\text{SGHMC}}$&$T_{\text{SGLD}}$  & $Q^*$  &${Q}_{\text{SGHMC}}$ &${Q}_{\text{SGLD}} $&${T}_{\text{SGHMC}}$&${T}_{\text{SGLD}}$  \\
 \hline
 \multirow{2}{*}{ $ \mathcal{N}\left(\mu = -1, \sigma = 1\right)$ }& \multirow{2}{*}{0.645} & 
 0.654&0.639&\multirow{2}{*}{{0.079s}} & \multirow{2}{*}{0.111s} & \multirow{2}{*}{1.326}&1.311 &1.265&\multirow{2}{*}{{0.078s}}&\multirow{2}{*}{0.372s}\\
 &  &(0.007) &(0.010)& & & &(0.010) &(0.018)&  \\

 \multirow{2}{*}{ $ \mathcal{N}\left(\mu = 1, \sigma = 2\right)$ }& \multirow{2}{*}{4.290} & 
 4.329&4.235&\multirow{2}{*}{{0.695s}} & \multirow{2}{*}{1.848s} & \multirow{2}{*}{5.653}&5.529&5.525&\multirow{2}{*}{{1.830s}}&\multirow{2}{*}{5.011s}\\
 & &(0.050)&(0.060)& &&&(0.125) &(0.240)&  \\
 \multirow{2}{*}{ $ \mathcal{N}\left(\mu =3, \sigma = 5\right)$ }& \multirow{2}{*}{11.224} & 
 11.137&11.127&\multirow{2}{*}{{2.209s}} & \multirow{2}{*}{5.316s} & \multirow{2}{*}{14.632}&14.379&14.379&\multirow{2}{*}{{6.828s}}&\multirow{2}{*}{15.769s}\\
 & &(0.730)&(1.300)& &&&(1.960) &(3.000)&  \\
 \hline 
 \multirow{2}{*}{ $ L\left(\alpha = 0, \beta = 1\right)$ }& \multirow{2}{*}{2.944} & 
 2.969&2.883&\multirow{2}{*}{{0.584s}} & \multirow{2}{*}{1.817s} & \multirow{2}{*}{4.595}&4.454&4.455&\multirow{2}{*}{{2.497s}}&\multirow{2}{*}{6.134s}\\
  &  &(0.026)&(0.040)& &&&(0.107) &(0.236)&  \\

 \multirow{2}{*}{ $ L\left(\alpha = -1, \beta = 1\right)$ }& \multirow{2}{*}{1.944} & 
 1.907&1.766&\multirow{2}{*}{{0.432s}} & \multirow{2}{*}{1.766s} & \multirow{2}{*}{3.595}&3.454&3.455&\multirow{2}{*}{{2.615s}}&\multirow{2}{*}{6.160s}\\
  &  &(0.016)&(0.024)& &&&(0.085) &(0.176)&  \\
 \multirow{2}{*}{ $ L\left(\alpha =-3, \beta = 3\right)$ }& \multirow{2}{*}{5.833} & 
 5.742&5.717&\multirow{2}{*}{{26.656s}} & \multirow{2}{*}{56.744s} & \multirow{2}{*}{10.785}&10.463&10.466&\multirow{2}{*}{{86.582s}}&\multirow{2}{*}{210.637s}\\
  &  &(0.314)&(0.590)& &&&(1.890) &(5.240)&  \\
 \hline 
  \multirow{2}{*}{ $ \mathcal{G}\left(\mu = 0, \beta = 1\right)$ }& \multirow{2}{*}{2.970} & 
 3.005&2.912&\multirow{2}{*}{{0.815s}} & \multirow{2}{*}{2.048s} & \multirow{2}{*}{4.600}&4.456&4.451&\multirow{2}{*}{{2.690s}}&\multirow{2}{*}{6.234s}\\
  &  &(0.023)&(0.030)& &&&(0.100) &(0.210)&  \\

 \multirow{2}{*}{ $ \mathcal{G}\left(\mu = 0, \beta = 2\right)$ }& \multirow{2}{*}{5.940} & 
 5.859&5.850&\multirow{2}{*}{{18.831s}} & \multirow{2}{*}{37.833s} & \multirow{2}{*}{9.200}&8.969&8.971&\multirow{2}{*}{{53.861s}}&\multirow{2}{*}{130.092s}\\
  &  &(0.120)&(2.100)& &&&(0.827) &(1.800)&  \\
 \multirow{2}{*}{ $ \mathcal{G}\left(\mu =1, \beta = 2\right)$ }& \multirow{2}{*}{6.940} & 
 6.859&6.851&\multirow{2}{*}{{16.806s}} & \multirow{2}{*}{36.883s} & \multirow{2}{*}{10.200}&9.966&9.966&\multirow{2}{*}{{55.363s}}&\multirow{2}{*}{131.361s}\\
  &  &(0.005)&(0.07)& &&&(0.542) &(0.960)&  \\
 \hline
 \hline 
\end{tabular}}
\caption{Quantile Estimation for several distributions.}
\label{table2}
\end{table}

\subsection{Solving regularized optimization problems using neural network}\label{sec:6.2}
In this section, we consider optimization problems involving neural networks, which includes transfer learning, hedging under asymmetric risk as well as regression and classification problems on real world datasets. 
\subsubsection{Transfer Learning}\label{transfer_learning}
In transfer learning and multi-task learning, neural networks with pre-trained parameters are employed to reduce computational costs (see, e.g.,~\cite{minar2018recent} and references therein). 

In this subsection, we analyze the following optimization problem using neural networks in a transfer learning setup. Let $d_1, d_2, m_1, m_2 \in \mathbb{N}$, and let $\mathfrak{N}=\left(\mathfrak{N}^1, \ldots, \mathfrak{N}^{m_2}\right): \mathbb{R}^d \times \mathbb{R}^{m_1} \rightarrow \mathbb{R}^{m_2}$ be the two-hidden-layer feed forward neural network (TLFN) with its $j$-th element given by
\begin{equation}\label{2LFN}
\mathfrak{N}^{j}(\theta, z):=\sum_{n=1}^{d_2} W_2^{j n} \sigma_2\big(\sum_{k=1}^{d_1}\big[f\big(W_1^{n k}\big) \sigma_1\big(\big\langle W_0^{k \cdot}, z\big\rangle+f(b_0^k)\big)\big]+b_1^n\big),\ j \in \lbrace 1, \cdots, m_2\rbrace,
\end{equation}
where $z=\left(z^1, \ldots, z^{m_1}\right) \in \mathbb{R}^{m_1}$ is the input vector, $\sigma_1: \mathbb{R} \rightarrow \mathbb{R}$ is the ReLU activation function, i.e., $\sigma_1(v):= \max\{0, v\}$, and $\sigma_2: \mathbb{R} \rightarrow \mathbb{R}$ is the sigmoid activation function, i.e., $\sigma_2(v):= 1 / \left(1 + e^{-v} \right)$, $
W_0 \in \mathbb{R}^{d_1 \times m_1}$ and $W_2 \in \mathbb{R}^{m_2 \times d_2}
$ are \textit{fixed} weight matrices\footnote{Later in the application resolved in section~\ref{Simulated_data}, $W_0$ and $W_2$ will correspond to the fixed parameters which were transferred from a previously solved optimization problem.}, $b_0 \in \mathbb{R}^{d_1}$ and $b_1 \in \mathbb{R}^{d_2}$, and $ W_1 \in \mathbb{R}^{d_2 \times d_1}$ are parameters over which we optimize, $f: \mathbb{R} \rightarrow \mathbb{R}$ is given by $f(v) := c \tanh \left(v / c\right)$ with some\footnote{We note that it makes sense to apply $f$ to $b_0$ and $W_1$ from a practical perspective, see~\cite{meyer2023reachability, rao1998function}, and as a consequence, the optimization problem~\eqref{op:nn} can be shown to satisfy our Assumptions~\ref{asm:A3}-\ref{asm:A5}, see Proposition~\ref{prop:6.2}.} $c > 0$. Then, the parameter of the neural network~\eqref{2LFN} which needs to be trained is given by
\begin{equation}\label{theta}
\theta=\big(\big[W_1\big], b_0, b_1\big) \in \mathbb{R}^d
\end{equation}
with $d := d_1 + d_2 + d_1 d_2$, where we denote by $\left[ W_1 \right]$ the vector of all elements in $W_1$. Moreover, we denote by $c_{W_0}:=\max\limits_{i, j} \left\{W_0^{i j} \right\}$ and $c_{W_2}:=\max\limits_{i, j}\left\{W_2^{i j}\right\}$, and assume that at least one element in each row of $W_0 \in \mathbb{R}^{d_1 \times m_1}$ is nonzero, i.e., for each $K=1, \ldots, d_1$, there exists $I=1, \ldots, m_1$ such that $W_0^{K I} \neq 0$.

Our goal is to address the following optimization problem:
\begin{equation}\label{op:nn}
\operatorname{minimize} \quad\mathbb{R}^d \ni \theta \mapsto u(\theta):=\mathbb{E}\left[\left\lvert Y-\mathfrak{N}(\theta, Z)\right\rvert^2\right] + \lambda_r \left|\theta\right|^{2},
\end{equation}
where $Z$ is an $\mathbb{R}^{m_1}$-valued input random variable, $Y$ is the corresponding $\mathbb{R}^{m_2}$-valued target random variable, $\theta \in \mathbb{R}^{d}$ is the parameter defined in~\eqref{theta}, and $\lambda_r > 0$ is the regularization constant.
\begin{proposition}\label{prop:6.2}
Let $X = (Y, Z) \in \mathbb{R}^{m_2} \times \mathbb{R}^{m_1}$ with $m := m_1 + m_2$ be a continuously distributed random variable. Assume that, for any $I=1, \ldots, m_1, J=1, \ldots, m_2$, the density function of $Z^I$ given $Z^1, \ldots, Z^{I-1}, Z^{I+1}, \ldots, Z^{m_1}, Y^J$ denoted by
$$
f_{Z^I \mid Z^1, \ldots, Z^{I-1}, Z^{I+1}, \ldots, Z^{m_1}, Y^J}: \mathbb{R} \rightarrow[0, \infty)
$$
satisfies, for any $I = 1, \ldots, m_1, J=1, \ldots, m_2$, that there exist constants $C_{Z^I}, \bar{C}_{Z^I}>0$, such that, for any $z=\left(z^1, \ldots, z^{m_1}\right) \in$ $\mathbb{R}^{m_1}, y^J \in \mathbb{R}$,
\begin{equation}\label{prop:6.2:c}
\begin{aligned}
\left|z^I\right|^2 f_{Z^I \mid Z^1, \ldots, Z^{I-1}, Z^{I+1}, \ldots, Z^{m_1, Y^J}}\left(z^I \mid z^1, \ldots, z^{I-1}, z^{I+1}, \ldots, z^{m_1}, y^J\right) &\leq \bar{C}_{Z^I},\\
f_{Z^I \mid Z^1, \ldots, Z^{I-1}, Z^{I+1}, \ldots, Z^{m_1}, Y^J}\left(z^I \mid z^1, \ldots, z^{I-1}, z^{I+1}, \ldots, z^{m_1}, y^J\right) &\leq C_{Z^I}.
\end{aligned}
\end{equation}
Let $F, G:\mathbb{R}^d \times \mathbb{R}^m \rightarrow \mathbb{R}^d$ be defined, for all $\left(\theta, x\right) \in \mathbb{R}^d \times \mathbb{R}^m$ with $x=(y, z)$, $y \in \mathbb{R}^{m_2}, z \in \mathbb{R}^{m_1}$, by
\begin{equation}\label{h=f+g}
F(\theta, x):= 2 \lambda_r \theta, \quad G(\theta, x):=\left(G_{W_1^{1 1}}, \cdots, G_{W_1^{d_2 d_1}}, G_{b_0^1}(\theta, x), \cdots, G_{b_0^{d_1}}(\theta, x), G_{b_1^1}(\theta, x), \ldots, G_{b_1^{d_2}}(\theta, x)\right),
\end{equation}
where for every $K=1, \ldots, d_1, N=1, \ldots, d_2$,
\begin{equation}\label{application_1_f_g}
\begin{aligned}
G_{W_1^{N K}}(\theta, x) &:=-2 \sum_{j=1}^{m_2}\big(y^j-\mathfrak{N}^j(\theta, z)\big) W_2^{j N} \sigma_2\left(\sum_{k=1}^{d_1}\left[f(W_1^{N k}) \sigma_1\left(\left\langle W_0^{k\cdot}, z\right\rangle+f(b_0^k)\right)\right]+b_1^N\right) f^{\prime}(W_1^{N K})
\\ & \quad \times \left(1 - \sigma_2\left(\sum_{k=1}^{d_1}\left[f(W_1^{N k})\sigma_1\left(\left\langle W_0^{k \cdot}, z\right\rangle+f(b_0^k)\right)\right]+b_1^N\right)\right)\sigma_1\left(\left\langle W_0^{K\cdot}, z\right\rangle+f(b_0^K)\right),
\end{aligned}
\end{equation}
$$
\begin{aligned}
G_{b_0^{K}}(\theta, x) & := -2 \sum_{j=1}^{m_2}\big(y^j-\mathfrak{N}^j(\theta, z)\big)\sum_{n=1}^{d_2} W_2^{j n} \sigma_2\left(\sum_{k=1}^{d_1}\left[f(W_1^{n k}) \sigma_1\left(\left\langle W_0^{k \cdot}, z\right\rangle+f(b_0^k)\right)\right]+b_1^n\right)\\ & \quad \times \left(1 - \sigma_2\left(\sum_{k=1}^{d_1}\left[f(W_1^{n k}) \sigma_1\left(\left\langle W_0^{k \cdot}, z\right\rangle+f(b_0^k)\right)\right]+b_1^n\right)\right) f(W_1^{n K}) f^{\prime}(b_0^K)\mathbbm{1}_{\{\langle W_0^{K \cdot }, z\rangle + f(b_0^{K}) > 0\}},
\end{aligned}
$$
$$
\begin{aligned}
G_{b_1^N}(\theta, x) &:=-2 \sum_{j=1}^{m_2}\left(y^j-\mathfrak{N}^j(\theta, z)\right) W_2^{j N} \sigma_2\left(\sum_{k=1}^{d_1}\left[f(W_1^{N k}) \sigma_1\left(\left\langle W_0^{K\cdot}, z\right\rangle+f(b_0^k)\right)\right]+b_1^N\right)
\\ & \quad \times \left(1 - \sigma_2\left(\sum_{k=1}^{d_1}\left[f(W_1^{N k})\sigma_1\left(\left\langle W_0^{k \cdot}, z\right\rangle+f(b_0^k)\right)\right]+b_1^N\right)\right).
\end{aligned}
$$
Then, the optimization problem~\eqref{op:nn} satisfies Assumptions~\ref{asm:A3}-\ref{asm:A5} with corresponding stochastic gradient $H:=F + G$.
\end{proposition}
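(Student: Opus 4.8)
The plan is to verify Assumptions~\ref{asm:A3}, \ref{asm:A2}, \ref{asm:A4} and~\ref{asm:A5} one by one for the splitting $H=F+G$ of~\eqref{h=f+g}, where $F(\theta,x)=2\lambda_r\theta$ comes from the $L^2$-regularizer and $G=\nabla_\theta|y-\mathfrak{N}(\theta,z)|^2$ is the (almost everywhere) gradient of the quadratic loss, with blocks as in~\eqref{application_1_f_g}. Three of the four assumptions rely on the boundedness built into the architecture~\eqref{2LFN}: since $\sigma_2$ is the sigmoid one has $|\mathfrak{N}^j(\theta,z)|\le\sum_n|W_2^{jn}|$ uniformly in $(\theta,z)$, while $|f|\le c$, $|f'|=\sech^2(\cdot/c)\le 1$, and $\sigma_2,1-\sigma_2\in[0,1]$. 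For Assumption~\ref{asm:A3}, part~(i) is immediate with $L_1=2\lambda_r$, $L_2=0$ and any $\rho>0$, as $F$ is linear in $\theta$ and independent of $x$; for part~(ii) I would bound each block of $G$ factor by factor, the only $x$-dependent growth coming from the residual $(y^j-\mathfrak{N}^j)$ and the ReLU value $\sigma_1(\langle W_0^{K\cdot},z\rangle+f(b_0^K))\le\|W_0^{K\cdot}\||z|+c$, which yields $|G(\theta,x)|\le\bar K_1(x)$ with $\bar K_1$ of at most quadratic growth in $|x|$. Assumption~\ref{asm:A2} then follows from the finiteness of sufficiently high moments of $X$ (a fourth moment suffices, as in the quantile example) together with this quadratic bound on $\bar K_1$. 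Assumption~\ref{asm:A5} is where the regularizer does the work: taking $A(x)\equiv 2\lambda_r I_d$ and $B(x)\equiv 0$ gives $\langle\theta,A(x)\theta\rangle=2\lambda_r|\theta|^2\ge 0$ and $\langle F(\theta,x),\theta\rangle=2\lambda_r|\theta|^2=\langle\theta,A(x)\theta\rangle-B(x)$, so that $a=2\lambda_r>0$ and $b=0$, with the non-dissipative but bounded part $G$ absorbed exactly as in Remark~\ref{rmk:3}.

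The crux is Assumption~\ref{asm:A4}, because $G$ is genuinely discontinuous: differentiating the inner ReLU produces the indicator $\mathbbm{1}_{\{\langle W_0^{K\cdot},z\rangle+f(b_0^K)>0\}}$ in the block $G_{b_0^K}$ of~\eqref{application_1_f_g}, and this is the sole source of non-smoothness. My plan is to split $H(\theta,x)-H(\theta',x)$ into a smooth part and a jump part. The term $F$ contributes $2\lambda_r|\theta-\theta'|$. The blocks $G_{W_1^{NK}}$ and $G_{b_1^N}$, together with the smooth prefactor of $G_{b_0^K}$, are continuous in $\theta$: each is a finite product of factors that are either bounded or Lipschitz through $f$ and the ReLU value, so the product rule gives a pointwise estimate $|G_{\mathrm{smooth}}(\theta,x)-G_{\mathrm{smooth}}(\theta',x)|\le\ell(x)|\theta-\theta'|$ with $\ell$ of polynomial growth in $|x|$; taking expectations and using the finite moments of $X$ controls this part by a constant multiple of $|\theta-\theta'|$.

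For the jump part I would write $G_{b_0^K}(\theta,x)=P(\theta,x)\,\mathbbm{1}_{\{B_K(\theta,x)>0\}}$ with $B_K(\theta,x):=\langle W_0^{K\cdot},z\rangle+f(b_0^K)$, and bound $|P(\theta)\mathbbm{1}(\theta)-P(\theta')\mathbbm{1}(\theta')|\le|P(\theta)-P(\theta')|+|P(\theta')|\,|\mathbbm{1}(\theta)-\mathbbm{1}(\theta')|$. The first summand is handled as above, and the heart of the matter is $\mathbb{E}[|P(\theta')|\,|\mathbbm{1}(\theta)-\mathbbm{1}(\theta')|]$. The indicators differ only when $-\langle W_0^{K\cdot},z\rangle$ lies between $f(b_0^K)$ and $f(b_0'^K)$; since $f$ is $1$-Lipschitz this is an interval of length at most $|b_0^K-b_0'^K|\le|\theta-\theta'|$. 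Using the hypothesis that some $W_0^{KI}\neq 0$, I would condition on all coordinates of $(Y,Z)$ except $Z^I$ and rewrite this as the event that $Z^I$ falls into an interval of length at most $|W_0^{KI}|^{-1}|\theta-\theta'|$; the conditional density bound $C_{Z^I}$ in~\eqref{prop:6.2:c} then controls its probability, while the weight $\bar C_{Z^I}$ controls the corresponding $|z^I|^2$-weighted mass, so that even the at most quadratically growing gradient magnitude is integrable against this thin set. This yields $\mathbb{E}[|P(\theta')|\,|\mathbbm{1}(\theta)-\mathbbm{1}(\theta')|]\le L'|\theta-\theta'|$, and summing over blocks gives Assumption~\ref{asm:A4}.

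The main obstacle is precisely this jump term. The two density bounds in~\eqref{prop:6.2:c} and the nonzero-row hypothesis on $W_0$ are exactly what is needed to convert the pointwise discontinuity of $G$ into a Lipschitz-in-average estimate: the Lipschitz property of $f$ makes the threshold set thin, the nonzero-row condition lets one reparametrise that set by a single coordinate $Z^I$, and the density bounds turn thinness into a bound of order $|\theta-\theta'|$. The delicate bookkeeping is to keep track of the polynomial $|z|$-growth introduced by the ReLU and the residual $(y-\mathfrak N)$ while carrying out this reduction, so that the discontinuous block contributes on the same footing as the smooth ones rather than spoiling the integrability.
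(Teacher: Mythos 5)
Your verification of Assumptions~\ref{asm:A3},~\ref{asm:A2}, and~\ref{asm:A5} matches the paper's proof (same choices: $L_1=2\lambda_r$, $L_2=0$, a quadratically growing $\bar K_1$, and $A(x)=2\lambda_r I_d$, $B(x)=0$), and your plan for Assumption~\ref{asm:A4} captures exactly the paper's key mechanism: the only genuine discontinuity is the indicator $\mathbbm{1}_{\{\langle W_0^{K\cdot},z\rangle+f(b_0^K)>0\}}$ in $G_{b_0^K}$; the symmetric difference of the events $A_K,\bar A_K$ is, after conditioning on all coordinates of $(Y,Z)$ except one $Z^I$ with $W_0^{KI}\neq 0$, the event that $Z^I$ falls in an interval of length at most $|W_0^{KI}|^{-1}|\theta-\bar\theta|$ (by $1$-Lipschitzness of $f$), and the conditional density bounds~\eqref{prop:6.2:c} convert this into an estimate of order $|\theta-\bar\theta|$ — precisely the paper's bounds~\eqref{mathbb1} and~\eqref{cont_Y}. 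Your treatment of the continuous blocks is in fact cleaner than the paper's: you exploit that the ReLU is $1$-Lipschitz to get pointwise Lipschitz estimates with polynomially growing coefficients and then integrate, whereas the paper re-expands every ReLU difference through indicator functions (which is why it also needs the $|z|^2$-weighted bounds~\eqref{mathbbz} and~\eqref{cont_YZ}); both routes work, and yours would even avoid using $\bar C_{Z^I}$ for the jump term, since the prefactor of the indicator in $G_{b_0^K}$ grows only in $|y|$, not in $|z|$.

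There is, however, a genuine gap: you never verify that $H=F+G$ is actually the stochastic gradient of~\eqref{op:nn}, i.e.\ that $u$ is differentiable with $\nabla u(\theta)=\mathbb{E}\left[H(\theta,X_0)\right]$. You dispose of this with the parenthetical remark that $G$ is the ``(almost everywhere) gradient'' of the quadratic loss, but an a.e.-pointwise gradient does not by itself give $\nabla_\theta\,\mathbb{E}\left[|Y-\mathfrak{N}(\theta,Z)|^2\right]=\mathbb{E}\left[\nabla_\theta|Y-\mathfrak{N}(\theta,Z)|^2\right]$: for fixed $x$ the loss is not differentiable in $b_0^K$ on the kink set $\{\langle W_0^{K\cdot},z\rangle+f(b_0^K)=0\}$, so the interchange of derivative and expectation requires justification. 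This is where the paper spends the first substantial part of its proof: it splits the integral over $z$ at the kink, checks a domination condition on the difference quotients, and applies a differentiation-under-the-integral theorem~\cite[Theorem A.5.2]{durrett2010probability} — again using that $Z$ is continuously distributed — to identify $\partial_{W_1}u$, $\partial_{b_0}u$, $\partial_{b_1}u$ with the expectations of the stated $G$-blocks plus the regularizer terms. Without this step the claim ``with corresponding stochastic gradient $H:=F+G$'' is unproved, the requirement of Section~\ref{sec:2} that $u$ be continuously differentiable with $h=\nabla u=\mathbb{E}\left[H(\cdot,X_0)\right]$ is unverified, and consequently Theorem~\ref{thm:2.2} cannot be invoked for this application. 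Your proposal needs this unbiasedness argument added; the rest of it would then go through.
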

\begin{proof}
    See Appendix~\ref{pf_prop:7.2}.
\end{proof}
\begin{figure}[t]
	\centering
	\begin{minipage}{0.49\linewidth}
		\centering
		\includegraphics[width=1.15\linewidth]{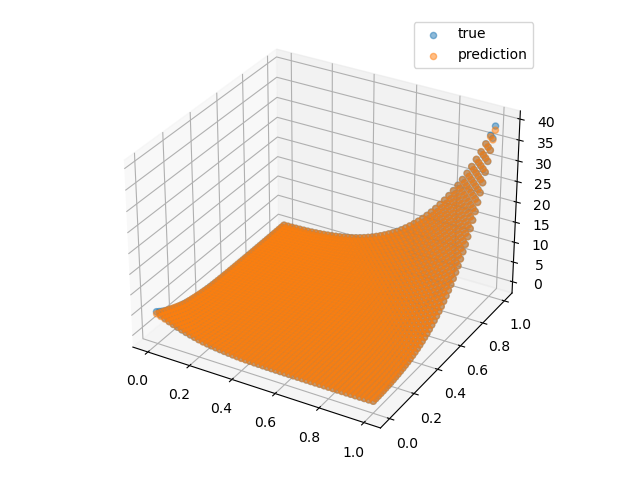}
	\end{minipage} 
	\begin{minipage}{0.49\linewidth}
		\centering
		\includegraphics[width=1.15\linewidth]{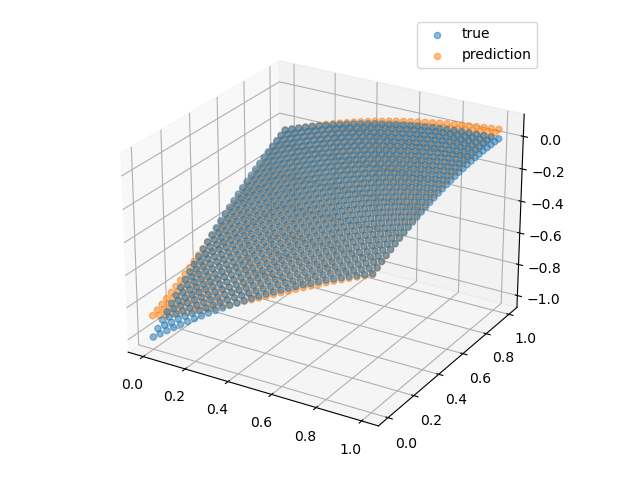}
	\end{minipage}
 \caption{True and prediction values for original learning task involving~\eqref{3LFN_eg} (left) and new learning task involving~\eqref{2LFN} (right).}
 \label{fig.3}
\end{figure}
\begin{corollary}\label{corollary 7.3}
Let $Y$ be an $m_2$-dimensional random variable defined by $Y=y(Z)$, where $y: \mathbb{R}^{m_1} \rightarrow \mathbb{R}^{m_2}$ is a Borel measurable function satisfying $|y(z)| \leq c_y\left(1+|z|^{\rho}\right)$ with $c_y \geq 0, \rho \geq 1$ for any $z \in \mathbb{R}^{m_1}$. For any $I=1, \ldots, m_1$, let $f_{Z^I \mid Z^1, \ldots, Z^{I-1}, Z^{I+1}, \ldots, Z^{m_1}}$ be the density function of $Z^I$ given $Z^1, \ldots, Z^{I-1}, Z^{I+1}, \ldots, Z^{m_1}$ and assume that there exist constants $C_{Z^I}, \bar{C}_{Z^I}>0$, such that for any $z=\left(z^1, \ldots, z^{m_1}\right) \in \mathbb{R}^{m_1}$,
$$
\begin{aligned}
\max\left\{\left|z^I\right|^{2}, \left|z^I\right|^{\rho}, \left|z^I\right|^{\rho + 2} \right\} f_{Z^I \mid Z^1, \ldots, Z^{I-1}, Z^{I+1}, \ldots, Z^{m_1}}\left(z^I \mid z^1, \ldots, z^{I-1}, z^{I+1}, \ldots, z^{m_1}\right) \leq \bar{C}_{Z^I}, \\
f_{Z^K \mid Z^1, \ldots, Z^{I-1}, Z^{I+1}, \ldots, Z^{m_1}}\left(z^I \mid z^1, \ldots, z^{I-1}, z^{I+1}, \ldots, z^{m_1}\right) \leq C_{Z^I}.
\end{aligned}
$$
Then,the optimization problem~\eqref{op:nn} satisfies Assumption~\ref{asm:A3}-\ref{asm:A5} with corresponding stochastic gradient $H:=F + G$ defined in~\eqref{h=f+g}-\eqref{application_1_f_g}.
\end{corollary}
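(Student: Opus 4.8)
The plan is to follow the same route as the proof of Proposition~\ref{prop:6.2}, replacing the target $Y$ by the deterministic response $y(Z)$ throughout and substituting the conditional-density hypotheses of the corollary for those of the proposition. A direct appeal to Proposition~\ref{prop:6.2} is not available, because under $Y=y(Z)$ the pair $X=(Y,Z)$ is supported on the graph $\{(y(z),z)\}$ and is therefore \emph{not} continuously distributed on $\mathbb{R}^m$; the conditional densities $f_{Z^I\mid\cdots,Y^J}$ used there degenerate, since $Y^J=y^J(Z)$ is itself a function of $Z^I$. I would therefore re-verify Assumptions~\ref{asm:A3}--\ref{asm:A5} directly, using the observation that after the substitution every quantity becomes a function of $Z$ alone while the discontinuity structure is unchanged.

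For Assumption~\ref{asm:A3} I would first record that the network output is uniformly bounded, $|\mathfrak{N}^j(\theta,z)|\le C_{\mathfrak{N}}$ with $C_{\mathfrak{N}}$ depending only on the fixed matrix $W_2$, because $\sigma_2\in(0,1)$; hence the residual satisfies $|y^j(z)-\mathfrak{N}^j(\theta,z)|\le c_y(1+|z|^\rho)+C_{\mathfrak{N}}$. Since $\sigma_2$, $f$ and $f'$ are bounded and $W_0,W_2$ are fixed, each component of $G$ in \eqref{application_1_f_g} is bounded by the residual times at most one ReLU factor $\sigma_1(\langle W_0^{K\cdot},z\rangle+f(b_0^K))$, which grows linearly in $z$; this yields an envelope $\bar K_1(x)$ that is polynomial in $|z|$, giving Assumption~\ref{asm:A3}(ii), while $F(\theta,x)=2\lambda_r\theta$ trivially satisfies Assumption~\ref{asm:A3}(i). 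The moment bound in Assumption~\ref{asm:A2} and the dissipativity in Assumption~\ref{asm:A5} are then routine: the density bounds $|z^I|^{\rho+2}f_{Z^I\mid\cdots}\le\bar C_{Z^I}$ together with $f_{Z^I\mid\cdots}\le C_{Z^I}$ make the relevant moments of $Z$ (hence of $X$ and of $\bar K_1(X)$) finite after splitting each integral into $\{|z^I|\le1\}$ and $\{|z^I|>1\}$, and choosing $A(x)=2\lambda_r I_d$, $B\equiv0$ gives $\langle F(\theta,x),\theta\rangle=2\lambda_r|\theta|^2$, so that Assumption~\ref{asm:A5} holds with $a=2\lambda_r$, $b=0$.

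The main obstacle is Assumption~\ref{asm:A4}, the continuity-in-average estimate, and within it the single discontinuous component $G_{b_0^K}$, whose indicator $\mathbbm{1}_{\{\langle W_0^{K\cdot},z\rangle+f(b_0^K)>0\}}$ is the only source of non-Lipschitz behaviour. I would split $\mathbb{E}[|H(\theta,X_0)-H(\theta',X_0)|]$ across the components of $G$ (and the Lipschitz part $F$). For $G_{W_1^{NK}}$, $G_{b_1^N}$, and for the smooth prefactors of $G_{b_0^K}$, the map $\theta\mapsto G(\theta,z)$ is locally Lipschitz with a polynomial-in-$z$ Lipschitz envelope, obtained from the mean value theorem together with the boundedness of $\sigma_2$, $\sigma_2(1-\sigma_2)$, $f$, $f'$ and the linear growth of the ReLU terms; taking expectations and using the finite moments of $Z$ from Assumption~\ref{asm:A2} converts these into a global Lipschitz bound in $\theta$.

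For the indicator in $G_{b_0^K}$ the decisive step is to control, after conditioning on $Z^{-I}:=(Z^1,\dots,Z^{I-1},Z^{I+1},\dots,Z^{m_1})$, the probability that $\langle W_0^{K\cdot},Z\rangle+f(b_0^K)$ and $\langle W_0^{K\cdot},Z\rangle+f(b_0^{K\prime})$ have opposite signs. The row condition on $W_0$ guarantees an index $I$ with $W_0^{KI}\neq0$, so for fixed $Z^{-I}$ this event confines $Z^I$ to an interval of length at most $|f(b_0^K)-f(b_0^{K\prime})|/|W_0^{KI}|\le|b_0^K-b_0^{K\prime}|/|W_0^{KI}|$, using that $f$ is $1$-Lipschitz. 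Integrating the polynomial-in-$|Z^I|$ envelope of the smooth prefactor against the conditional density over this thin slab, and invoking $|z^I|^{\rho}f_{Z^I\mid\cdots}\le\bar C_{Z^I}$ and $|z^I|^{\rho+2}f_{Z^I\mid\cdots}\le\bar C_{Z^I}$, yields a contribution proportional to $|b_0^K-b_0^{K\prime}|\le|\theta-\theta'|$; summing over $K$ and combining with the Lipschitz contributions from the other components gives Assumption~\ref{asm:A4}. The delicate points are the polynomial bookkeeping of the envelope so that the slab integrals remain finite (which is exactly what the two density bounds are calibrated for) and the separation of the indicator's jump from the Lipschitz variation of its prefactor in the remaining coordinates of $\theta$.
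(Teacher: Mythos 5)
Your proposal takes essentially the same route as the paper's proof: the paper likewise re-runs the argument of Proposition~\ref{prop:6.2} (noting that it cannot be invoked directly because $(Y,Z)=(y(Z),Z)$ is degenerate), and the only genuinely new content is the pair of slab estimates $\mathbb{E}\left[|Y^j|\,|\mathbbm{1}_{A_k}(Z)-\mathbbm{1}_{\bar{A}_k}(Z)|\right]\le C\,|\theta-\bar{\theta}|$ and $\mathbb{E}\left[|Y^j|\,|Z|^2\,|\mathbbm{1}_{A_k}(Z)-\mathbbm{1}_{\bar{A}_k}(Z)|\right]\le C\,|\theta-\bar{\theta}|$, obtained exactly as you describe by substituting $|y^j(Z)|\le c_y\left(1+|Z|^\rho\right)$, conditioning on $Z_{-v_k}$, and integrating the $|z^{v_k}|^2$-, $|z^{v_k}|^{\rho}$- and $|z^{v_k}|^{\rho+2}$-weighted conditional-density bounds over the interval of length $|f(b_0^k)-f(\bar{b}_0^k)|/W_0^{k v_k}$. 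The one over-claim is your assertion that these density bounds by themselves make the relevant moments of $Z$ finite (a conditional density decaying like $|z^I|^{-(\rho+2)}$ need not have finite moments of order $\rho+1$ or higher), but since both the corollary's constants and the paper's own proof implicitly assume those moments to be finite, this does not alter the comparison.
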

\begin{proof}
    See Appendix~\ref{pf_prop:7.3}.
\end{proof}
\paragraph{\textbf{Simulated data}}\label{Simulated_data}
Set $d_1=30, d_2 = 30, m_1=2, m_2=1$. We denote by $z=\left(z^1, z^2\right) \in \mathbb{R}^{2}$ with $z^1, z^2 \in \mathbb{R}$. We aim to approximate the function $y(z)=-\left|1.2 z^1+0.9 z^2-0.8\right|^2$ on $[0,1] \times[0,1]$ using the TLFN defined in~\eqref{2LFN}. To this end, we consider the following method to obtain the fixed parameters $\left(\left[W_0\right], \left[W_2\right]\right)$: 
\begin{enumerate}
    \item First, we define the three-hidden-layer feed forward neural network (ThreeLFN) $\dot{\mathfrak{N}}: \mathbb{R}^{\dot{d}} \times \mathbb{R}^{m_1} \rightarrow \mathbb{R}^{m_2}$ with its $j$-th element given by given by
\begin{equation}\label{3LFN_eg}
\dot{\mathfrak{N}}^j(\dot{\theta}, \dot{z})=\sum_{m=1}^{d_3}\dot{W}_3^{j m}\sigma_3 \bigg( \sum_{n=1}^{d_2} \dot{W}_2^{m n} \sigma_2\left(\sum_{k=1}^{d_1}\left[\dot{W}_1^{n k} \sigma_1\left(\left\langle\dot{W}_0^{k \cdot}, \dot{z}\right\rangle+\dot{b}_0^k\right)\right]+\dot{b}_1^n\right) + \dot{b}_2^{m}\bigg),
\end{equation}
where $\dot{z}=\left(\dot{z}^1, \dot{z}^2\right) \in \mathbb{R}^{m_1}, \dot{W}_0 \in \mathbb{R}^{d_1 \times m_1}, \dot{W}_1 \in \mathbb{R}^{d_2 \times d_1}, \dot{W}_2 \in \mathbb{R}^{d_3 \times d_2}, \dot{W}_3 \in \mathbb{R}^{m_2 \times d_3}, \dot{b}_0 \in \mathbb{R}^{d_1}, \dot{b}_1 \in \mathbb{R}^{d_2}, \dot{b}_2 \in \mathbb{R}^{d_3}$ with $d_1 = d_2 = d_3=30, \sigma_1$ is the ReLU activation function, $\sigma_2$ and $\sigma_3$ are tanh activation functions, and where the parameter is
$$\dot{\theta}=\left(\left[\dot{W}_0\right], \left[\dot{W}_1\right], \left[\dot{W}_2\right], \left[\dot{W}_3\right], \dot{b}_0, \dot{b}_1, \dot{b}_2\right) \in \mathbb{R}^{\dot{d}}$$ with $\dot{d}=d_1\left(m_1+1\right)+d_2\left(d_1+1\right) + d_3\left(d_2 + m_2 + 1\right)$.
    \item Then, we train the ThreeLFN to approximate the function $\dot{y}(\dot{z})=\left|2\dot{z}^1+2 \dot{z}^2-1.5\right|^3$ on $[0,1] \times[0,1]$, that is, we aim to solve the following optimization problem: 
\begin{equation}\label{og_task}
\operatorname{minimize} \quad \mathbb{R}^d \ni \dot{\theta} \mapsto u(\dot{\theta}):=\mathbb{E}\left[|\dot{y}(\dot{Z})-\mathfrak{N}(\dot{\theta}, \dot{Z})|^2\right]+\lambda_r|\dot{\theta}|^2,
\end{equation}
where $\dot{Z}=\left(\dot{Z}^1, \dot{Z}^2\right) \in \mathbb{R}^2$. Note that~\eqref{og_task} corresponds to a similar task (but not the same) as our goal is to approximate the Borel function $y$. For the simulation, we generate 10000 independent samples $\dot{y}_n=\dot{y}\left(\dot{z}_n\right)$ with $\dot{z}_n=\left(\dot{z}_n^1, \dot{z}_n^2\right)$ for $\dot{z}^1, \dot{z}^2 \sim \mathcal{U}(0,1)$ being independent and split the dataset into subsets including training set (80 $\%$) and validation set (20 $\%$). Moreover, we set $\eta=10^{-2}, \lambda_r=10^{-6}, \beta=10^8, \gamma=0.5$, and obtain the initial values $\dot{\theta}_0$ using Xavier initialization (i.e., the default setting of PyTorch, see~\cite{glorot2010understanding});
    \item Once the ThreeLFN~\eqref{3LFN_eg} is fully trained, we obtain the trained parameters ${\dot{\theta}^*=\left([\dot{W}_0^*],[\dot{W}_1^*],[\dot{W}_2^*],\right.} \allowbreak {\left. [\dot{W}_3^*], \dot{b}_0^*, \dot{b}_1^*, \dot{b}_2^*\right)}$. Finally, we let $W_0:=\dot{W}_0^{\ast}$, $W_2 := \dot{W}_3^{\ast}$ be the fixed weight parameters in TLFN~\eqref{2LFN}.
\end{enumerate}
Now we proceed to approximate $y$ using TLFN~\eqref{2LFN}. The initial value of $\theta_0$ is obtained by using Xavier initialization (see~\cite{glorot2010understanding}).  Let $X=(y(Z), Z) \in \mathbb{R}^3$ with $Y \in \mathbb{R}$ and $Z=\left(Z^1, Z^2\right) \in \mathbb{R}^2$ where $Z^1, Z^2 \sim \mathcal{U}(0,1)$ are independent input random variables. We generate $n = 10000$ independent samples $\left(x_i\right)_{i=1}^{n}=\left(y_i, z_i\right)_{i=1}^{n}$ with $z_i=\left(z_i^1, z_i^2\right)$ and $y_i=y\left(z_i\right)=-\left|1.2 z_i^1+0.9 z_i^2-0.8\right|^2$. One notes that Assumptions~\ref{asm:A3}-\ref{asm:A5} hold in this setting due to Corollary~\ref{prop:6.2}.  Figure~\ref{fig.3} shows the true functions (in blue) and the approximated functions (in orange). These results indicate that SGHMC can be used for solving minimization problems involving ReLU neural networks in the transfer learning setting.

\subsubsection{Hedging under asymmetric risk}\label{sec:6.3}
Hedging is a major concern in finance, both from a theoretical and a practical point of view. Theoretical foundations for continuous-time hedging are well-established (see~\cite{karatzas1998methods}, for instance). However, in practice, hedging can be performed only at discrete time points, which yields a residual risk, namely, the tracking error. The major problem is to obtain hedging policies that minimize this error. To this end, this subsection adapts a deep learning approach proposed in~\cite{tsang2020deep} to solve hedging under asymmetric risk. We view the given hedging problem as a Markov Decision Process (MDP) and then approximate the optimal policy function of the MDP using neural networks. We use SGHMC to train the neural networks and compare its performance with other popular optimization algorithms.

We assume that the financial market is defined on a filtered probability space $\left(\Omega,\left\{\mathcal{F}_k\right\}_{k=0}^K, \mathbb{P}\right)$ with $\mathcal{F}_0=\{\emptyset, \Omega\}$ where $p \in \mathbb{N}$ assets $\left(\mathcal{S}_k^i\right)_{k=0, \cdots, K}^{i=1, \cdots, p}$ can be traded at discrete time points, i.e., $0=t_0 < t_1 < t_2 < \ldots < t_{K-1}< t_K=T$ for some fixed finite time horizon $0 < T < \infty$. We are interested in hedging the claim $h_K \in \mathcal{F}_K$ at time $T$ using the $p$ different hedging instruments. We denote by $R_k \in \mathbb{R}^p$, $k=0, \cdots, K-1$, the excess return vector of the $p$ risky assets between the period $\left(t_k, t_{k+1}\right]$ which are $\mathcal{F}_{k+1}$-measurable, whereas the constant risk-free return is denoted by $R_f \in \left[0, \infty\right)$.  For any $k=0, \ldots, K-1$, we consider the following (discrete-time) multidimensional Black-Scholes-Merton model for the excess return $R_k$:
$$
R_k=\exp \left(\left(\tilde{r} \mathbbm{1}+\Sigma \tilde{\lambda}-\frac{1}{2} \operatorname{diag}\left(\Sigma \Sigma^{\top}\right)\right) \Delta+\sqrt{\Delta} \Sigma \epsilon_k\right)-R_f \mathbbm{1},
$$
where $\tilde{r} \in \left[0, \infty\right), \mathbbm{1}=(1, \ldots, 1) \in \mathbb{R}^p, \Sigma \in \mathbb{R}^{p \times m}, m \geq p, \tilde{\lambda} \in \mathbb{R}^m, \Delta>0$ is a constant rebalancing time period, $\epsilon_k, k=0, \ldots, K-1$, are i.i.d. $m$-dimensional Gaussian vectors with mean $\mathbf{0}$ and covariance $I_m$, i.e., $\epsilon_k \sim \mathcal{N}_m\left(\mathbf{0}, I_m\right)$, and $R_f:=\exp (\tilde{r} \Delta)$ denotes the risk free return. In this setting, the excess returns $\left\{R_k\right\}_{k=0}^{K-1}$ are i.i.d.. Note that the market is complete when $m = p$ and $\Sigma \Sigma^{\top}$ is invertible and incomplete when $m > p$. Moreover, denote by $\mathcal{W}_k \in \mathbb{R}$ the wealth of the portfolio and $\mathcal{S}_k=\left(\mathcal{S}_k^{1}, \cdots, \mathcal{S}_k^{p}\right) \in \mathbb{R}^p$ the equity prices at time point $t_k$. To model this as an MDP, we define the state process $\left(s_k\right)_{k=0, \cdots, K}$ by $s_k:=\left(\mathcal{W}_k, \mathcal{S}_k\right) \in \mathbb{R}^{1 + p}$ and denote by $D \subseteq \mathbb{R}^p$ the set of possible actions representing the proportion of current wealth invested in each risky asset. Then, for any $k=0,1, \ldots, K-1$, the evolution of the equity prices between the time points $k$ and $k+1$ is $\mathcal{S}_{k+1} := \mathcal{S}_k \left(1 + R_k\right)$ and the evolution of the wealth is given by
$$
\mathcal{W}_{k+1}:=\mathcal{W}_k\left(\left\langle g_k\left(s_k\right), R_k\right\rangle+R_f\right),
$$
where $g_k(\cdot): \mathbb{R}^{1+p} \rightarrow D$ (being measurable) is the investment control policy function on $p$ risky assets at time point $t_k$. 

In this setting, we are interested in finding the hedging instrument selection of $p$ risky assets which minimizes the loss function of the residual risk. The loss function minimization problem can be written as an MDP problem as follows:
\begin{equation}\label{BS_MDP}
\begin{aligned}
\min _{g_0, \ldots, g_{K-1}} & \mathscr{E}_K \left(s_0\right) := 
\min _{g_0, \ldots, g_{K-1}} \mathbb{E}\left[\frac{\Big(1 - \gamma \operatorname{Sign}(\mathcal{W}_K - h_K)\Big)^2 \left(\mathcal{W}_K - h_K\right)^2}{2} \right],\\
\text { s.t. } s_{k+1}=&\big(\mathcal{W}_{k+1}, \mathcal{S}_{k+1}\big) =\big(\mathcal{W}_k\left(\left\langle g_k\left(s_k\right), R_k\right\rangle+R_f\right), \mathcal{S}_k \left(1 + R_k\right)\big), \quad k=0,1, \ldots, K-1 ,
\end{aligned}
\end{equation}
where $\operatorname{Sign}(y):=\mathbbm{1}_{y>0}-\mathbbm{1}_{y<0}$, $h_K:=h(\mathcal{S}_K)$ is the claim for some measurable function $h: \mathbb{R}^{p} \rightarrow \left[0, \infty\right)$, and $\gamma \in\left[0,1\right)$ is the penalization parameter. Note that the loss function $\ell_\gamma(y):=\frac{(1 - \gamma \operatorname{Sign}(y))^2 y^2}{2}$, $y\in\mathbb{R}$, is asymmetric when $\gamma > 0$, which penalizes losses~(i.e., when $\mathcal{W}_K - h_K < 0$) more than profits.
\begin{table}[t]
\centering
\begin{tabular}{c | c  c | c  c}
 \hline
$p, m$ & $5$, $5$ & $50$, $50$& $5$, $10$ & $50$, $60$ \\ \hline \hline
$\tilde{K}$ & $5 $& $50 $& $5$ & $60$\\ \hline
$\mathcal{W}_0$ & $1$ &$1$ &$1$  &$1$ \\ \hline
$\mathcal{S}_0$ & $\mathbbm{1}$ &$\mathbbm{1}$ &$\mathbbm{1}$  &$\mathbbm{1}$ \\ \hline
$\gamma$ & $0.5$& $0.5$&$0.5$ &$0.5$ \\  \hline
$\widetilde{r}$ &$ 0.03$ & $0.03$ &$ 0.03 $ & $0.03$\\ \hline
$\Delta$ & $1/40 $& $1/40 $& $1/40$ & $1/40$\\ \hline
$D$ & $[0, 1]^p$ & $[0, 1]^p$ & $[0, 1]^p$ & $[0, 1]^p$  \\ \hline
\multirow{4}{*}{$\widetilde{\lambda}$} & $\widetilde{\lambda}_i=0.1$  & $\widetilde{\lambda}_i=0.01$ & $\widetilde{\lambda}_i=0.01$  & $\widetilde{\lambda}_i=0.01$ \\

& for $i=1,2$ & for $i=1,\ldots, 25$ & for $i=1, 2$ & for $i=1,\ldots, 25$ \\ 

& $\widetilde{\lambda}_i=0.2$ & $\widetilde{\lambda}_i=0.05$ & $\widetilde{\lambda}_i=0.05$ & $\widetilde{\lambda}_i=0.05$ \\ 

& for $i=3,4,5$ & for $i=26,\ldots, 50$ & for $i=3, 4, 5$ & for $i=26,\ldots, 50$ \\  \hline

\multirow{2}{*}{$\Sigma$} & $\Sigma_{ii}=0.15$ & $\Sigma_{ii}=0.15$  & $\Sigma_{ii}=0.15$ 
& $\Sigma_{ii}=0.15$  \\
& $\Sigma_{ij}=0.01$  for $i\neq j$ & $\Sigma_{ij}=0.001$  for $i\neq j$& $\Sigma_{ij}=0.01$  for $i\neq j$ & $\Sigma_{ij}=0.001$  for $i\neq j$ \\  \hline
\end{tabular}
\caption{Parameters for optimization problem~\eqref{deep_learning_method_mdp}.}
\label{tab:iid}
\end{table}

Following~\cite{tsang2020deep}, we solve the MDP problem~\eqref{BS_MDP} by employing standard feed forward neural networks to approximate each of the investment control policy functions $g_k(\cdot)$. We denote by $\mathcal{G}_\nu$ the set of standard feed forward neural networks with two hidden layers, which is given explicitly by
\begin{equation}\label{nn_mdp}
\begin{aligned}
\mathcal{G}_\nu = &\big\{f: \mathbb{R}^{1+p} \rightarrow \mathbb{R}^p \mid f(x)=\tanh \left(W_3 z+b_3\right), z=\sigma\left(W_2 y+b_2\right), \\
&y=\sigma\left(W_1 x+b_1\right), W_1 \in \mathbb{R}^{\nu \times (1+p)}, W_2 \in \mathbb{R}^{\nu \times \nu}, W_3 \in \mathbb{R}^{p \times \nu}, b_1, b_2 \in \mathbb{R}^\nu, b_3 \in \mathbb{R}^p\big\},
\end{aligned}
\end{equation}
where $\nu$ denotes the number of neurons on each layer of the neural network, $\tanh (x)$, for any $x \in \mathbb{R}^p$, is the hyperbolic tangent function at $x$ applied componentwise, and $\sigma(y):=\max \{0, y\}, y \in \mathbb{R}^\nu$, is the ReLU activation function applied componentwise.
\begin{figure}[h]
    \centering
    \begin{subfigure}[b]{0.32\textwidth}
        \includegraphics[width=\textwidth]{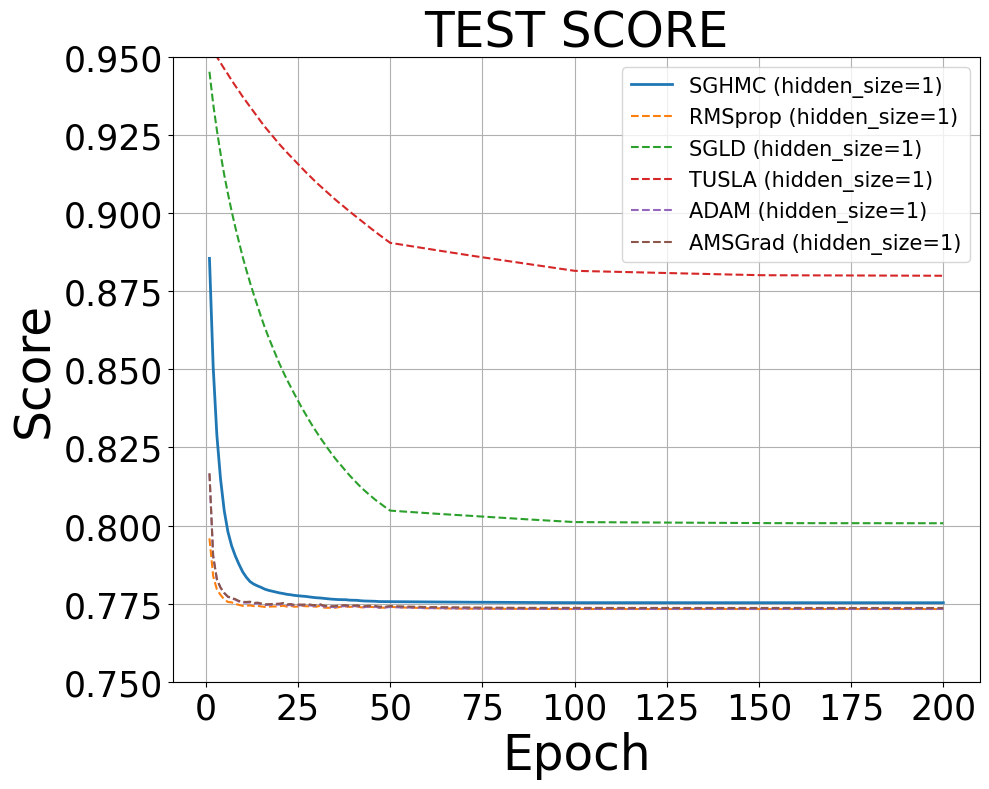}
        \caption{$p=m=5$ and $\nu=1$}
    \end{subfigure}
    \begin{subfigure}[b]{0.32\textwidth}
        \includegraphics[width=\textwidth]{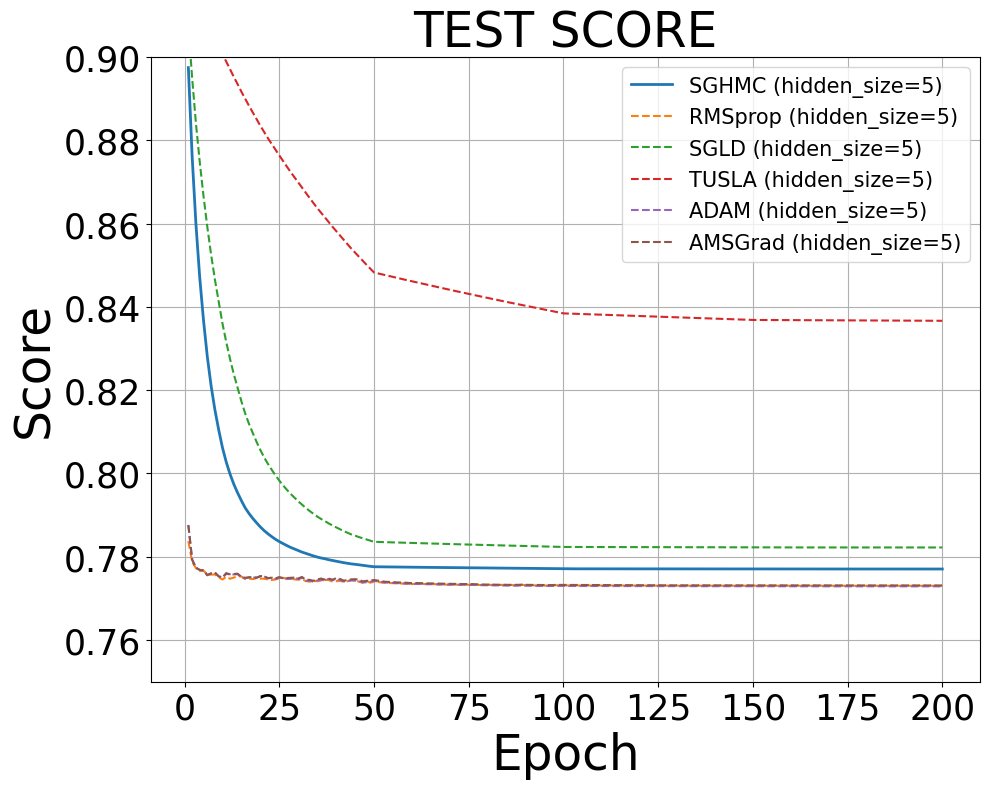}
        \caption{$p=m=5$ and $\nu=5$}
    \end{subfigure}
    \begin{subfigure}[b]{0.32\textwidth}
        \includegraphics[width=\textwidth]{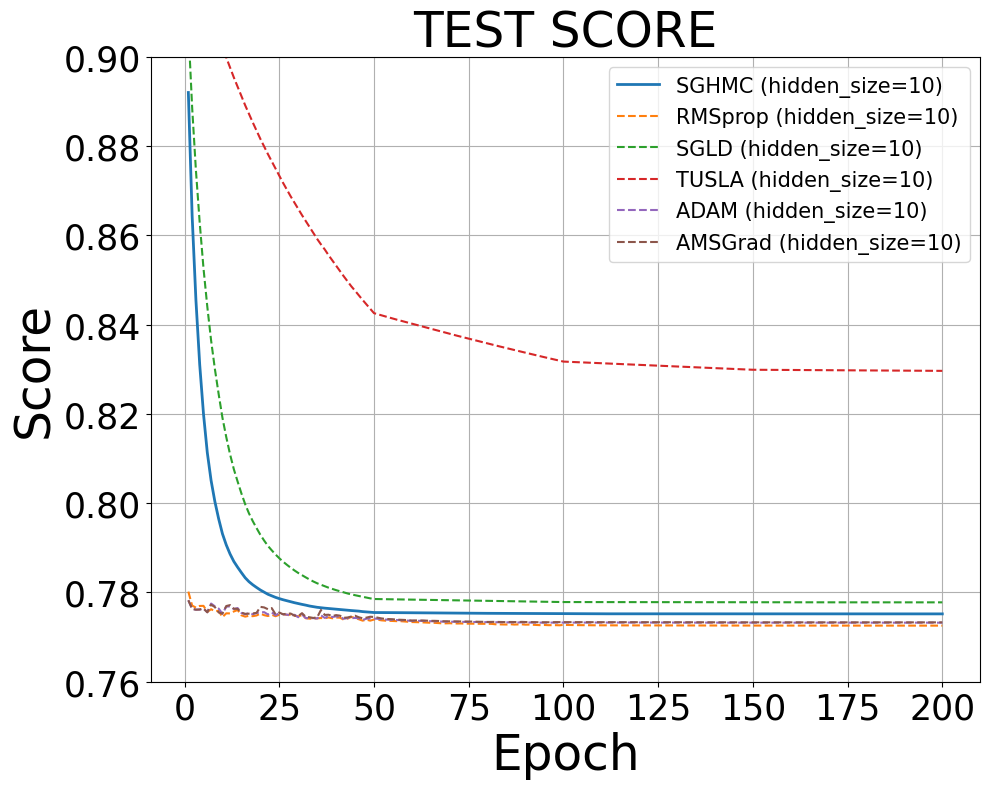}
        \caption{$p=m=5$ and $\nu=10$}
    \end{subfigure}

    \begin{subfigure}[b]{0.32\textwidth}
        \includegraphics[width=\textwidth]{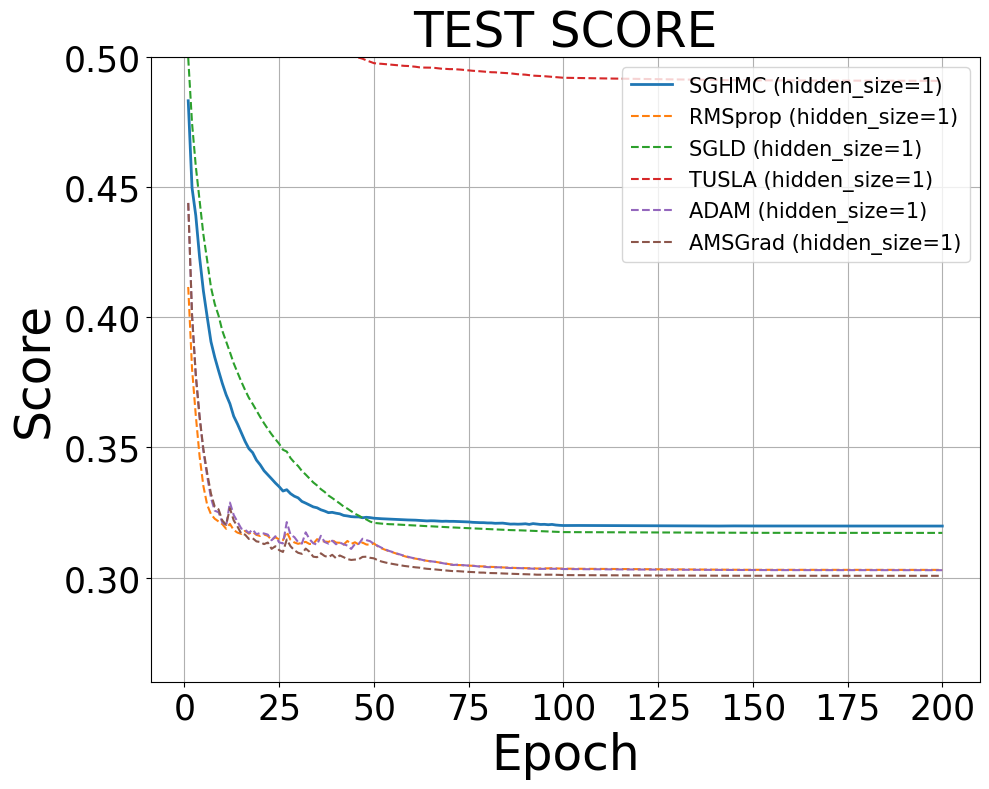}
        \caption{$p=m=50$ and $\nu=1$}
    \end{subfigure}
    \begin{subfigure}[b]{0.32\textwidth}
        \includegraphics[width=\textwidth]{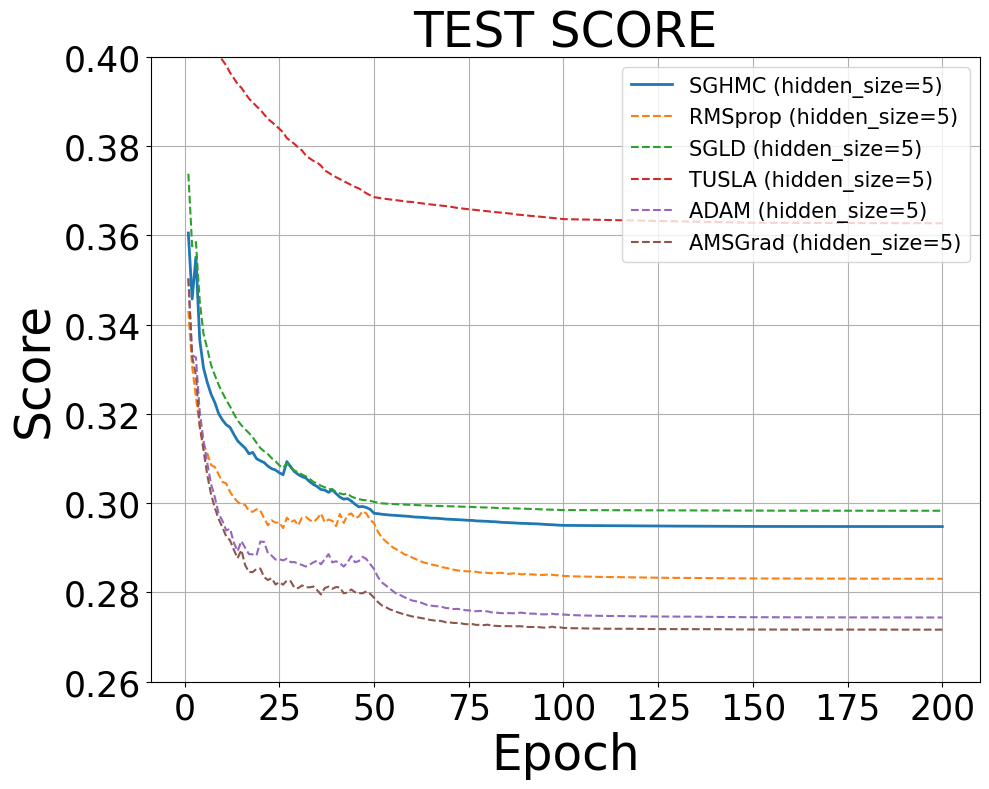}
        \caption{$p=m=50$ and $\nu=5$}
    \end{subfigure}
    \begin{subfigure}[b]{0.32\textwidth}
        \includegraphics[width=\textwidth]{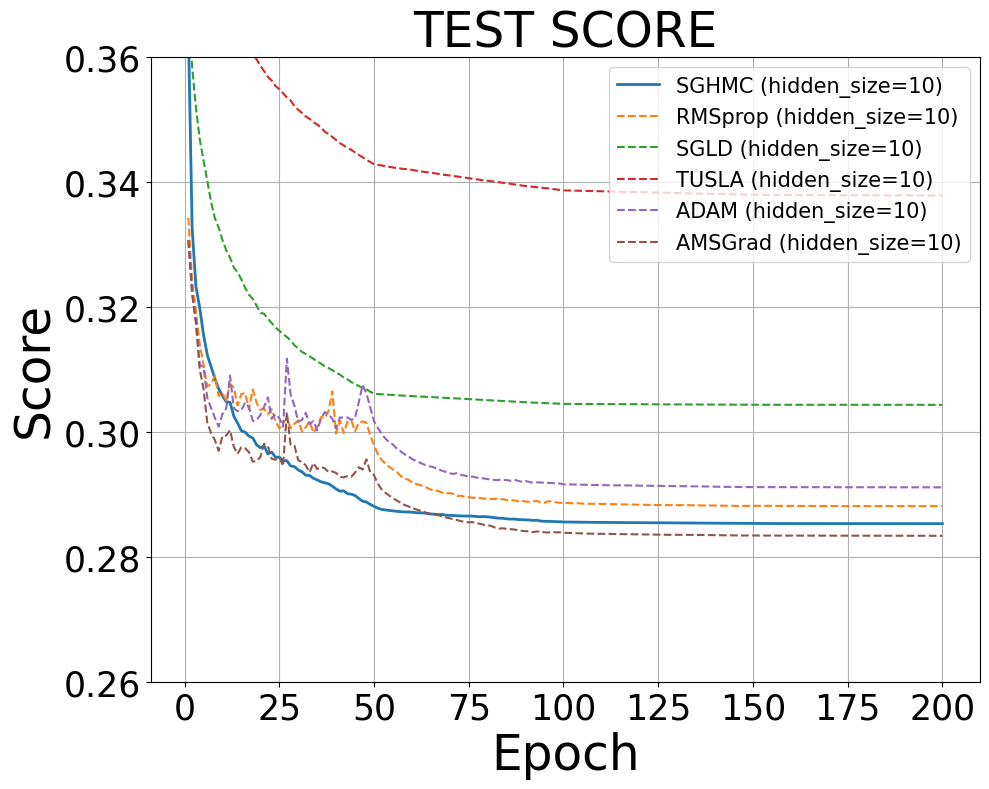}
        \caption{$p=m=50$ and $\nu=10$}
    \end{subfigure}

    \caption{Plots of test scores $\mathscr{E}_K^{\mathcal{N} \mathcal{N}}\left(s_0\right)$ for different numbers of assets and hidden sizes under the Black-Scholes-Merton model in the complete market case. The parameter settings are summarized in Table~\ref{tab:iid}.}
    \label{fig:iid_com}
\end{figure}

\begin{figure}[h]
    \centering
    \begin{subfigure}[b]{0.32\textwidth}
        \includegraphics[width=\textwidth]{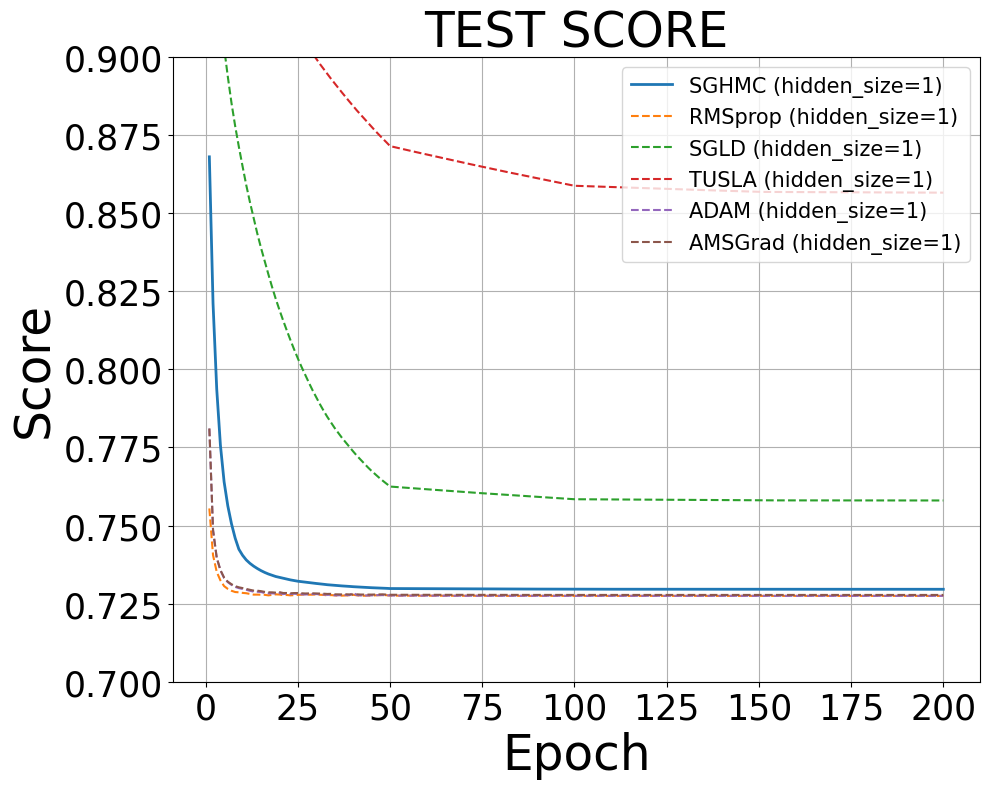}
        \caption{$p=5$, $m=10$, and $\nu=1$}
    \end{subfigure}
    \begin{subfigure}[b]{0.32\textwidth}
        \includegraphics[width=\textwidth]{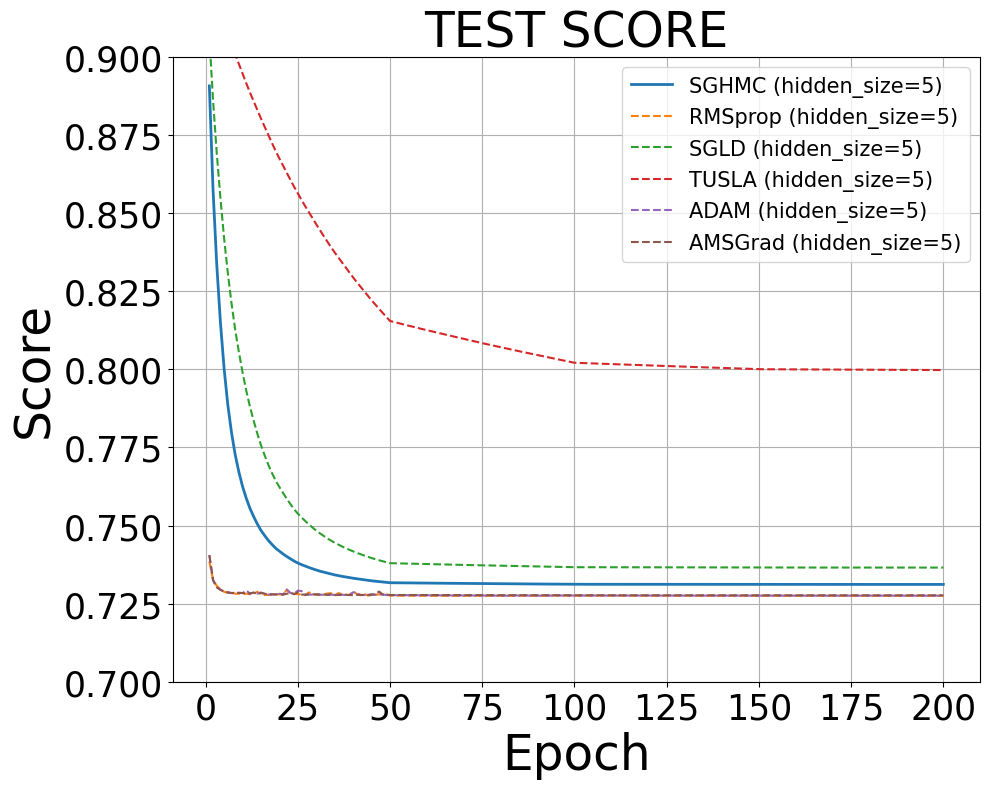}
        \caption{$p=5$, $m=10$, and $\nu=5$}
    \end{subfigure}
    \begin{subfigure}[b]{0.32\textwidth}
        \includegraphics[width=\textwidth]{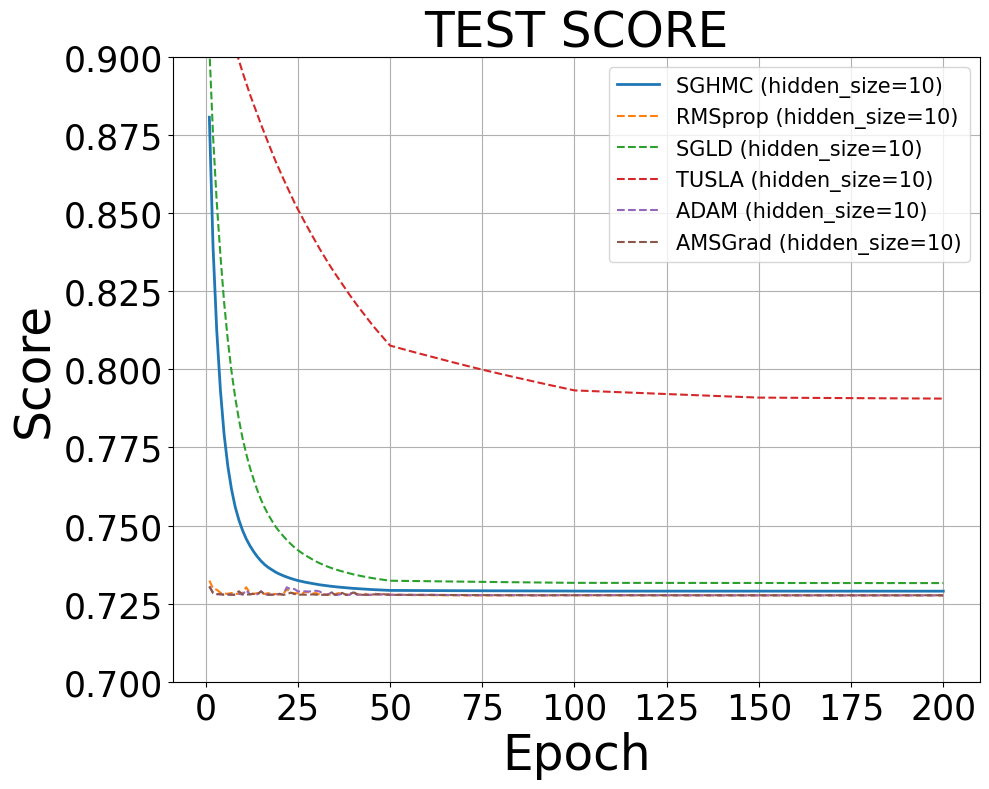}
        \caption{$p=5$, $m=10$, and $\nu=10$}
    \end{subfigure}

    \begin{subfigure}[b]{0.32\textwidth}
        \includegraphics[width=\textwidth]{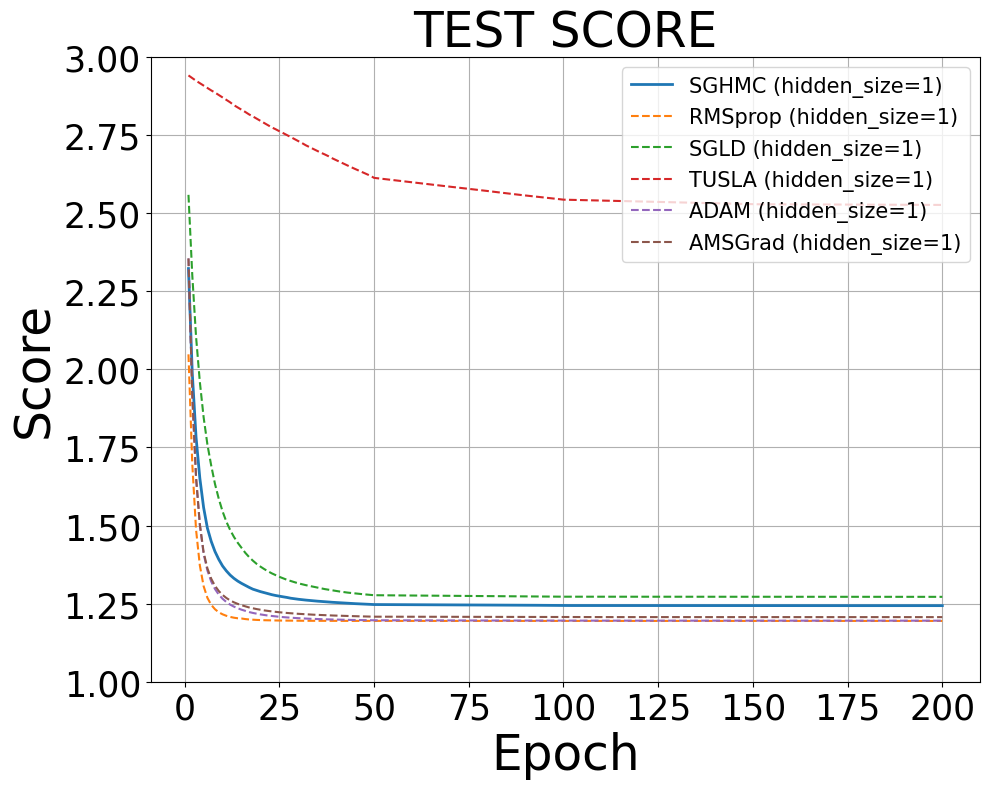}
        \caption{$p=50$, $m=60$, and $\nu=1$}
    \end{subfigure}
    \begin{subfigure}[b]{0.32\textwidth}
        \includegraphics[width=\textwidth]{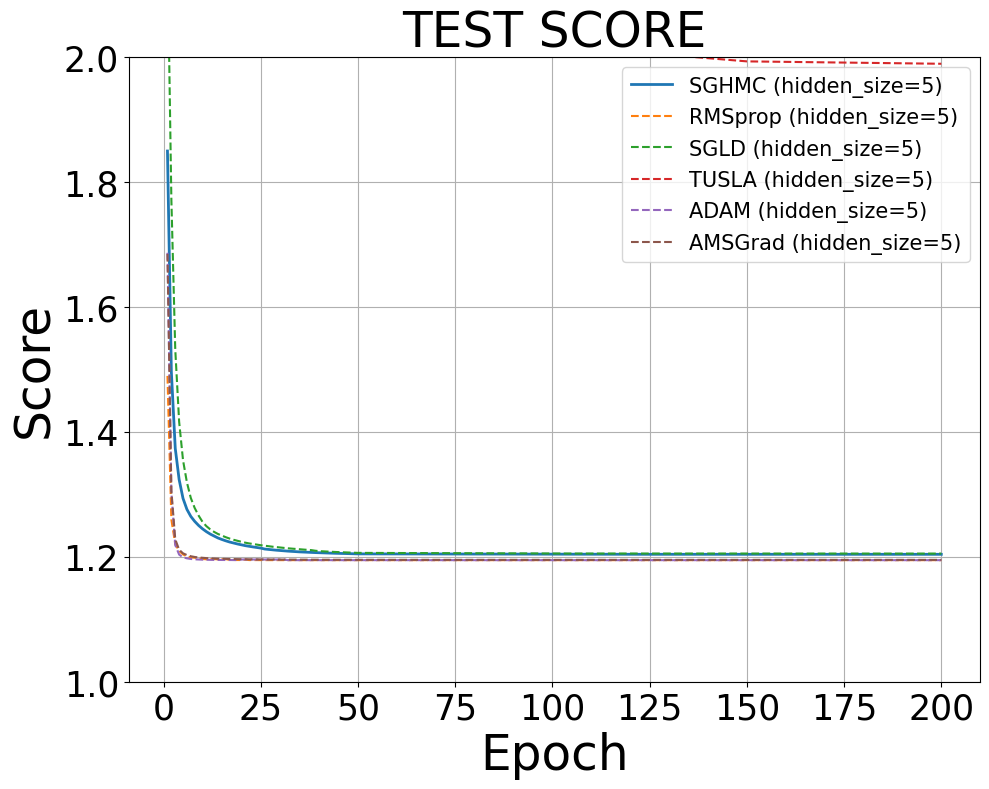}
        \caption{$p=50$, $m=60$, and $\nu=5$}
    \end{subfigure}
    \begin{subfigure}[b]{0.32\textwidth}
        \includegraphics[width=\textwidth]{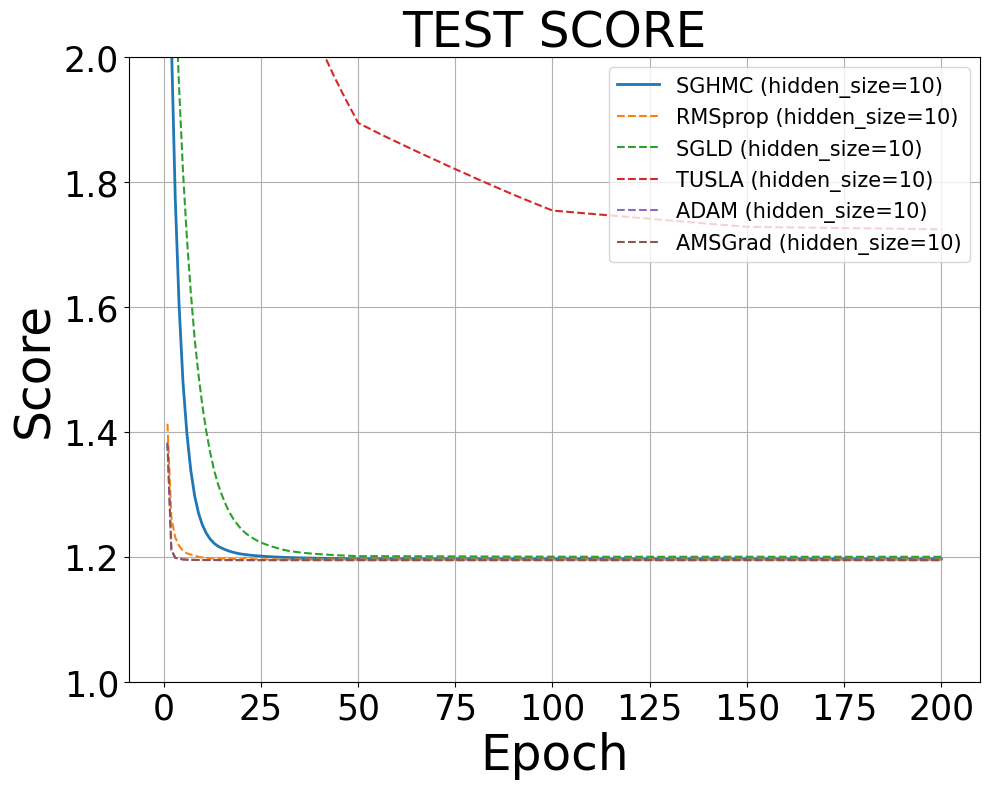}
        \caption{$p=50$, $m=60$, and $\nu=10$}
    \end{subfigure}

    \caption{Plots of test scores $\mathscr{E}_K^{\mathcal{N} \mathcal{N}}\left(s_0\right)$ for different numbers of assets and hidden sizes under the Black-Scholes-Merton model in the incomplete market case. The parameter settings are summarized in Table~\ref{tab:iid}.}
    \label{fig:iid_incom}
\end{figure}
For any $k=0,1, \ldots, K-1$, denote by $g_k\left(\cdot;\theta_k\right):\mathbb{R}^{1+p}\rightarrow\mathbb{R}^p$ the approximated policy function at time $t_k$ using a neural network with its structure defined in with~\eqref{nn_mdp}, where $\theta_k := $ $\left(b_1, b_2, b_3,\left[W_1\right],\left[W_2\right],\left[W_3\right]\right) \in \mathbb{R}^{\nu\left(1+p+\nu+p+2\right)+p}$ denotes the parameter of the neural network. Then, the MDP problem~\eqref{BS_MDP} can be approximated by restricting $g_k\left(\cdot ; \theta_k\right) \in \mathcal{G}_\nu$ :
\begin{equation}\label{deep_learning_method_mdp}
\min _{g_0, \ldots, g_{K-1}} \mathscr{E}_K\left(s_0\right) \approx \min _\theta \mathbb{E}\left[\frac{\Big(1 - \gamma \operatorname{Sign}(\mathcal{W}_K^{\mathcal{N} \mathcal{N}}\left(\zeta ; \theta\right) - h_K)\Big)^2 \left(\mathcal{W}_K^{\mathcal{N} \mathcal{N}}\left(\zeta ; \theta\right) - h_K\right)^2}{2} \right] =: \min _\theta \mathscr{E}_K^{\mathcal{N} \mathcal{N}}\left(s_0\right),
\end{equation}
where $\zeta:=\left(s_0, R_0, \ldots, R_{K-1}\right)$ denotes the vector of the initial state variable and all the returns throughout the trading time horizon $[0, T]$, and where $s_K^{\mathcal{N} \mathcal{N}}(\zeta ; \theta)$ is recursively defined, for $k=0,1, \ldots, K-1$, by
\begin{equation}\label{deep_learning_method_2_mdp}
s_{k+1}^{\mathcal{N} \mathcal{N}}\left(\zeta ; \theta\right)=\left(\mathcal{W}_{k+1}^{\mathcal{N} \mathcal{N}}\left(\zeta ; \theta\right), \mathcal{S}_{k+1}\right) =\bigg(\mathcal{W}_{k}^{\mathcal{N} \mathcal{N}}\left(\zeta ; \theta\right)\big(\left\langle g_k\left(s_k^{\mathcal{N} \mathcal{N}}; \theta_k\right), R_k\right\rangle+R_f\big), \mathcal{S}_k \left(1 + R_k\right)\bigg),
\end{equation}
where $g_k\left(\cdot ; \theta_k\right) \in \mathcal{G}_\nu$ is the approximated policy function with $\mathcal{G}_\nu$ given in~\eqref{nn_mdp} and ${\theta=\left(\theta_0, \ldots, \theta_{K-1}\right) \in \mathbb{R}^d}$ is the parameter for the neural networks with $d:=$ $K\left(\nu\left(1+p+\nu+p+2\right)+p\right)$.

We set the claim $h_K := \max\left\{\sum_{i=1}^{p}\mathcal{S}_K^{i} - \tilde{K}, 0\right\}$, where $\tilde{K}$ is the strike price. Different settings of the Black-Scholes-Merton model for simulations are summarized in Table~\ref{tab:iid}. Similar to~\cite{tsang2020deep}, the models are trained for 200 steps with a batch size of 128. Each step involves generating 20000 training samples, requiring $\lceil 20000 / 128\rceil = 157$ iterations per step for model training. Subsequently, 100000 test samples are used to compute the test residual error. Furthermore, three different hidden sizes $\nu$ are tested for each experimental setting. For $p=5:$~$\nu$ values are $\{1,5,10\}$ with $m$ values $\{5, 10\}$; for $p=50:$~$\nu$ values are $\{1,5,10\}$ with $m$ values $\{50, 60\}$. For SGHMC, optimal hyperparameters are selected from $\eta=\{0.5, 0.1, 0.05, 0.01\}$, $\gamma=\left\{0.5, 1, 5\right\}$, and $\beta=10^{12}$. For SGLD and TUSLA (see~\cite{lim2024non, lovas2023taming}), we choose hyperparameters among $\eta=\{0.5, 0.1, 0.05, 0.01\}$ and $\beta=10^{12}$. For ADAM, AMSGrad, and RMSProp, the best learning rate is chosen among $\eta=\{0.1, 0.05, 0.01\}$ with other hyperparameters set as default.

Figure~\ref{fig:iid_com} and~\ref{fig:iid_incom} plots learning curves for all optimization algorithms across various configurations in complete and incomplete markets. As depicted, the SGHMC algorithm generally outperforms SGLD and TUSLA in this task. Furthermore, the SGHMC algorithm achieves comparable performances to ADAM, AMSGrad, and RMSProp.


\subsubsection{Real-world datasets}\label{sec:6.2.2}
In this subsection, we apply the SGHMC algorithm to solve optimization problems using real-world datasets. More precisely, we consider a regression problem using concrete compressive strength dataset (see~\cite{yeh1998modeling}) as well as an image classification problem using FashionMNIST (see~\cite{xiao2017fashion}). We then compare the performance of SGHMC with other popular optimizers including ADAM, AMSGrad, and RMSProp.

We consider to solve the following optimization problem:
\begin{equation}\label{opt:real-world}
\operatorname{minimize}\quad \mathbb{R}^d \ni \theta \mapsto u(\theta):=\mathbb{E}[\ell(Y, \bar{\mathfrak{N}}(\theta, Z))] + \lambda_r \left\lvert\theta\right\rvert^{2},
\end{equation}
where $\ell:\mathbb{R}^{m_2} \times \mathbb{R}^{m_2} \rightarrow \mathbb{R}$ is a loss function with $m_2 \in \mathbb{N}$, $\theta \in \mathbb{R}^{d}$ is the parameter over which we optimize, $Z \in \mathbb{R}^{m_1}$ is the input random variable, $Y \in \mathbb{R}^{m_2}$ is the target random variable, and $\bar{\mathfrak{N}}$ is the neural network given by:
\begin{equation}\label{nn:real-world}
\bar{\mathfrak{N}}(\theta, z):=W_2 \sigma_1\left(W_1 z+b_1\right)+b_2,
\end{equation}
where $W_1 \in \mathbb{R}^{d_1 \times m_1}$, $W_2 \in \mathbb{R}^{m_2 \times d_1}$, $b_1 \in \mathbb{R}^{d_1}$, and $b_2 \in \mathbb{R}^{m_2}$ are the parameters, i.e., $\theta = ([W_1], [W_2], b_1, b_2) \in \mathbb{R}^{d}$ with $d = d_1(m_1 + m_2 + 1) + m_2$. 
\\

\paragraph{\textbf{Regression}} In the regression example, we explore the performance of SGHMC in solving~\eqref{opt:real-world} using concrete compressive strength dataset\footnote{Dataset is available in \href{https://archive.ics.uci.edu/dataset/165}{https://archive.ics.uci.edu/dataset/165}}, which includes 1030 instances and each sample has 9 attributes (consisting of 8 quantitative input variables and 1 quantitative output variable) with no missing values.  For the numerical experiments, we set $m_1=8, m_2=1$, $d_1 = 50$ and thus $d = 501$. Our goal is to find the best estimator that predicts the concrete compressive strength $Y \in \mathbb{R}$ given the input variable $Z \in \mathbb{R}^8$ by solving the optimization problem~\eqref{opt:real-world} with the squared loss function $\ell(u, v)=|u-v|^2$ for $u, v \in \mathbb{R}$. In our experiments, we divided $10\%$ of the dataset as a test set and employed the SGHMC, ADAM, AMSGrad, and RMSprop algorithms (see, e.g.,~\cite{kingma2014adam, tan2019convergence, xu2021convergence}) with three different seeds. We trained the models for 5000 epochs using a batch size of 256. For ADAM, AMSGrad, and RMSProp, we searched for the optimal learning rate within the range of $\{0.01, 0.001\}$ whereas the other parameters are set as default.
\\

\paragraph{\textbf{Classification}} In the classification example, we set $m_1=784, m_2=10, d_1=50$ and thus $d=39760$. We consider to solve~\eqref{opt:real-world} with the neural network defined in~\eqref{nn:real-world} and the cross entropy loss given by $\ell \left(u, v\right) := -\sum_{j=1}^{m_2} u_j \log v_j$ for $u, v \in \mathbb{R}^{m_2}$. We use the Fashion MNIST dataset\footnote{Dataset is available in \href{https://github.com/zalandoresearch/fashion-mnist}{https://github.com/zalandoresearch/fashion-mnist}} comprising of $28 \times 28 $ images of 70000 fashion products from 10 categories with 7000 images per category. The training set has 60000 samples and the test set has 10000 samples. Each sample is assigned to one of ten different labels. Then, the label variables are defined as $y_i = [y_{i, 0}, \cdots, y_{i, 9}]^\top \in \mathbb{R}^{10}$, where $y_{i, j} := \mathbbm{1}_{j = label_{i}}$, $i = 1, \cdots, 60000$. We train the models for 200 epochs with a batch size of 128. Also, we decay the initial learning rate by 10 after 150 epochs.
\begin{figure}[!t]
	\centering
	\begin{minipage}{0.49\linewidth}
		\centering
		\includegraphics[width=0.9\linewidth]{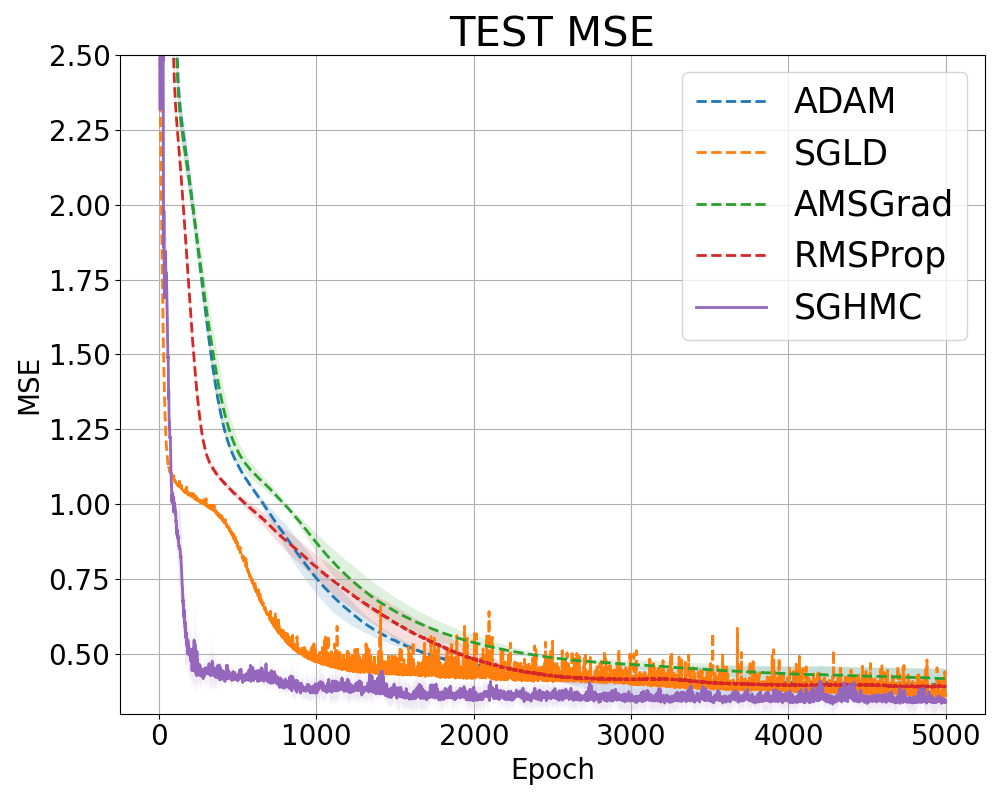}
	\end{minipage} 
	\begin{minipage}{0.49\linewidth}
		\centering
		\includegraphics[width=0.9\linewidth]{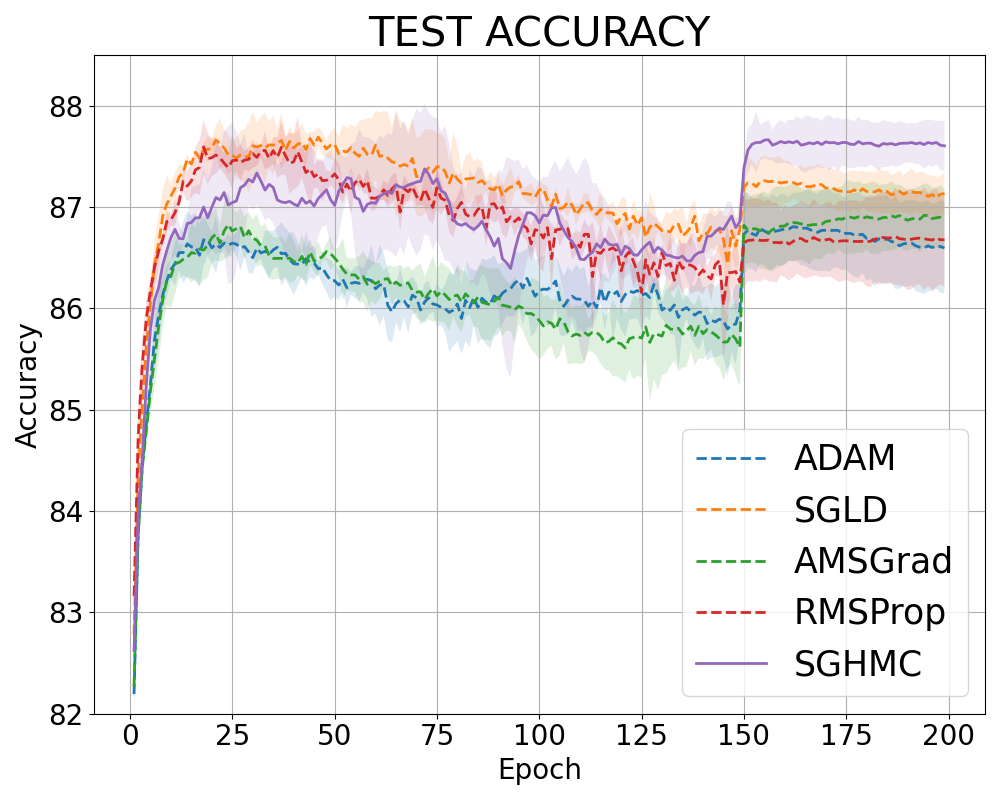}
	\end{minipage}
 \caption{Mean squared error for concrete compressive strength dataset (left) and Test accuracy curve for Fashion MNIST (right).}
 \label{fig.4}
\end{figure}

As shown in the left plot in Figure~\ref{fig.4}, the SGHMC algorithm achieves the lowest mean squared error compared to other optimizers. The right plot in Figure~\ref{fig.4} shows that the performance of the SGHMC is slightly better than ADAM, Amsgrad, and RMSProp in terms of test accuracy. Our results show that the SGHMC algorithm can outperform the SGLD algorithm in many applications and achieve comparable performance to popular optimizers on learning tasks involving real-world datasets.
\\

\subsection{Conclusion of numerical experiments}
We considered applying the SGHMC algorithm to various applications to justify our theoretical results. For the empirical experiments on quantile estimation in Section~\ref{sec:6.1},  SGHMC outperforms SGLD by having a smaller expected excess risk and a shorter training time under different settings. In Section~\ref{transfer_learning},  the numerical results show that SGHMC can be used for the training of ReLU neural networks in the transfer learning setting. Moreover, for the hedging problem under asymmetric risk described in Section~\ref{sec:6.3},
SGHMC outperforms SGLD and TUSLA in different scenarios while it achieves comparable test scores to ADAM, AMSGrad, and RMSProp. In addition, as discussed in Section~\ref{sec:6.2.2}, for regression and classification problems on real-world datasets, SGHMC demonstrates better performance compared to other optimizers in term of the test mean-squared error and test accuracy.

In summary, SGHMC achieves (at least) comparable results to existing algorithms including, e.g., ADAM, AMSGrad, RMSProp, SGLD, and TUSLA, while its performance is backed by theoretical guarantees as described in Theorem~\ref{thm:2.1}, Corollary~\ref{cor:2.2}, and Theorem~\ref{thm:2.2}.


\newpage

\section{Proof of Main Theorems}
\subsection{Introduction of auxiliary processes}
\noindent We first define the underdamped Langevin SDE $(\theta_t, V_t)_{t\in \mathbb{R}_{+}}$ given by
\begin{equation}\label{underdamped Langevin SDE}
\begin{aligned}
\mathrm{d} V_t & =-\left(\gamma V_t+h\left(\theta_t\right)\right) \mathrm{d} t+\sqrt{2 \gamma \beta^{-1}} \mathrm{d} B_t , \\ 
\mathrm{d} \theta_t & = V_t \mathrm{d} t,
\end{aligned}
\end{equation}
where $h:=\nabla u$, $\gamma>0$ is the friction coefficient, $\beta > 0$ is the inverse temperature parameter, and $\left(B_t\right)_{t \in \mathbb{R}_{+}}$ is a standard $d$-dimensional Brownian motion adapted to its completed natural filtration denoted by $\left(\mathcal{F}_t\right)_{t \in \mathbb{R}_{+}}$. We assume that $\left(\mathcal{F}_t\right)_{t \in \mathbb{R}_{+}}$ is independent of $\mathcal{G}_{\infty} \lor \sigma(\theta_0, V_0)$.

For each $\eta > 0$, we consider the scaled process $(\zeta_t^\eta, Z_t^\eta)$ := $(\theta_{\eta t}, V_{\eta t})$ with $(\theta_t, V_t)_{t\in \mathbb{R_{+}}}$ defined in~\eqref{underdamped Langevin SDE}. It can be written as follows: 
\begin{equation}\label{SKLD}
\begin{aligned}
\mathrm{d} Z_t^\eta & =-\eta\left(\gamma Z_t^\eta+h\left(\zeta_t^\eta\right)\right) \mathrm{d} t+\sqrt{2 \gamma \eta \beta^{-1}} \mathrm{~d} B_t^\eta, \\
\mathrm{d} \zeta_t^\eta & =\eta Z_t^\eta \mathrm{d} t,
\end{aligned}
\end{equation}
with the initial condition $Z_0^\eta:= V_0$ and $\zeta_0^\eta:= \theta_0$, where $B_t^{\eta} := \frac{1}{\sqrt\eta}B_{\eta t}$. We denote the filtration of $\left(B_t^{\eta}\right)_{t \in \mathbb{R}_{+}}$ by $\left(\mathcal{F}_t^{\eta}\right)_{t \in \mathbb{R}_{+}}:=\left(\mathcal{F}_{\eta t}\right)_{t \in \mathbb{R}_{+}}$, and we note that  $\left(\mathcal{F}_t^{\eta}\right)_{t \in \mathbb{R}_{+}}$ is independent of $\mathcal{G}_{\infty} \lor \sigma(\theta_0, V_0)$.

Next, we define the continuous-time interpolation of SGHMC as
\begin{equation}\label{cont.SGHMC}
\begin{aligned}
\mathrm{d} \bar{V}_t^\eta & =-\eta\left(\gamma \bar{V}_{\lfloor t\rfloor}^\eta+H\left(\bar{\theta}_{\lfloor t\rfloor}^\eta, X_{\lceil t\rceil}\right)\right) \mathrm{d} t+\sqrt{2 \gamma \eta \beta^{-1}} \mathrm{d} B_t^\eta, \\
\mathrm{d} \bar{\theta}_t^\eta & =\eta \bar{V}_{\lfloor t\rfloor}^\eta \mathrm{d} t,
\end{aligned}
\end{equation}
with initial value $\bar{V}_0^\eta:= V_0$ and $\bar{\theta}_0^\eta:=\theta_0$. The process~\eqref{cont.SGHMC} mimics the recursion~\eqref{SGHMC 1} and~\eqref{SGHMC 2} at grid points in the sense that $\mathcal{L}\left({\theta}_n^\eta, {V}_n^\eta\right) = \mathcal{L}\left(\bar{\theta}_n^\eta, \bar{V}_n^\eta\right)$, for each $n \in \mathbb{N}_0$. 

Furthermore, we define the auxiliary process $\left(\hat{\zeta}_t^{s,u,v,\eta}, \hat{Z}_t^{s,u,v,\eta}\right)_{t \geq s}$ as
$$
\begin{aligned}
\mathrm{d} \hat{Z}_t^{s,u,v,\eta} & =-\eta\left(\gamma \hat{Z}_t^{s,u,v,\eta}+h\left(\hat{\zeta}_t^{s,u,v,\eta}\right)\right) \mathrm{d} t+\sqrt{2 \gamma \eta \beta^{-1}} \mathrm{~d} B_t^\eta, \\
\mathrm{d} \hat{\zeta}_t^{s,u,v,\eta} & =\eta \hat{Z}_t^{s,u,v,\eta} \mathrm{d} t,
\end{aligned}
$$
with initial conditions $\hat{\zeta}_s^{s,u,v,\eta} = u$, $\hat{Z}_s^{s,u,v,\eta} = v$.

\begin{definition}\label{def:1}
    For any $0 < \eta \leq \eta_{\max}$ with $\eta_{\max}$ given in~\eqref{eta_max} and $n \in \N_0$, set $T := \lfloor 1/\eta\rfloor$ and define 
    \begin{equation}\label{auxiliary_process}
        \bar{\zeta}_t^{\eta, n} = \hat{\zeta}_t^{nT,\bar{\theta}_{nT}^{\eta},\bar{V}_{nT}^{\eta},\eta}, \qquad \bar{Z}_t^{\eta, n} = \hat{Z}_t^{nT,\bar{\theta}_{nT}^{\eta},\bar{V}_{nT}^{\eta},\eta}.
    \end{equation}
\end{definition}
We note that, by above Definition~\ref{def:1}, the process $(\bar{\zeta}_t^{\eta, n}, \bar{Z}_t^{\eta, n})_{t \geq n T}$ is the underdamped Langevin process~\eqref{SKLD} starting at time $nT$ with $(\bar{\theta}_{nT}^{\eta} , \bar{V}_{nT}^{\eta})$.

\subsection{Preliminary moment estimates}\label{subsec_5.2}
In this subsection, we provide preliminary results which are essential for proving the main results. To this end, consider the following Lyapunov function defined for any $(\theta, v) \in \mathbb{R}^d \times \mathbb{R}^d$ by 
\begin{equation}\label{Lyapunov}
    \mathcal{V}(\theta, v) = \beta u(\theta) + \frac{\beta}{4}\gamma^2 (\lvert \theta + \gamma^{-1}v \rvert^2 + \lvert\gamma^{-1}v \rvert^2 - \lambda\lvert \theta\rvert^2).
\end{equation}
Denote by $\mu_0$ the probability law  of the initial state $\left(\theta_0,V_0\right)$. Note that ${\mathbb{E}\left[\mathcal{V}(\theta_0, V_0)\right]< \infty}$, since we assume $\mathbb{E}\left[\left|\theta_0\right|^4\right] + \mathbb{E}\left[\left|V_0\right|^4\right]<\infty$.
\begin{lemma}
\label{lemma:3.2}
    Let Assumptions~\ref{asm:A3}-\ref{asm:A5} hold. Then, we have
\begin{equation}
\begin{aligned}
\sup _{t \geq 0} \mathbb{E}\left[\left|\theta_t\right|^2 \right]\leq C_\theta^c:=\frac{\int_{\mathbb{R}^{2 d}} \mathcal{V}(\theta, v) \mu_0(\mathrm{d}\theta, \mathrm{d}v)+\frac{d+A_c}{\lambda}}{\frac{1}{8}(1-2 \lambda) \beta \gamma^2},
\end{aligned}
\end{equation}
\begin{equation}
\begin{aligned}
\sup _{t \geq 0} \mathbb{E}\left[\left|V_t\right|^2\right] \leq C_v^c:=\frac{\int_{\mathbb{R}^{2 d}} \mathcal{V}(\theta, v) \mu_0(\mathrm{d}\theta, \mathrm{d}v)+\frac{d+A_c}{\lambda}}{\frac{1}{4}(1-2 \lambda) \beta}.
\end{aligned}
\end{equation}
\end{lemma}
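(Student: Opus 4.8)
The plan is to establish a Lyapunov drift inequality for the diffusion~\eqref{underdamped Langevin SDE} with respect to the function $\mathcal{V}$ in~\eqref{Lyapunov}, and then convert it via Grönwall's inequality into a uniform-in-time bound on $\mathbb{E}[\mathcal{V}(\theta_t, V_t)]$; the two moment bounds will follow by bounding $\mathcal{V}$ from below by $|\theta|^2$ and $|v|^2$ separately. First I would write the infinitesimal generator of~\eqref{underdamped Langevin SDE} as $\mathcal{L} = \langle v, \nabla_\theta\rangle - \langle \gamma v + h(\theta), \nabla_v\rangle + \gamma\beta^{-1}\Delta_v$ and compute $\mathcal{L}\mathcal{V}$. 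Using the expansion $\mathcal{V}(\theta,v) = \beta u(\theta) + \frac{\beta\gamma^2}{4}\big((1-\lambda)|\theta|^2 + 2\gamma^{-1}\langle\theta,v\rangle + 2\gamma^{-2}|v|^2\big)$, the term $\nabla_\theta\mathcal{V}$ contributes $\beta\langle v, h(\theta)\rangle$ while $\nabla_v\mathcal{V}$ contributes $-\beta\langle h(\theta),v\rangle$, and these cancel exactly; after collecting the remaining terms one is left with
\[
\mathcal{L}\mathcal{V}(\theta,v) = -\tfrac{\beta\gamma^2\lambda}{2}\langle\theta,v\rangle - \tfrac{\beta\gamma}{2}|v|^2 - \tfrac{\beta\gamma}{2}\langle h(\theta),\theta\rangle + \gamma d.
\]

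Next I would insert the local dissipativity estimate $\langle h(\theta),\theta\rangle \geq 2\lambda\big(u(\theta) + \tfrac{\gamma^2}{4}|\theta|^2\big) - 2A_c/\beta$ from Remark~\ref{rmk:4}. Choosing the contraction rate equal to $\gamma\lambda$, a term-by-term comparison of $\mathcal{L}\mathcal{V} + \gamma\lambda\mathcal{V}$ shows that the $u(\theta)$ terms and the cross terms $\langle\theta,v\rangle$ match identically, while the $|v|^2$ and $|\theta|^2$ coefficients become $-\tfrac{\beta\gamma}{2}(1-\lambda)$ and $-\tfrac{\beta\gamma^3\lambda^2}{4}$, both nonpositive since $\lambda \leq 1/4 < 1$. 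This produces the drift inequality
\[
\mathcal{L}\mathcal{V}(\theta,v) \leq -\gamma\lambda\,\mathcal{V}(\theta,v) + \gamma(d + A_c).
\]

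I would then apply Itô's formula to $t\mapsto \mathcal{V}(\theta_t, V_t)$. Since $\theta_t$ has finite variation ($\mathrm{d}\theta_t = V_t\,\mathrm{d}t$), only the $C^1$-regularity of $u$ is needed for the position part, while $\mathcal{V}$ is smooth (quadratic) in $v$, so no second derivative of $u$ appears. Localizing along a sequence of stopping times to kill the local-martingale part, taking expectations, and passing to the limit by Fatou's lemma yields $\tfrac{\mathrm{d}}{\mathrm{d}t}\mathbb{E}[\mathcal{V}(\theta_t,V_t)] \leq -\gamma\lambda\,\mathbb{E}[\mathcal{V}(\theta_t,V_t)] + \gamma(d+A_c)$; Grönwall's inequality together with $\mathbb{E}[\mathcal{V}(\theta_0,V_0)]<\infty$ then gives
\[
\sup_{t\geq 0}\mathbb{E}[\mathcal{V}(\theta_t,V_t)] \leq \int_{\mathbb{R}^{2d}}\mathcal{V}(\theta,v)\,\mu_0(\mathrm{d}\theta,\mathrm{d}v) + \tfrac{d+A_c}{\lambda},
\]
which is exactly the numerator in both claimed bounds.

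Finally I would extract the moment bounds from lower bounds on $\mathcal{V}$. Since $u\geq 0$, it suffices to bound the quadratic form $Q(\theta,v):=|\theta+\gamma^{-1}v|^2 + \gamma^{-2}|v|^2 - \lambda|\theta|^2$ from below. Using $|\theta+\gamma^{-1}v|^2 \geq \tfrac12|\theta|^2 - \gamma^{-2}|v|^2$ gives $Q \geq \tfrac12(1-2\lambda)|\theta|^2$, whence $\mathcal{V}\geq \tfrac{\beta\gamma^2}{8}(1-2\lambda)|\theta|^2$; and checking that the quadratic form $(1-\lambda)|\theta|^2 + 2\langle\theta,\gamma^{-1}v\rangle + (1+2\lambda)\gamma^{-2}|v|^2$ is nonnegative (its discriminant condition reduces to $\lambda(1-2\lambda)\geq 0$) gives $Q\geq (1-2\lambda)\gamma^{-2}|v|^2$, whence $\mathcal{V}\geq \tfrac{\beta}{4}(1-2\lambda)|v|^2$. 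Dividing the uniform bound on $\mathbb{E}[\mathcal{V}]$ by these two constants yields $C_\theta^c$ and $C_v^c$. The main obstacle I anticipate is the drift computation together with the precise choice of contraction rate: it is the exact cancellation of the $\langle h(\theta),v\rangle$ terms and the alignment of all remaining coefficients under the choice $\gamma\lambda$ and the constraint $\lambda\leq 1/4$ that make this Lyapunov function work, whereas the stochastic-analysis localization and the linear-algebra lower bounds are then routine.
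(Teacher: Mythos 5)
Your proposal is correct and follows essentially the same route as the paper's proof: compute the generator applied to $\mathcal{V}$, invoke Remark~\ref{rmk:4} to obtain the drift inequality $\mathcal{A}\mathcal{V} \leq \gamma(d+A_c-\lambda\mathcal{V})$, integrate via It\^o and Gr\"onwall to get a uniform-in-time bound on $\mathbb{E}[\mathcal{V}(\theta_t,V_t)]$, and extract both moment bounds from quadratic lower bounds on $\mathcal{V}$. The only differences are cosmetic: you kill the local martingale by localization and Fatou where the paper cites square-integrability of the solution (via \cite[Theorem 5.2.1]{oksendal2013stochastic}), and you verify the $|v|^2$ lower bound through a discriminant condition where the paper expands $-\lambda|\theta|^2 \geq -2\lambda(|\theta+\gamma^{-1}v|^2+|\gamma^{-1}v|^2)$; both variants yield identical constants.
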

\begin{proof}
    See Appendix~\ref{prf of lemma:3.2}.
\end{proof}

\begin{lemma}
\label{lemma:3.3}
    Let Assumptions~\ref{asm:A3}-\ref{asm:A5} hold. Then, we have, for any $0 < \eta \leq \eta_{\max}$ with $\eta_{\max}$ given in~\eqref{eta_max}, that
\begin{equation*}
\begin{aligned}
\sup _{t \geq 0} \mathbb{E}\left[\left|\theta_k^{\eta}\right|^2\right] \leq & C_\theta:=\frac{\int_{\mathbb{R}^{2 d}} \mathcal{V}(\theta, v) \mu_0(\mathrm{d}\theta, \mathrm{d} v)+\frac{4(d+A_c)}{\lambda}}{\frac{1}{8}(1-2 \lambda) \beta \gamma^2},\\
\sup _{t \geq 0} \mathbb{E}\left[\left|V_k^{\eta}\right|^2\right] \leq & C_v:=\frac{\int_{\mathbb{R}^{2 d}} \mathcal{V}(\theta, v) \mu_0(\mathrm{d} \theta, \mathrm{d} v)+\frac{4(d+A_c)}{\lambda}}{\frac{1}{4}(1-2 \lambda) \beta}.
\end{aligned}
\end{equation*}
\end{lemma}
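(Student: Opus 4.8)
The plan is to establish a one-step Lyapunov drift inequality for the discrete recursion~\eqref{SGHMC 1}-\eqref{SGHMC 2} and then iterate it, mirroring the continuous-time estimate in Lemma~\ref{lemma:3.2} while keeping track of the additional discretization error. Concretely, I would aim to prove that there exist constants $c_0, C_0 > 0$, depending only on $\beta, \gamma, \lambda, L, d, A_c$ and the moments of $X_0$, such that for every $0 < \eta \leq \eta_{\max}$ and every $n \in \N_0$,
\begin{equation*}
\E\!\left[\mathcal{V}(\theta_{n+1}^{\eta}, V_{n+1}^{\eta}) \,\middle|\, \mathcal{F}_n\right] \leq (1 - c_0\eta)\,\mathcal{V}(\theta_{n}^{\eta}, V_{n}^{\eta}) + C_0\eta,
\end{equation*}
where $\mathcal{F}_n$ denotes the sigma-algebra generated by $\theta_0, V_0, \mathcal{G}_n$ and $\xi_1, \ldots, \xi_n$, so that $(\theta_n^\eta, V_n^\eta)$ is $\mathcal{F}_n$-measurable while $X_{n+1}$ and $\xi_{n+1}$ are independent of $\mathcal{F}_n$.

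First I would substitute $\theta_{n+1}^\eta = \theta_n^\eta + \eta V_n^\eta$ and $V_{n+1}^\eta = V_n^\eta - \eta[\gamma V_n^\eta + H(\theta_n^\eta, X_{n+1})] + \sqrt{2\gamma\eta\beta^{-1}}\,\xi_{n+1}$ into the definition~\eqref{Lyapunov} of $\mathcal{V}$ and expand. Upon taking the conditional expectation given $\mathcal{F}_n$, the terms linear in $\xi_{n+1}$ vanish by zero mean and independence, the quadratic noise term contributes via $\E[|\xi_{n+1}|^2] = d$ (the source of the dimension dependence), and the unbiasedness $\E[H(\theta_n^\eta, X_{n+1}) \mid \mathcal{F}_n] = h(\theta_n^\eta)$ lets me replace $H$ by $h$ in every first-order cross term. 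The leading $O(\eta)$ part of the drift is then controlled by $-\langle h(\theta_n^\eta), \theta_n^\eta\rangle$-type contributions, for which I would invoke the local dissipativity of Remark~\ref{rmk:4}, $\langle h(\theta), \theta\rangle \geq 2\lambda\big(u(\theta) + \tfrac{\gamma^2}{4}|\theta|^2\big) - 2A_c/\beta$; this is precisely what converts the drift into $-c_0\eta\,\mathcal{V}(\theta_n^\eta, V_n^\eta)$ up to the additive constant.

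The remaining terms are of order $\eta^2$ and all involve $\gamma V_n^\eta + H(\theta_n^\eta, X_{n+1})$. I would bound their conditional second moments using the polynomial growth estimate of Remark~\ref{rmk:1}, $|H(\theta, x)| \leq (1+|x|)^{\rho+1}(L_1|\theta| + L_2) + F_*(x)$, together with the moment conditions $\E[|X_0|^{4(\rho+1)}] < \infty$ and $\E[|\bar K_1(X_0)|^2] < \infty$ of Assumption~\ref{asm:A2}. The step-size restriction $\eta \leq \eta_{\max}$, in particular the thresholds encoded through $K_1, K_2, K_3$ in~\eqref{eta_max}, is exactly what ensures these $\eta^2$-terms are absorbed by the first-order negative drift while keeping $c_0 > 0$; the discretization overhead is what inflates the additive constant from $d+A_c$ in Lemma~\ref{lemma:3.2} to $4(d+A_c)$ here.

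Taking full expectations and iterating the drift inequality, using $(1-c_0\eta)^n \leq 1$ and summing the geometric remainder, yields
\begin{equation*}
\sup_{n \in \N_0}\E\!\left[\mathcal{V}(\theta_{n}^{\eta}, V_{n}^{\eta})\right] \leq \E\!\left[\mathcal{V}(\theta_0, V_0)\right] + \frac{C_0}{c_0},
\end{equation*}
which is finite since $\E[|\theta_0|^4] + \E[|V_0|^4] < \infty$, and where $C_0/c_0 = 4(d+A_c)/\lambda$ after tracking the constants. Finally, I would extract the two stated moment bounds from a lower bound on $\mathcal{V}$: since $u \geq 0$, completing the square in the quadratic part of~\eqref{Lyapunov} (using $\lambda \leq 1/4$) gives $\mathcal{V}(\theta, v) \geq \tfrac{1}{8}(1-2\lambda)\beta\gamma^2|\theta|^2$ and $\mathcal{V}(\theta, v) \geq \tfrac{1}{4}(1-2\lambda)\beta|v|^2$, which combined with the uniform bound above deliver $C_\theta$ and $C_v$. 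I expect the main obstacle to be the careful bookkeeping of the $O(\eta^2)$ terms carrying $H(\theta_n^\eta, X_{n+1})$: one must verify that the polynomial-growth and moment estimates combine with the precise thresholds defining $\eta_{\max}$ so that the contraction rate $c_0$ stays strictly positive and the additive constant comes out as $4(d+A_c)/\lambda$ rather than something larger.
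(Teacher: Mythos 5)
Your proposal is correct and follows essentially the same route as the paper's proof: a one-step drift inequality for $\mathcal{V}$ along the discrete recursion (the paper works with $M_2(k):=\mathbb{E}[\mathcal{V}(\theta_k^\eta,V_k^\eta)]/\beta$ rather than conditionally on $\mathcal{F}_n$, which is immaterial), using unbiasedness of $H$ for the cross terms, Remark~\ref{rmk:4} for the first-order dissipative drift, Remark~\ref{rmk:1} and Assumption~\ref{asm:A2} to bound the $O(\eta^2)$ terms, the thresholds $K_1,K_2,K_3$ in $\eta_{\max}$ to absorb them into a contraction with rate $\gamma\lambda\eta/2$, iteration to get the uniform bound $\mathbb{E}[\mathcal{V}(\theta_0,V_0)]+4(d+A_c)/\lambda$, and finally the lower bounds $\mathcal{V}(\theta,v)\geq \tfrac{1}{8}(1-2\lambda)\beta\gamma^2|\theta|^2$ and $\mathcal{V}(\theta,v)\geq\tfrac{1}{4}(1-2\lambda)\beta|v|^2$ from~\eqref{func:V}. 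The one point your sketch glosses over (and which the paper handles explicitly in~\eqref{thm_pf:U}) is that $\mathcal{V}$ contains the non-quadratic term $u(\theta)$, so its one-step increment cannot be obtained by algebraic expansion but requires the fundamental theorem of calculus together with the Lipschitz bound on $h$ from Remark~\ref{rmk:2}; this is a routine step fully consistent with your plan.
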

\begin{proof}
    See Appendix~\ref{prf of lemma:3.3}.
\end{proof}

\begin{lemma}\label{lemma:3.4}
    Let Assumptions~\ref{asm:A3}-\ref{asm:A5} hold. Moreover, for any $0 < \eta \leq \eta_{\max}$ with $\eta_{\max}$ given in~\eqref{eta_max}, set $T := \lfloor 1/\eta\rfloor$. Then, we have, for any $0 < \eta \leq \eta_{\max}$ with $\eta_{\max}$ defined in~\eqref{eta_max}, that
\begin{equation}\label{lemma3.4.12}
\begin{aligned}
\sup _{n \in \mathbb{N}_0} \sup _{t \in\left[n T,(n+1) T\right]} \mathbb{E}\left[\left|\bar{\zeta}_t^{\eta, n}\right|^2\right] \leq C_\zeta:=\frac{\int_{\mathbb{R}^{2 d}} \mathcal{V}(\theta, v) \mu_0(\mathrm{d}\theta, \mathrm{d} v)+\frac{5\left(d+A_c\right)}{\lambda}}{\frac{1}{8}(1-2 \lambda) \beta \gamma^2}, \\
\sup _{n \in \mathbb{N}_0} \sup _{t \in[n T,(n+1) T]} \mathbb{E}\left[\left|\bar{Z}_t^{\eta, n}\right|^2\right] \leq C_Z:=\frac{\int_{\mathbb{R}^{2 d}} \mathcal{V}(\theta, v) \mu_0(\mathrm{d} \theta, \mathrm{d} v)+\frac{5\left(d+A_c\right)}{\lambda}}{\frac{1}{4}(1-2 \lambda) \beta}.
\end{aligned}
\end{equation}
\end{lemma}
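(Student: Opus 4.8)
The plan is to exploit the fact that, by Definition~\ref{def:1}, the auxiliary process $(\bar{\zeta}_t^{\eta,n}, \bar{Z}_t^{\eta,n})_{t\geq nT}$ solves exactly the scaled underdamped Langevin dynamics~\eqref{SKLD}, only started at time $nT$ from the \emph{random} SGHMC state $(\bar{\theta}_{nT}^{\eta}, \bar{V}_{nT}^{\eta})$ rather than from $\mu_0$. Since~\eqref{SKLD} is time-homogeneous, the Lyapunov drift computation underlying Lemma~\ref{lemma:3.2} carries over verbatim; the only genuinely new ingredient is controlling the expected Lyapunov value at the random starting point, which I would import from the proof of Lemma~\ref{lemma:3.3}.

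First, I would apply It\^o's formula to $\mathcal{V}(\bar{\zeta}_t^{\eta,n}, \bar{Z}_t^{\eta,n})$ with $\mathcal{V}$ as in~\eqref{Lyapunov}. Using the local dissipativity in the form of Remark~\ref{rmk:4} together with the quadratic structure of $\mathcal{V}$, the generator of~\eqref{SKLD} (which carries the extra factor $\eta$ coming from the time scaling) satisfies, exactly as in the proof of Lemma~\ref{lemma:3.2}, a drift inequality
\begin{equation*}
\frac{\mathrm{d}}{\mathrm{d}t}\mathbb{E}\left[\mathcal{V}(\bar{\zeta}_t^{\eta,n}, \bar{Z}_t^{\eta,n})\right] \leq -c\eta\, \mathbb{E}\left[\mathcal{V}(\bar{\zeta}_t^{\eta,n}, \bar{Z}_t^{\eta,n})\right] + c\eta\,\lambda^{-1}(d+A_c)
\end{equation*}
for a suitable $c>0$. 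A Gr\"onwall argument then gives, for every $t\in[nT,(n+1)T]$,
\begin{equation*}
\mathbb{E}\left[\mathcal{V}(\bar{\zeta}_t^{\eta,n}, \bar{Z}_t^{\eta,n})\right] \leq e^{-c\eta(t-nT)}\,\mathbb{E}\left[\mathcal{V}(\bar{\theta}_{nT}^{\eta}, \bar{V}_{nT}^{\eta})\right] + \frac{d+A_c}{\lambda}.
\end{equation*}

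Next I would insert the bound on the starting value. The proof of Lemma~\ref{lemma:3.3} establishes, before its conversion to second moments, the uniform estimate $\sup_{k}\mathbb{E}[\mathcal{V}(\bar{\theta}_k^{\eta}, \bar{V}_k^{\eta})] \leq \int_{\mathbb{R}^{2d}}\mathcal{V}\,\mathrm{d}\mu_0 + 4(d+A_c)/\lambda$; applying this at $k=nT$ and using $e^{-c\eta(t-nT)}\leq 1$ yields, uniformly in $n\in\mathbb{N}_0$ and $t\in[nT,(n+1)T]$,
\begin{equation*}
\mathbb{E}\left[\mathcal{V}(\bar{\zeta}_t^{\eta,n}, \bar{Z}_t^{\eta,n})\right] \leq \int_{\mathbb{R}^{2d}}\mathcal{V}\,\mathrm{d}\mu_0 + \frac{5(d+A_c)}{\lambda},
\end{equation*}
where the constant $5=4+1$ is precisely the source of the numerators appearing in $C_\zeta$ and $C_Z$. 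Finally, I would convert this Lyapunov estimate into the claimed moment bounds: since $u\geq 0$, the lower bounds $\mathcal{V}(\theta,v)\geq \tfrac{1}{8}(1-2\lambda)\beta\gamma^2|\theta|^2$ and $\mathcal{V}(\theta,v)\geq \tfrac{1}{4}(1-2\lambda)\beta|v|^2$ hold for $\lambda\leq 1/4$ (the first from $|\theta+\gamma^{-1}v|^2+|\gamma^{-1}v|^2\geq\tfrac12|\theta|^2$, the second from nonnegativity of the form $(1-\lambda)|\theta|^2+2\langle\theta,\gamma^{-1}v\rangle+(1+2\lambda)|\gamma^{-1}v|^2$, valid whenever $\lambda(1-2\lambda)\geq0$); dividing the displayed bound by these coefficients produces $C_\zeta$ and $C_Z$.

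The step requiring the most care is verifying that the drift inequality for $\mathcal{V}$ holds with exactly the same constants as in Lemma~\ref{lemma:3.2} despite the random (rather than $\mu_0$-distributed) initialization, and extracting the uniform-in-$n$ bound $\int_{\mathbb{R}^{2d}}\mathcal{V}\,\mathrm{d}\mu_0 + 4(d+A_c)/\lambda$ on $\mathbb{E}[\mathcal{V}(\bar{\theta}_{nT}^{\eta},\bar{V}_{nT}^{\eta})]$ from the argument behind Lemma~\ref{lemma:3.3}; once these are in hand, the remaining Gr\"onwall estimate and the conversion to second moments are routine.
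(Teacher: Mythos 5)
Your proposal is correct and follows essentially the same route as the paper: the paper likewise applies the continuous-time Lyapunov/Gr\"onwall estimate of Lemma~\ref{lemma:3.2} to the process restarted at $(\bar{\theta}_{nT}^{\eta},\bar{V}_{nT}^{\eta})$, bounds $\mathbb{E}[\mathcal{V}(\bar{\theta}_{nT}^{\eta},\bar{V}_{nT}^{\eta})]$ by $\int_{\mathbb{R}^{2d}}\mathcal{V}\,\mathrm{d}\mu_0+4(d+A_c)/\lambda$ via the $M_2$-recursion from the proof of Lemma~\ref{lemma:3.3}, and combines the two to obtain the numerator $5(d+A_c)/\lambda$ before dividing by the Lyapunov lower bounds. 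The only cosmetic difference is that you re-derive the It\^o/Gr\"onwall drift inequality explicitly rather than citing Lemma~\ref{lemma:3.2}, and your quadratic-form justification of the lower bound $\mathcal{V}(\theta,v)\geq\tfrac14(1-2\lambda)\beta|v|^2$ (nonnegativity under $\lambda(1-2\lambda)\geq 0$) is equivalent to the paper's estimate~\eqref{func:V}.
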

\begin{proof}
    See Appendix~\ref{prf of lemma:3.4}.
\end{proof}

\begin{lemma}\label{lemma:3.5}
Let Assumptions~\ref{asm:A3}-\ref{asm:A5} hold. Then, we obtain, for any $0 \leq \eta \leq \eta_{\max}$ with $\eta_{\max}$ given in~\eqref{eta_max}, that
\begin{equation*}
\begin{aligned}
    \sup _{k \in \mathbb{N}_0} \mathbb{E}\left[\mathcal{V}^2\left(\theta_k^\eta, V_k^\eta\right)\right] \leq \mathbb{E}\left[\mathcal{V}^2\left(\theta_0, v_0\right)\right]+\frac{2 D}{\gamma \lambda}:= C_{\mathcal{V}},
\end{aligned}
\end{equation*}
\begin{equation*}
\begin{aligned}
    \sup _{k \in \mathbb{N}_0} \mathbb{E}\left[\mathcal{V}^2\left(\bar{\zeta}_t^{\eta, k}, \bar{Z}_t^{\eta, k}\right)\right] \leq  C_{\mathcal{V}}^{\prime},
\end{aligned}
\end{equation*}
where $C_{\mathcal{V}}^{\prime}$ is given by~\eqref{c_v_prime}.
\end{lemma}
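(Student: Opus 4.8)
The plan is to derive a geometric drift inequality for the \emph{square} of the Lyapunov function $\mathcal{V}$ in \eqref{Lyapunov}, separately for the discrete SGHMC recursion and for the continuous auxiliary process of Definition~\ref{def:1}, and then to iterate (resp.\ integrate) it. Both inequalities upgrade to second order the first-order drift estimates that already underlie Lemma~\ref{lemma:3.3} and Lemma~\ref{lemma:3.4}. The common starting point is the one-step/infinitesimal contraction of the form $\mathcal{V}\mapsto(1-c\eta)\mathcal{V}+D'\eta$, which follows from the dissipativity bound in Remark~\ref{rmk:4}, namely $\langle h(\theta),\theta\rangle\geq 2\lambda\big(u(\theta)+\tfrac{\gamma^2}{4}|\theta|^2\big)-2A_c/\beta$, once the cross terms $\langle h(\theta),v\rangle$ produced by differentiating $\mathcal{V}$ along the drift cancel.

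For the discrete bound, write $\mathcal{V}_k:=\mathcal{V}(\theta_k^\eta,V_k^\eta)$ and $\Delta_k:=\mathcal{V}_{k+1}-\mathcal{V}_k$, so that $\mathbb{E}[\mathcal{V}_{k+1}^2\mid\mathcal{G}_k]=\mathcal{V}_k^2+2\mathcal{V}_k\,\mathbb{E}[\Delta_k\mid\mathcal{G}_k]+\mathbb{E}[\Delta_k^2\mid\mathcal{G}_k]$. The first-order drift gives $\mathbb{E}[\Delta_k\mid\mathcal{G}_k]\leq-c\eta\,\mathcal{V}_k+D'\eta$ with $c=\gamma\lambda/2$, while a direct expansion of $\Delta_k$ using \eqref{SGHMC 1}--\eqref{SGHMC 2} shows that the only $O(\sqrt{\eta})$ contribution comes from the Gaussian increment entering $V_{k+1}^\eta$; squaring and conditioning therefore yields $\mathbb{E}[\Delta_k^2\mid\mathcal{G}_k]\leq\eta(a\mathcal{V}_k+b)$ for explicit $a,b$, where the smoothness of $u$ (Remark~\ref{rmk:2}), the growth bound on $H$ (Remark~\ref{rmk:1}), the unbiasedness $\mathbb{E}[H(\theta,X_0)]=h(\theta)$, and the moment condition in Assumption~\ref{asm:A2} are used to dominate the contribution of the discontinuous $H$. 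Combining the two estimates and absorbing the linear-in-$\mathcal{V}_k$ remainder by Young's inequality produces $\mathbb{E}[\mathcal{V}_{k+1}^2\mid\mathcal{G}_k]\leq(1-c\eta)\mathcal{V}_k^2+D\eta$; taking expectations, iterating, and summing the geometric series $\sum_j(1-c\eta)^j\leq1/(c\eta)$ gives the uniform bound $\mathbb{E}[\mathcal{V}_k^2]\leq\mathbb{E}[\mathcal{V}^2(\theta_0,v_0)]+D/c=\mathbb{E}[\mathcal{V}^2(\theta_0,v_0)]+2D/(\gamma\lambda)=C_{\mathcal{V}}$.

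For the auxiliary process $(\bar{\zeta}_t^{\eta,k},\bar{Z}_t^{\eta,k})$, which by Definition~\ref{def:1} solves the scaled underdamped Langevin SDE \eqref{SKLD} driven by the \emph{continuous} gradient $h$, It\^o's formula applies directly. Writing $\mathcal{L}^\eta$ for the generator of \eqref{SKLD}, one has $\mathcal{L}^\eta\mathcal{V}^2=2\mathcal{V}\,\mathcal{L}^\eta\mathcal{V}+2\gamma\eta\beta^{-1}|\nabla_v\mathcal{V}|^2$. The computation underlying the first-order drift gives $\mathcal{L}^\eta\mathcal{V}\leq-\gamma\lambda\eta\,\mathcal{V}+\gamma\eta(A_c+d)$, and since $u\geq0$ while the quadratic part of $\mathcal{V}$ is positive definite for $\lambda\leq\tfrac14$ (its block determinant equals $\gamma^{-2}(1-2\lambda)>0$), $\mathcal{V}$ is coercive and $|\nabla_v\mathcal{V}|^2\leq C\mathcal{V}$. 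Hence $\mathcal{L}^\eta\mathcal{V}^2\leq-2\gamma\lambda\eta\,\mathcal{V}^2+\gamma\eta\big(2(A_c+d)+2C\beta^{-1}\big)\mathcal{V}$, and Young's inequality again yields $\mathcal{L}^\eta\mathcal{V}^2\leq-\gamma\lambda\eta\,\mathcal{V}^2+D\eta$. Taking expectations and applying Gr\"onwall's inequality over $[kT,(k+1)T]$, together with the control of $\mathbb{E}[\mathcal{V}^2]$ at the grid point $kT$ provided by the discrete bound, gives the uniform estimate $C_{\mathcal{V}}'$ recorded in \eqref{c_v_prime}.

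The main obstacle is the discrete one-step estimate: because $H$ is discontinuous, $\Delta_k$ cannot be controlled pathwise, and one must work entirely through conditional expectations, using only unbiasedness together with the polynomial growth and moment bounds to dominate every $\mathcal{V}_k$-weighted error term by $\eta(a\mathcal{V}_k+b)$ with the correct contraction rate. This forces the restriction $\eta\leq\eta_{\max}$ from \eqref{eta_max} so that the $O(\eta^2)$ discretization terms are absorbed into the negative $\mathcal{V}_k^2$ term. By contrast, the continuous estimate is routine once the coercivity of $\mathcal{V}$ and the first-order drift are available, since the auxiliary dynamics involve the smooth gradient $h$ rather than $H$.
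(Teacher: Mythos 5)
Your overall two-part strategy is the same as the paper's: a one-step geometric drift inequality for $\mathcal{V}^2$ along the SGHMC iterates, iterated to give $C_{\mathcal{V}}$, followed by an It\^o argument for the auxiliary process anchored at the grid points through the discrete bound. For the discrete half, the paper squares the pathwise one-step inequality $\mathcal{V}_{k+1}/\beta \leq (1-\gamma\lambda\eta)\mathcal{V}_k/\beta + (\text{error terms}) + \Sigma_k$ and conditions on $(\theta_k^\eta,V_k^\eta)$, whereas you decompose $\mathbb{E}[\mathcal{V}_{k+1}^2\mid\cdot]$ into conditional mean and conditional second moment of the increment; these are equivalent organizations of the same estimate. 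For the continuous half you genuinely deviate: you exploit the negative quadratic drift together with $|\nabla_v\mathcal{V}|^2\leq C\beta\,\mathcal{V}$ (valid by \eqref{func:V}, since $\nabla_v\mathcal{V}=\beta v+\tfrac{\beta\gamma}{2}\theta$) to obtain a dissipative inequality that is uniform in time, whereas the paper bounds $|\nabla_v\mathcal{V}|^2$ by $|\theta|^2$ and $|v|^2$, invokes the moment bounds of Lemma~\ref{lemma:3.4}, keeps every term with a positive sign, and runs Gr\"onwall over the interval $[kT,(k+1)T]$ of length at most $1/\eta$, which is why \eqref{c_v_prime} carries the factor $e^{2\lambda\gamma}$. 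Your route is sound and arguably cleaner, but it yields a different explicit constant than \eqref{c_v_prime}; since the lemma is stated with that particular constant, you would need to either restate the bound with your constant or redo the paper's bookkeeping.

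There are two genuine, though fixable, defects. First, the displayed claim $\mathbb{E}[\Delta_k^2\mid\mathcal{G}_k]\leq\eta(a\mathcal{V}_k+b)$ is false as stated: the squared drift contributes terms such as $\eta^2|V_k^\eta|^2|H(\theta_k^\eta,X_{k+1})|^2$, which after conditioning are of size $\eta^2\,\mathbb{E}\big[(1+|X_0|)^{2(\rho+1)}\big]L_1^2|V_k^\eta|^2|\theta_k^\eta|^2$, i.e.\ of order $\eta^2\mathcal{V}_k^2$, not $\eta\mathcal{V}_k$. This is precisely why the paper introduces $\tilde{K}$ in \eqref{tilde_K} and imposes $\eta\leq\lambda\gamma/(2\tilde{K})$ in \eqref{eta_max}. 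Your closing paragraph acknowledges that $O(\eta^2)$ terms must be absorbed into the negative $\mathcal{V}_k^2$ term, so this is an internal inconsistency of the writeup rather than a conceptual error, but the correct intermediate bound must read $\mathbb{E}[\Delta_k^2\mid\cdot]\leq \eta(a\mathcal{V}_k+b)+\eta^2(a'\mathcal{V}_k^2+b')$, and the absorption then uses exactly the step-size restriction you cite. Second, in the continuous part you pass expectations through It\^o's formula without justification: a priori the stochastic integral $\int \mathcal{V}\,\nabla_v\mathcal{V}\,\mathrm{d}B^\eta$ is only a local martingale, and one needs the localization the paper carries out (the stopping times $\tau_n$ in \eqref{stopping time}, the square-integrability check via Lemma~\ref{lemma:3.4}, and Fatou's lemma) before the Gr\"onwall step. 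Both repairs are routine but necessary for a complete proof.
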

\begin{proof}
    See Appendix~\ref{prf of lemma:3.5}.
\end{proof}

Next, following~\cite{eberle2019couplings, raginsky2017non}, we provide the contraction property of the underdamped SDE in the semi-metric defined below. For any $(\theta, v),\left(\theta^{\prime}, v^{\prime}\right) \in \mathbb{R}^{d} \times \mathbb{R}^{d} $, we denote by:
$$
\begin{aligned}
& r\left((\theta, v),\left(\theta^{\prime}, v^{\prime}\right)\right)=\alpha_c\left\lvert \theta-\theta^{\prime}\right\rvert + \left\lvert\theta-\theta^{\prime}+\gamma^{-1}\left(v-v^{\prime}\right)\right\rvert, \\
& \rho\left((\theta, v),\left(\theta^{\prime}, v^{\prime}\right)\right)=g\left(r\left((\theta, v),(\theta^{\prime}, v^{\prime})\right)\right) \cdot\left(1+\varepsilon_c \mathcal{V}(\theta, v)+\varepsilon_c \mathcal{V}\left(\theta^{\prime}, v^{\prime}\right)\right),
\end{aligned}
$$
where $\alpha_c$ and $\varepsilon_c$ are appropriately chosen positive constants (see, e.g.,~\cite[Theorem 2.3]{eberle2019couplings}) and $g:\left[0, \infty\right) \rightarrow \left[0, \infty\right)$ is a continuous, non-decreasing concave function such that $g(0)=0, g$ is twice continuously differentiable on $\left(0, R_1\right)$ for some constant $R_1>0$ with right-sided derivative $g_{+}^{\prime}(0)=1$ and left-sided derivative $g_{-}^{\prime}\left(R_1\right)>0$, and $g$ is constant on $\left[R_1, \infty\right)$. For any two probability measures $\mu, \nu$ on $\mathbb{R}^{2 d}$, we define
$$
\mathcal{W}_\rho(\mu, \nu):=\inf _{\left(\theta_1, V_1\right) \sim \mu,\left(\theta_2, V_2\right) \sim \nu} \mathbb{E}\left[\rho\left(\left(\theta_1, V_1\right),\left(\theta_2, V_2\right)\right)\right].
$$
We note that $\rho$ and $\mathcal{W}_\rho$ are semi-metrics but not necessarily metrics.
\begin{proposition}\label{prop:3.6}
Let Assumptions~\ref{asm:A3}-\ref{asm:A5} hold and let the initial value of the scaled underdamped Langevin SDEs $\left(\theta_t, V_t\right)$ and $\left(\theta_t^{\prime}, V_t^{\prime}\right)$ be $\left(\theta_0, V_0\right) \sim \mu$ and $\left(\theta_0^{\prime}, V_0^{\prime}\right) \sim \nu$, respectively. Then, there exist constants $\dot{c}, \dot{C} > 0$ such that, for any $t > 0$ and $1 \leq p \leq 2$, 
$$
W_p\left(\mathcal{L}\left(\theta_t, V_t\right), \mathcal{L}\left(\theta_t^{\prime}, V_t^{\prime}\right)\right) \leq \dot{C}\mathcal{W}_\rho(\mu, \nu)^{1/p} e^{-\dot{c} t}, 
$$
where the constants $\dot{c}$ and $\dot{C}$ are given in Table~\ref{table 3}.
\end{proposition}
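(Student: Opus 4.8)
The plan is to prove the statement in two stages: first an exponential contraction in the semimetric $\mathcal{W}_\rho$ via a coupling argument following \cite{eberle2019couplings}, and then a transfer of this contraction to the Wasserstein-$p$ distance by comparing $\rho$ with the Euclidean distance on $\mathbb{R}^{2d}$, using the second-moment bounds of Lemma~\ref{lemma:3.2}. The structural fact that makes \cite{eberle2019couplings} applicable here is that, although the stochastic gradient $H$ is only assumed to satisfy the continuity-\emph{in-average} condition of Assumption~\ref{asm:A4}, the true drift $h=\nabla u$ of the continuous-time diffusion~\eqref{underdamped Langevin SDE} is \emph{globally Lipschitz} by Remark~\ref{rmk:2}. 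Hence~\eqref{underdamped Langevin SDE} admits a unique strong solution and falls within the framework for which coupling contraction of kinetic Langevin dynamics is available; the discontinuity of $H$ plays no role at this stage and is only felt in the separate discretization estimates. The scaling in~\eqref{SKLD} is a deterministic time change $t\mapsto\eta t$, which only rescales the contraction rate and is harmless.

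\textbf{Stage 1 (contraction in $\mathcal{W}_\rho$).} I would run two copies $(\theta_t,V_t)$ and $(\theta_t',V_t')$ of the SDE under a coupling of the driving Brownian motions interpolating between reflection coupling, when the processes are separated, and synchronous coupling, when they are close, with the switch governed by Lipschitz weights as in \cite[Section 2]{eberle2019couplings}. Working in the twisted coordinates underlying $r((\theta,v),(\theta',v'))=\alpha_c|\theta-\theta'|+|\theta-\theta'+\gamma^{-1}(v-v')|$, one applies It\^o's formula to $g(r_t)$ and then to $\rho_t=g(r_t)(1+\varepsilon_c\mathcal{V}(\theta_t,V_t)+\varepsilon_c\mathcal{V}(\theta_t',V_t'))$. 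The crucial point is that the noise enters only through the velocity, so the reflection part yields a genuinely negative second-order contribution precisely in the coordinate $\theta-\theta'+\gamma^{-1}(v-v')$; concavity of $g$ converts this into contraction of $r_t$ on $\{r_t\le R_1\}$, while outside this region the Lyapunov drift of $\mathcal{V}$ in~\eqref{Lyapunov} — available thanks to the dissipativity of Remarks~\ref{rmk:3} and~\ref{rmk:4} — forces the weighted quantity back toward the diagonal. Choosing $\alpha_c$, $\varepsilon_c$, $R_1$ and the profile of $g$ as in \cite[Theorem 2.3]{eberle2019couplings}, all contributions combine into $\frac{\mathrm{d}}{\mathrm{d}t}\mathbb{E}[\rho_t]\le-\dot{c}\,\mathbb{E}[\rho_t]$; Gronwall's inequality, choosing the initial coupling to nearly attain $\mathcal{W}_\rho(\mu,\nu)$ and taking the infimum over admissible couplings, yields $\mathcal{W}_\rho(\mathcal{L}(\theta_t,V_t),\mathcal{L}(\theta_t',V_t'))\le e^{-\dot{c}t}\mathcal{W}_\rho(\mu,\nu)$.

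\textbf{Stage 2 (from $\mathcal{W}_\rho$ to $W_p$).} For the coupling above I would bound $\mathbb{E}[|(\theta_t,V_t)-(\theta_t',V_t')|^p]$ by a multiple of $\mathbb{E}[\rho_t]$ for $1\le p\le 2$; since $W_p^p\le\mathbb{E}[|(\theta_t,V_t)-(\theta_t',V_t')|^p]$, the claim follows after raising to the power $1/p$. The comparison $\rho\gtrsim|(\theta,v)-(\theta',v')|^p$ splits at $R_1$: by equivalence of norms $|(\theta,v)-(\theta',v')|\le C\,r$, and for $r\le R_1$ the concavity bound $g(r)\ge (g(R_1)/R_1)\,r$ gives $|(\theta,v)-(\theta',v')|^p\le C R_1^{p-1}r\le C'g(r)\le C'\rho$; for $r>R_1$, $g(r)=g(R_1)$ is constant, but the weight dominates because $\mathcal{V}(\theta,v)+\mathcal{V}(\theta',v')\gtrsim|\theta|^2+|v|^2+|\theta'|^2+|v'|^2-C\gtrsim r^2-C$, and $r^p\le r^2+1$ for $p\le 2$ again yields $|(\theta,v)-(\theta',v')|^p\le C'\rho$. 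This is exactly where the restriction $p\le 2$ enters, as $\mathcal{V}$ is quadratic. Tracking constants through both stages produces $\dot{c}$ and $\dot{C}$ as recorded in Table~\ref{table 3}.

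\textbf{Main obstacle.} The delicate step is Stage 1: the simultaneous choice of $g$, $\alpha_c$, $\varepsilon_c$ and $R_1$ so that the negative reflection contribution, the Lipschitz drift error controlled by $L$ (Remark~\ref{rmk:2}), and the Lyapunov drift balance into a strictly negative rate $-\dot{c}$. This is the hypocoercive heart of the estimate, where the absence of noise in the position variable must be compensated through the twisted metric $r$. By contrast, Stage 2 is a routine comparison of (semi)metrics once the quadratic growth of $\mathcal{V}$ and the moment bounds of Lemma~\ref{lemma:3.2} are in hand.
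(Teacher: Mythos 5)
Your proposal is correct and follows essentially the same route as the paper: the paper's proof simply observes that Remark~\ref{rmk:2} (global Lipschitzness of $h$) and Remark~\ref{rmk:4} (dissipativity) verify Assumption~2.1 of \cite{eberle2019couplings}, and then invokes the argument of \cite[Theorem 4.1]{chau2022stochastic}, which is precisely your two stages — the reflection/synchronous coupling contraction in $\mathcal{W}_\rho$ followed by the comparison of $\rho$ with the Euclidean distance (splitting at $R_1$ and using the quadratic lower bound on $\mathcal{V}$ for $1\le p\le 2$). Your write-up just makes explicit what the paper delegates to those two citations.
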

\begin{proof}
By using Remark~\ref{rmk:2} and Remark~\ref{rmk:4}, the Assumption 2.1 of~\cite{eberle2019couplings} is satisfied. Hence, the result follows by applying the same argument as that in the proof of~\cite[Theorem 4.1]{chau2022stochastic}.
\end{proof}
\begin{table*}[t]
\renewcommand{\arraystretch}{2}
\centering
\begin{threeparttable}
\scriptsize
\begin{tabular}{@{}ccl@{}} 
\toprule 
\toprule 
\multicolumn{1}{c}{} & \multicolumn{1}{c}{Constant} & \multicolumn{1}{c}{Full expression} \\ 
\midrule
	\multirow{2}{*}{Lemma~\ref{lemma:3.2}}			        &$C_\theta^c$		& $\int_{\mathbb{R}^{2 d}} \mathcal{V}(\theta, v) \mu_0(\mathrm{~d} \theta, \mathrm{d} v)+\frac{d+A_c}{\lambda}\left(\frac{1}{8}(1-2 \lambda) \beta \gamma^2\right)^{ - 1} $		 \\ \cline{2-3}
                                                                &$C_v^c$		& $\int_{\mathbb{R}^{2 d}} \mathcal{V}(\theta, v) \mu_0(\mathrm{~d} \theta, \mathrm{d} v)+\frac{d+A_c}{\lambda}\left(\frac{1}{4}(1-2 \lambda) \beta\right)^{-1}$	  \\ \hline
                                                                
	\multirow{2}{*}{Lemma~\ref{lemma:3.3}}			        &$C_\theta$		& $\int_{\mathbb{R}^{2 d}} \mathcal{V}(\theta, v) \mu_0(\mathrm{~d} \theta, \mathrm{d} v)+\frac{4\left(d+A_c\right)}{\lambda}(\frac{1}{8}(1-2 \lambda) \beta \gamma^2)^{-1}$		  \\ \cline{2-3}
                                                                &$C_v$		& $\int_{\mathbb{R}^{2 d}} \mathcal{V}(\theta, v) \mu_0(\mathrm{~d} \theta, \mathrm{d} v)+\frac{4\left(d+A_c\right)}{\lambda}(\frac{1}{4}(1-2 \lambda) \beta)^{-1}$		  \\ \hline
                                                                
	\multirow{2}{*}{Lemma~\ref{lemma:3.4}}			        &$C_\zeta$		& $\int_{\mathbb{R}^{2 d}} \mathcal{V}(\theta, v) \mu_0(d \theta, d v)+\frac{5\left(d+A_c\right)}{\lambda}(\frac{1}{8}(1-2 \lambda) \beta \gamma^2)^{-1}$  \\ \cline{2-3}
                                                                &$C_Z$		& $\int_{\mathbb{R}^{2 d}} \mathcal{V}(\theta, v) \mu_0(d \theta, d v)+\frac{5\left(d+A_c\right)}{\lambda}(\frac{1}{4}(1-2 \lambda) \beta)^{-1}$		  \\ \hline
                                                                
	\multirow{6}{*}{Lemma~\ref{lemma:3.5}}			        &$C_{\mathcal{V}}$		& $\int_{\mathbb{R}^{2 d}} \mathcal{V}^2(\theta, v) \mu_0(d \theta, d v)+\frac{2 D}{\gamma \lambda}$  \\ \cline{2-3}
                                                                &$C_{\mathcal{V}}^{\prime}$		& $\bigg(C_{\mathcal{V}} + 2\beta\gamma\left(d+A_c\right)\left( u_0+\frac{|h(0)|^2}{2 L}\right) + \gamma \beta\left(8+\frac{3\left(d+A_c\right)}{2}\right) C_{Z} + 2\gamma \beta\left(\gamma^2+\left(d+A_c\right)\left(L+\frac{3\gamma^2}{4}\right)\right)C_\zeta \bigg)e^{2\lambda\gamma}$		 \\ \cline{2-3}
                                                                &\multirow{4}{*}{$D$}		& $(2\gamma A_c + 2\gamma d+6 \max \left\{\frac{5 \gamma d}{\frac{1}{8}(1-2 \lambda)}, \frac{3 \gamma d\left(\gamma^2+5 L_1^2 \mathbb{E}\left[\left(1+\left|X_0\right|\right)^{2(\rho+1)}\right]\right)}{\frac{1}{16}(1-2 \lambda) \gamma^2}\right\})(\mathbb{E}\left[\mathcal{V}\left(\theta_0, V_0\right)\right]+\frac{4\left(A_c+d\right)}{\lambda})$		\\
      														& &$+2 (2+\gamma)\left(\mathbb{E}\left[\left(1+\left|X_0\right|\right)^{2(\rho+1)}\right] L_2^2+\mathbb{E}\left[F_*^2\left(X_0\right)\right]\right)(\mathbb{E}\left[\mathcal{V}\left(\theta_0, V_0\right)\right]+\frac{4\left(A_c+d\right)}{\lambda}) \beta +16 \gamma^2 A_c^2+48 \gamma^2 d^2$ \\& &$  +\bigg(96\left(1+\frac{\gamma}{2}\right)^2\left(|h(0)|^4+4 L_2^4 \mathbb{E}\left[\left(1+\left|X_0\right|\right)^{4(\rho+1)}\right]+4 \mathbb{E}\left[F_*^4\left(X_0\right)\right]\right)+64\left(1+\frac{\gamma}{2}\right)^2$ \\ & &$ \times \bigg(L_2^4 \mathbb{E}\left[\left(1+\left|X_0\right|\right)^{4(\rho+1)}\right]+\mathbb{E}\left[F_*^4\left(X_0\right)\right]\bigg)\bigg) \beta^2+\left(90 \gamma d L_2^2 \mathbb{E}\left[\left(1+\left|X_0\right|\right)^{2(\rho+1)}\right]+90 \gamma d \mathbb{E}\left[F_*^2\left(X_0\right)\right]\right) \beta$ \\ \hline

	\multirow{6}{*}{Proposition~\ref{prop:3.6}}			     &$\dot{c}$		& $\frac{\gamma}{384 p} \min \left\{\lambda_c L \gamma^{-2}, \Lambda_c^{1 / 2} e^{-\Lambda_c} L \gamma^{-2}, \Lambda_c^{1 / 2} e^{-\Lambda_c}\right\}$  \\ \cline{2-3}			                 &$\dot{C}$		& $2^{1 / p} e^{2 / p+\Lambda_c / p} \frac{1+\gamma}{\min \left\{1, \alpha_c\right\}}\left(\max \left\{1,4 \frac{\max \left\{1, R_1^{p-2}\right\}}{\min \left\{1, R_1\right\}}\left(1+2 \alpha_c+2 \alpha_c^2\right)\left(d+A_c\right) \beta^{-1} \gamma^{-1} \dot{c}^{-1}\right\}\right)^{1 / p}$  \\ \cline{2-3}
 
                                                                &$\Lambda_c$		& $\frac{12}{5}\left(1+2 \alpha_c+2 \alpha_c^2\right)\left(d+A_c\right) L \gamma^{-2} \lambda^{-1}(1-2 \lambda)^{-1}$  \\ \cline{2-3}
                                                                
                                                                &$\varepsilon_c$		& $\gamma^{-1} \dot{c} /\left(d+A_c\right)$  \\ \cline{2-3}

                                                                &$R_1$		& $4 \cdot(6 / 5)^{1 / 2}\left(1+2 \alpha_c+2 \alpha_c^2\right)^{1 / 2}\left(d+A_c\right)^{1 / 2} \beta^{-1 / 2} \gamma^{-1}\left(\lambda-2 \lambda^2\right)^{-1 / 2}$  \\ \cline{2-3}
                                                                
                                                                &$\alpha_c$		& $\left(1+\Lambda_c^{-1}\right) L \gamma^{-2}>0$		  \\ 
                                                                \bottomrule
                                                                 \bottomrule
    \end{tabular}
    \end{threeparttable}
\caption{Explicit expressions of the constants in Section~\ref{subsec_5.2}.}
\label{table 3}
\end{table*}

\subsection{Proof of main results}\label{sec:5}
\noindent To provide a non-asymptotic upper bound for $W_p\left(\mathcal{L}\left(\bar{\theta}_t^\eta, \bar{V}_t^\eta\right), \bar{\pi}_\beta\right)$ for $p = 1, 2$, we first consider the following splitting using the scaled process given in~\eqref{SKLD}: for any $n \in \mathbb{N}_0$ and $t \in(n T,(n+1) T]$, 
\begin{equation}\label{sec4:1}
\begin{aligned}
W_p\left(\mathcal{L}\left(\bar{\theta}_t^\eta, \bar{V}_t^\eta\right), \bar{\pi}_\beta\right) & \leq  W_p\left(\mathcal{L}\left(\bar{\theta}_t, \bar{V}_t^\eta\right), \mathcal{L}\left(\bar{\zeta}_t^{\eta, n}, \bar{Z}_t^{\eta, n}\right)\right)+W_p\left(\mathcal{L}\left(\bar{\zeta}_t^{\eta, n}, \bar{Z}_t^{\eta, n}\right), \mathcal{L}\left(\zeta_t^\eta, Z_t^\eta\right)\right) \\ & \quad + W_p\left(\mathcal{L}\left(\zeta_t^\eta, Z_t^\eta\right), \bar{\pi}_\beta\right).
\end{aligned}
\end{equation}
Note that $\left(\bar{\theta}_t^\eta, \bar{V}_t^\eta\right)$ defined in~\eqref{cont.SGHMC} can be viewed as a continuous version of the first order Euler scheme of $\left(\bar{\zeta}_t^{\eta, n}, \bar{Z}_t^{\eta, n}\right)$ with stochastic gradient. We consider the corresponding $L^2$-distance and employ the synchronous coupling to obtain an estimate for the first term on the RHS of~\eqref{sec4:1}. To upper bound the second term on the RHS of~\eqref{sec4:1}, we view $\mathcal{L}\left(\bar{\zeta}_t^{\eta, n}, \bar{Z}_t^{\eta, n}\right)$ and $\mathcal{L}\left(\zeta_t^\eta, Z_t^\eta\right)$ as the laws of the time-changed process~\eqref{SKLD} starting from different initial points. Then, applying the contraction result in~\cite[Lemma 5.4]{chau2022stochastic} and letting $p=1, 2$ yield the result. We note that, as $\bar{\pi}_\beta$ is the invariant measure of the second-order (underdamped) Langevin SDE in~\eqref{underdamped Langevin SDE}, $\mathcal{L}\left(\zeta_t^\eta, Z_t^\eta\right)$ and $\bar{\pi}_\beta$ can also be viewed as the laws of the process~\eqref{underdamped Langevin SDE} from different initial points, i.e., from $(\theta_0, V_0)$ and a random variable distributed according to $\bar{\pi}_\beta$, respectively. Therefore, the last term on the RHS of~\eqref{sec4:1} can be upper bounded by the same approach as that for bounding the second term.

We establish upper bounds for each of the terms on the RHS of~\eqref{sec4:1}. The results are presented below.

\begin{proposition}\label{lemma:4.1}
Let Assumptions~\ref{asm:A3}-~\ref{asm:A5} hold. Then, for any $0 < \eta \leq \eta_{\max}$ with $\eta_{\max}$ given in~\eqref{eta_max}, $n \in \N_0$, and  $t \in\left[n T,(n+1) T \right)$ with $T := \lfloor 1/\eta\rfloor$, we obtain
$$
W_2\left(\mathcal{L}\left(\bar{\theta}_t^\eta, \bar{V}_t^\eta\right), \mathcal{L}\left(\bar{\zeta}_t^{\eta, n}, \bar{Z}_t^{\eta, n}\right)\right) \leq C_1^{\star} \eta^{1 / 2},
$$
where $C_1^{\star}$ is given explicitly in Table~\ref{table 1}.
\end{proposition}
\begin{proof}
    See Appendix~\ref{prf of lemma 4.1}.
\end{proof}

\begin{proposition}\label{lemma:4.2}
Let Assumptions~\ref{asm:A3}-~\ref{asm:A5} hold. Then, for any $0 < \eta \leq \eta_{\max}$ with $\eta_{\max}$ given in~\eqref{eta_max}, $n \in \N_0$, and  $t \in\left[n T,(n+1) T \right)$ with $T := \lfloor 1/\eta\rfloor$, we obtain
$$
W_1\left(\mathcal{L}\left(\bar{\zeta}_t^{\eta, n}, \bar{Z}_t^{\eta, n}\right), \mathcal{L}\left(\zeta_t^\eta, Z_t^\eta\right)\right) \leq C^{\ast} \eta^{1 / 2},\quad
W_2\left(\mathcal{L}\left(\bar{\zeta}_t^{\eta, n}, \bar{Z}_t^{\eta, n}\right), \mathcal{L}\left(\zeta_t^\eta, Z_t^\eta\right)\right) \leq C_2^{\star} \eta^{1 / 4},
$$
where $C^{\ast}$ is given in~\eqref{C ast} and $C_2^{\star}$ is given explicitly in Table~\ref{table 1}.
\end{proposition}
\begin{proof}
    See Appendix~\ref{prf of lemma 4.2}.
\end{proof}
\begin{proposition}\label{lemma:4.3}
Let Assumptions~\ref{asm:A3}-~\ref{asm:A5} hold. Then, for any $0 < \eta \leq \eta_{\max}$ with $\eta_{\max}$ given in~\eqref{eta_max}, $n \in \N_0$, and  $t \in\left[n T,(n+1) T \right)$ with $T := \lfloor 1/\eta\rfloor$, we have
$$
W_1\left(\mathcal{L}\left(\zeta_t^\eta, Z_t^\eta\right), \bar{\pi}_\beta\right) \leq C_2^{\ast} e^{-C_3^{\ast} \eta t}, \quad
W_2\left(\mathcal{L}\left(\zeta_t^\eta, Z_t^\eta\right), \bar{\pi}_\beta\right) \leq C_3^{\star} e^{-C_4^{\star} \eta t},
$$
where $C_2^{\ast}$, $C_3^{\ast}$, $C_3^{\star}$, and $C_4^{\star}$ are explicitly given in Table~\ref{table 1}. 
\end{proposition}
\begin{proof}
    See Appendix~\ref{prf of lemma 4.3}.
\end{proof}
\begin{proof}[\textbf{\textit{Proof of Theorem~\ref{thm:2.1} and Corollary~\ref{cor:2.2}}}]
First, by using Proposition~\ref{lemma:4.1},~\ref{lemma:4.2}, and~\ref{lemma:4.3}, we obtain, for any $n \in \N_0$ and $t \in[n T,(n+1) T)$ with $T := \lfloor 1/\eta\rfloor$, that
$$
W_2\left(\mathcal{L}\left(\bar{\theta}_{t}^\eta, \bar{V}_{t}^\eta\right), \bar{\pi}_\beta\right) \leq C_1^{\star} \eta^{1 / 2} + C_2^{\star} \eta^{1 / 4}+ C_3^{\star} e^{-C_4^{\star} n \eta T},
$$
which implies
$$
W_2\left(\mathcal{L}\left(\bar{\theta}_{n T}^\eta, \bar{V}_{n T}^\eta\right), \bar{\pi}_\beta\right) \leq C_1^{\star} \eta^{1 / 2} + C_2^{\star} \eta^{1 / 4}+ C_3^{\star} e^{-C_4^{\star} n \eta T}.
$$
To obtain a non-asymptotic upper bound for the SGHMC algorithm, we set $n T$ to $n$ on the left-hand side of the above inequality, while $n$ on the RHS of the above inequality is set to $n / T$. Hence, for any $n \in \mathbb{N}_0, 0<\eta \leq \eta_{\max }$ with $\eta_{\max}$ given in~\eqref{eta_max}, one writes
\begin{equation}\label{W_2}
W_2\left(\mathcal{L}\left(\bar{\theta}_{n}^\eta, \bar{V}_{n}^\eta\right), \bar{\pi}_\beta\right) \leq C_1^{\star} \eta^{1 / 2} + C_2^{\star} \eta^{1 / 4}+ C_3^{\star} e^{-C_4^{\star} n \eta}.
\end{equation}
Similarly, for Wasserstein-1 distance, we obtain
\begin{equation}\label{W_1}
W_1\left(\mathcal{L}\left(\bar{\theta}_{n}^\eta, \bar{V}_{n}^\eta\right), \bar{\pi}_\beta\right) \leq \left( C^* + C_1^{\star} \right)\eta^{1 / 2} + C_2^{\ast} e^{-C_3^* \eta n}.
\end{equation}
Moreover, for any $\epsilon>0$, if we choose $\eta$ and $n$ such that $\eta\leq\eta_{\max}$, $ C_1^{\star} \eta^{1 / 2}\leq \epsilon / 3$, $ C_2^{\star} \eta^{1 / 4} \leq \epsilon / 3$, and $C_3^{\star} e^{-C_4^{\star} n \eta}\leq \epsilon / 3$, where $\eta_{\max}$ is given in~\eqref{eta_max}, then $W_2\left(\mathcal{L}\left(\bar{\theta}_{n}^\eta, \bar{V}_{n}^\eta\right), \bar{\pi}_\beta\right)\leq \epsilon$. This further implies that $\eta \leq \min\left\{\frac{\epsilon^2}{9 C_1^{\star 2}}, \frac{\epsilon^4}{81 C_2^{\star 4}}, \eta_{\max}\right\}$ and $ n \eta\geq \frac{\ln(3C_3^{\star} / \epsilon)}{C_4^{\star}}$. Therefore, one can write $n \geq \frac{\ln(3C_3^{\star} / \epsilon)}{C_4^{\star} \min\left\{\frac{\epsilon^2}{9 C_1^{\star 2}}, \frac{\epsilon^4}{81 C_2^{\star 4}}, \eta_{\max}\right\}}$. Similarly, for any $\epsilon>0$, if we choose $\eta$ and $n$ such that $\eta\leq \min\left\{\frac{\epsilon^2}{4\left(C^* + C_1^{\star}\right)^2}, \eta_{\max}\right\}$ and $n \geq \max\left\{\frac{4\left(C^* + C_1^{\star}\right)^2 \ln(2 C_2^{\ast} / \epsilon)}{C_3^* \epsilon^2 }, \frac{\ln(2 C_2^{\ast} / \epsilon)}{C_3^* \eta_{\max}}\right\}$, then $W_1\left(\mathcal{L}\left(\bar{\theta}_{n}^\eta, \bar{V}_{n}^\eta\right), \bar{\pi}_\beta\right) \leq \epsilon$.
\end{proof}

To provide a non-asymptotic error bound for the expected excess risk, we proceed with the following decomposition:
\begin{equation}\label{EER}
\mathbb{E}\left[u\left(\theta_n^\eta\right)\right]-\inf _{\theta \in \mathbb{R}^d} u(\theta)=\underbrace{\mathbb{E}\left[u\left(\theta_n^\eta\right)\right]-\mathbb{E}\left[u\left(\theta_{\infty}\right)\right]}_{\mathcal{T}_1}+\underbrace{\mathbb{E}\left[u\left(\theta_{\infty}\right)\right]-\inf _{\theta \in \mathbb{R}^d} u(\theta)}_{\mathcal{T}_2},
\end{equation}
where $\theta_{\infty}$ is distributed according to $\pi_\beta$. We note that $\mathcal{T}_1$ depends on the sampling behavior of the SGHMC algorithm, which relates to the error in Wasserstein-2 distance between the law of the SGHMC algorithm and $\bar{\pi}_\beta$. $\mathcal{T}_2$ is about the concentration property of $\bar{\pi}_\beta$ and becomes small when the temperature parameter $\beta > 0$ is large. The following proposition presents a bound for $\mathcal{T}_1$ under our assumptions.
\begin{proposition}\label{lemma:4.4}
Let Assumptions~\ref{asm:A3}-\ref{asm:A5} hold. Then, for every $\beta > 0$, there exist constants $\bar{C}_1^{\star}, \bar{C}_2^{\star}, \bar{C}_3^{\star}, \bar{C}_4^{\star} > 0$ such that, for any $n \in \mathbb{N}_0$ and $0 \leq \eta \leq \eta_{\max}$ with $\eta_{\max}$ given in~\eqref{eta_max}, we have
$$
\mathbb{E}\left[u\left(\theta_n^\eta\right)\right]-\mathbb{E}\left[u\left(\theta_{\infty}\right)\right] \leq \bar{C}_1^{\star} \eta^{1 / 2}+\bar{C}_2^{\star} \eta^{1 / 4}+\bar{C}_3^{\star} e^{-\bar{C}_4^{\star} \eta n},
$$
where $\theta_{\infty} \sim \pi_\beta$ and $\bar{C}_i^{\star}:=C_i^{\star}\left(L C_m +\lvert h(0)\rvert\right)$ with $C_m:=\max \left(C_\theta^c, C_\theta\right)$, for $i = 1, 2, 3$ and ${\bar{C}_4^{\star}:= C_4^{\star}}$, and where $C_1^{\star}$, $C_2^{\star}$, $C_3^{\star}$, $C_4^{\star}$ are given explicitly in Table~\ref{table 1} while $C_\theta^c$ and $C_\theta$ are given explicitly in Table~\ref{table 3}.
\end{proposition}
\begin{proof}
    See Appendix~\ref{prf of lemma 4.4}.
\end{proof}
Next, we bound the second term $\mathcal{T}_2$ as follows.
\begin{proposition}\label{lemma:4.5}
Let Assumptions~\ref{asm:A3}-\ref{asm:A5} hold. Then, for any $\beta > 0$, we have
$$
\mathbb{E}\left[u\left(\theta_{\infty}\right)\right]-\inf _{\theta \in \mathbb{R}^d} u(\theta) \leq \frac{d}{2 \beta} \log \left(\frac{8 e L}{\gamma^2 \lambda(1 - 2\lambda)}\left(\frac{A_c}{d}+1\right)\right),
$$
where $A_c$ is explicitly given in Remark~\ref{rmk:4}.
\end{proposition}
\begin{proof}
    See Appendix~\ref{prf of lemma 4.5}.
\end{proof}
\begin{proof}[\textbf{\textit{Proof of Theorem~\ref{thm:2.2}}}]
Substituting the results in Proposition~\ref{lemma:4.4} and Proposition~\ref{lemma:4.5} into~\eqref{EER} yields the desired non-asymptotic upper bound. Moreover, for any $\epsilon>0$, if we choose $\beta$, $\eta$, and $n$ such that $\eta\leq\eta_{\max}$, $ \bar{C}_1^{\star} \eta^{1 / 2}\leq \epsilon / 4$, $ \bar{C}_2^{\star} \eta^{1 / 4} \leq \epsilon / 4$, $\bar{C}_3^{\star} e^{-\bar{C}_4^{\star} n \eta}\leq \epsilon / 4$, and $\frac{d}{2 \beta} \log \left(\frac{8 e L}{\gamma^2 \lambda(1 - 2\lambda)}\left(\frac{A_c}{d} + 1\right)\right) \leq \epsilon / 4$, where $\eta_{\max}$ is given in~\eqref{eta_max}, then $\mathbb{E}\left[u\left(\theta_{\infty}\right)\right]-\inf _{\theta \in \mathbb{R}^d} u(\theta) \leq \epsilon$. 
Calculations yield that $\eta \leq \min\left\{\frac{\epsilon^2}{16\bar{C}_1^{\star 2}}, \frac{\epsilon^4}{256\bar{C}_2^{\star 4}}, \eta_{\max}\right\}$ and $\eta n \geq \frac{\ln(4 \bar{C}_3^{\star} / \epsilon)}{\bar{C}_4^{\star}}$, which implies that $n \geq \frac{\ln(4 \bar{C}_3^{\star} / \epsilon)}{\bar{C}_4^{\star}\min\left\{\frac{\epsilon^2}{16\bar{C}_1^{\star 2}}, \frac{\epsilon^4}{256\bar{C}_2^{\star 4}}, \eta_{\max}\right\}}$. Furthermore, we recall that $A_c:=\beta (\lambda u(0) + \lambda L\lvert h(0)\rvert + b^{\prime} / 2)$, then, ${\frac{d}{2 \beta} \log \left(\frac{8 e L}{\gamma^2 \lambda(1 - 2\lambda)}\left(\frac{A_c}{d} + 1\right)\right) \leq \epsilon / 4}$ is achieved if we choose $\beta \geq \max\left\{\frac{16 d^2}{\epsilon^2}, \frac{4d}{\epsilon} \log\left(\frac{8eL}{\gamma^2 \lambda(1 - 2\lambda)}\left(\frac{\lambda u(0) + \lambda L\lvert h(0)\rvert + b^{\prime} / 2}{d} + 1\right) \right)\right\}$. Indeed, for any $\beta > 0$, we have
$$
\begin{aligned}
&\frac{d}{2 \beta} \log \left(\frac{8 e L}{\gamma^2 \lambda(1 - 2\lambda)}\left(\frac{A_c}{d} + 1\right)\right) \\& \leq \frac{d}{2 \beta} \log \left(\frac{8 e L}{\gamma^2 \lambda(1 - 2\lambda)}\left(\frac{\lambda u(0) + \lambda L\lvert h(0)\rvert + b^{\prime} / 2}{d} + 1\right)\right) + \frac{d}{2 \beta} \log \left(\beta + 1\right)\\ & \leq 
\frac{1}{\beta} \left(\frac{d}{2} \log\left(\frac{8 e L}{\gamma^2 \lambda(1 - 2\lambda)}\left(\frac{\lambda u(0) + \lambda L\lvert h(0)\rvert + b^{\prime} / 2}{d} + 1\right)\right)\right) + \frac{d}{2 \sqrt{\beta}} \\ & \leq 
\epsilon / 8 + \epsilon / 8 = \epsilon / 4,
\end{aligned}
$$
where the second inequality holds due to $\log\left(\beta+ 1 \right) / \beta \leq 1 / \sqrt{\beta + 1} \leq 1 / \sqrt{\beta } $ for any $\beta > 0$.

\end{proof}

\appendix


\newpage

\section{Proofs of Section~\ref{sec:2}}\label{Appendix: B}
\begin{proof}[\textit{Proof of Remark~\ref{rmk:1}}]\label{prf of rmk:1}
By using Assumption~\ref{asm:A3}, we obtain, for any $\left(\theta, x\right) \in \mathbb{R}^d \times \mathbb{R}^m$, 
\begin{equation*}
\begin{split}
    \lvert H(\theta, x) - H(0, 0)\rvert\leq & \lvert F(\theta, x) - F(0, 0)\rvert + \lvert G(\theta, x) - G(0, 0)\rvert\\
    \leq & (1 + \lvert x\rvert)^{\rho} (L_1\lvert\theta\rvert + L_2\lvert x\rvert) + \bar{K}_1(0) + \bar{K}_1(x)\\
    \leq & (1 + \lvert x\rvert) ^{\rho + 1} (L_1\lvert\theta\rvert + L_2) + \bar{K}_1(0) + \bar{K}_1(x).\\
\end{split}
\end{equation*}
Hence, letting $F_{\ast}(x) :=  2  \bar{K}_1(0) + \bar{K}_1(x) + \lvert F(0, 0)\rvert$, we have
\begin{equation*}
\begin{split}
    \lvert H(\theta, x)\rvert\leq &(1 + \lvert x\rvert) ^{\rho + 1} (L_1\lvert\theta\rvert + L_2) + 2 \bar{K}_1(0) + \bar{K}_1(x) + \lvert F(0, 0)\rvert\\
    = & (1 + \lvert x\rvert) ^{\rho + 1} (L_1\lvert\theta\rvert + L_2) + F_{\ast}(x).\\
\end{split}
\end{equation*}
\end{proof}

\begin{proof}[\textit{Proof of Remark~\ref{rmk:3}}]
By Assumption~\ref{asm:A3} and~\ref{asm:A5}, for any $(\theta, x) \in \mathbb{R}^d \times \mathbb{R}^m$,  we have 
\begin{equation*}
\begin{split}
    \langle \theta, H(\theta, x)\rangle =& \langle\theta, F(\theta, x)\rangle + \langle\theta, G(\theta, x)\rangle \\
    \geq & \langle\theta, A(x)\theta\rangle - B(x) - \bar{K}_1(x)\lvert\theta\rvert.
\end{split}
\end{equation*}
Then, by using Assumption~\ref{asm:A2} and Cauchy-Schwarz inequality, we obtain
\begin{equation*}
\begin{split}
    \langle\theta, h(\theta)\rangle \geq & a\lvert\theta\rvert^2 - b- \mathbb{E}\left[\bar{K}_1(X_0)\right] \lvert\theta\rvert\\
    \geq & a\lvert\theta\rvert^2 - b - \frac{a}{2} \lvert\theta\rvert^2 - \frac{1}{2a}(\mathbb{E}\left[(\bar{K}_1(X_0))\right])^2\\
    := &  a^{\prime}\lvert\theta\rvert^2 - b^{\prime},
\end{split}
\end{equation*}
where $a^{\prime} := \frac{a}{2}$  and  $b^{\prime} := b + \frac{1}{2a}(\mathbb{E}\left[(\bar{K}_1(X_0))\right])^2$.
\end{proof}

\begin{proof}[\textit{Proof of Remark~\ref{rmk:4}}]
Denote by $\Bar{u}(t):=u(t\theta), t \in [0, 1]$. Note that for any $\theta \in \mathbb{R}^d$, $\Bar{u}^{\prime}(t):=\langle\theta, h(t\theta)\rangle$, and 
\begin{equation}\label{u_delta}
\begin{split}
u(\theta) -u(0) = &\Bar{u}(1) - \Bar{u}(0) \\= &\int_{0}^{1} \langle\theta, h(t\theta)\rangle \mathrm{d}t\\
 \leq & \int_{0}^{1} \lvert\theta\rvert \lvert h(t\theta)\rvert \mathrm{d}t\\
 \leq & \int_{0}^{1} \lvert\theta\rvert L(t\lvert \theta\rvert + \lvert h(0)\rvert) \mathrm{d}t\\
 = & \frac{L}{2}\lvert \theta\rvert^2 + L\lvert h(0)\rvert\lvert \theta\rvert,
\end{split}
\end{equation}
where the second inequality holds due to Remark~\ref{rmk:2}. Define $\lambda := \min\left\{\frac{1}{4}, \frac{a^{\prime}}{L + 2L\lvert h(0)\rvert + \frac{\gamma^2}{2}}\right\}$. By using~\eqref{u_delta} and Remark~\ref{rmk:3}, we have, for any $\theta \in \mathbb{R}^d$, that
\begin{equation*}
\begin{split}
\langle h(\theta), \theta\rangle & \geq  a^{\prime}\lvert\theta\rvert^2 - b^{\prime} \\ &  \geq2\lambda\left(\frac{L}{2} + L\lvert h(0)\rvert + \frac{\gamma^2}{4}\right)\lvert\theta\rvert^2 - b^{\prime} \\ & 
= 2\lambda\left(\frac{L}{2}\lvert\theta\rvert^2 + L\lvert h(0)\rvert\lvert\theta\rvert -L\lvert h(0)\rvert\lvert\theta\rvert + L\lvert h(0)\rvert\lvert\theta\rvert^2 + \frac{\gamma^2}{4}\lvert\theta\rvert^2\right) - b^{\prime} \\ &  
\geq 2\lambda\left(u(\theta) - u(0) - L\lvert h(0)\rvert\lvert\theta\rvert + L\lvert h(0)\rvert\lvert\theta\rvert^2 + \frac{\gamma^2}{4}\lvert\theta\rvert^2\right) - b^{\prime}\\  &  
\geq 2\lambda\left(u(\theta) - u(0) - L\lvert h(0)\rvert + \frac{\gamma^2}{4}\lvert\theta\rvert^2\right) - b^{\prime}\\ & 
= 2\lambda\left(u(\theta)+ \frac{\gamma^2}{4}\lvert\theta\rvert^2\right) - 2\frac{A_c}{\beta}
\end{split}
\end{equation*}
with $A_c := \frac{\beta}{2} (2\lambda u(0) + 2\lambda L\lvert h(0)\rvert + b^{\prime})$, where the last inequality holds due to $\lvert \theta \rvert \leq 1 +\lvert \theta \rvert^2$.  
\end{proof}
\section{Proofs of Section~\ref{sec:3}}
\begin{proof}[\textit{Proof of Lemma~\ref{lemma:3.2}}]\label{prf of lemma:3.2}
Recall the Lyapunov function defined in~\eqref{Lyapunov}, i.e., for any $\theta, v \in \R^d$,  
\begin{equation*}
    \mathcal{V}(\theta, v) = \beta u(\theta) + \frac{\beta}{4}\gamma^2 (\lvert \theta + \gamma^{-1}v \rvert^2 + \lvert\gamma^{-1}v \rvert^2 - \lambda\lvert \theta\rvert^2).
\end{equation*}
Now we define the operator 
\begin{equation}\label{operator}
\mathcal{A} := -\langle\gamma v + h(\theta), \nabla_v\rangle + \gamma\beta^{-1}\Delta_v + \langle v, \nabla_{\theta}\rangle. 
\end{equation} Following similar arguments as in~\cite[Lemma 2.2]{eberle2019couplings}, by using Assumption~\ref{asm:A5}, Remark~\ref{rmk:2}, and Remark~\ref{rmk:4}, we have
\begin{equation*}
    \mathcal{A}\mathcal{V} \leq \gamma(d + A_c - \lambda \mathcal{V}) .
\end{equation*}
Indeed, by using~\eqref{operator}, we have that $\mathcal{A} u(\theta) = \langle v, h(\theta) \rangle$, $\mathcal{A} \lvert\theta\rvert^2 = 2\langle\theta, v\rangle$, and $$\mathcal{A}\left( \frac{1}{2}\lvert\gamma^{-1}v\rvert^2\right) = -\frac{1}{2}\langle\gamma v + h(\theta), \nabla_v \lvert\gamma^{-1}v\rvert^2\rangle + \frac{\gamma}{2\beta}\Delta_v\lvert\gamma^{-1}v\rvert^2 = \frac{d}{\gamma\beta} - \frac{\lvert v\rvert^2}{\gamma} - \frac{\langle h(\theta), v\rangle}{\gamma^2},$$ $$\mathcal{A} \left(\frac{1}{2}\lvert\theta + \gamma^{-1}v\rvert^2\right) = \frac{d}{\gamma\beta} - \frac{\langle h(\theta), v\gamma^{-1} + \theta\rangle}{\gamma}.$$
This implies that
\begin{equation}\label{lm4.2:2}
\begin{aligned}
    \mathcal{A}\mathcal{V} (\theta, v)& = \beta \langle v,  h(\theta)\rangle + \frac{\beta\gamma^2}{2}\left(\frac{d}{\gamma\beta} - \langle h(\theta), (v\gamma^{-1} + \theta)\gamma^{-1}\rangle + \frac{d}{\gamma\beta}- \frac{\lvert v\rvert^2}{\gamma} - \frac{\langle h(\theta), v\rangle}{\gamma^2} - \lambda\langle \theta, v\rangle\right) \\&
    \leq  \gamma\left(d + A_c - \lambda \beta u(\theta) - \frac{1}{4}\lambda\beta\gamma^2\left(\lvert\theta\rvert^2 + 2\lambda^{-1}\lvert\gamma^{-1}v\rvert^2 + 2\langle\theta, \gamma^{-1}v\rangle\right)\right)\\&
    \leq  \gamma\left(d + A_c - \lambda \beta u(\theta) - \frac{1}{4}\lambda\beta\gamma^2\left((1 - \lambda)\lvert\theta\rvert^2 + 2\lvert\gamma^{-1}v\rvert^2 + 2\langle\theta, \gamma^{-1}v\rangle\right)\right)
    \\ &
    = \gamma(d + A_c - \lambda \mathcal{V}(\theta, v)),\\
\end{aligned}
\end{equation}
where the first inequality holds due to Remark~\ref{rmk:4}, and the second inequality holds due to $0 < \lambda \leq \frac{1}{4}$. Moreover, by applying Itô formula, we obtain
\begin{equation*}
\begin{aligned}
    \mathrm{d}\left(e^{\lambda\gamma t}\mathcal{V}(\theta_t, V_t)\right)= \lambda\gamma e^{\lambda\gamma t}\mathcal{V}(\theta_t, V_t)\mathrm{d}t +  e^{\lambda\gamma t}\mathcal{A}\mathcal{V}(\theta_t, V_t) \mathrm{d}t + e^{\lambda\gamma t}\left(\beta V_t + \frac{\beta\gamma}{2}\theta_t\right)\sqrt{2\gamma\beta^{-1}}\mathrm{d}B_t.
\end{aligned}
\end{equation*}
Hence,
\begin{equation}\label{Ineq:1}
\begin{aligned}
    e^{\lambda\gamma t}\mathcal{V}(\theta_t, V_t)& \leq \mathcal{V}(\theta_0, V_0) + \gamma(d + A_c)\int_{0}^{t} e^{\lambda\gamma s}\mathrm{d}t -\int_{0}^{t}e^{\lambda\gamma s}\left(\beta V_s + \frac{\beta\gamma}{2}\theta_s\right)\sqrt{2\gamma\beta^{-1}}\mathrm{d}B_s.
\end{aligned}
\end{equation}
Note that by Remark~\ref{rmk:2}, SDE~\eqref{underdamped Langevin SDE} has a unique strong solution. By~\cite[Theorem 5.2.1]{oksendal2013stochastic}, for every $T > 0$, we have that $$\mathbb{E}\left[\int_{0}^{T}\left(\lvert V_s\rvert^2 + \lvert \theta_s\rvert^2\right)\mathrm{d}s\right] < \infty.$$ Hence, 
\begin{equation*}
\begin{aligned}
    \mathbb{E}\left[\int_{0}^{T}e^{2\lambda\gamma s}\left\lvert\beta V_s + \frac{\beta\gamma}{2}\theta_s\right\rvert^2\frac{2\gamma}{\beta} \mathrm{d}s\right] < \infty, 
\end{aligned}
\end{equation*}
implying $$\mathbb{E}\left[\int_{0}^{T}e^{\lambda\gamma s}\left(\beta\gamma(s) + \frac{\beta\gamma}{2}\theta(s)\right)\sqrt{\frac{2\gamma}{\beta}}dB_s \right] = 0.$$ 
For each $t > 0$, denote $\mathbb{L}(t) := \mathbb{E}\left[\mathcal{V}(\theta_t, V_t)\right]$. By using~\eqref{Ineq:1}, we obtain 
\begin{equation}\label{Ineq:2}
\begin{aligned}
\mathbb{L}(t) \leq \mathbb{L}(0)e^{-\lambda\gamma t} + \frac{d + A}{\lambda}\left(1 - e^{-\lambda\gamma t}\right).
\end{aligned}
\end{equation}
In addition, note that
\begin{equation}\label{func:V}
\begin{aligned}
    \mathcal{V} (\theta, v) & = \beta u(\theta) + \frac{\beta}{4}\gamma^2 (\lvert \theta + \gamma^{-1}v \rvert^2 + \lvert\gamma^{-1}v \rvert^2 - \lambda\lvert \theta+\gamma^{-1}v - \gamma^{-1}v\rvert^2)\\&
    \geq \beta u(\theta) + \frac{\beta}{4}\gamma^2 (\lvert \theta + \gamma^{-1}v \rvert^2 + \lvert\gamma^{-1}v \rvert^2 - 2\lambda (\lvert\theta + \gamma^{-1}v\rvert^2 + \lvert\gamma^{-1}v\rvert^2)\\&
    = \beta u(\theta) + \frac{\beta}{4}\gamma^2(1 - 2\lambda)(\lvert \theta + \gamma^{-1}v \rvert^2 + \lvert\gamma^{-1}v \rvert^2)\\&
    \geq \max\left\{ \frac{\beta\gamma^2}{8}(1 - 2\lambda)\lvert\theta\rvert^2, \frac{\beta\gamma^2}{4}(1 - 2\lambda)\lvert\gamma^{-1}v\rvert^2\right\}.
\end{aligned}
\end{equation}
Thus, combining the above result~\eqref{func:V} with~\eqref{Ineq:2} yields
\begin{equation*}
\begin{aligned}
& \frac{ \beta}{8}(1-2 \lambda) \gamma^2 \mathbb{E}\| \theta_t\|^2 \leq \mathbb{E}\left[\mathcal{V}\left(\theta_0, V_0\right)\right]+\frac{d+A_c}{\lambda},\\
& \frac{\beta}{4}(1-2 \lambda) \mathbb{E}\|V_t\|^2 \leq \mathbb{E}\left[\mathcal{V}\left(\theta_0, V_0\right)\right]+\frac{d+A_c}{\lambda}.
\end{aligned}
\end{equation*}
\end{proof}

\begin{proof}[\textit{Proof of Lemma~\ref{lemma:3.3}}]\label{prf of lemma:3.3}
By using Remark~\ref{rmk:1} and the fact that $X_{k+1} $ is independent of $\theta_k^{\eta}$ and $V_k^{\eta}$, we have
\begin{equation}\label{Ineq:3}
\begin{aligned}    
\mathbb{E}\left[\lvert H(\theta_k^{\eta}, X_{k+1})\rvert^2\right] &\leq 2
\mathbb{E}\left[(1 + \lvert X_{k+1}\rvert)^{2(\rho + 1)}L_1^2\lvert\theta_k^{\eta}\rvert^2\right] + 4\mathbb{E}\left[(1 + \lvert X_{k+1}\rvert)^{2(\rho + 1)}L_2^2 + F_{\ast}^2(X_{k+1})\right]\\
&= 2
\mathbb{E}\left[\lvert\theta_k^{\eta}\rvert^2\mathbb{E}\left[(1 + \lvert X_{k+1}\rvert)^{2(\rho + 1)}L_1^2 \bigg| \theta_k^{\eta}\right]\right] + 4\mathbb{E}\left[(1 + \lvert X_{0}\rvert)^{2(\rho + 1)}L_2^2 + F_{\ast}^2(X_{0})\right]\\
&= 2L_1^2
\mathbb{E}\left[(1 + \lvert X_{0}\rvert)^{2(\rho + 1)}\right]\mathbb{E}\left[\lvert\theta_k^{\eta}\rvert^2\right] + 4L_2^2\mathbb{E}\left[(1 + \lvert X_{0}\rvert)^{2(\rho + 1)}\right] + 4\mathbb{E}\left[F_{\ast}^2(X_{0})\right]\\
&= \widetilde{L}_1\mathbb{E}\left[\lvert\theta_k^{\eta}\rvert^2\right] + \widetilde{C}_1,\\
\end{aligned}    
\end{equation}
where $\widetilde{L}_1 := 2L_1^2\mathbb{E}\left[(1 + \lvert X_0\rvert)^{2(\rho + 1)}]\right]$, $\widetilde{C}_1 := 4L_2^2\mathbb{E}\left[(1 + \lvert X_0\rvert)^{2(\rho + 1)}\right] + 4\mathbb{E}\left[F_{\ast}(X_0)^2\right]$. Recall the SGHMC algorithm given in~\eqref{SGHMC 1} and~\eqref{SGHMC 2}, and using~\eqref{Ineq:3} yields

\begin{equation}\label{thm_pf:1}
\begin{aligned}
    \mathbb{E}\left[\lvert V_{k+1}^{\eta}\rvert^2\right] &= \mathbb{E}\left[|(1-\gamma\eta)V_k^{\eta}- \eta H(\theta_k^{\eta}, X_{k+1})|^2\right] + 2\gamma\eta\beta^{-1}d\\&
    = (1 - \gamma\eta)^2\mathbb{E}\left[\lvert V_{k}^{\eta}\rvert^2\right] - 2\eta(1 - \gamma\eta)\mathbb{E}\left[\langle V_k^{\eta}, h(\theta_k^{\eta})\rangle\right] + \eta^2\mathbb{E}\left[\lvert H(\theta_k^{\eta}, X_{k+1})\rvert^2\right] + 2\gamma\eta\beta^{-1} d\\&
    \leq  (1 - \gamma\eta)^2\mathbb{E}\left[\lvert V_{k}^{\eta}\rvert^2\right] - 2\eta(1 - \gamma\eta)\mathbb{E}\left[\langle V_k^{\eta}, h(\theta_k^{\eta})\rangle\right] + \eta^2\left[\widetilde{L}_1\mathbb{E}\left[\lvert \theta_k^{\eta}\rvert^2\right]+ \widetilde{C}_1\right] + 2\gamma\eta\beta^{-1} d,\\
\end{aligned}    
\end{equation}
where the second equality holds due to the fact that $X_{k+1} $ is independent of $\theta_k^{\eta}$ and $V_k^{\eta}$ , and that 
$$\mathbb{E}\left[\langle V_{k}^{\eta}, H(\theta_k^{\eta}, X_{k+1})\rangle\right] = \mathbb{E}\left[\mathbb{E}\left[\langle V_{k}^{\eta}, H(\theta_k^{\eta}, X_{k+1})\rangle|\theta_k^{\eta}, V_{k}^{\eta} \right]\right] = \mathbb{E}\left[\langle V_{k}^{\eta}, h(\theta_k^{\eta})\rangle\right].$$
Moreover, denote $\hat{U}_t := U(\theta_k^{\eta} + t\eta V_k^{\eta}), t \in [0, 1]$. We observe that $\hat{U}^{\prime}(t) = \langle h(\theta_k^{\eta} + t\eta V_k^{\eta}), \eta V_k^{\eta}\rangle$, and that, by using~\eqref{SGHMC 2}, $u(\theta_{k+1}^{\eta}) - u(\theta_{k}^{\eta}) = \hat{u}(1) - \hat{u}(0) = \int_{0}^{1}\hat{u}^{\prime}(t)\mathrm{d}t$. 
Thus, by Remark~\ref{rmk:2},
\begin{equation}\label{thm_pf:U}
\begin{aligned}
u(\theta_{k+1}^{\eta}) - u(\theta_k^{\eta}) &\leq \left\lvert\int_{0}^{1}\langle h(\theta_{k}^{\eta} + \tau\eta V_k^{\eta}) - h(\theta_{k}^{\eta}), \eta V_k^{\eta}\rangle \mathrm{d}\tau \right\rvert \\ &
\leq  \int_{0}^{1} \left\lvert h(\theta_{k}^{\eta} + \tau\eta V_k^{\eta}) - h(\theta_{k}^{\eta})\right\rvert \left\lvert\eta V_k^{\eta}\right\rvert \mathrm{d}\tau \\&
\leq \int_{0}^{1} L\tau \lvert\eta V_k^{\eta}\rvert^2 \mathrm{d}\tau \\&= \frac{L}{2}\eta^2\lvert V_k^{\eta}\rvert^2.
\end{aligned}    
\end{equation}
The above result implies
\begin{equation}\label{thm_pf:2}
\begin{aligned}
\mathbb{E}\left[u(\theta_{k+1}^{\eta})\right] - \mathbb{E}\left[u(\theta_k^{\eta}) \right] \leq \eta \mathbb{E}\left[\langle h(\theta_k^{\eta}), V_k^{\eta}\rangle\right] + \frac{L}{2}\eta^2\lvert V_k^{\eta}\rvert^2.
\end{aligned}    
\end{equation}
Then, notice that 
\begin{equation}\label{thm_pf:3}
\begin{aligned}
\mathbb{E}\left[\lvert\theta_{k+1}^{\eta}\rvert^2\right] = \mathbb{E}\left[\lvert\theta_k^{\eta}\rvert^2\right] + 2\eta\mathbb{E}\left[(\theta_k^{\eta}, V_k^{\eta})\right] + \eta^2\mathbb{E}\left[\lvert V_k^{\eta}\rvert^2\right].\\
\end{aligned}    
\end{equation}
Furthermore, by~\eqref{SGHMC 1} and~\eqref{SGHMC 2}, we have that 
\begin{equation}\label{thm_pf:4}
\begin{aligned}
\mathbb{E}\left[\left|\theta_{k+1}^\eta+\gamma^{-1} V_{k+1}^\eta\right|^2\right] & =\mathbb{E}\left[\left|\theta_k^\eta+\gamma^{-1} V_k^\eta-\eta \gamma^{-1} H\left(\theta_k^\eta, X_{k+1}\right)\right|^2\right]+2 \gamma^{-1} \beta^{-1} \eta d \\
& =\mathbb{E}\left[\left|\theta_k^\eta+\gamma^{-1} V_k^\eta\right|^2\right]-2 \eta \gamma^{-1} \mathbb{E}\left[\left\langle\theta_k^\eta+\gamma^{-1} V_k^\eta, h\left(\theta_k^\eta\right)\right\rangle\right] \\
&\quad +\eta^2 \gamma^{-2} \mathbb{E}\left[\left|H\left(\theta_k^\eta, X_{k+1}\right)\right|^2\right] + 2 \gamma^{-1} \eta \beta^{-1} d \\
& \leq \mathbb{E}\left[\left|\theta_k^\eta+\gamma^{-1} V_k^\eta\right|^2\right] - 2 \eta \gamma^{-1} \mathbb{E}\left[\left\langle\theta_k^\eta+\gamma^{-1} V_k^\eta, h\left(\theta_k^\eta\right)\right\rangle\right] \\
& \quad +\eta^2 \gamma^{-2}\left(\widetilde{L}_1 \mathbb{E}\left[\left|\theta_k^\eta\right|^2\right]+\widetilde{C}_1\right)+2 \gamma^{-1} \eta \beta^{-1} d,
\end{aligned}    
\end{equation}
where the last inequality holds due to~\eqref{Ineq:3}. Next we define, for any $k \in \mathbb{N}_0$, 
\begin{equation}\label{M_2(k)}
M_2(k) := \frac{\mathbb{E}\left[\mathcal{V}(\theta_k^{\eta}, V_k^{\eta})\right]} {\beta} =\mathbb{E} \left[ U(\theta_k^{\eta}) + \frac{\gamma^2}{4}(\lvert\theta_k^{\eta} + \gamma^{-1}V_k^{\eta}\rvert^2 + \lvert\gamma^{-1}V_k^{\eta}\rvert^2 - \lambda\lvert\theta_k^{\eta}\rvert^2) \right].
\end{equation}
By using~\eqref{thm_pf:1},~\eqref{thm_pf:2},~\eqref{thm_pf:3} and~\eqref{thm_pf:4}, we have
\begin{equation*}
\begin{aligned}
M_2(k+1) - M_2(k) &= \mathbb{E}\left[U(\theta_{k+1}^{\eta})\right] - \mathbb{E}\left[U(\theta_k^{\eta}) \right] + \frac{\gamma^2}{4}\left(\mathbb{E}\left[\lvert\theta_{k+1}^{\eta} + \gamma^{-1}V_{k+1}^{\eta}\rvert^2\right] - \mathbb{E}\left[\lvert\theta_{k}^{\eta} + \gamma^{-1}V_{k}^{\eta}\rvert^2\right]\right) \\ &\quad + \frac{1}{4}\left(\mathbb{E}\left[\lvert V_{k+1}^{\eta}\rvert^2\right] - \mathbb{E}\left[\lvert V_{k}^{\eta}\rvert^2\right]\right) - \frac{\gamma^2\lambda}{4}\left(\mathbb{E}\left[\lvert\theta_{k+1}^{\eta}\rvert^2\right] - \mathbb{E}\left[\lvert\theta_{k}^{\eta}\rvert^2\right]\right)\\ 
&\leq 
\eta \mathbb{E}\left[\langle h(\theta_k^{\eta}), V_k^{\eta}\rangle\right] + \frac{L}{2}\eta^2\mathbb{E}\left[\lvert V_k^{\eta}\rvert^2\right]+ \frac{\gamma^2}{4}\bigg(-2 \eta \gamma^{-1} \mathbb{E}\left[\langle\theta_k^\eta+\gamma^{-1} V_k^\eta, h\left(\theta_k^\eta\right)\rangle\right] \\ & \quad + \eta^2 \gamma^{-2}\left(\widetilde{L}_1 \mathbb{E}\left|\theta_k^\eta\right|^2+\widetilde{C}_1\right)+2 \gamma^{-1} \eta \beta^{-1} d\bigg) + \frac{1}{4}(-2 \gamma\eta + \gamma^2\eta^2)\mathbb{E}\left[\lvert V_{k}^{\eta}\rvert^2\right] \\& \quad+\frac{1}{4}\left(- 2\eta(1 - \gamma\eta)\mathbb{E}\left[\langle V_k^{\eta}, h(\theta_k^{\eta})\rangle\right] + \eta^2\left[\widetilde{L}_1\mathbb{E}\left[\lvert \theta_k^{\eta}\rvert^2\right]+ \widetilde{C}_1\right] + 2\gamma\eta\beta^{-1} d\right) \\ & \quad- \frac{\gamma^2\lambda}{4}\left(2\eta\mathbb{E}\left[\langle\theta_k^{\eta}, V_k^{\eta}\rangle\right] + \eta^2\mathbb{E}\left[\lvert V_k^{\eta}\rvert^2\right]\right).
\end{aligned}    
\end{equation*}
Then, we obtain
\begin{equation}\label{Ineq:4}
\begin{aligned}
M_2(k+1) - M_2(k)
&\leq
\frac{\gamma\eta^2}{2} \mathbb{E}\left[\langle h(\theta_k^{\eta}), V_k^{\eta}\rangle\right] - \frac{\gamma\eta}{2}\mathbb{E}\left[\langle h(\theta_k^{\eta}), \theta_k^{\eta}\rangle\right] + \frac{\eta^2\widetilde{L}_1}{2}\mathbb{E}\left[\lvert\theta_k^{\eta}\rvert^2\right] - \frac{\gamma^2\eta\lambda}{2}\mathbb{E}\left[\langle\theta_k^{\eta}, V_k^{\eta}\rangle\right]  \\& \quad + \left(\frac{L\eta^2}{2} -\frac{\eta\gamma}{2} + \frac{\gamma^2\eta^2}{4} - \frac{\lambda\gamma^2\eta^2}{4}\right)\mathbb{E}\left[\lvert V_k^{\eta}\rvert^2\right]+ \frac{\widetilde{C}_1\eta^2}{2} + \gamma\eta\beta^{-1}d
\\ &
\leq - \eta\gamma\lambda \mathbb{E}\left[u(\theta_k^{\eta})\right] - \frac{\lambda\gamma^3\eta}{4}\mathbb{E}\left[\lvert\theta_k^{\eta}\rvert^2\right] + A_c\gamma\eta\beta^{-1} \\ & \quad + \left(\frac{L\eta^2}{2} -\frac{\eta\gamma}{2} + \frac{\gamma^2\eta^2}{4} - \frac{\lambda\gamma^2\eta^2}{4}\right)\mathbb{E}\left[\lvert V_k^{\eta}\rvert^2\right] + \frac{\gamma\eta^2}{2} \mathbb{E}\left[\langle h(\theta_k^{\eta}), V_k^{\eta}\rangle\right] \\ &\quad + \frac{\eta^2\widetilde{L}_1}{2}\mathbb{E}\left[\lvert\theta_k^{\eta}\rvert^2\right] - \frac{\gamma^2\eta\lambda}{2}\mathbb{E}\left[\langle\theta_k^{\eta}, V_k^{\eta}\rangle\right] + \frac{\widetilde{C}_1\eta^2}{2} + \gamma\eta\beta^{-1}d ,\\
\end{aligned}    
\end{equation}
where the last inequality holds due to Remark~\ref{rmk:4}. By using $0 < \lambda \leq \frac{1}{4}$, we obtain  
\begin{equation*}
\begin{aligned}
M_2(k) &= \mathbb{E}\left[u(\theta_k^{\eta}) + \frac{\gamma^2}{4}\left(\lvert\theta_k^{\eta}\rvert^2 + 2\gamma^{-1}\langle \theta_k^{\eta}, V_k^{\eta}\rangle + 2\gamma^{-2}\lvert V_k^{\eta}\rvert^2 - \lambda\lvert\theta_k^{\eta}\rvert^2\right)\right] 
\\ &\leq \mathbb{E}\left[u(\theta_k^{\eta})\right] + \frac{\gamma^2}{4}\mathbb{E}\left[\lvert\theta_k^{\eta}\rvert^2\right] + \frac{\gamma}{2}\mathbb{E}\left[\langle \theta_k^{\eta}, V_k^{\eta}\rangle\right] + \frac{1}{2}\mathbb{E}\left[\lvert V_k^{\eta}\rvert^2\right], 
\end{aligned}
\end{equation*}
which implies, by rearranging the terms, that  
\begin{equation}\label{Ineq:5}
\begin{aligned}  
-\frac{\gamma}{2} \mathbb{E}\left[\langle\theta_k^{\eta}, V_k^{\eta}\rangle\right] \leq -M_2(k) + \mathbb{E}\left[U(\theta_k^{\eta})\right] + \frac{\gamma^2}{4}\mathbb{E}\left[\lvert\theta_k^{\eta}\rvert^2\right] + \frac{1}{2}\mathbb{E}\left[\lvert V_k^{\eta}\rvert^2\right].
\end{aligned}    
\end{equation}
Combining the result in~\eqref{Ineq:4} and~\eqref{Ineq:5}, we obtain 
\begin{equation} \label{func:M}
\begin{aligned}
M_2(k+1)  &\leq (1-\lambda\gamma\eta) M_2(k) + A_c\gamma\eta\beta^{-1} + \frac{\gamma\eta^2}{2} \mathbb{E}\left[\langle h(\theta_k^{\eta}), V_k^{\eta}\rangle\right] + \frac{\eta^2\widetilde{L}_1}{2}\mathbb{E}\left[\lvert\theta_k^{\eta}\rvert^2\right] + \gamma\eta\beta^{-1}d \\&\quad + \frac{\widetilde{C}_1\eta^2}{2}+ \left(\frac{L\eta^2}{2} -\frac{\eta\gamma}{2} + \frac{\gamma^2\eta^2}{4} - \frac{\lambda\gamma^2\eta^2}{4} + \frac{\lambda\eta\gamma}{2}\right)\mathbb{E}\left[\lvert V_k^{\eta}\rvert^2\right] \\ 
&\leq (1-\lambda\gamma\eta) M_2(k) + A_c\gamma\eta\beta^{-1} + \frac{\eta^2}{2}(\gamma L^2 + \widetilde{L}_1)\mathbb{E}\left[\lvert\theta_k^{\eta}\lvert^2\right] +\frac{\gamma\eta^2}{2}\lvert h(0)\rvert^2 + \gamma\eta\beta^{-1}d \\& \quad+ \frac{\widetilde{C}_1\eta^2}{2}+ \left(\frac{L\eta^2}{2} -\frac{\eta\gamma}{2} + \frac{\gamma^2\eta^2}{4} - \frac{\lambda\gamma^2\eta^2}{4} + \frac{\lambda\eta\gamma}{2} + \frac{\gamma\eta^2}{4}\right)\mathbb{E}\left[\lvert V_k^{\eta}\rvert^2\right] \\ 
&\leq  (1-\lambda\gamma\eta) M_2(k) + A_c\gamma\eta\beta^{-1} + \frac{\eta^2}{2}(\gamma L^2 + \widetilde{L}_1)\mathbb{E}\left[\lvert\theta_k^{\eta}\lvert^2\right] +\frac{\gamma\eta^2}{2}\lvert h(0)\rvert^2 + \gamma\eta\beta^{-1}d \\& \quad+ \frac{\widetilde{C}_1\eta^2}{2}+ \eta^2\left(\frac{L}{2} + \frac{\gamma^2}{4} - \frac{\lambda\gamma^2}{4} + \frac{\gamma}{4}\right)\mathbb{E}\left[\lvert V_k^{\eta}\rvert^2\right],\\ 
\end{aligned}    
\end{equation}
where the second inequality holds due to Remark~\ref{rmk:2} and $\langle a, b\rangle \leq \frac{(\lvert a\rvert^2 + \lvert b\rvert^2)}{2}$ with $a = h(\theta_k^{\eta})$ and $b = V_k^{\eta}$, 
and where the last inequality holds due to $0 < \lambda \leq \frac{1}{4}$. By~\eqref{func:V} and the fact that $\max\left\{a, b\right\} \geq \frac{a + b}{2}$, $a, b \in \mathbb{R}$, we have that
\begin{equation}\label{func:M_1}
\begin{aligned}
    M_2(k) &\geq \max\left\{ \frac{\gamma^2}{8}(1 - 2\lambda)\mathbb{E}\left[\lvert\theta_k^{\eta}\rvert^2\right], \frac{1}{4}(1 - 2\lambda)\lvert\mathbb{E}\left[ V_k^{\eta}\rvert^2\right]\right\}\\&
    \geq \frac{\gamma^2}{16}(1 - 2\lambda)\mathbb{E}\left[\lvert\theta_k^{\eta}\rvert^2\right] + \frac{1}{8}(1 - 2\lambda)\lvert \mathbb{E}\left[V_k^{\eta}\rvert^2\right].
\end{aligned}    
\end{equation}
By denoting 
\begin{equation}\label{K_1}
K_1 := \frac{1}{2}\max\left\{\frac{\widetilde{L}_1 + \gamma L^2}{\frac{\gamma^2}{16}(1 - 2\lambda)}, \frac{L + \frac{\gamma^2}{2} - \frac{\gamma^2\lambda}{2} + \frac{\gamma}{2}}{\frac{1}{8}(1 - 2\lambda)}\right\},
\end{equation}
we have that $$2K_1\eta^2M_2(k) \geq \eta^2(\widetilde{L}_1 + \gamma L^2)\mathbb{E}\left[\lvert\theta_k^{\eta}\rvert^2\right] + \eta^2(L + \frac{\gamma^2}{2} - \frac{\gamma^2\lambda}{2} + \frac{\gamma}{2})\mathbb{E}\left[\lvert V_k^{\eta}\rvert^2\right].$$
This, together with~\eqref{func:M}, yields
$$
    M_2(k+1) \leq (1 - \gamma\lambda\eta + K_1\eta^2) M_2(k) + K_2 \eta^2 + K_3\eta,
$$
where 
\begin{equation}\label{K_2}
K_2 := \frac{\gamma\lvert h(0)\rvert^2 + \widetilde{C}_1}{2}
\end{equation} and 
\begin{equation}\label{K_3}
K_3 := \gamma(d + A_c)\beta^{-1}.
\end{equation} 
Therefore, for $0 < \eta \leq \min\left\{\frac{K_3}{K_2}, \frac{\gamma\lambda}{2K_1}\right\}$ and $\eta \leq 2 / \gamma\lambda$ (implied by $\eta \leq 2 / \gamma$ and $\lambda \leq 1 / 4$), we have
\begin{equation}\label{func:M_2}
\begin{aligned}
    M_2(k+1)\leq \left(1 - \frac{\gamma\lambda\eta}{2}\right) M_2(k) + 2K_3\eta
    \leq M_2(0) + 2\eta K_3\frac{1 - (1-\frac{\gamma\lambda\eta}{2})^k}{1 - (1-\frac{\gamma\lambda\eta}{2})}\leq M_2(0) + \frac{4}{\gamma\lambda}K_3.
\end{aligned}    
\end{equation}
By using~\eqref{func:M_1} and~\eqref{func:M_2}, we hence obtain 
\begin{equation*}
\begin{aligned}
\sup _{t \geq 0} \mathbb{E}\left[\left|\theta_k^{\eta}\right|^2\right] \leq C_\theta:=\frac{\int_{\mathbb{R}^{2 d}} \mathcal{V}(\theta, v) \mu_0(\mathrm{~d} \theta, \mathrm{d} v)+\frac{4(d+A_c)}{\lambda}}{\frac{1}{8}(1-2 \lambda) \beta \gamma^2},
\end{aligned}
\end{equation*}
\begin{equation*}
\begin{aligned}
\sup _{t \geq 0} \mathbb{E}\left[\left|V_k^{\eta}\right|^2\right] \leq C_v:=\frac{\int_{\mathbb{R}^{2 d}} \mathcal{V}(\theta, v) \mu_0(\mathrm{~d} \theta, \mathrm{d} v)+\frac{4(d+A_c)}{\lambda}}{\frac{1}{4}(1-2 \lambda) \beta}.
\end{aligned}
\end{equation*}
\end{proof}

\begin{proof}[\textit{Proof of Lemma~\ref{lemma:3.4}}]\label{prf of lemma:3.4}
    Recall the process $\left(\bar{\zeta}_{t}^{\eta, n}\right)_{t \geq nT}$ defined in Definition~\ref{def:1}. According to Lemma~\ref{lemma:3.2}, for any $n \in \mathbb{N}_0$, we have
\begin{equation*}
\begin{aligned}
\sup _{t \in\left[n T,(n+1) T\right]} \mathbb{E}\left[\left|\bar{\zeta}_t^{\eta, n}\right|^2\right] \leq \frac{\mathbb{E}\left[\mathcal{V}(\bar{\theta}_{nT}^{\eta}, \bar{V}_{nT}^{\eta})\right]+\frac{\left(d+A_c\right)}{\lambda}}{\frac{1}{8}(1-2 \lambda) \beta \gamma^2}.
\end{aligned}
\end{equation*}
By using the expression of $M_2(k)$, $k \in \mathbb{N}_0$, defined in~\eqref{M_2(k)},  we have 
\begin{equation}\label{lm2.4: 1}
    \mathbb{E}\left[\mathcal{V}(\bar{\theta}_{nT}^{\eta}, \bar{V}_{nT}^{\eta})\right] = \beta M_2(nT)\leq \beta M_2(0) + \frac{4}{\lambda\gamma}\beta K_3 \leq \beta M_2(0) + \frac{4(A_c + d)}{\lambda} =: \tilde{c}_{10}, 
\end{equation} which implies that,
\begin{equation*}
\begin{aligned}
\sup_{t \in\left[n T,(n+1) T\right]} \mathbb{E}\left[\left|\bar{\zeta}_t^{\eta, n}\right|^2\right] \leq  \frac{\beta M_2(0) + \frac{5(d+A_c)}{\lambda}}{\frac{1}{8}(1-2 \lambda) \beta \gamma^2}=  \frac{\int_{\mathbb{R}^{2 d}} \mathcal{V}(\theta, v) \mu_0(d \theta, d v) + \frac{5(d+A_c)}{\lambda}}{\frac{1}{8}(1-2 \lambda) \beta \gamma^2}.
\end{aligned}
\end{equation*}
Hence, we conclude that 
\begin{equation*}
\begin{aligned}
\sup _{n \in \mathbb{N}_0} \sup _{t \in\left[n T,(n+1) T\right]} \mathbb{E}\left[\left|\bar{\zeta}_t^{\eta, n}\right|^2\right] \leq C_\zeta:=\frac{\int_{\mathbb{R}^{2 d}} \mathcal{V}(\theta, v) \mu_0(d \theta, dv)+\frac{5\left(d+A_c\right)}{\lambda}}{\frac{1}{8}(1-2 \lambda) \beta \gamma^2}.\\
\end{aligned}
\end{equation*}
Similarly, recalling the process $\left(\bar{Z}_{t}^{\eta, n}\right)_{t \geq nT}$ and Lemma~\ref{lemma:3.2}, we obtain
$$
\sup _{t \in[n T,(n+1) T]} \mathbb{E}\left[\left|\bar{Z}_t^{\eta, n}\right|^2\right] \leq \frac{\mathbb{E}\left[\mathcal{V}\left(\bar{\theta}_{n T}^\eta, \bar{V}_{n T}^\eta\right)\right]+\frac{\left(d+A_c\right)}{\lambda}}{\frac{1}{4}(1-2 \lambda) \beta}.
$$
By using~\eqref{lm2.4: 1} yields
$$
\sup _{t \in[n T,(n+1) T]} \mathbb{E}\left[\left|\bar{Z}_t^{\eta, n}\right|^2\right] \leq \frac{\beta M_2(0)+\frac{5\left(d+A_c\right)}{\lambda}}{\frac{1}{4}(1-2 \lambda) \beta}=\frac{\int_{\mathbb{R}^{2 d}} \mathcal{V}(\theta, v) \mu_0(d \theta, d v)+\frac{5\left(d+A_c\right)}{\lambda}}{\frac{1}{4}(1-2 \lambda) \beta}.
$$
Therefore, we have
$$
\sup _{n \in \mathbb{N}_0} \sup _{t \in[n T,(n+1) T]} \mathbb{E}\left[\left|\bar{Z}_t^{\eta, n}\right|^2\right] \leq C_Z:=\frac{\int_{\mathbb{R}^{2 d}} \mathcal{V}(\theta, v) \mu_0(d \theta, d v)+\frac{5\left(d+A_c\right)}{\lambda}}{\frac{1}{4}(1-2 \lambda) \beta}.
$$

\end{proof}

\begin{proof}[\textit{Proof of Lemma~\ref{lemma:3.5}}]\label{prf of lemma:3.5}
In this proof, we follow a similar strategy to the proof of Lemma~\ref{lemma:3.2}. Denote by \begin{equation}\label{delta}\Delta_k^{1} := (1 - \eta\gamma)V_k^{\eta} - \eta H(\theta_k^{\eta}, X_{k+1}) , \quad \Delta_k^{2} := \theta_{k}^{\eta} + \gamma^{-1}V_{k}^{\eta} - \gamma^{-1}\eta H(\theta_{k}^{\eta}, X_{k+1}).\end{equation} Recall SGHMC algorithm~\eqref{SGHMC 1} and~\eqref{SGHMC 2}. By using~\eqref{SGHMC 2}, we obtain
\begin{equation}\label{lm2.5: 1}
\lvert \theta_{k+1}^{\eta}\rvert^2 = \lvert \theta_k^{\eta}\rvert + 2\eta\langle\theta_k^{\eta}, V_k^{\eta}\rangle + \eta^2 \lvert V_k^{\eta}\rvert^2.
\end{equation}
Moreover, by using~\eqref{SGHMC 1}, we have
\begin{equation}\label{lm2.5: 2}
\begin{aligned}
    \lvert V_{k+1}^{\eta} \rvert^2 &=\lvert \Delta_k^{1} \rvert^2 + 2\sqrt{2\eta\gamma / \beta}\langle\Delta_k^{1} , \xi_{k+1}\rangle + \frac{2\eta\gamma}{\beta} \xi_{k+1}^2\\
    &= (1 - \eta\gamma)^2 \lvert V_k^{\eta}\rvert^2 - 2\eta(1 - \eta\gamma)\langle V_k^{\eta}, H(\theta_k^{\eta}, X_{k+1})\rangle + \eta^2\lvert H(\theta_k^{\eta}, X_{k+1})\rvert^2 \\&\quad + 2\sqrt{2\eta\gamma / \beta}\langle\Delta_k^{1} , \xi_{k+1}\rangle + \frac{2\eta\gamma}{\beta} \xi_{k+1}^2.
\end{aligned}
\end{equation}
We further note that
\begin{equation}\label{lm2.5: 3}
\begin{aligned}
\lvert \theta_{k+1}^{\eta} + \gamma^{-1}V_{k+1}^{\eta}\rvert^2 &= \lvert\theta_{k}^{\eta} + \gamma^{-1}V_{k}^{\eta} - \gamma^{-1}\eta  H(\theta_{k}^{\eta}, X_{k+1} )+ \gamma^{-1}\sqrt{2\gamma\eta / \beta}\xi_{k+1}\rvert^2 \\&
= \lvert\theta_{k}^{\eta} + \gamma^{-1} V_{k}^{\eta}\rvert^2 + \frac{2\eta}{\gamma\beta}\lvert\xi_{k+1}\rvert^2  - 2\eta\gamma^{-1}\langle\theta_{k}^{\eta} + \gamma^{-1} V_{k}^{\eta}, H(\theta_k^{\eta}, X_{k+1})\rangle \\& \quad + 2\sqrt{2\gamma^{-1}\beta^{-1}\eta}\langle\Delta_{k}^2, \xi_{k+1}\rangle + \eta^2\gamma^{-2}\lvert H(\theta_k^{\eta}, X_{k+1})\rvert^2.
\end{aligned}
\end{equation}
Using~\eqref{thm_pf:U}, we obtain that
\begin{equation}\label{lm2.5: 4}
u(\theta_{k+1}^{\eta}) - u(\theta_{k}^{\eta}) \leq \eta\langle h(\theta_k^{\eta}), V_k^{\eta}\rangle + \frac{L\eta^2}{2}\lvert V_k^{\eta}\rvert^2.
\end{equation}
Denote by 
\begin{equation}\label{sigma}
\Sigma_k :=  \frac{\gamma^2}{2}\sqrt{2\gamma^{-1}\beta^{-1}\eta}\langle\Delta_{k}^2, \xi_{k+1}\rangle
 + \frac{1}{2}\sqrt{2\eta\gamma / \beta}\langle\Delta_k^{1} , \xi_{k+1}\rangle, 
\end{equation}
and for the ease of notation, we denote by $\mathcal{V}_k := \mathcal{V}\left(\theta_k^\eta, V_k^\eta\right)$. By using~\eqref{lm2.5: 1},~\eqref{lm2.5: 2},~\eqref{lm2.5: 3}, and~\eqref{lm2.5: 4}, we obtain
\begin{equation}\label{lm3.5:v1}
\begin{aligned}
    (\mathcal{V}_{k + 1} - \mathcal{V}_k) / \beta &=  U(\theta_{k+1}^{\eta}) - U(\theta_{k}^{\eta}) + \frac{\gamma^2}{4}\left[ \lvert\theta_{k+1}^{\eta} + \gamma^{-1}V_{k+1}^{\eta}\rvert^2 - \lvert\theta_{k}^{\eta} + \gamma^{-1}V_{k}^{\eta}\rvert^2\right] \\&\quad+ \frac{\gamma^2}{4}\left[\lvert\gamma^{-1}V_{k+1}^{\eta}\rvert^2 - \lvert\gamma^{-1}V_{k}^{\eta}\rvert^2 - \lambda(\lvert\theta_{k+1}^{\eta}\rvert^2 - \lvert\theta_{k}^{\eta}\rvert^2) \right]\\& 
    \leq  \eta\langle h(\theta_k^{\eta}), V_k^{\eta}\rangle + \frac{L\eta^2}{2}\lvert V_k^{\eta}\rvert^2 + \frac{\gamma^2}{4}\bigg[\frac{2\eta}{\gamma\beta}\lvert\xi_{k+1}\rvert^2  - 2\eta\gamma^{-1}\langle\theta_{k}^{\eta} + \gamma^{-1} V_{k}^{\eta}, H(\theta_k^{\eta}, X_{k+1})\rangle \\&\quad + \eta^2\gamma^{-2}\lvert H(\theta_k^{\eta}, X_{k+1})\rvert^2 \bigg] + \frac{1}{4}\bigg[(- 2\eta\gamma + \eta^2\gamma^2) \lvert V_k^{\eta}\rvert^2 - 2\eta(1 - \eta\gamma)\langle V_k^{\eta}, H(\theta_k^{\eta}, X_{k+1})\rangle \\&\quad + \eta^2\lvert H(\theta_k^{\eta}, X_{k+1})\rvert^2 + \frac{2\eta\gamma}{\beta} \lvert\xi_{k+1}\rvert^2\bigg] - \frac{\gamma^2\lambda}{4}\left[2\eta\langle\theta_k^{\eta}, V_k^{\eta}\rangle + \eta^2 \lvert V_k^{\eta}\rvert^2  \right] + \Sigma_k\\
    &\leq -\frac{\eta\gamma}{2}\langle \theta_k^{\eta}, H(\theta_k^{\eta}, X_{k+1})\rangle + \eta\langle h(\theta_k^{\eta}), V_k^{\eta}\rangle + \left(\frac{L\eta^2}{2} - \frac{\eta\gamma}{2} + \frac{\gamma^2\eta^2}{4} - \frac{\eta^2\gamma^2\lambda}{4}\right) \lvert V_k^{\eta}\rvert^2 \\ &\quad + \frac{\eta^2}{2}\lvert H(\theta_k^{\eta}, X_{k+1})\rvert^2 + \gamma\eta\beta^{-1}\lvert \xi_{k+1}\rvert^2 + \left(\frac{\gamma\eta^2}{2} - \eta\right)\langle V_k^{\eta}, H(\theta_k^{\eta}, X_{k+1})\rangle \\ & \quad - \frac{\gamma^2\lambda\eta}{2}\langle\theta_k^{\eta}, V_k^{\eta}\rangle + \Sigma_k.\\
\end{aligned}
\end{equation}
Using the fact that $0 < \lambda \leq \frac{1}{4}$ and the expression of Lyapunov function~\eqref{Lyapunov}, we have that
\begin{equation}\label{lm3.5:v2}
\begin{aligned}  
-\frac{\gamma}{2} \langle\theta_k^{\eta}, V_k^{\eta}\rangle \leq -\mathcal{V}_k / \beta + U(\theta_k^{\eta}) + \frac{\gamma^2}{4}\lvert\theta_k^{\eta}\rvert^2 + \frac{1}{2}\lvert V_k^{\eta}\rvert^2.
\end{aligned}    
\end{equation}
Hence, substituting~\eqref{lm3.5:v2} into~\eqref{lm3.5:v1} yields
\begin{equation*}
\begin{aligned}
(\mathcal{V}_{k + 1} - \mathcal{V}_k) / \beta &\leq  -\frac{\eta\gamma}{2}\langle \theta_k^{\eta}, H(\theta_k^{\eta}, X_{k+1})\rangle + \eta\langle h(\theta_k^{\eta}) - H(\theta_k^{\eta}, X_{k+1}), V_k^{\eta}\rangle \\& \quad+ \left(\frac{L\eta^2}{2} - \frac{\eta\gamma}{2} + \frac{\gamma^2\eta^2}{4} - \frac{\eta^2\gamma^2\lambda}{4} + \frac{\lambda\gamma\eta}{2}\right) \lvert V_k^{\eta}\rvert^2 + \frac{\eta^2}{2}\lvert H(\theta_k^{\eta}, X_{k+1})\rvert^2 + \gamma\eta\beta^{-1}\lvert \xi_{k+1}\rvert^2 \\&\quad + \frac{\gamma\eta^2}{2}\left\langle V_k^{\eta}, H(\theta_k^{\eta}, X_{k+1})\right\rangle - \gamma\lambda\eta \mathcal{V}_k / \beta+ \lambda\eta\gamma U(\theta_k^{\eta}) + \frac{\gamma^3\lambda\eta}{4}\lvert\theta_k^{\eta}\rvert^2 + \Sigma_k.\\
\end{aligned}
\end{equation*}
Then, by rearranging the terms, we arrive at
\begin{align*}
\mathcal{V}_{k + 1}/ \beta & \leq  (1 - \gamma\lambda\eta) \mathcal{V}_k / \beta -\frac{\eta\gamma}{2}\langle \theta_k^{\eta}, H(\theta_k^{\eta}, X_{k+1})\rangle + \eta\left\langle h(\theta_k^{\eta}) - H(\theta_k^{\eta}, X_{k+1}), V_k^{\eta}\right\rangle \\ & \quad +  \left(\frac{L\eta^2}{2} - \frac{\eta\gamma}{2} + \frac{\gamma^2\eta^2}{4} - \frac{\eta^2\gamma^2\lambda}{4} + \frac{\lambda\gamma\eta}{2}\right) \lvert V_k^{\eta}\rvert^2 + \frac{\eta^2}{2}\lvert H(\theta_k^{\eta}, X_{k+1})\rvert^2 + \gamma\eta\beta^{-1}\lvert \xi_{k+1}\rvert^2 \\& \quad + \frac{\gamma\eta^2}{2}\langle V_k^{\eta}, H(\theta_k^{\eta}, X_{k+1})\rangle + \lambda\eta\gamma U(\theta_k^{\eta}) + \frac{\gamma^3\lambda\eta}{4}\lvert\theta_k^{\eta}\rvert^2 + \Sigma_k\\
& = (1 - \gamma\lambda\eta) \mathcal{V}_k / \beta -\frac{\eta\gamma}{2}\langle \theta_k^{\eta}, h(\theta_k^{\eta})\rangle + \frac{\eta\gamma}{2}\langle \theta_k^{\eta}, h(\theta_k^{\eta}) - H(\theta_k^{\eta}, X_{k+1})\rangle + \eta\langle h(\theta_k^{\eta}) - H(\theta_k^{\eta}, X_{k+1}), V_k^{\eta}\rangle \\ & \quad + \left(\frac{L\eta^2}{2} - \frac{\eta\gamma}{2} + \frac{\gamma^2\eta^2}{4} - \frac{\eta^2\gamma^2\lambda}{4} + \frac{\lambda\gamma\eta}{2}\right) \lvert V_k^{\eta}\rvert^2 + \frac{\eta^2}{2}\lvert H(\theta_k^{\eta}, X_{k+1})\rvert^2 + \gamma\eta\beta^{-1}\lvert \xi_{k+1}\rvert^2 \\ & \quad +\frac{\gamma\eta^2}{2}\langle V_k^{\eta}, H(\theta_k^{\eta}, X_{k+1})\rangle + \lambda\eta\gamma U(\theta_k^{\eta}) + \frac{\gamma^3\lambda\eta}{4}\lvert\theta_k^{\eta}\rvert^2  + \Sigma_k\\
& \leq (1 - \gamma\lambda\eta) \mathcal{V}_k / \beta -\lambda\gamma\eta U(\theta_k^{\eta}) - \frac{\gamma^3\lambda\eta}{4}\lvert\theta_k^{\eta}\rvert^2 + \eta\gamma\beta^{-1}A_c -\frac{\eta\gamma}{2}\langle \theta_k^{\eta}, H(\theta_k^{\eta}, X_{k+1}) - h(\theta_k^{\eta})\rangle \\ & \quad+ \eta\left\langle h(\theta_k^{\eta}) + \left(\frac{\eta\gamma}{2}-1\right) H(\theta_k^{\eta}, X_{k+1}), V_k^{\eta}\right\rangle + \left(\frac{L\eta^2}{2} - \frac{\eta\gamma}{2} + \frac{\gamma^2\eta^2}{4} - \frac{\eta^2\gamma^2\lambda}{4} + \frac{\lambda\gamma\eta}{2}\right) \lvert V_k^{\eta}\rvert^2 \\ & \quad + \frac{\eta^2}{2}\lvert H(\theta_k^{\eta}, X_{k+1})\rvert^2 + \gamma\eta\beta^{-1}\lvert \xi_{k+1}\rvert^2  + \lambda\eta\gamma U(\theta_k^{\eta}) + \frac{\gamma^3\lambda\eta}{4}\lvert\theta_k^{\eta}\rvert^2 + \Sigma_k,\\
\end{align*}
where the last inequality holds due to Remark~\ref{rmk:4}. Then, 
\begin{equation}\label{lm2.5 : v3}
\begin{aligned}
\mathcal{V}_{k + 1}/ \beta & \leq (1 - \gamma\lambda\eta) \mathcal{V}_k / \beta + \eta\gamma\beta^{-1}A_c -\frac{\eta\gamma}{2}\langle \theta_k^{\eta}, H(\theta_k^{\eta}, X_{k+1}) - h(\theta_k^{\eta})\rangle \\ & \quad+  \eta\langle h(\theta_k^{\eta}) - H(\theta_k^{\eta}, X_{k+1}), V_k^{\eta}\rangle + \left(\frac{L\eta^2}{2} - \frac{\eta\gamma}{2} + \frac{\gamma^2\eta^2}{4} - \frac{\eta^2\gamma^2\lambda}{4} + \frac{\lambda\gamma\eta}{2}\right) \lvert V_k^{\eta}\rvert^2 \\ & \quad + \frac{\eta^2}{2}\lvert H(\theta_k^{\eta}, X_{k+1})\rvert^2 + \gamma\eta\beta^{-1}\lvert \xi_{k+1}\rvert^2 + \Sigma_k + \frac{\eta^2\gamma}{2}\langle H(\theta_k^{\eta}, X_{k+1}), V_k^{\eta}\rangle \\
& \leq  (1 - \gamma\lambda\eta) \mathcal{V}_k / \beta + \eta\gamma\beta^{-1}A_c -\frac{\eta\gamma}{2}\langle \theta_k^{\eta}, H(\theta_k^{\eta}, X_{k+1}) - h(\theta_k^{\eta})\rangle \\ & \quad +  \eta\langle h(\theta_k^{\eta}) - H(\theta_k^{\eta}, X_{k+1}), V_k^{\eta}\rangle + \left(\frac{L\eta^2}{2} - \frac{\eta\gamma}{2} + \frac{\gamma^2\eta^2}{4} - \frac{\eta^2\gamma^2\lambda}{4} + \frac{\lambda\gamma\eta}{2} + \frac{\eta^2\gamma}{4}\right) \lvert V_k^{\eta}\rvert^2 \\ & \quad+ (\frac{\eta^2}{2} + \frac{\eta^2\gamma}{4})\lvert H(\theta_k^{\eta}, X_{k+1})\rvert^2 + \gamma\eta\beta^{-1}\lvert \xi_{k+1}\rvert^2 + \Sigma_k, \\
\end{aligned}
\end{equation}
where the last inequality holds due to $\langle a, b\rangle \leq \frac{\lvert a\rvert^2 + \lvert b\rvert^2}{2}$, for $a, b \in \mathbb{R}^d$. By using~\eqref{func:V} and the fact that $\max\left\{x, y\right\}\geq (x + y) / 2$, for any $x, y \geq 0$, we obtain
\begin{equation}\label{lm2.5: 5}
\begin{aligned}
    \mathcal{V} (\theta_k^{\eta}, V_k^{\eta}) / \beta
    \geq& \max\left\{ \frac{\gamma^2}{8}(1 - 2\lambda)\lvert\theta_k^{\eta}\rvert^2, \frac{1}{4}(1 - 2\lambda)\lvert V_k^{\eta}\rvert^2\right\},\\
    \geq & \frac{\gamma^2}{16}(1 - 2\lambda)\lvert\theta_k^{\eta}\rvert^2 + \frac{1}{8}(1 - 2\lambda)\lvert V_k^{\eta}\rvert^2.\\
\end{aligned}
\end{equation}
Denote by $\phi := (1 - \gamma\lambda\eta)$ and $$\tilde{K}_1 := \max\left\{\frac{\left((1 + \frac{\gamma}{2})L_1^2 \mathbb{E}\left[(1 + \lvert X_0\rvert)^{2(\rho + 1)}\right]\right)}{\frac{\gamma^2}{16}(1 - 2\lambda)}, \frac{\left(\frac{L\eta^2}{2} + \frac{\gamma^2\eta^2}{4} - \frac{\eta^2\gamma^2\lambda}{4} + \frac{\eta^2\gamma}{4}\right)}{\frac{1}{8}(1 - 2\lambda)}\right\}.$$ 
The result in~\eqref{lm2.5: 5} further implies
\begin{equation}\label{lm2.5: v4}
2\phi\tilde{K}_1\mathcal{V}_k / \beta \geq 2\phi\left(1 + \frac{\gamma}{2}\right)\mathbb{E}\left[(1+ \lvert X_0\rvert)^{2(\rho + 1)}\right]L_1^2\lvert\theta_k^{\eta}\rvert^2 + 2\phi\left(\frac{L\eta^2}{2} + \frac{\gamma^2\eta^2}{4} - \frac{\eta^2\gamma^2\lambda}{4} + \frac{\eta^2\gamma}{4}\right) \lvert V_k^{\eta}\rvert^2.
\end{equation}
Then, by using Remark~\ref{rmk:1},~\eqref{sigma},~\eqref{lm2.5: v4}, $\lambda \leq \frac{1}{4}$, and the fact that $\xi_{k+1}$ is independent of $\Delta_k^{1}$ and $\Delta_k^{2}$ , we have that  
\begin{align*}
&\mathbb{E}\left[\mathcal{V}_{k + 1}^2/ \beta^2\big|\theta_k^{\eta}, V_k^{\eta}\right] \\ & \leq  \phi^2\mathcal{V}_k^2 / \beta^2 + 2\phi \mathcal{V}_k / \beta \bigg(\eta\gamma\beta^{-1}A_c + \gamma\eta\beta^{-1}d + \eta^2\left(1 + \frac{\gamma}{2}\right)\bigg(\mathbb{E}\left[(1+ \lvert X_0\rvert)^{2(\rho + 1)}\right]L_1^2\lvert\theta_k^{\eta}\rvert^2 \\& \quad+ 2\mathbb{E}\left[(1+ \lvert X_0\rvert)^{2(\rho + 1)}\right]L_2^2 + 2\mathbb{E}\left[F^2_{\ast}(X_0)\right]\bigg) + \left(\frac{L\eta^2}{2} + \frac{\gamma^2\eta^2}{4} - \frac{\eta^2\gamma^2\lambda}{4} + \frac{\eta^2\gamma}{4}\right) \lvert V_k^{\eta}\rvert^2\bigg) \\ & \quad + \mathbb{E}\Bigg[\Bigg(\eta\gamma\beta^{-1}A_c + \frac{\eta\gamma}{2}\langle \theta_k^{\eta}, h(\theta_k^{\eta}) - H(\theta_k^{\eta}, X_{k+1}) \rangle +  \eta\langle h(\theta_k^{\eta}) - H(\theta_k^{\eta}, X_{k+1}), V_k^{\eta}\rangle \\ & \quad + \left(\frac{L\eta^2}{2} - \frac{\eta\gamma}{2} + \frac{\gamma^2\eta^2}{4} - \frac{\eta^2\gamma^2\lambda}{4} + \frac{\lambda\gamma\eta}{2} + \frac{\eta^2\gamma}{4}\right) \lvert V_k^{\eta}\rvert^2 + \left(\frac{\eta^2}{2} + \frac{\eta^2\gamma}{4}\right)\lvert H(\theta_k^{\eta}, X_{k+1})\rvert^2\\ & \quad + \gamma\eta\beta^{-1}\lvert \xi_{k+1}\rvert^2 + \Sigma_k\Bigg)^2\Bigg|\theta_k^{\eta}, V_k^{\eta}\Bigg]\\
& \leq \left(\phi^2 + 2\phi\tilde{K}_1\eta^2\right)\mathcal{V}_k^2 / \beta^2 + 2\phi\mathcal{V}_k / \beta \bigg(\eta\gamma\beta^{-1}A_c + \gamma\eta\beta^{-1}d + \eta^2(2 + \gamma)\left(\mathbb{E}\left[(1 + \lvert X_0\rvert)^{2(\rho + 1)}\right] L_2^2 + \mathbb{E}\left[F_{\ast}^2(X_{0})\right]\right)\!\bigg) \\ & \quad + \mathbb{E}\Bigg[\Bigg(\eta\gamma\beta^{-1}A_c + \frac{\eta\gamma}{2}\langle \theta_k^{\eta}, -H(\theta_k^{\eta}, X_{k+1}) + h(\theta_k^{\eta})\rangle +  \eta\langle h(\theta_k^{\eta}) - H(\theta_k^{\eta}, X_{k+1}), V_k^{\eta}\rangle \\ & \quad + \left(\frac{L\eta^2}{2} - \frac{\eta\gamma}{2} + \frac{\gamma^2\eta^2}{4} - \frac{\eta^2\gamma^2\lambda}{4} + \frac{\lambda\gamma\eta}{2} + \frac{\eta^2\gamma}{4}\right) \lvert V_k^{\eta}\rvert^2 +\gamma\eta\beta^{-1}\lvert \xi_{k+1}\rvert^2 + \Sigma_k \\ & \quad + \left(\eta^2 + \frac{\eta^2\gamma}{2}\right)\left((1 + \lvert X_{k+1}\rvert) ^{2(\rho + 1)} L_1^2\lvert\theta_{k}^{\eta}\rvert^2 + 2(1 + \lvert X_{k+1}\rvert) ^{2(\rho + 1)}L^2_2 + 2F^2_{\ast}(X_{k+1})\right)\Bigg)^2\Bigg|\theta_k^{\eta}, V_k^{\eta}\Bigg].\\
\end{align*}
This further implies, by using $\langle a, b\rangle \leq \frac{\lvert a\rvert^2 + \lvert b\rvert^2}{2}$, for any $a, b \in \mathbb{R}^d$, that
\begin{align*}
& \mathbb{E}\left[\mathcal{V}_{k + 1}^2/ \beta^2\big|\theta_k^{\eta}, V_k^{\eta}\right] \\ & \leq\left(\phi^2 + 2\phi\tilde{K}_1\eta^2\right)\mathcal{V}_k^2 / \beta^2 + 2\phi\mathcal{V}_k / \beta \bigg(\eta\gamma\beta^{-1}A_c + \gamma\eta\beta^{-1}d  \\ &\quad + \eta^2(2 + \gamma)\left(\mathbb{E}\left[(1 + \lvert X_0\rvert)^{2(\rho + 1)}\right] L_2^2 + \mathbb{E}\left[F_{\ast}^2(X_{0})\right]\right)\bigg) \\ &\quad  +\mathbb{E}\Bigg[\Bigg(\eta\gamma\beta^{-1}A_c + \frac{\eta\gamma}{4}\lvert\theta_k^{\eta}\rvert^2 + \left(\frac{\eta\gamma}{2} + \eta\right)\left(\lvert h(\theta_k^{\eta})\rvert^2 + \lvert H(\theta_k^{\eta},X_{k+1})\rvert^2\right)
+ \gamma\eta\beta^{-1}\lvert \xi_{k+1}\rvert^2\\ &\quad +  \left(\frac{L\eta^2}{2} - \frac{\eta\gamma}{2} + \frac{\gamma^2\eta^2}{4} - \frac{\eta^2\gamma^2\lambda}{4} + \frac{\lambda\gamma\eta}{2} + \frac{\eta^2\gamma}{2}+ \frac{\eta}{2}\right) \lvert V_k^{\eta}\rvert^2 + \Sigma_k \\ &\quad + \left(\eta^2 + \frac{\eta^2\gamma}{2}\right)\left((1 + \lvert X_{k+1}\rvert) ^{2(\rho + 1)} L_1^2\lvert\theta_{k}^{\eta}\rvert^2 + 2(1 + \lvert X_{k+1}\rvert) ^{2(\rho + 1)}L^2_2 + 2F^2_{\ast}(X_{k+1})\right)\Bigg)^2\Bigg|\theta_k^{\eta}, V_k^{\eta}\Bigg].\\
\end{align*}
By using Remark~\ref{rmk:1} and~\ref{rmk:2}, we have
\begin{align*}
&\mathbb{E}\left[\mathcal{V}_{k + 1}^2/ \beta^2\big|\theta_k^{\eta}, V_k^{\eta}\right] \\  & \leq \left(\phi^2 + 2\phi\tilde{K}_1\eta^2\right)\mathcal{V}_k^2 / \beta^2 + 2\phi\mathcal{V}_k / \beta \bigg(\eta\gamma\beta^{-1}A_c + \gamma\eta\beta^{-1}d  +  \eta^2(2 + \gamma)\left(\mathbb{E}\left[(1 + \lvert X_0\rvert)^{2(\rho + 1)}\right] L_2^2 + \mathbb{E}\left[F_{\ast}^2(X_{0})\right]\right)\!\bigg) \\ &\quad + 
\mathbb{E}\Bigg[\Bigg(\eta\gamma\beta^{-1}A_c + \frac{\eta\gamma}{4}\lvert\theta_k^{\eta}\rvert^2 + \eta\left(1 + \frac{\gamma}{2}\right)\bigg(2 L^2\lvert\theta_k^{\eta}\rvert^2 + 2\lvert h(0)\rvert^2 \\ &\quad  + 2(1 + \lvert X_{k+1}\rvert) ^{2(\rho + 1)} L^2_1\lvert\theta_k^{\eta}\rvert^2 + 4(1 + \lvert X_{k+1}\rvert) ^{2(\rho + 1)}L^2_2 + 4F^2_{\ast}(X_{k+1})\bigg)
+ \gamma\eta\beta^{-1}\lvert \xi_{k+1}\rvert^2\\ &\quad + \left(\frac{L\eta^2}{2} - \frac{\eta\gamma}{2} + \frac{\gamma^2\eta^2}{4} - \frac{\eta^2\gamma^2\lambda}{4} + \frac{\lambda\gamma\eta}{2} + \frac{\eta^2\gamma}{2}+ \frac{\eta}{2}\right) \lvert V_k^{\eta}\rvert^2 + \Sigma_k \\ &\quad  + \eta^2(1 + \frac{\gamma}{2})\bigg((1 + \lvert X_{k+1}\rvert) ^{2(\rho + 1)} L_1^2\lvert\theta_{k}^{\eta}\rvert^2 + 2(1 + \lvert X_{k+1}\rvert) ^{2(\rho + 1)}L^2_2 + 2F^2_{\ast}(X_{k+1})\bigg)\Bigg)^2\Bigg|\theta_k^{\eta}, V_k^{\eta}\Bigg]\\
& =  \left(\phi^2 + 2\phi\tilde{K}_1\eta^2\right)\mathcal{V}_k^2 / \beta^2 + 2\phi\eta(\gamma A_c + \gamma d)\mathcal{V}_k / \beta^2  + 2\phi\eta^2
(2 + \gamma)\left(\mathbb{E}\left[(1 + \lvert X_0\rvert)^{2(\rho + 1)}\right] L_2^2 + \mathbb{E}\left[F_{\ast}^2(X_{0})\right]\right) \mathcal{V}_k / \beta 
\\ &\quad +
\mathbb{E}\Bigg[\Bigg(
\eta / \beta \left(\gamma A_c + \gamma \lvert \xi_{k+1}\rvert^2\right) + \eta \lvert\theta_{k}^{\eta}\rvert^2\left(\frac{\gamma}{4} + 2\left(1 + \frac{\gamma}{2}\right)(L^2 + (1 + \lvert X_{k+1}\rvert)^{2(\rho + 1)}L_1^2) \right) \\ &\quad  + \eta^2 \lvert\theta_{k}^{\eta}\rvert^2 \left(1 + \frac{\gamma}{2}\right)(1 + \lvert X_{k+1}\rvert)^{2(\rho + 1)}L_1^2 + \eta \lvert V_k^{\eta}\rvert^2\left(\frac{1}{2} + \frac{\lambda\gamma}{2} -\frac{\gamma}{2}\right) \\ &\quad  +\eta^2\lvert V_k^{\eta}\rvert^2\left(\frac{L}{2} + \frac{\gamma^2}{4} - \frac{\lambda\gamma^2}{4} + \frac{\gamma}{2}\right) + \Sigma_k + \eta\left(1 + \frac{\gamma}{2}\right)\left(2\lvert h(0)\rvert^2 + 4(1 + \lvert X_{k+1}\rvert)^{2(\rho + 1)}L_2^2 + 4 F_{\ast}^2(X_{k+1})\right) \\ &\quad + \eta^2(2 + \gamma)((1 + \lvert X_{k+1}\rvert)^{2(\rho + 1)}L_2^2 + F_{\ast}^2(X_{k+1}) ) \Bigg)^2\Bigg|\theta_k^{\eta}, V_k^{\eta}\Big].
\end{align*}
By using the fact that, for any integer $k \geq 2$, $ \left(\sum\limits_{i=1}^{k}X_i\right)^2 \leq k\sum\limits_{i=1}^{k} X_i^2$, we obtain
\begin{equation}\label{lm:long}
\begin{aligned}
\mathbb{E}\left[\mathcal{V}_{k + 1}^2/ \beta^2\big|\theta_k^{\eta}, V_k^{\eta}\right] & \leq  \left(\phi^2 + 2\phi\tilde{K}_1\eta^2\right)\mathcal{V}_k^2 / \beta^2 + 2\phi\eta \tilde{c}_1\mathcal{V}_k / \beta^2+ 2\phi\eta^2 \hat{c}_1\mathcal{V}_k/\beta + \eta^2\beta^{-2} \tilde{c}_2 + \eta^2\lvert\theta_k^{\eta}\rvert^4 \tilde{c}_3 \\ &\quad +  \eta^4\lvert\theta_k^{\eta}\rvert^4 \hat{c}_3 + \eta^2\lvert V_k^{\eta}\rvert^4 \tilde{c}_4 +  \eta^4\lvert V_k^{\eta}\rvert^4 \hat{c}_4 + \eta^2\tilde{c}_5 + \eta^4\hat{c}_5 
+ 8\mathbb{E}\left[\Sigma^2_k \Bigg|\theta_k^{\eta}, V_k^{\eta}\right],
\end{aligned}
\end{equation}
where 
\begin{align*}
\tilde{c}_1 &:= \gamma A_c + \gamma d,\quad \hat{c}_1 = \left(2 + \gamma\right)\left(\mathbb{E}\left[(1 + \lvert X_0\rvert)^{2(\rho + 1)}\right] L_2^2 + \mathbb{E}\left[F_{\ast}^2(X_{0})\right]\right),\\
\tilde{c}_2 &:= 16\gamma^2A_c^2 + 48\gamma^2 d^2,\\
\tilde{c}_3 &:= \frac{3}{2}\gamma^2 + 24\left(2 + \gamma\right)^2\left(L^4 + L_1^4\mathbb{E}\left[(1 + \lvert X_0\rvert)^{4(\rho + 1)}\right]\right),\\
\hat{c}_3 &:= 8 \left(1 + \frac{\gamma}{2}\right)^2L_1^4\mathbb{E}\left[(1 + \lvert X_0\rvert)^{4(\rho + 1)}\right],\\
\tilde{c}_4 &:= 2 \left(1 + \lambda\gamma - \gamma\right)^2, \quad \hat{c}_4 := 2\left(L + \frac{\gamma^2}{2} - \frac{\lambda\gamma^2}{2} + \gamma\right)^2,\\
\tilde{c}_5 &:= 96\left(1 + \frac{\gamma}{2}\right)^2\left(\lvert h(0)\rvert^4 + 4L_2^4\mathbb{E}\left[(1 + \lvert X_0\rvert)^{4(\rho + 1)}\right] + 4\mathbb{E}\left[F^4_{\ast}(X_0)\right]\right),\\
\hat{c}_5 &:= 64\left(1 + \frac{\gamma}{2}\right)^2\left(L_2^4\mathbb{E}\left[(1 + \lvert X_0\rvert)^{4(\rho + 1)}\right] + \mathbb{E}\left[F^4_{\ast}(X_0)\right]\right).\\
\end{align*}
Next, recall the expression of $\Delta_k^{1}$, $\Delta_k^{2}$ defined in~\eqref{delta} and of $\Sigma_k$ in~\eqref{sigma}, $k \in \mathbb{N}_0$. We note that 
\begin{align*}
\Sigma_{k}^{2}&  \leq \gamma\beta^{-1}\eta|\Delta_{k}^{1}|^{2}|\xi_{k+1}|^{2} + \gamma^{3}\beta^{-1}\eta|\Delta_{k}^{2}|^{2}|\xi_{k+1}|^{2}\\
& \leq\eta\gamma\beta^{-1}\lvert\xi_{k+1}\rvert^{2}\left(\lvert V_{k}^{n}-\eta\left(\gamma V_{k}^{n}+H(\theta_{k}^{\eta}, X_{k+1})\right)\rvert^{2} + \gamma^{2}\lvert\theta_{k}^{\eta}
+ \gamma^{-1} V_k^{\eta} - \eta\gamma^{-1}H(\theta_k^{\eta}, X_{k+1})\rvert^2\right)\\
& \leq \eta\gamma\beta^{-1}\lvert\xi_{k+1}\rvert^{2}\left(2(1 - \eta\gamma)^2\lvert V_k^{\eta}\rvert^2 + (2+3\eta^2)\lvert H(\theta_k^{\eta}, X_{k+1})\rvert^2 + 3\gamma^2\lvert\theta_k^{\eta}\rvert^2 + 3\lvert V_k^{\eta}\rvert^2 \right)\\
& \leq \eta\gamma\beta^{-1}\lvert\xi_{k+1}\rvert^{2}\bigg(2(1 - \eta\gamma)^2\lvert V_k^{\eta}\rvert^2 + (2+3\eta^2)\bigg(3L_1^2(1 + \lvert X_{k+1}\rvert)^{2(\rho + 1)}\lvert\theta_k^{\eta}\rvert^2 \\& \quad+ 3L_2^2(1 + \lvert X_{k+1}\rvert)^{2(\rho + 1)} + 3F^2_{\ast}(X_{k+1})
\bigg)+ 3\gamma^2\lvert\theta_k^{\eta}\rvert^2 + 3\lvert V_k^{\eta}\rvert^2 \bigg),\\
\end{align*}
where the last inequality holds due to Remark~\ref{rmk:1}. Then by taking expectation, using $\eta \leq 1$ and $\eta \leq  \frac{2}{\gamma}$, and the fact that $\xi_{k+1}$ is independent of $X_{k+1}$, $V_{k}^{\eta}$ and $\theta_k^{\eta}$, we obtain
\begin{equation*}
\begin{aligned}
\mathbb{E}\left[\Sigma_{k}^{2}\big| \theta_k^{\eta}, V_k^{\eta}\right]& \leq  \eta\gamma\beta^{-1}d\bigg(5\lvert V_k^{\eta}\rvert^2 + \left(3\gamma^2 + 15L_1^2\mathbb{E}\left[(1 + \lvert X_{0}\rvert)^{2(\rho + 1)}\right]\right)\lvert\theta_k^{\eta}\rvert^2 \\& \quad+ 15L_2^2\mathbb{E}\left[(1 + \lvert X_{0}\rvert)^{2(\rho + 1)}\right] + 15\mathbb{E}\left[F^2_{\ast}(X_0)\right]\bigg).\\
\end{aligned}
\end{equation*}
Applying~\eqref{lm2.5: 5} to the inequality above yields 
\begin{equation}\label{sigma_1}
\begin{aligned}
\mathbb{E}\left[\Sigma_{k}^{2}\big| \theta_k^{\eta}, V_k^{\eta}\right]\leq & \frac{\eta\mathcal{V}_k}{8\beta^2}\tilde{c}_6 + \frac{\eta}{8\beta}\tilde{c}_7,
\end{aligned}
\end{equation}
where 
$\tilde{c}_6 := 8\max \left\{\frac{5\gamma d}{\frac{1}{8}(1 - 2\lambda)}, \frac{3\gamma d(\gamma^2 + 5L_1^2\mathbb{E}\left[(1 + \lvert X_{0}\rvert)^{2(\rho + 1)}\right])}{\frac{1}{16}(1 - 2\lambda)\gamma^2}\right\}$ and $ \tilde{c}_7 := 120 \gamma dL_2^2\mathbb{E}\left[(1 + \lvert X_{0}\rvert)^{2(\rho + 1)}\right] + 120\gamma d \mathbb{E}\left[F^2_{\ast}(X_0)\right]$.
Substituting~\eqref{sigma_1} into~\eqref{lm:long} and by using $\eta \leq 1$, we obtain
\begin{equation*}
\begin{aligned}
\mathbb{E}\left[\mathcal{V}_{k + 1}^2/ \beta^2\big|\theta_k^{\eta}, V_k^{\eta}\right] 
\leq & \left(\phi^2 + 2\phi\tilde{K}_1\eta^2\right)\frac{\mathcal{V}_k^2}{\beta^2} + (2\phi\tilde{c}_1 + \tilde{c}_6)\frac{\mathcal{V}_k}{\beta^2}\eta+ 2\phi\eta^2 \hat{c}_1\mathcal{V}_k / \beta + \frac{\eta^2}{\beta^2} \tilde{c}_2 \\+& \eta^2\lvert\theta_k^{\eta}\rvert^4 (\tilde{c}_3 + \hat{c}_3) + \eta^2\lvert V_k^{\eta}\rvert^4 (\tilde{c}_4 + \hat{c}_4) + \eta^2(\tilde{c}_5 +\hat{c}_5)
+ \frac{\eta}{\beta}\tilde{c}_7 .\\
\end{aligned}
\end{equation*}
Next, by using~\eqref{lm2.5: 5}, we have
\begin{equation*}
\begin{aligned}
    \mathcal{V}_k^2 / \beta^2
    \geq& \max\left\{ \frac{1}{64}(1 - 2\lambda)^2\gamma^4\lvert\theta_k^{\eta}\rvert^4, \frac{1}{16}(1 - 2\lambda)^2\lvert V_k^{\eta}\rvert^4\right\},\\
    \geq & \frac{1}{128}(1 - 2\lambda)^2\gamma^4\lvert\theta_k^{\eta}\rvert^4 + \frac{1}{32}(1 - 2\lambda)^2\lvert V_k^{\eta}\rvert^4.
\end{aligned}
\end{equation*}
This implies that
\begin{equation*}
\begin{aligned}
\mathbb{E}\left[\mathcal{V}_{k + 1}^2/ \beta^2\big|\theta_k^{\eta}, V_k^{\eta}\right] 
\leq & \left(\phi^2 + (2\phi\tilde{K}_1 + \tilde{c}_8)\eta^2 \right)\mathcal{V}_k^2 / \beta^2 + (2\phi\tilde{c}_1 + \tilde{c}_6)\eta\mathcal{V}_k / \beta^2+ 2\phi\eta^2 \hat{c}_1\mathcal{V}_k / \beta + \frac{\eta^2}{\beta^2} \tilde{c}_2 \\ + &  \eta^2(\tilde{c}_5 +\hat{c}_5)
+ \frac{\eta}{\beta}\tilde{c}_7 ,\\
\end{aligned}
\end{equation*}
where $\tilde{c}_8 := \max \left\{\frac{\tilde{c}_3 + \hat{c}_3}{\frac{1}{128}(1 - 2\lambda)^2\gamma^4}, \frac{\tilde{c}_4 + \hat{c}_4}{\frac{1}{32}(1 - 2\lambda)^2}\right\}$. By taking expectations on both sides of the above inequality and by using the fact that $\phi = 1 - \lambda\gamma\eta $ and $ \phi \leq 1$, we obtain
\begin{equation}
\begin{aligned}
\mathbb{E}\left[\mathcal{V}_{k + 1}^2\right] 
\leq & \left(1 - \lambda\gamma\eta + \tilde{K}\eta^2 \right)\mathbb{E}\left[\mathcal{V}_{k}^2\right]  + \tilde{c}_9\eta\mathbb{E}\left[\mathcal{V}_k\right]+ 2\hat{c}_1\beta\eta^2\mathbb{E}\left[\mathcal{V}_k\right] + \eta^2 \tilde{c}_2 \\+&  \eta^2(\tilde{c}_5 +\hat{c}_5)\beta^2
+ \eta\tilde{c}_7\beta ,\\
\end{aligned}
\end{equation}
where 
\begin{equation}\label{tilde_K}
\tilde{K} := 2\tilde{K}_1 + \tilde{c}_8
\end{equation}
and $\tilde{c}_9 := 2\tilde{c}_1 + \tilde{c}_6$. By using~\eqref{lm2.4: 1} and $\eta \leq 1$, we obtain
$$
\mathbb{E}\left[\mathcal{V}_{k + 1}^2\right] 
\leq \left(1 - \lambda\gamma\eta + \tilde{K}\eta^2 \right)\mathbb{E}\left[\mathcal{V}_{k}^2\right] + \eta D,
$$
where $D := \tilde{c}_9\tilde{c}_{10} + 2\hat{c}_1\tilde{c}_{10}\beta  + \tilde{c}_2  + (\tilde{c}_5 +\hat{c}_5)\beta^2  + \tilde{c}_7\beta$. Since $\eta \leq \frac{\lambda\gamma}{2\tilde{K}}$, we obtain
$$
\mathbb{E}\left[\mathcal{V}_{k + 1}^2\right] 
\leq \left(1 - \frac{\lambda\gamma\eta}{2} \right)\mathbb{E}\left[\mathcal{V}_{k}^2\right] + \eta D, 
$$
which implies 
\begin{equation}\label{C_V}
\mathbb{E}\left[\mathcal{V}_{k}^2\right] 
\leq \mathbb{E}\left[\mathcal{V}_{0}^2\right] + \frac{2D}{\gamma\lambda}:= C_{\mathcal{V}}.
\end{equation}
We claim that, for all $\theta \in \mathbb{R}^d$, we have that
\begin{equation}\label{lemma_A_3}
\frac{a^{\prime}}{3}|\theta|^2-\frac{b^{\prime}}{2} \log 3 \leq u(\theta) \leq u(0)+\frac{L}{2}|\theta|^2+\left|h(0)\right|\lvert\theta\rvert,
\end{equation}
Indeed, by using Remark~\ref{rmk:2}, we obtain
$$
\begin{aligned}
u(\theta) - u(0) & =\int_0^1\langle\theta, h(t \theta)\rangle \mathrm{d} t \\
& \leq \int_0^1|\theta||h(t \theta)| \mathrm{d} t \\
& \leq \int_0^1|\theta|\left(tL|\theta|+\lvert h(0)\rvert\right) \mathrm{d} t .
\end{aligned}
$$
This in turn leads to
$$
u(\theta) \leq u(0) +\frac{L}{2}|\theta|^2+ \lvert h(0)\rvert \lvert \theta\rvert.
$$
Next, we prove the lower bound. To this end, by taking $c \in(0,1)$ and using Remark~\ref{rmk:3}, we write
$$
\begin{aligned}
u(\theta) & =u(c \theta)+\int_c^1\langle\theta, h(t \theta)\rangle \mathrm{d} t \\
& \geq \int_c^1 \frac{1}{t}\langle t \theta, h(t \theta)\rangle \mathrm{d} t \\
& \geq \int_c^1 \frac{1}{t}\left(a^{\prime}|t \theta|^2-b^{\prime}\right) \mathrm{d} t \\
& =\frac{a^{\prime}\left(1-c^2\right)}{2}|\theta|^2+b^{\prime} \log c,
\end{aligned}
$$
which by taking $c=1 / \sqrt{3}$ leads to the bound. By using~\eqref{lemma_A_3}, we have 
\begin{equation}\label{func:V2}
\begin{aligned}
\mathcal{V}(\theta, v) & \leq \beta\left(\frac{L}{2}|\theta|^2+\lvert h(0)\rvert|\theta| + u(0)\right)+\frac{1}{4} \beta \gamma^2\left(\left|\theta+\gamma^{-1} v\right|^2+\left|\gamma^{-1} v\right|^2-\lambda|\theta|^2\right) \\
& \leq \beta\left(\frac{L}{2}\|\theta|^2+\lvert h(0)\rvert|\theta| + u(0)\right)+\frac{1}{4} \beta \gamma^2\left(2|\theta|^2+2 \gamma^{-2}|v|^2+\left|\gamma^{-1} v\right|^2-\lambda|\theta|^2\right) \\
& \leq \beta\left(L|\theta|^2 + u(0) + \frac{\lvert h(0)\rvert^2}{2 L}\right)+\frac{1}{4} \beta \gamma^2\left(2|\theta|^2+2 \gamma^{-2}|v|^2+\left|\gamma^{-1} v\right|^2-\lambda|\theta|^2\right) \\
& \leq\left(\beta L+\frac{1}{2} \beta \gamma^2\right)|\theta|^2+\frac{3}{4} \beta|v|^2+\beta u(0)+\frac{\beta \lvert h(0)\rvert^2}{2 L} .
\end{aligned}
\end{equation}
Taking square on both sides for $(\theta, v) = (\theta_0, V_0)$ and taking expectation yield
\begin{equation}\label{V^2}
\begin{aligned}
\mathbb{E}\left[\mathcal{V}_0^2 \right]
& \leq \mathbb{E}\left[\left(\left(\beta L+\frac{1}{2} \beta \gamma^2\right)|\theta_0|^2+\frac{3}{4} \beta|V_0|^2+\beta u(0)+\frac{\beta \lvert h(0)\rvert^2}{2 L}\right)^2\right] \\ 
& \leq 4\beta^2(L + \frac{\gamma^2}{2})\mathbb{E}\left[\lvert\theta_0\rvert^4\right] + \frac{9}{4}\beta^2\mathbb{E}\left[\lvert V_0\rvert^4\right] + 4\beta^2u(0)^2 + \beta^2\frac{\lvert h(0)\rvert^2}{L^2}\\
& \leq \beta^2\left(\max\{4(L + \frac{\gamma^2}{2}), \frac{9}{4}\}\left(\mathbb{E}\left[\lvert\theta_0\rvert^4\right] + \mathbb{E}\left[\lvert V_0\rvert^4\right]\right) + 4u(0)^2 + \frac{\lvert h(0)\rvert^2}{L^2}\right).
\end{aligned}
\end{equation}
Note that according to Assumption~\ref{asm:A2}, the RHS of~\eqref{V^2} is bounded. 

To obtain the second inequality, for every $n \in \mathbb{N}_0$, we denote by $x \wedge y := \min\left\{x, y\right\}$ for any $x, y \in \mathbb{R}$ and define the stopping time
\begin{equation}\label{stopping time}
\tau_n:= \inf \left\{t \in \left[k T, (k+1)T\right]:\left\lvert\mathcal{V}\left(\bar{\zeta}_t^{\eta, k}, \bar{Z}_t^{\eta, k}\right)\right\rvert > n\right\} \wedge \left(k+1\right) T.
\end{equation}
It is clear that $\tau_n \uparrow (k+1)T$. By using 
$$
\begin{aligned}
&\mathcal{V}\left(\bar{\zeta}_{t \wedge \tau_n}^{\eta, k}, \bar{Z}_{t \wedge \tau_n}^{\eta, k}\right) \\& = \mathcal{V}\left(\bar{\theta}_{k T}^\eta, \bar{V}_{k T}^\eta\right) + \int_{k T}^{t} \eta\mathcal{A} \mathcal{V}\left(\bar{\zeta}_{s \wedge \tau_n}^{\eta, k}, \bar{Z}_{s \wedge \tau_n}^{\eta, k}\right) \mathbbm{1}_{\left\{k T \leq s \leq \tau_n\right\}}\mathrm{d} s  + \sqrt{2\gamma \eta\beta^{-1}}  \int_{k T}^{t} \nabla_v \mathcal{V}\left(\bar{\zeta}_{s \wedge \tau_n}^{\eta, k}, \bar{Z}_{s \wedge \tau_n}^{\eta, k}\right) \mathbbm{1}_{\left\{k T \leq s \leq \tau_n\right\}}\mathrm{d} B_s^{\eta}
\end{aligned}
$$ and applying Itô formula to the stopped process $\mathcal{V}^2\left(\bar{\zeta}_{t \wedge \tau_n}^{\eta, k}, \bar{Z}_{t \wedge \tau_n}^{\eta, k}\right)$, one obtains
\begin{equation}\label{itov}
\begin{aligned}
&\mathcal{V}^2\left(\bar{\zeta}_{t \wedge \tau_n}^{\eta, k}, \bar{Z}_{t \wedge \tau_n}^{\eta, k}\right) \\&= \mathcal{V}^2\left(\bar{\theta}_{k T}^\eta, \bar{V}_{k T}^\eta\right) + 2\eta \int_{k T}^{t}\!\bigg(\mathcal{V}\left(\bar{\zeta}_{s \wedge \tau_n}^{\eta, k}, \bar{Z}_{s \wedge \tau_n}^{\eta, k}\right) \mathcal{A} \mathcal{V}\left(\bar{\zeta}_{s \wedge \tau_n}^{\eta, k}, \bar{Z}_{s \wedge \tau_n}^{\eta, k}\right) + 2\gamma \beta^{-1}\!  \left(\nabla_v \mathcal{V}\left(\bar{\zeta}_{s \wedge \tau_n}^{\eta, k}, \bar{Z}_{s \wedge \tau_n}^{\eta, k}\right)\right)^2 \bigg)\mathbbm{1}_{\left\{k T \leq s \leq \tau_n\right\}}\mathrm{d} s 
\\ & \quad + 2\sqrt{2\gamma \eta\beta^{-1}} \int_{k T}^{t}\mathcal{V}\left(\bar{\zeta}_{s \wedge \tau_n}^{\eta, k}, \bar{Z}_{s \wedge \tau_n}^{\eta, k}\right) \nabla_v \mathcal{V}\left(\bar{\zeta}_{s \wedge \tau_n}^{\eta, k}, \bar{Z}_{s \wedge \tau_n}^{\eta, k}\right) \mathbbm{1}_{\left\{k T \leq s \leq \tau_n\right\}} \mathrm{d} B_s^{\eta}.
\end{aligned}
\end{equation}
By using~\eqref{Lyapunov}, we obtain, for any $\theta, v \in \mathbb{R}^d$, that
$$
\nabla_v \mathcal{V}\left(\theta, v\right)=\beta v+\frac{\beta \gamma}{2} \theta,
$$
which implies
\begin{equation}\label{nable v}
\lvert\nabla_v \mathcal{V}\left(\theta, v\right)\rvert^2 \leq 2\beta^2 v^2 + \frac{\beta^2\gamma^2}{2}\theta^2.
\end{equation}
By taking expectation on both sides of~\eqref{itov}, using~\eqref{lm4.2:2},~\eqref{nable v}, and the fact that
\begin{equation*}
\begin{aligned}
& \mathbb{E}\left[\int_{k T}^{ t }\left\lvert \mathcal{V}\left(\bar{\zeta}_{s \wedge \tau_n}^{\eta, k}, \bar{Z}_{s \wedge \tau_n}^{\eta, k}\right) \nabla_v \mathcal{V}\left(\bar{\zeta}_{s \wedge \tau_n}^{\eta, k}, \bar{Z}_{s \wedge \tau_n}^{\eta, k}\right)\right\rvert^2 \mathbbm{1}_{\left\{k T \leq s \leq \tau_n\right\}} \mathrm{d} s\right]  
 \\ &\leq
2 n^2 \beta^2 \int_{k T}^{ t }  \mathbb{E}\left[\left\lvert\bar{Z}_{s \wedge \tau_n}^{\eta, k}\right\rvert^2\mathbbm{1}_{\left\{k T \leq s \leq \tau_n\right\}}\right] \mathrm{d} s + \frac{n^2 \gamma^2 \beta^2}{2} \int_{k T}^{ t }  \mathbb{E}\left[\left\lvert\bar{\zeta}_{s \wedge \tau_n}^{\eta, k}\right\rvert^2\mathbbm{1}_{\left\{k T \leq s \leq \tau_n\right\}}\right] \mathrm{d} s 
\\ & \leq
2 n^2 \beta^2 \int_{k T}^{ t }  \mathbb{E}\left[\left\lvert\bar{Z}_{s}^{\eta, k}\right\rvert^2\right] \mathrm{d} s + \frac{n^2 \gamma^2 \beta^2}{2} \int_{k T}^{ t }  \mathbb{E}\left[\left\lvert\bar{\zeta}_{s}^{\eta, k}\right\rvert^2\right] \mathrm{d} s  < \infty, 
\end{aligned}
\end{equation*}
where the last inequality holds due to~\eqref{lemma3.4.12} in Lemma~\ref{lemma:3.4}, we obtain that
\begin{equation*}
\begin{aligned}
&\mathbb{E}\left[\mathcal{V}^2\left(\bar{\zeta}_{t \wedge \tau_n}^{\eta, k}, \bar{Z}_{t \wedge \tau_n}^{\eta, k}\right)\right] \\& = \mathbb{E}\left[\mathcal{V}^2\left(\bar{\theta}_{k T}^\eta, \bar{V}_{k T}^\eta\right)\right] + 2\eta \int_{k T}^{t} \mathbb{E}\left[\mathcal{V}\left(\bar{\zeta}_{s \wedge \tau_n}^{\eta, k}, \bar{Z}_{s \wedge \tau_n}^{\eta, k}\right) \mathcal{A} \mathcal{V}\left(\bar{\zeta}_{s \wedge \tau_n}^{\eta, k}, \bar{Z}_{s \wedge \tau_n}^{\eta, k}\right)\mathbbm{1}_{\left\{k T \leq s \leq \tau_n\right\}}\right]\mathrm{d} s 
\\ & \quad + 4\gamma \eta\beta^{-1} \int_{k T}^{t}\mathbb{E}\left[\left(\nabla_v \mathcal{V}\left(\bar{\zeta}_{s \wedge \tau_n}^{\eta, k}, \bar{Z}_{s \wedge \tau_n}^{\eta, k}\right)\right)^2\mathbbm{1}_{\left\{k T \leq s \leq \tau_n\right\}}\right]\mathrm{d} s 
\\ & \leq
\mathbb{E}\left[\mathcal{V}^2\left(\bar{\theta}_{k T}^\eta, \bar{V}_{k T}^\eta\right)\right] + 2\eta\gamma\left(d+ A_c\right) \int_{k T}^{t}\mathbb{E}\left[\left\lvert\mathcal{V}\left(\bar{\zeta}_{s \wedge \tau_n}^{\eta, k}, \bar{Z}_{s \wedge \tau_n}^{\eta, k}\right)\right\rvert\right] \mathrm{d} s + 2\eta\gamma\lambda \int_{k T}^{t} \mathbb{E}\left[\mathcal{V}^2\left(\bar{\zeta}_{s \wedge \tau_n}^{\eta, k}, \bar{Z}_{s \wedge \tau_n}^{\eta, k}\right)\right]\mathrm{d} s \\ & \quad +  8\eta\gamma\beta \int_{k T}^{t}\mathbb{E}\left[\left\lvert \bar{Z}_{s \wedge \tau_n}^{\eta, k}\right\rvert^2\mathbbm{1}_{\left\{k T \leq s \leq \tau_n\right\}}\right] \mathrm{d} s 
+
2\eta\beta\gamma^3 \int_{k T}^{t}\mathbb{E}\left[\left\lvert\bar{\zeta}_{s \wedge \tau_n}^{\eta, k}\right\rvert^2\mathbbm{1}_{\left\{k T \leq s \leq \tau_n\right\}}\right] \mathrm{d} s.
\end{aligned}
\end{equation*}
Note that
\begin{equation}\label{func:V3}
\begin{aligned}
\left\lvert \mathcal{V}(\theta, v)\right\rvert & \leq \beta\left(\frac{L}{2}|\theta|^2+\lvert h(0)\rvert|\theta| + u(0)\right)+\frac{1}{4} \beta \gamma^2\left(\left|\theta+\gamma^{-1} v\right|^2+\left|\gamma^{-1} v\right|^2 
 + \lambda|\theta|^2\right) \\
& \leq \beta\left(\frac{L}{2}\|\theta|^2+\lvert h(0)\rvert|\theta| + u(0)\right)+\frac{1}{4} \beta \gamma^2\left(2|\theta|^2+2 \gamma^{-2}|v|^2+\left|\gamma^{-1} v\right|^2 + \lambda|\theta|^2\right) \\
& \leq \beta\left(L|\theta|^2 + u(0) + \frac{\lvert h(0)\rvert^2}{2 L}\right)+\frac{1}{4} \beta \gamma^2\left(3|\theta|^2+2 \gamma^{-2}|v|^2+\left|\gamma^{-1} v\right|^2 \right) \\
& \leq \beta\left( L+\frac{3}{4} \gamma^2\right)|\theta|^2+\frac{3}{4} \beta|v|^2+\beta u(0) + \frac{\beta \lvert h(0)\rvert^2}{2 L},
\end{aligned}
\end{equation}
where the second inequality holds due to $\lambda \leq 1 / 4$. By using~\eqref{func:V3} and $\eta T \leq 1$, we obtain
\begin{equation*}
\begin{aligned}
&\mathbb{E}\left[\mathcal{V}^2\left(\bar{\zeta}_{t \wedge \tau_n}^{\eta, k}, \bar{Z}_{t \wedge \tau_n}^{\eta, k}\right)\right] 
\\ & \leq
\mathbb{E}\left[\mathcal{V}^2\left(\bar{\theta}_{k T}^\eta, \bar{V}_{k T}^\eta\right)\right] + 2\gamma\left(d+ A_c\right)\left(\beta u(0) + \frac{\beta|h(0)|^2}{2 L}\right)  + 2\eta\gamma\lambda \int_{k T}^{t} \mathbb{E}\left[\mathcal{V}^2\left(\bar{\zeta}_{s \wedge \tau_n}^{\eta, k}, \bar{Z}_{s \wedge \tau_n}^{\eta, k}\right)\right]\mathrm{d} s \\ & \quad +
2 \eta\gamma\beta\left(\gamma^2 + (d+A_c)\left(L + \frac{3}{4}\gamma^2\right)\right)\int_{k T}^{t}\mathbb{E}\left[\left\lvert\bar{\zeta}_{s}^{\eta, k}\right\rvert^2\right] \mathrm{d} s  +  2 \eta\gamma\beta(4+ \frac{3}{4}(d+A_c)) \int_{k T}^{t}\mathbb{E}\left[\left\lvert \bar{Z}_{s}^{\eta, k}\right\rvert^2\right]  \mathrm{d} s 
\\ & \leq C_{\mathcal{V}}^{\prime}, 
\end{aligned}
\end{equation*}
where the last inequality holds due to~\eqref{lemma3.4.12} in Lemma~\ref{lemma:3.4} and Gronwall's lemma, and where
\begin{equation}\label{c_v_prime}
\begin{aligned}
C_{\mathcal{V}}^{\prime}& :=\bigg(C_{\mathcal{V}} + 2\beta\gamma\left(d+A_c\right)\left(u(0) + \frac{|h(0)|^2}{2 L}\right) + 2 \gamma \beta\left(4+\frac{3}{4}\left(d+A_c\right)\right) C_{Z} \\ & \quad + 2\gamma \beta\left(\gamma^2+\left(d+A_c\right)\left(L+\frac{3}{4}\gamma^2\right)\right)C_\zeta \bigg)e^{2\lambda\gamma}.
\end{aligned}
\end{equation}
By letting  $n \rightarrow \infty$ and using Fatou's lemma, one obtains
$$
\begin{aligned}
\mathbb{E}\left[\left\lvert\mathcal{V}\left(\bar{\zeta}_{t}^{\eta, k}, \bar{Z}_{t}^{\eta, k}\right)\right\rvert^2 \right] \leq  \lim_{n \to +\infty} \mathbb{E}\left[\left\lvert\mathcal{V}\left(\bar{\zeta}_{t \wedge \tau_n}^{\eta, k}, \bar{Z}_{t \wedge \tau_n}^{\eta, k}\right)\right\rvert^2 \right] \leq  C_{\mathcal{V}}^{\prime}.
\end{aligned}
$$ 
Then the desired inequality follows by taking the supremum.
\end{proof}

\section{Proofs of Section~\ref{subsec_5.2}}

\begin{lemma}\label{lemmac.1}
Let Assumptions~\ref{asm:A3} and~\ref{asm:A2} hold. For any $0 < \eta \leq \eta_{\max}$ with $\eta_{\max}$ given in~\eqref{eta_max} and any $k=1, \ldots, T$ with $T := \lfloor 1/\eta\rfloor$, we obtain
$$
\sup _{n \in \mathbb{N}} \sup _{t \in[n T,(n+1) T]} \mathbb{E}\left[\left|h\left(\bar{\zeta}_t^{\eta, n}\right)-H\left(\bar{\zeta}_t^{\eta, n}, X_{n T+k}\right)\right|^2\right] \leq \sigma_H,
$$
where $\sigma_H:=8 L_2^2 \bar{\sigma}_z+16 \hat{\sigma}_z, \bar{\sigma}_z:=\mathbb{E}\left[\left(1+\left|X_0\right|+\left|\mathbb{E}\left[X_0\right]\right|\right)^{2 \rho}\left|X_0-\mathbb{E}\left[X_0\right]\right|^2\right]$ and 
$\hat{\sigma}_z:=\mathbb{E}\left[\bar{K}_1^2\left(X_0\right) 
 +\bar{K}_1^2\left(\mathbb{E}\left[X_0\right]\right)\right]$.
\end{lemma}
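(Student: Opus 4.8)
The plan is to exploit the structural fact that the auxiliary continuous-time process $(\bar{\zeta}_t^{\eta,n})_{t\in[nT,(n+1)T]}$ is the \emph{exact-gradient} underdamped diffusion, driven by $h$ and the Brownian motion $B^\eta$ only, so that it never depends on the fresh data sample $X_{nT+k}$ for $k\geq 1$. This reduces the estimate to a pointwise bias-variance bound on $H(\cdot,X_0)$ around its mean $h(\cdot)$, which can then be controlled uniformly in the spatial argument by Assumptions~\ref{asm:A3} and~\ref{asm:A2}.

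First I would establish the independence. By Definition~\ref{def:1}, $\bar\zeta_t^{\eta,n}=\hat\zeta_t^{nT,\bar\theta_{nT}^\eta,\bar V_{nT}^\eta,\eta}$ is a measurable function of the initial datum $(\bar\theta_{nT}^\eta,\bar V_{nT}^\eta)$ and of the Brownian path $(B_s^\eta)_{s\in[nT,t]}$. The initial datum is $\mathcal{G}_{nT}\vee\sigma(\theta_0,V_0)\vee\sigma(\xi_1,\ldots,\xi_{nT})$-measurable, whereas $(B_s^\eta)_{s\geq nT}$ is $\mathcal{F}^\eta$-measurable; since $X_{nT+k}$ with $1\leq k\leq T$ is an i.i.d.\ copy independent of $\mathcal{G}_{nT}$, of $(\xi_n)$, of $(\theta_0,V_0)$, and of $\mathcal{F}^\eta$, it is independent of $\bar\zeta_t^{\eta,n}$. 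Conditioning on $\bar\zeta_t^{\eta,n}$, using the tower property, and noting that $X_{nT+k}$ has the same law as $X_0$, I obtain
\[
\mathbb{E}\left[\left|h(\bar\zeta_t^{\eta,n})-H(\bar\zeta_t^{\eta,n},X_{nT+k})\right|^2\right]=\mathbb{E}\left[\Psi(\bar\zeta_t^{\eta,n})\right],\qquad \Psi(\theta):=\mathbb{E}\left[\left|h(\theta)-H(\theta,X_0)\right|^2\right].
\]
It therefore suffices to bound $\Psi(\theta)\leq\sigma_H$ \emph{uniformly} in $\theta$.

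For the pointwise bound, I would center $H(\theta,X_0)$ at the deterministic value $H(\theta,\mathbb{E}[X_0])$ and write $h(\theta)-H(\theta,X_0)=\big(\mathbb{E}[H(\theta,X_0)]-H(\theta,\mathbb{E}[X_0])\big)+\big(H(\theta,\mathbb{E}[X_0])-H(\theta,X_0)\big)$. Applying $|a+b|^2\leq 2|a|^2+2|b|^2$ together with Jensen's inequality on the (constant) first summand collapses both contributions to $\Psi(\theta)\leq 4\,\mathbb{E}\big[|H(\theta,X_0)-H(\theta,\mathbb{E}[X_0])|^2\big]$. I then split $H=F+G$. For $F$, Assumption~\ref{asm:A3}(i) applied with identical $\theta$-argument kills the $L_1$ term and leaves $|F(\theta,X_0)-F(\theta,\mathbb{E}[X_0])|\leq L_2(1+|X_0|+|\mathbb{E}[X_0]|)^{\rho}|X_0-\mathbb{E}[X_0]|$, whose square has expectation $L_2^2\bar\sigma_z$. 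For $G$, Assumption~\ref{asm:A3}(ii) gives $|G(\theta,\cdot)|\leq\bar K_1(\cdot)$, so the difference is controlled in expectation by $2\hat\sigma_z$. Combining these with the factor $2$ from the triangle inequality and the overall factor $4$ yields $\Psi(\theta)\leq 8L_2^2\bar\sigma_z+16\hat\sigma_z=\sigma_H$, independent of $\theta$; finiteness of $\bar\sigma_z$ and $\hat\sigma_z$ follows from Assumption~\ref{asm:A2}, since $\mathbb{E}[|X_0|^{4(\rho+1)}]<\infty$ dominates the $|X_0|^{2\rho+2}$ growth inside $\bar\sigma_z$ and $\mathbb{E}[\bar K_1^2(X_0)]<\infty$. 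Taking the supremum over $t$, $n$, and $k$ then gives the claim. The main obstacle is precisely the independence/measurability step: one must argue rigorously that the continuous auxiliary diffusion started at $(\bar\theta_{nT}^\eta,\bar V_{nT}^\eta)$ is independent of the samples $X_{nT+k}$, $k\geq 1$, even though the SGHMC recursion over $[nT,(n+1)T]$ does use those samples---the decisive point being that $\bar\zeta^{\eta,n}$ is the exact-gradient process and hence functionally free of them. Once this is secured, the rest is a routine bias-variance decomposition.
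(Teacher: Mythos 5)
Your proposal is correct and takes essentially the same route as the paper's proof: the paper implements your independence/tower-property step by conditioning on $\mathcal{H}_{nT}:=\mathcal{F}_\infty^\eta\vee\mathcal{G}_{nT}$, so that $h\big(\bar{\zeta}_t^{\eta,n}\big)=\mathbb{E}\big[H\big(\bar{\zeta}_t^{\eta,n},X_{nT+k}\big)\mid\mathcal{H}_{nT}\big]$, and your explicit triangle-plus-Jensen centering at $H(\theta,\mathbb{E}[X_0])$ with the factor $4$ is precisely the bound the paper imports from~\cite[Lemma 6.1]{chau2019fixed}. After that, both arguments apply Assumption~\ref{asm:A3} (Lipschitz part $F$ with equal $\theta$-arguments, boundedness of $G$) in the same way and land on the identical constant $\sigma_H=8L_2^2\bar{\sigma}_z+16\hat{\sigma}_z$.
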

\begin{proof}
Let $\mathcal{H}_t:=\mathcal{F}_{\infty}^\eta \vee \mathcal{G}_{\lfloor t \rfloor}$. By using Assumptions~\ref{asm:A3},~\ref{asm:A2} and~\cite[Lemma 6.1]{chau2019fixed}, we obtain
$$
\begin{aligned}
\mathbb{E} & {\left[\left|h\left(\bar{\theta}_t^{\eta, n}\right)-H\left(\bar{\theta}_t^{\eta, n}, X_{n T+k}\right)\right|^2\right] } \\
= & \mathbb{E}\left[\mathbb{E}\left[\left|h\left(\bar{\theta}_t^{\eta, n}\right)-H\left(\bar{\theta}_t^{\eta, n}, X_{n T+k}\right)\right|^2 \mid \mathcal{H}_{n T}\right]\right] \\
= & \mathbb{E}\left[\mathbb{E}\left[\left|\mathbb{E}\left[H\left(\bar{\theta}_t^{\eta, n}, X_{n T+k}\right) \mid \mathcal{H}_{n T}\right]-H\left(\bar{\theta}_t^{\eta, n}, X_{n T+k}\right)\right|^2 \mid \mathcal{H}_{n T}\right]\right] \\
\leq & 4 \mathbb{E}\left[\mathbb{E}\left[\left|H\left(\bar{\theta}_t^{\eta, n}, X_{n T+k}\right)-H\left(\bar{\theta}_t^{\eta, n}, \mathbb{E}\left[X_{n T+k} \mid \mathcal{H}_{n T}\right]\right)\right|^2 \mid \mathcal{H}_{n T}\right]\right] \\
\leq & 4 \mathbb{E}\left[\mathbb { E } \left[\left(\left(1+\left|X_{n T+k}\right|+\left|\mathbb{E}\left[X_{n T+k} \mid \mathcal{H}_{n T}\right]\right|\right)^\rho L_2\left|X_{n T+k}-\mathbb{E}\left[X_{n T+k} \mid \mathcal{H}_{n T}\right]\right|\right.\right.\right. \\
& \left.\left.\left.+\bar{K}_1\left(X_{n T+k}\right)+\bar{K}_1\left(\mathbb{E}\left[X_{n T+k} \mid \mathcal{H}_{n T}\right]\right)\right)^2 \mid \mathcal{H}_{n T}\right]\right] \\
\leq & 8 L_2^2 \mathbb{E}\left[\left(1+\left|X_0\right|+\left|\mathbb{E}\left[X_0\right]\right|\right)^{2 \rho}\left|X_0-\mathbb{E}\left[X_0\right]\right|^2\right]+16 \mathbb{E}\left[\bar{K}_1^2\left(X_0\right)+\bar{K}_1^2\left(\mathbb{E}\left[X_0\right]\right)\right] \\
= & 8 L_2^2 \bar{\sigma}_z+16 \hat{\sigma}_z,
\end{aligned}
$$
where $\bar{\sigma}_z:=\mathbb{E}\left[\left(1+\left|X_0\right|+\left|\mathbb{E}\left[X_0\right]\right|\right)^{2 \rho}\left|X_0-\mathbb{E}\left[X_0\right]\right|^2\right]$ and $\hat{\sigma}_z:=\mathbb{E}\left[\bar{K}_1^2\left(X_0\right)+\bar{K}_1^2\left(\mathbb{E}\left[X_0\right]\right)\right]$.
\end{proof}

\begin{proof}[\textit{Proof of Proposition~\ref{lemma:4.1}}]\label{prf of lemma 4.1}
First, we note that
\begin{equation}\label{thm4.1: 1}
\begin{aligned}
W_2\left(\mathcal{L}\left(\bar{\theta}_t^\eta, \bar{V}_t^\eta\right), \mathcal{L}\left(\bar{\zeta}_t^{\eta, n}, \bar{Z}_t^{\eta, n}\right)\right) \leq \mathbb{E}\left[\left|\bar{\theta}_t^\eta-\bar{\zeta}_t^{\eta, n}\right|^2\right]^{1 / 2}+\mathbb{E}\left[\left|\bar{V}_t^\eta-\bar{Z}_t^{\eta, n}\right|^2\right]^{1 / 2}.
\end{aligned}
\end{equation}
To bound the first term on the RHS of~\eqref{thm4.1: 1}, we start by using the definition of $\left(\bar{\theta}_t^\eta, \bar{V}_t^\eta\right)$ in~\eqref{cont.SGHMC} and $\left(\bar{\zeta}_t^{\eta, n}, \bar{Z}_t^{\eta, n}\right)$ in Definition~\ref{def:1}, and by employing the synchronous coupling, to obtain
$$
\lvert\Bar{\theta}_t^{\eta} - \Bar{\zeta}_t^{\eta, n}\rvert \leq \eta \int_{nT}^{t}\lvert\Bar{V}_{\lfloor s\rfloor}^{\eta} - \Bar{Z}_{ s}^{\eta, n}\rvert \mathrm{d}s.
$$
This implies, by using Cauchy-Schwarz inequality, that
$$
\begin{aligned}
\sup _{n T \leq u \leq t} \mathbb{E}\left[\left|\bar{\theta}_u^\eta-\bar{\zeta}_u^{\eta, n}\right|^2\right] & \leq \eta \sup _{n T \leq u \leq t} \int_{n T}^u \mathbb{E}\left[\left|\bar{V}_{\lfloor s\rfloor}^\eta-\bar{Z}_s^{\eta, n}\right|^2\right] \mathrm{d} s, \\
& =\eta \int_{n T}^t \mathbb{E}\left[\left|\bar{V}_{\lfloor s\rfloor}^\eta-\bar{Z}_s^{\eta, n}\right|^2\right] \mathrm{d} s.
\end{aligned}
$$
Next, we set for any $t\in \left[nT, (n+1)T\right]$,
$$
\begin{aligned}
\left|\bar{V}_{\lfloor t\rfloor}^\eta-\bar{Z}_t^{\eta, n}\right| & \leq\left|\bar{V}_{\lfloor t\rfloor}^\eta-\bar{V}_t^\eta\right|+\left|\bar{V}_t^\eta-\bar{Z}_t^{\eta, n}\right| \\
& \leq\left|\bar{V}_{\lfloor t\rfloor}^\eta-\bar{V}_t^\eta\right|+\left|-\gamma \eta \int_{n T}^t\left[\bar{V}_{\lfloor s\rfloor}^\eta-\bar{Z}_s^{\eta, n}\right] \mathrm{d} s-\eta \int_{n T}^t\left[H\left(\bar{\theta}_{\lfloor s\rfloor}^\eta, X_{\lceil s\rceil}\right)-h\left(\bar{\zeta}_s^{\eta, n}\right)\right] \mathrm{d} s\right|\\
& \leq\left|\bar{V}_{\lfloor t\rfloor}^\eta-\bar{V}_t^\eta\right|+\gamma \eta \int_{n T}^t\left|\bar{V}_{\lfloor s\rfloor}^\eta-\bar{Z}_s^{\eta, n}\right| \mathrm{d} s+\eta\left|\int_{n T}^t\left[H\left(\bar{\theta}_{\lfloor s\rfloor}^\eta, X_{\lceil s\rceil}\right)-h\left(\bar{\zeta}_s^{\eta, n}\right)\right] \mathrm{d} s\right| \\
& \leq\left|\bar{V}_{\lfloor t\rfloor}^\eta-\bar{V}_t^\eta\right|+\gamma \eta \int_{n T}^t\left|\bar{V}_{\lfloor s\rfloor}^\eta-\bar{Z}_s^{\eta, n}\right| \mathrm{d} s \\
& +\eta \int_{n T}^t\left|H\left(\bar{\theta}_{\lfloor s\rfloor}^\eta, X_{\lceil s\rceil}\right)-h(\bar{\theta}_{\lfloor s\rfloor}^\eta)\right| \mathrm{d} s + \eta\left|\int_{n T}^t\left[h(\bar{\theta}_{\lfloor s\rfloor}^\eta)-h\left(\bar{\zeta}_s^{\eta, n}\right)\right] \mathrm{d} s\right|.
\end{aligned}
$$
We take the squares on both sides and use $(a + b)^2 \leq 2(a^2 + b^2)$ twice to obtain
$$
\begin{aligned}
\left|\bar{V}_{\lfloor t\rfloor}^\eta-\bar{Z}_t^{\eta, n}\right|^2 &\leq 4\left|\bar{V}_{\lfloor t\rfloor}^\eta-\bar{V}_t^{\eta}\right|^2 + 4\gamma^2\eta^2\left(\int_{nT}^{t}\lvert\bar{V}_{\lfloor s\rfloor}^\eta- \bar{Z}_s^{\eta, n} \rvert \mathrm{d}s \right)^2+4\eta^2\left|\int_{n T}^t\left[h(\bar{\theta}_{\lfloor s\rfloor}^\eta)-h\left(\bar{\zeta}_s^{\eta, n}\right)\right] \mathrm{d} s\right|^2 \\& \quad + 4\eta^2\left(\int_{n T}^t\left|H\left(\bar{\theta}_{\lfloor s\rfloor}^\eta, X_{\lceil s\rceil}\right)-h(\bar{\theta}_{\lfloor s\rfloor}^\eta)\right| \mathrm{d} s\right)^2\\
&\leq 4\left|\bar{V}_{\lfloor t\rfloor}^\eta-\bar{V}_t^{\eta}\right|^2 + 4\gamma^2\eta\left(\int_{nT}^{t}\lvert\bar{V}_{\lfloor s\rfloor}^\eta- \bar{Z}_s^{\eta, n} \rvert^2 \mathrm{d}s \right) + 4\eta\int_{n T}^t\left|h(\bar{\theta}_{\lfloor s\rfloor}^\eta)-h\left(\bar{\zeta}_s^{\eta, n}\right)\right|^2 \mathrm{d} s \\& \quad +4\eta^2\left(\int_{n T}^t\left|H\left(\bar{\theta}_{\lfloor s\rfloor}^\eta, X_{\lceil s\rceil}\right)-h(\bar{\theta}_{\lfloor s\rfloor}^\eta)\right| \mathrm{d} s\right)^2,
\end{aligned}
$$
where the last inequality holds due to Cauchy-Schwarz inequality and $\eta T \leq 1$. Then taking expectations on both sides and by noticing that $X_{\lceil s\rceil}$ is independent of $\bar{\theta}_{\lfloor s\rfloor}^\eta$ and $\bar{\zeta}_s^{\eta, n}$ , we obtain
\begin{equation}\label{lm4.1:V}
\begin{aligned}
\mathbb{E}\left[\left|\bar{V}_{\lfloor t\rfloor}^\eta-\bar{Z}_t^{\eta, n}\right|^2 \right]
&\leq  4\mathbb{E}\left[\left|\bar{V}_{\lfloor t\rfloor}^\eta-\bar{V}_t^{\eta}\right|^2\right] + 4\gamma^2\eta\left(\int_{nT}^{t}\mathbb{E}\left[\lvert\bar{V}_{\lfloor s\rfloor}^\eta- \bar{Z}_s^{\eta, n} \rvert^2\right] \mathrm{d}s \right) \\&\quad + 4\eta^2\mathbb{E}\left[\left(\int_{n T}^t\left|H\left(\bar{\theta}_{\lfloor s\rfloor}^\eta, X_{\lceil s\rceil}\right)-h(\bar{\theta}_{\lfloor s\rfloor}^\eta)\right| \mathrm{d} s\right)^2\right]\\&\quad + 4\eta\int_{n T}^t\mathbb{E}\left[\left|h(\bar{\theta}_{\lfloor s\rfloor}^\eta)-h\left(\bar{\zeta}_s^{\eta, n}\right)\right|^2\right] \mathrm{d} s \\&\leq 4\mathbb{E}\left[\left|\bar{V}_{\lfloor t\rfloor}^\eta-\bar{V}_t^{\eta}\right|^2\right] + 4\gamma^2\eta\left(\int_{nT}^{t}\mathbb{E}\left[\lvert\bar{V}_{\lfloor s\rfloor}^\eta- \bar{Z}_s^{\eta, n} \rvert^2\right] \mathrm{d}s \right) \\&\quad + 4\eta^2\mathbb{E}\left[\left(\int_{n T}^t\left|H\left(\bar{\theta}_{\lfloor s\rfloor}^\eta, X_{\lceil s\rceil}\right)-h(\bar{\theta}_{\lfloor s\rfloor}^\eta)\right| \mathrm{d} s\right)^2\right]\\&\quad + 4\eta L^2\int_{n T}^t\mathbb{E}\left[\left|\bar{\theta}_{\lfloor s\rfloor}^\eta - \bar{\zeta}_s^{\eta, n}\right|^2\right] \mathrm{d} s.
\end{aligned}
\end{equation}
Note that for any $t \geq 0$, we have
$$
\bar{V}_t^\eta=\bar{V}_{\lfloor t\rfloor}^\eta-\eta \gamma \int_{\lfloor t\rfloor}^t \bar{V}_{\lfloor s\rfloor}^\eta \mathrm{d} s-\eta \int_{\lfloor t\rfloor}^t H\left(\theta_{\lfloor s\rfloor}, X_{\lceil s\rceil}\right) \mathrm{d} s+\sqrt{2 \gamma \eta \beta^{-1}}\left(B_t^\eta-B_{\lfloor t\rfloor}^\eta\right) .
$$
By using Remark~\ref{rmk:1} and $\eta \leq 1$, we therefore obtain
$$
\begin{aligned}
\mathbb{E}\left[\left|\bar{V}_{\lfloor t\rfloor}^\eta-\bar{V}_t^\eta\right|^2\right] & =\mathbb{E}\left[\left|\eta \gamma \int_{\lfloor t\rfloor}^t \bar{V}_{\lfloor s\rfloor}^\eta \mathrm{d} s + \eta \int_{\lfloor t\rfloor}^t H\left(\theta_{\lfloor s\rfloor}, X_{\lceil s\rceil}\right) \mathrm{d} s - \sqrt{2 \gamma \eta \beta^{-1}}\left(B_t^\eta-B_{\lfloor t\rfloor}^\eta\right)\right|^2\right] \\
& \leq 3 \eta^2 \gamma^2 C_v + 6\eta^2 L_1^2 \mathbb{E}\left[(1 + \lvert X_0\rvert)^{2(\rho + 1)}\right]C_{\theta} + 12 \eta^2 L_2^2\mathbb{E}\left[(1 + \lvert X_0\rvert)^{2(\rho + 1)}\right] \\&\quad + 12\eta^2\mathbb{E}\left[F^2_{\ast}(X_0)\right] + 6 \gamma \eta \beta^{-1} d \\&
\leq \sigma_V \eta,
\end{aligned}
$$
where $\sigma_V := 3\gamma^2 C_v + 6\mathbb{E}\left[(1 + \lvert X_0\rvert)^{2(\rho + 1)}\right](C_{\theta}L_1^2 + 2L_2^2)+ 12\mathbb{E}\left[F^2_{\ast}(X_0)\right] + 6 \gamma \beta^{-1} d$. By applying Grönwall inequality to~\eqref{lm4.1:V} and using $\eta T \leq 1$, we arrive at
$$
\begin{aligned}
\mathbb{E}\left[\left|\bar{V}_{\lfloor t\rfloor}^\eta-\bar{Z}_t^{\eta, n}\right|^2 \right]
\leq & c_1\bigg(\sigma_{V}\eta + \eta^2\mathbb{E}\left[\left(\int_{n T}^t\left|H\left(\bar{\theta}_{\lfloor s\rfloor}^\eta, X_{\lceil s\rceil}\right)-h(\bar{\theta}_{\lfloor s\rfloor}^\eta)\right| \mathrm{d} s\right)^2\right]\\&\quad + \eta L^2\int_{n T}^t\mathbb{E}\left[\left|\bar{\theta}_{\lfloor s\rfloor}^\eta - \bar{\zeta}_s^{\eta, n}\right|^2\right] \mathrm{d} s\bigg), 
\end{aligned}
$$
where $c_1 = 4 e^{4\gamma^2}$. Next, 
\begin{equation}\label{thm4.1:3}
\begin{aligned}
\sup _{n T \leq u \leq t} \mathbb{E}\left[\left|\bar{\theta}_u^\eta-\bar{\zeta}_u^{\eta, n}\right|^2\right] & \leq \eta \int_{n T}^t \mathbb{E}\left[\left|\bar{V}_{\lfloor s\rfloor}^\eta-\bar{Z}_s^{\eta, n}\right|^2\right] \mathrm{d} s \\
& \leq c_1 \eta L^2 \sup _{n T \leq s \leq t} \int_{n T}^s \mathbb{E}\left[\left|\bar{\theta}_{\left\lfloor s^{\prime}\right\rfloor}^\eta-\bar{\zeta}_{s^{\prime}}^{\eta, n}\right|^2\right] \mathrm{d} s^{\prime} + c_1 \eta \sigma_V \\&\quad + c_1 \eta^3 \int_{n T}^t \mathbb{E}\left[\left|\int_{n T}^s\left[H\left(\bar{\theta}_{\lfloor s^{\prime}\rfloor}^\eta, X_{\left\lceil s^{\prime}\right\rceil}\right)-h\left(\bar{\theta}_{\lfloor s^{\prime}\rfloor}^\eta\right)\right] \mathrm{d} s^{\prime}\right|^2\right] \mathrm{d} s .
\end{aligned}
\end{equation}
To obtain an upper bound for the first term on the RHS of~\eqref{thm4.1:3}, we observe that
\begin{equation}\label{thm4.1:4}
\begin{aligned}
\sup _{n T \leq s \leq t} \int_{n T}^s \mathbb{E}\left[\left|\bar{\theta}_{\left\lfloor s^{\prime}\right\rfloor}^\eta-\bar{\zeta}_{s^{\prime}}^{\eta, n}\right|^2\right] \mathrm{d} s^{\prime} & =\int_{n T}^t \mathbb{E}\left[\left|\bar{\theta}_{\left\lfloor s^{\prime}\right\rfloor}^\eta-\bar{\zeta}_{s^{\prime}}^{\eta, n}\right|^2\right] \mathrm{d} s^{\prime} \\
& \leq \int_{n T}^t \sup_{n T \leq u \leq s^{\prime}} \mathbb{E}\left[\left|\bar{\theta}_{\lfloor u\rfloor}^\eta-\bar{\zeta}_u^{\eta, n}\right|^2\right] \mathrm{d} s^{\prime} \\
& \leq \int_{n T}^t \sup_{n T \leq u \leq s^{\prime}}  \mathbb{E}\left[\left|\bar{\theta}_u^\eta-\bar{\zeta}_u^{\eta, n}\right|^2\right] \mathrm{d} s^{\prime}.
\end{aligned}
\end{equation}
Then, we bound the last term on the RHS of~\eqref{thm4.1:3} by partitioning the integral. For any $s \in \left[nT, t\right]$ and $t \in \left[nT, (n+1)T\right]$, we denote by $K:=\lfloor s - nT\rfloor$ and have 
$$
\left|\int_{n T}^s\left[h\left(\bar{\theta}_{\lfloor s^{\prime}\rfloor}^\eta\right)-H\left(\bar{\theta}_{\lfloor s^{\prime}\rfloor}^\eta, X_{\left\lceil s^{\prime}\right\rceil}\right)\right] \mathrm{d} s^{\prime}\right|=\left|\sum_{k=1}^K I_k+R_K\right|,
$$
where 
$$
I_k=\int_{ n T + k - 1}^{n T+k}\left[h\left(\bar{\theta}_{\lfloor s^{\prime}\rfloor}^\eta\right)-H\left(\bar{\theta}_{\lfloor s^{\prime}\rfloor}^\eta, X_{n T+k}\right)\right] \mathrm{d} s^{\prime} , \quad R_K=\int_{n T+K}^s\left[h\left(\bar{\theta}_{s^{\prime}}^{\eta, n}\right)-H\left(\bar{\theta}_{s^{\prime}}^{\eta, n}, X_{n T+K+1}\right)\right] \mathrm{d} s^{\prime}.
$$
By taking squares on both sides, we obtain
$$
\left|\sum_{k=1}^K I_k+R_K\right|^2=\sum_{k=1}^K\left|I_k\right|^2+2 \sum_{k=2}^K \sum_{j=1}^{k-1}\left\langle I_k, I_j\right\rangle+2 \sum_{k=1}^K\left\langle I_k, R_K\right\rangle+\left|R_K\right|^2.
$$
Define the filtration $\mathcal{H}_s := \mathcal{F}_{\infty}^\eta \vee \mathcal{G}_{\lfloor s\rfloor}, n T \leq s \leq (n+1)T$. We have, for any $k=2, \ldots, K, j=1, \ldots, k-1$, that
$$
\begin{aligned}
& \mathbb{E}\left[\left\langle I_k, I_j\right\rangle\right] \\
& =\mathbb{E}\left[\mathbb{E}\left[\left\langle I_k, I_j\right\rangle \mid \mathcal{H}_{n T+j}\right]\right] \\
& =\mathbb{E}\left[\mathbb{E}\left[\left\langle\int_{n T+(k-1)}^{n T+k}\left[H\left(\bar{\theta}_{s^{\prime}}^{\eta, n}, X_{n T+k}\right)-h\left(\bar{\theta}_{s^{\prime}}^{\eta, n}\right)\right] \mathrm{d} s^{\prime}, \int_{n T+(j-1)}^{n T+j}\left[H\left(\bar{\theta}_{s^{\prime}}^{\eta, n}, X_{n T+j}\right)-h\left(\bar{\theta}_{s^{\prime}}^{\eta, n}\right)\right] \mathrm{d} s^{\prime}\right\rangle \mid \mathcal{H}_{n T+j}\right]\right] \\
& =\mathbb{E}\left[\left\langle\int_{n T+(k-1)}^{n T+k} \mathbb{E}\left[H\left(\bar{\theta}_{s^{\prime}}^{\eta, n}, X_{n T+k}\right)-h\left(\bar{\theta}_{s^{\prime}}^{\eta, n}\right) \mid \mathcal{H}_{n T+j}\right] \mathrm{d} s^{\prime}, \int_{n T+(j-1)}^{n T+j}\left[H\left(\bar{\theta}_{s^{\prime}}^{\eta, n}, X_{n T+j}\right)-h\left(\bar{\theta}_{s^{\prime}}^{\eta, n}\right)\right] \mathrm{d} s^{\prime}\right\rangle\right] \\
& =0.
\end{aligned}
$$
Similarly, we have $\mathbb{E}\left[\left\langle I_k, R_K\right\rangle\right]=0$ for all $1 \leq k \leq K$. Then, 
\begin{equation}\label{thm4.1:5}
\begin{aligned}
&\int_{n T}^t \mathbb{E} {\left[\left|\int_{n T}^s\left[H\left(\bar{\theta}_{s^{\prime}}^{\eta, n}, X_{\left\lceil s^{\prime}\right\rceil}\right)-h\left(\bar{\theta}_{s^{\prime}}^{\eta, n}\right)\right] \mathrm{d} s^{\prime}\right|^2\right] \mathrm{d} s } \\
& =\int_{n T}^t\left[\sum_{k=1}^K \mathbb{E}\left[\left|\int_{n T+(k-1)}^{n T+k}\left[h\left(\bar{\theta}_{s^{\prime}}^{\eta, n}\right)-H\left(\bar{\theta}_{s^{\prime}}^{\eta, n}, X_{n T+k}\right)\right] \mathrm{d} s^{\prime}\right|^2\right]\right] \mathrm{d} s \\
& \quad +\int_{n T}^t \mathbb{E}\left[\left|\int_{n T+K}^s\left[h\left(\bar{\theta}_{s^{\prime}}^{\eta, n}\right)-H\left(\bar{\theta}_{s^{\prime}}^{\eta, n}, X_{n T+K+1}\right)\right] \mathrm{d} s^{\prime}\right|^2\right] \mathrm{d} s \\
& \leq \int_{n T}^t\left[\sum_{k=1}^K \int_{n T+(k-1)}^{n T+k} \mathbb{E}\left[\left|h\left(\bar{\theta}_{s^{\prime}}^{\eta, n}\right)-H\left(\bar{\theta}_{s^{\prime}}^{\eta, n}, X_{n T+k}\right)\right|^2\right] \mathrm{d} s^{\prime}\right] \mathrm{d} s \\
& \quad +\int_{n T}^t \int_{n T+K}^s \mathbb{E}\left[\left|h\left(\bar{\theta}_{s^{\prime}}^{\eta, n}\right)-H\left(\bar{\theta}_{s^{\prime}}^{\eta, n}, X_{n T+K+1}\right)\right|^2\right] \mathrm{d} s^{\prime} \mathrm{d} s \\
& \leq T^2 \sigma_H+T \sigma_H,
\end{aligned}
\end{equation}
where the last inequality holds due to Lemma~\ref{lemmac.1}. Substituting~\eqref{thm4.1:4} and~\eqref{thm4.1:5} into~\eqref{thm4.1:3} yields
\begin{equation}\label{first_term}
\begin{aligned}
\sup _{n T \leq u \leq t} \mathbb{E}\left[\left|\bar{\theta}_u^\eta-\bar{\zeta}_u^{\eta, n}\right|^2\right] & \leq c_1 \eta \sigma_V+  c_1 \eta L^2 \int_{n T}^t \sup _{n T \leq u \leq s^{\prime}} \mathbb{E}\left[\left|\bar{\theta}_u^\eta-\bar{\zeta}_u^{\eta, n}\right|^2\right] \mathrm{d} s^{\prime} \\
&\quad  +c_1 \eta^3\left(T^2 \sigma_H+T \sigma_H\right) \\
& \leq  c_1 \eta \sigma_V+ c_1 \eta L^2  \int_{n T}^t \sup _{n T \leq u \leq s^{\prime}} \mathbb{E}\left[\left|\bar{\theta}_u^\eta-\bar{\zeta}_u^{\eta, n}\right|^2\right] \mathrm{d} s^{\prime} \\
& \quad + c_1 \eta \sigma_H+  c_1 \eta^2 \sigma_H,
\end{aligned}
\end{equation}
where we used $\eta T \leq 1$ in the last inequality. 
Next, we consider the second term of~\eqref{thm4.1: 1}. To this end, we write
$$
\left|\bar{V}_t^\eta-\bar{Z}_t^{\eta, n}\right| \leq\left|\gamma \eta \int_{n T}^t\left[\bar{V}_{\lfloor s\rfloor}^\eta-\bar{Z}_s^{\eta, n}\right] \mathrm{d} s\right|+\eta\left|\int_{n T}^t\left[H\left(\bar{\theta}_{\lfloor s\rfloor}^\eta, X_{\lceil s\rceil}\right)-h\left(\bar{\zeta}_s^{\eta, n}\right)\right] \mathrm{d} s\right|.
$$
Then, by taking the expectation on both sides and by using $\eta T \leq 1$, we obtain

\begin{equation}\label{UB2}
\mathbb{E}\left[\left|\bar{V}_t^\eta-\bar{Z}_t^{\eta, n}\right|^2\right] \leq 2 \gamma^2 \eta \int_{n T}^t \mathbb{E}\left[\left|\bar{V}_{\lfloor s\rfloor}^\eta-\bar{Z}_s^{\eta, n}\right|^2\right]+2 \eta^2 \mathbb{E}\left[\left|\int_{n T}^t\left[H\left(\bar{\theta}_{\lfloor s\rfloor}^\eta, X_{\lceil s\rceil}\right)-h\left(\bar{\zeta}_s^{\eta, n}\right)\right] \mathrm{d} s\right|^2\right] .
\end{equation}
The second term  on the RHS of the above inequality can be bounded as follows:
$$
 \mathbb{E}\left[\left|\int_{n T}^l\left[H\left(\bar{\theta}_{\lfloor s\rfloor}^\eta, X_{\lceil s\rceil}\right)-h\left(\bar{\zeta}_s^{\eta, n}\right)\right] \mathrm{d} s\right|^2\right]\leq T \sigma_H + \sigma_H.
$$
Substituting the result into the second term of~\eqref{UB2} yields
$$
\mathbb{E}\left[\left|\bar{V}_t^\eta-\bar{Z}_t^{\eta, n}\right|^2\right] \leq 2 \gamma^2 \eta \int_{n T}^t \mathbb{E}\left[\left|\bar{V}_{\lfloor s\rfloor}^\eta-\bar{Z}_s^{\eta, n}\right|^2\right] + 2\sigma_H \eta + 2\sigma_H \eta^2 ,
$$
which implies
\begin{equation}\label{snd_term}
\sup _{n T \leq u \leq t} \mathbb{E}\left[\left|\bar{V}_u^\eta-\bar{Z}_u^{\eta, n}\right|^2\right] \leq 2 \gamma^2 \eta \int_{n T}^t \sup _{n T \leq u \leq s} \mathbb{E}\left[\left|\bar{V}_u^\eta-\bar{Z}_u^{\eta, n}\right|^2\right] \mathrm{d} s+2\sigma_H \eta+2\sigma_H \eta^2.
\end{equation}
Then by combining~\eqref{first_term} and~\eqref{snd_term}, we have
\begin{equation*}
\begin{aligned}
&\sup _{n T \leq u \leq t} \left\{\mathbb{E}\left[\left|\bar{\theta}_u^\eta-\bar{\zeta}_u^{\eta, n}\right|^2\right]\right\} +  \sup _{n T \leq u \leq t} \left\{\mathbb{E}\left[\left|\bar{V}_u^\eta-\bar{Z}_u^{\eta, n}\right|^2\right] \right\}
\\& \leq  c_1 \eta L^2  \int_{n T}^t \sup _{n T \leq u \leq s^{\prime}} \mathbb{E}\left[\left|\bar{\theta}_u^\eta-\bar{\zeta}_u^{\eta, n}\right|^2\right] \mathrm{d} s^{\prime}+ 2 \gamma^2 \eta \int_{n T}^t \sup _{n T \leq u \leq s} \mathbb{E}\left[\left|\bar{V}_u^\eta-\bar{Z}_u^{\eta, n}\right|^2\right] \mathrm{d} s \\
& \quad + c_1 \eta \sigma_V+ c_1 \eta \sigma_H+  c_1 \eta^2 \sigma_H + 2\sigma_H \eta+2\sigma_H \eta^2 \\
& \leq (c_1 L^2+  2 \gamma^2)\eta \int_{n T}^t \sup _{n T \leq u \leq s}\left\{ \mathbb{E}\left[\left|\bar{\theta}_u^\eta-\bar{\zeta}_u^{\eta, n}\right|^2\right]\right\} +  \sup _{n T \leq u \leq s}\left\{\mathbb{E}\left[\left|\bar{V}_u^\eta-\bar{Z}_u^{\eta, n}\right|^2\right] \right\}\mathrm{d} s \\
& \quad + \left(c_1 \sigma_V+ 2c_1 \sigma_H+ 4\sigma_H\right)\eta .
\end{aligned}
\end{equation*}
Finally, applying Gronwall's lemma yields
\begin{equation}\label{gron_sup}
\begin{aligned}
\sup _{n T \leq u \leq t} \left\{\mathbb{E}\left[\left|\bar{\theta}_u^\eta-\bar{\zeta}_u^{\eta, n}\right|^2\right]\right\} +  \sup _{n T \leq u \leq t} \left\{\mathbb{E}\left[\left|\bar{V}_u^\eta-\bar{Z}_u^{\eta, n}\right|^2\right] \right\}
\leq 
\left(c_1 \sigma_V+ 2c_1 \sigma_H+ 4\sigma_H\right)e^{c_1 L^2 + 2 \gamma^2}\eta  :={C_1^{\star}}^2 \eta,
\end{aligned}
\end{equation}
where $C_1^{\star} := \sqrt{\left(c_1 \sigma_V+ 2c_1 \sigma_H+ 4\sigma_H\right)}e^{c_1 L^2 / 2+ \gamma^2}.$
Therefore, combining~\eqref{thm4.1: 1} and~\eqref{gron_sup} yields the desired result, i.e.,
$$
W_2\left(\mathcal{L}\left(\bar{\theta}_t^\eta, \bar{V}_t^\eta\right), \mathcal{L}\left(\bar{\zeta}_t^{\eta, n}, \bar{Z}_t^{\eta, n}\right)\right) \leq C_1^{\star} \eta^{1 / 2}.
$$
\end{proof}

\begin{proof}[\textit{Proof of Proposition~\ref{lemma:4.2}}]\label{prf of lemma 4.2}
Let $p = \left\{1, 2\right\}$. By using the triangle inequality, Proposition~\ref{prop:3.6}, and~\cite[Lemma 5.4]{chau2022stochastic}, one obtains
\begin{equation}\label{w_p_app}
\begin{aligned}
W_p & \left(\mathcal{L}\left(\bar{\zeta}_t^{\eta, n}, \bar{Z}_t^{\eta, n}\right), \mathcal{L}\left(\zeta_t^\eta, Z_t^\eta\right)\right) \leq \sum_{k=1}^n W_p\left(\mathcal{L}\left(\bar{\zeta}_t^{\eta, k}, \bar{Z}_t^{\eta, k}\right), \mathcal{L}\left(\bar{\zeta}_t^{\eta, k-1}\right), \bar{Z}_t^{\eta, k-1}\right) \\
& \left.=\sum_{k=1}^n W_p\left(\mathcal{L}\left(\hat{\zeta}_t^{k T, \Bar{\theta}_{k T}^\eta, \Bar{V}_{k T}^\eta, \eta}, \hat{Z}_t^{k T, \Bar{\theta}_{k T}^\eta, \Bar{V}_{k T}^\eta, \eta}\right), \mathcal{L}\left(\hat{\zeta}_t^{(k-1) T, \Bar{\theta}_{(k-1) T}^\eta, \Bar{V}_{(k-1) T}^\eta, \eta}\right), \hat{Z}_t^{(k-1) T, \Bar{\theta}_{(k-1) T}^\eta, \Bar{V}_{(k-1) T}^\eta, \eta}\right)\right) \\
& =\sum_{k=1}^n W_p\left(\mathcal{L}\left(\hat{\zeta}_t^{k T, \Bar{\theta}_{k T}^\eta, \Bar{V}_{k T}^\eta, \eta}, \hat{Z}_t^{k T, \Bar{\theta}_{k T}^\eta, \Bar{V}_{k T}^\eta, \eta}\right), \mathcal{L}\left(\hat{\zeta}_t^{k T, \Bar{\zeta}_{k T}^{\eta, k-1}, \Bar{Z}_{k T}^{\eta, k-1}, \eta}, \hat{Z}_t^{k T, \Bar{\zeta}_{(k-1) T}^\eta, \Bar{Z}_{(k-1) T}^\eta, \eta}\right)\right) \\
& \leq \dot{C} \sum_{k=1}^n e^{-\dot{c} \eta (t-k T)} \mathcal{W}_\rho^{1 / p}\left(\mathcal{L}\left(\Bar{\theta}_{k T}^\eta, \Bar{V}_{k T}^\eta\right), \mathcal{L}\left(\Bar{\zeta}_{k T}^{\eta, k-1}, \Bar{Z}_{k T}^{\eta, k-1}\right)\right)\\
& \leq \left(3 \max \left\{1+\alpha_c, \gamma^{-1}\right\}\right)^{1 / p}\dot{C} \sum_{k=1}^n e^{-\dot{c} \eta (t-k T)}\left(1+\varepsilon_c\mathbb{E}^{1 / 2}\left[\mathcal{V}^2\left(\theta_{k T}^\eta, V_{k T}^\eta\right)\right]+ \varepsilon_c\mathbb{E}^{1 / 2}\left[\mathcal{V}^2\left(\bar{\zeta}_{k T}^\eta, \bar{Z}_{k T}^\eta\right)\right]\right)^{1 / p} \\ 
& \quad \times W_2^{1 / p}\left(\mathcal{L}\left(\theta_{k T}^\eta, V_{k T}^\eta\right), \mathcal{L}\left(\bar{\zeta}_{k T}^{\eta,(k-1) T}, \bar{Z}_{k T}^{\eta,(k-1) T}\right)\right)\\ 
& \leq \left(3 \max \left\{1+\alpha_c, \gamma^{-1}\right\}\right)^{1 / p} \dot{C}\left(1+\varepsilon_c C_{\mathcal{V}} + \varepsilon_c C_{\mathcal{V}}^{\prime}\right)^{1 / p} \left(C_1^{\star}\eta^{1 / 2}\right)^{1 / p}\sum_{k=1}^n e^{-\dot{c} \eta(t-k T)}.
\end{aligned}
\end{equation}
By using the facts that $ 1 - \eta \leq \eta T \leq 1$ and $\lfloor 1 / \eta\rfloor \geq 1$, one obtains
\begin{equation}\label{sum}
\begin{aligned}
\sum_{k=1}^n e^{- \dot{c} \eta (t-k T)} &\leq \sum_{k=0}^{n-1} e^{-\dot{c} \eta k T} \leq \frac{1}{1 - e^{ -\dot{c}\eta T}}  \leq
\left\{
\begin{array}{rcl}
\frac{1}{1 - e^{ -\dot{c}(1-\eta)}}            & {\eta \in (0, \eta_{\max}/ 2)}\\
\frac{1}{1 - e^{ -\dot{c}\eta T} }           & {\eta \in [ \eta_{\max} / 2, \eta_{\max}] }\\
\end{array} \right. \\
& \leq \max\{\frac{1}{1- e^{ -\dot{c}(1-\eta_{\max} / 2)}}, \frac{1}{1 - e^{ -\dot{c}\eta_{\max}\lfloor 1 / \eta\rfloor / 2} } \}\\
& \leq \max\{\frac{1}{1- e^{ -\dot{c} / 2}}, \frac{1}{1 - e^{ -\dot{c}\eta_{\max} / 2} } \}\\
& \leq \frac{1}{1 - e^{ -\dot{c}\eta_{\max} / 2} }.
\end{aligned}
\end{equation}
Substituting~\eqref{sum} into~\eqref{w_p_app} and letting $p = 1$ and $p = 2$ yields
\begin{equation*}
\begin{aligned}
W_1 \left(\mathcal{L}\left(\bar{\zeta}_t^{\eta, n}, \bar{Z}_t^{\eta, n}\right), \mathcal{L}\left(\zeta_t^\eta, Z_t^\eta\right)\right) \leq  C^{\ast} \eta^{1 / 2}, \quad 
W_2 \left(\mathcal{L}\left(\bar{\zeta}_t^{\eta, n}, \bar{Z}_t^{\eta, n}\right), \mathcal{L}\left(\zeta_t^\eta, Z_t^\eta\right)\right) \leq  C_2^{\star} \eta^{1 / 4},
\end{aligned}
\end{equation*}
where 
\begin{equation}\label{C ast}
C^{\ast} := \frac{3 C_1 \max \left\{1+\alpha_c, \gamma^{-1}\right\}}{1 - e^{ -c_1\eta_{\max} / 2 } }C_1^{\star}\left(1+\varepsilon_c \left(C_{\mathcal{V}} + C_{\mathcal{V}}^{\prime}\right)\right)
\end{equation}
and 
$$C_2^{\star} := \frac{ C_2\sqrt{3\max \left\{1+\alpha_c, \gamma^{-1}\right\}}}{1 - e^{ -c_2\eta_{\max} / 2} }\sqrt{C_1^{\star}\left(1+\varepsilon_c \left(C_{\mathcal{V}} + C_{\mathcal{V}}^{\prime}\right)\right)}.$$
\end{proof}

\begin{proof}[\textit{Proof of Proposition~\ref{lemma:4.3}}]\label{prf of lemma 4.3}
We note, by~\cite{eberle2019couplings}, that
$ h(r) \leq \min \left\{r, h\left(R_1\right)\right\} \leq \min \left\{r, R_1\right\}$, for any $r \geq 0$, which implies that
$$
\begin{aligned}
\rho\left((x, v),\left(x^{\prime}, v^{\prime}\right)\right) & \leq \min \left\{r\left((x, v),\left(x^{\prime}, v^{\prime}\right)\right), R_1\right\}\left(1+\varepsilon_c \mathcal{V}(x, v)+\varepsilon_c \mathcal{V}\left(x^{\prime}, v^{\prime}\right)\right) \\
& \leq R_1\left(1+\varepsilon_c \mathcal{V}(x, v)+\varepsilon_c \mathcal{V}\left(x^{\prime}, v^{\prime}\right)\right).
\end{aligned}
$$
Hence, by using~\eqref{func:V2}, we obtain
\begin{equation}\label{rho_bnd}
\begin{aligned}
& \mathcal{W}_\rho\left(\mu_0, \bar{\pi}_\beta\right) \\
& \leq R_1 + R_1 \varepsilon_c\left(\left(\beta L+\frac{1}{2} \beta \gamma^2\right) \int_{\mathbb{R}^{2 d}}\lvert\theta\rvert^2 \mu_0(\mathrm{d} \theta, \mathrm{d} v)+\frac{3}{4} \beta \int_{\mathbb{R}^{2 d}}\lvert v\rvert^2 \mu_0(\mathrm{d} \theta, \mathbb{d} v)+\beta u_0+\frac{\beta \lvert h(0)\rvert^2}{2 L}\right) \\
& \quad+R_1 \varepsilon_c\left(\left(\beta L+\frac{1}{2} \beta \gamma^2\right) \int_{\mathbb{R}^{2 d}}\lvert\theta\rvert^2 \bar{\pi}_\beta(\mathrm{d} \theta, \mathrm{d} v)+\frac{3}{4} \beta \int_{\mathbb{R}^{2 d}}\lvert v\rvert^2 \bar{\pi}_\beta(\mathrm{d} \theta, \mathrm{d} v)+\beta u_0+\frac{\beta \lvert h(0)\rvert^2}{2 L}\right).
\end{aligned}
\end{equation}
Moreover, by using~\cite{raginsky2017non} and the fact that
\begin{equation}\label{pai_ex}
\begin{aligned}
\bar{\pi}_\beta(\mathrm{d} \theta, \mathrm{d} v) =  \frac{\exp \left(-\beta\left(\frac{1}{2}|v|^2+u(\theta)\right)\right) \mathrm{d} \theta \mathrm{d} v}{\int_{\mathbb{R}^{2d}} \exp \left(-\beta\left(\frac{1}{2}|v|^2+u(\theta)\right)\right) \mathrm{d} \theta \mathrm{d} v} = \left(2\pi \beta^{-1}\right)^{-d / 2}\frac{\exp \left(-\beta\left(\frac{1}{2}|v|^2+u(\theta)\right)\right) \mathrm{d} \theta \mathrm{d} v}{\int_{\mathbb{R}^{d}} \exp \left(-\beta u(\theta)\right) \mathrm{d} \theta},
\end{aligned}
\end{equation}
we obtain
$$
\int_{\mathbb{R}^{2 d}}\lvert \theta\rvert^2 \bar{\pi}_{\beta}(\mathbb{d} \theta, \mathbb{d} v) \leq \frac{b^{\prime} + d / \beta}{a^{\prime}}
$$
and
\begin{equation}\label{pi_beta_v_2}
\int_{\mathbb{R}^{2 d}}\lvert v\rvert^2 \bar{\pi}_\beta(\mathrm{d} \theta, \mathrm{d} v)= \left(2 \pi \beta^{-1}\right)^{- d / 2}\int_{\mathbb{R}^d}\lvert v\rvert^2 e^{-\beta \lvert v\rvert^2  /2} \mathrm{d} v=d / \beta.
\end{equation}
By using~\eqref{pai_ex},~\eqref{pi_beta_v_2} and Assumption~\ref{asm:A2}, the RHS of~\eqref{rho_bnd} is bounded. By using Proposition~\ref{prop:3.6} and letting $p=1$, we obtain
$$
\begin{aligned}
W_1\left(\mathcal{L}\left(\zeta_t^\eta, Z_t^\eta\right), \bar{\pi}_\beta\right) \leq& C_1 e^{- c_1 \eta t } \mathcal{W}_\rho(\mu_0, \bar{\pi}_\beta),
\\ \leq& C_2^{\ast} e^{-C_3^{\ast}\eta t},
\end{aligned}
$$
where $C_2^{\ast}:=C_1 \mathcal{W}_\rho\left(\mu_0, \bar{\pi}_\beta\right)$ and $C_3^{\ast}:=c_1$. Similarly, by letting $p=2$, we have 
$$
\begin{aligned}
W_2\left(\mathcal{L}\left(\zeta_t^\eta, Z_t^\eta\right), \bar{\pi}_\beta\right) \leq& C_2 e^{-c_2 \eta t } \sqrt{\mathcal{W}_\rho(\mu_0, \bar{\pi}_\beta)},
\\ \leq& C_3^{\star} e^{-C_4^{\star}\eta t},
\end{aligned}
$$
where $C_3^{\star}:=C_2\sqrt{ \mathcal{W}_\rho\left(\mu_0, \bar{\pi}_\beta\right)}$ and $C_4^{\star}:=c_2$.

\end{proof}

\begin{proof}[\textit{Proof of Proposition~\ref{lemma:4.4}}]\label{prf of lemma 4.4}
We denote by $\pi_{n, \beta}^\eta:=\mathcal{L}\left(\theta_n^\eta, V_n^\eta\right)$ and write
$$
\mathbb{E}\left[u\left(\theta_n^\eta\right)\right]-\mathbb{E}\left[u\left(\theta_{\infty}\right)\right]=\int_{\mathbb{R}^{2 d}} u(\theta) \pi_{n, \beta}^\eta(\mathrm{d} \theta, \mathrm{d} v)-\int_{\mathbb{R}^{2 d}} u(\theta) \pi_\beta(\mathrm{d} \theta, \mathrm{d} v) .
$$
Note that we have
$$
|h(\theta)| \leq L\lvert\theta\rvert+\lvert h(0)\rvert,
$$
hence by using similar arguments to that in~\cite[Proposition 1]{polyanskiy2016wasserstein}, we obtain that
$$
\begin{aligned}
\left\lvert u(\theta_n^{\eta}) - u(\theta_{\infty})\right\rvert  &= \lvert\int_0^1 \left\langle h(\theta_{\infty} + t(\theta_n^{\eta} - \theta_{\infty})), \theta_n^{\eta}-\theta_{\infty}\right\rangle\rvert \mathrm{d} t 
\\ &\leq \int_0^1\lvert h(\theta_{\infty} + t(\theta_n^{\eta} - \theta_{\infty}))\rvert\lvert \theta_n^{\eta}-\theta_{\infty}\rvert \mathrm{d} t 
\\ & \leq L\lvert \theta_n^{\eta}-\theta_{\infty}\rvert\int_0^1 \left( t\lvert\theta_n^{\eta}\rvert + (1 - t)\lvert\theta_{\infty}\rvert\right)\mathrm{d} t + |h(0)| \lvert \theta_n^{\eta}-\theta_{\infty}\rvert
\\ & = L\left(\frac{1}{2}\lvert\theta_n^{\eta}\rvert + \frac{1}{2}\lvert\theta_{\infty}\rvert\right)\lvert \theta_n^{\eta}-\theta_{\infty}\rvert+|h(0)|\lvert \theta_n^{\eta}-\theta_{\infty}\rvert .
\end{aligned}
$$
Then taking expectation and using Hölder inequality yields
$$
\lvert\int_{\mathbb{R}^{2 d}} u(\theta) \pi_{n, \beta}^\eta(\mathrm{d} \theta, \mathrm{d} v)-\int_{\mathbb{R}^{2 d}} u(\theta) \pi_\beta(\mathrm{d} \theta, \mathrm{d} v)\rvert \leq\left(L C_m + \lvert h(0)\rvert\right) W_2\left(\pi_{n, \beta}^\eta, \pi_\beta\right),
$$
where $C_m:=\max \left(\int_{\mathbb{R}^{2 d}}\lvert\theta\rvert^2 \pi_{n, \beta}^\eta(\mathrm{d} \theta, \mathrm{d} v), \int_{\mathbb{R}^{2 d}}\lvert\theta\rvert^2 \pi_\beta(\mathrm{d} \theta, \mathrm{d} v)\right)$.
By using Theorem~\ref{thm:2.1}, we obtain
$$
\mathbb{E}\left[u\left(\theta_n^\eta\right)\right]-\mathbb{E}\left[u\left(\theta_{\infty}\right)\right] \leq\left(L C_m+\lvert h(0)\rvert\right)\left(C_1^{\star} \eta^{1 / 2}+C_2^{\star} \eta^{1 / 4}  + C_3^{\star} e^{-C_4^{\star} n}\right) .
$$
\end{proof}

\begin{proof}[\textit{Proof of Proposition~\ref{lemma:4.5}}]\label{prf of lemma 4.5}
Denote by $p(\theta) := e^{ - \beta u(\theta)} / \Lambda$ for $\theta \in \mathbb{R}^d$ the marginal density of $\bar{\pi}_{\beta}$, where $\Lambda:=\int_{\mathbb{R}^d} e^{-\beta u(\theta)} \mathrm{d} \theta$ is the normalization constant. Let $\theta^*$ be a point that minimizes $u(\theta)$, i.e., $u^*:=\min _{\theta \in \mathbb{R}^d} u(\theta)=u\left(\theta^*\right)$, which exists by Assumption~\ref{asm:A5}, see, e.g.,~\cite[Theorem 2.32]{beck2014introduction}. We note that
\begin{equation}\label{thm4.5:1}
\begin{aligned}
\mathbb{E}\left[u\left(\theta_{\infty}\right)\right]-u^* & =\frac{1}{\beta}\left(-\int_{\mathbb{R}^d} \frac{e^{-\beta u(\theta)}}{\Lambda} \log \frac{e^{-\beta u(\theta)}}{\Lambda} \mathrm{d} \theta-\log \Lambda\right)-u^*,
\end{aligned}
\end{equation}
where the first term on the RHS is the differential entropy of $p(\theta)$. To upper-bound the first term on the RHS of~\eqref{thm4.5:1}, we estimate the second moment of $\theta_{\infty}$ first. 
By using Proposition~\ref{prop:3.6} and letting $p=2$, one obtains
$$
\begin{aligned}
W_2\left(\mathcal{L}\left(\theta_t, V_t\right), \bar{\pi}_\beta\right) \leq   C_2 e^{-c_2 t } \sqrt{\mathcal{W}_\rho(\mathcal{L}\left(\theta_0, V_0\right), \bar{\pi}_\beta)},
\end{aligned}
$$
which implies $W_2\left(\mathcal{L}\left(\theta_t, V_t\right), \bar{\pi}_\beta\right) \stackrel{t \rightarrow \infty}{\longrightarrow} 0$. Note that convergence of probability measures in Wasserstein-2 distance is equivalent to weak convergence plus convergence of second moments (see, e.g.,~\cite[Theorem 7.12]{villani2003topics}). Thus, by using~\eqref{Ineq:2},~\eqref{func:V}, and Fatou's lemma, we obtain
$$
\mathbb{E}\left[\lvert \theta_{\infty}\rvert^2\right]= \lim _{t \rightarrow \infty} \mathbb{E}\left[\lvert \theta_t\rvert^2\right] \leq \frac{8(d / \beta + A_c / \beta)}{ \gamma^2 \lambda(1-2 \lambda) }.
$$
By using the fact that Gaussian distributions maximise the differential entropy over all distributions with the same finite second moment~\cite[Theorem 9.6.5]{cover1999elements}, we obtain
\begin{equation}\label{thm4.5:2}
-\int_{\mathbb{R}^d} \frac{e^{-\beta u(\theta)}}{\Lambda} \log \frac{e^{-\beta u(\theta)}}{\Lambda} \mathrm{d} \theta \leq \frac{d}{2} \log \left(\frac{16 \pi e(d / \beta + A_c/ \beta )}{d \gamma^2 \lambda(1 - 2\lambda)}\right).
\end{equation}
Moreover, since $\theta^{\ast}$ is a minimizer of $u$, i.e., $u\left(\theta^*\right)=\min _{\theta \in \mathbb{R}^d} u(\theta)$, this implies $\nabla u\left(\theta^*\right)=0$. By using Remark~\ref{rmk:2}, we have 
\begin{equation}\label{3.21}
\begin{aligned}
-\beta\left(u\left(\theta^*\right)-u(\theta)\right) \leq & \beta\left|\int_0^1\left\langle h\left(t \theta^*+(1-t) \theta\right)-h\left(\theta^*\right), \theta^*-\theta\right\rangle \mathrm{d} t\right|,\\
\leq & \beta\left(\int_0^1 L(1-t)\lvert \theta - \theta^*\rvert^2 \mathrm{d} t\right), \\
\leq &  \beta \frac{L\lvert\theta - \theta^*\rvert^2}{2}.
\end{aligned}
\end{equation}
Then, we can lower-bound $\log \Lambda $ by using~\eqref{3.21}, one writes
\begin{equation}\label{thm4.5:3}
\begin{aligned}
\log \Lambda & =\log \int_{\mathbb{R}^d} e^{-\beta u(\theta)} \mathrm{d} \theta, \\
& =-\beta u^*+\log \int_{\mathbb{R}^d} e^{\beta\left(u^* - u(\theta)\right)} \mathrm{d} \theta, \\
& \geq-\beta u^*+\log \int_{\mathbb{R}^d} e^{-\frac{\beta L\left\lvert\theta-\theta^*\right\rvert^2}{2}} \mathrm{d} \theta, \\
& =-\beta u^*+\frac{d}{2} \log \left(\frac{2 \pi}{L \beta}\right),
\end{aligned}
\end{equation}
where the last inequality holds due to the fact that $\int_{-\infty}^{+\infty} e^{-a x^2} \mathrm{d} x =  \sqrt{\frac{\pi}{a}}$ for any $a > 0$. Substituting~\eqref{thm4.5:2} and~\eqref{thm4.5:3} into~\eqref{thm4.5:1} yields
$$
\int_{\mathbb{R}^d} u(\theta) \pi(\mathrm{d} \theta)-\min _{\theta \in \mathbb{R}^d} u(\theta) \leq \frac{d}{2 \beta} \log \left(\frac{8 e L}{\gamma^2 \lambda(1 - 2\lambda)}\left(\frac{A_c}{d}+1\right)\right) .
$$
\end{proof}




\section{Proofs of Section~\ref{sec:6}}\label{Appendix: E}
\begin{proof}[\textit{Proof of Proposition~\ref{prop:6.1}}]\label{pf_prop:7.1}
Assumption~\ref{asm:A3} holds with $\rho=0, L_1=2 \lambda_r, L_2=0$ and $\bar{K}_1(x)=2$. Assumption~\ref{asm:A2} is satisfied by construction. Denote by $f_{X}$ the density of $X$ and denote by $\bar{c}_d$ the upper bound of $f_{X}$. Then, Assumption~\ref{asm:A4} holds with $L= 2 \left(\lambda_r + \bar{c}_d \right)$. Indeed, we have, for any $\theta, \theta^{\prime} \in \mathbb{R}^d$, that
$$
\begin{aligned}
\mathbb{E}\left[\left|H\left(\theta, X_0\right)-H\left(\theta^{\prime}, X_0\right)\right|\right] \leq &  2\lambda_r \lvert\theta - \theta^{\prime} \rvert + \mathbb{E}\left[\left\lvert \mathbbm{1}_{\{X_0 < \theta\}} - \mathbbm{1}_{\{X_0 < \theta^{\prime}\}}\right\rvert\right]
\\ \leq & 2\lambda_r \lvert\theta - \theta^{\prime} \rvert + \mathbb{E}\left[\mathbbm{1}_{\{\theta^{\prime} \leq X_0 < \theta\}}\right] + \mathbb{E}\left[\mathbbm{1}_{\{\theta \leq X_0 < \theta^{\prime}\}}\right]
\\ \leq & 2\lambda_r \lvert\theta - \theta^{\prime} \rvert + \left\lvert \int_{\theta^{\prime}}^{\theta} f_{X_0}\left(x\right) d x \right\rvert + \left\lvert \int_{\theta}^{\theta^{\prime}} f_{X_0}\left(x\right) d x \right\rvert
\\ \leq & 2\left(\lambda_r + \bar{c}_d\right) \lvert\theta - \theta^{\prime} \rvert.
\end{aligned}
$$
Furthermore, Assumption~\ref{asm:A5} holds with $A(x)=2\lambda_r \mathbf{I}_d$ and $b(x)=0$, which implies $a=2\lambda_r$ and $b=0$.
\end{proof}

\begin{proof}[\textit{Proof of Proposition~\ref{prop:6.2}}]\label{pf_prop:7.2}
For $N=1, \cdots, d_2$, we denote by $$\sigma_2^{N}:=\sigma_2\left(\sum_{k=1}^{d_1}\left[f(W_1^{N k}) \sigma_1\left(\left\langle W_0^{k \cdot}, Z\right\rangle+f(b_0^k)\right)\right]+b_1^N\right).$$ We claim that $h(\theta):=\nabla u(\theta)=\mathbb{E}\left[H\left(\theta, X_0\right)\right]$. Indeed, for $K=1, \cdots, d_1$ and $ N = 1, \cdots, d_2$, we have
\begin{equation*}
\begin{aligned}
\partial_{W_1^{N K}} u(\theta)&=-2 \mathbb{E}\bigg[\sum_{j=1}^{m_2}\left(Y^j-\mathfrak{N}^j(\theta, Z)\right) W_2^{j N} \sigma_2^N \left(1 - \sigma_2^N\right)f^{\prime}(W_1^{N K}) \sigma_1\left(\left\langle W_0^{K\cdot}, Z\right\rangle+f(b_0^K)\right)\bigg] + 2\lambda_r W_1^{N K},
\end{aligned}
\end{equation*}
\begin{equation*}
\begin{aligned}
\partial_{b_0^K} u(\theta)&= -2 \mathbb{E}\bigg[\sum_{j=1}^{m_2}\left(Y^j-\mathfrak{N}^j(\theta, Z)\right)\sum_{n=1}^{d_2} W_2^{j n} \sigma_2^n \left(1 - \sigma_2^n\right) f(W_1^{n K}) f^{\prime}(b_0^K)\mathbbm{1}_{\{\langle W_0^{K \cdot }, Z\rangle + f(b_0^{K}) > 0\}}\bigg] + 2\lambda_r b_0^K,
\end{aligned}
\end{equation*}
\begin{equation*}
\begin{aligned}
\partial_{b_1^N} u(\theta)= -2 \mathbb{E}\bigg[\sum_{j=1}^{m_2}\left(Y^j-\mathfrak{N}^j(\theta, Z)\right) W_2^{j N} \sigma_2^N \left(1 - \sigma_2^N\right)\bigg] + 2\lambda_r b_1^N.
\end{aligned}
\end{equation*}
We note that the partial derivative of $u$ with respect to $W_1$ and $b_1$ are obtained by using the chain rule. Next, we provide a proof for $\partial_{b_0} u(\theta)$ under the case $m_1=m_2=d_1=d_2 = 1$ for the ease of notation, which could be naturally extended to $m_1, m_2, d_1, d_2 \in \mathbb{R}$. In this case,
\begin{equation*}
\begin{aligned}
u(\theta)& :=\mathbb{E}\left[|Y-\mathfrak{N}(\theta, Z)|^2\right]+\lambda_r|\theta|^2 \\ & =
\mathbb{E}\left[|Y-W_2 \sigma_2 \left(f(W_1)\sigma_1(W_0 Z + f(b_0))+b_1\right)|^2\right]+\lambda_r|\theta|^2 \\ & =
\mathbb{E}\left[Y^2 - 2 Y W_2 \sigma_2 \left(f(W_1)\sigma_1(W_0 Z + f(b_0))+b_1\right) + W_2^2 \sigma_2^2 \left(f(W_1)\sigma_1(W_0 Z + f(b_0))+b_1\right)\right]+\lambda_r|\theta|^2.
\end{aligned}
\end{equation*}
Then, one obtains
\begin{equation}\label{b_0}
\partial_{b_0} u(\theta)=T_{b_0,1}(\theta)+T_{b_0,2}(\theta) + 2\lambda_r b_0,
\end{equation}
where 
\begin{equation*}
\begin{aligned}
T_{b_0,1}(\theta) :=\partial_{b_0}\left(-2 W_2 \int_{-\infty}^{\infty} \int_{-\infty}^{\infty} y\sigma_2\left(f\left(W_1\right) \sigma_1\left(W_0 z+f\left(b_0\right)\right)+b_1\right) f_{Y, Z}(y, z) \mathrm{d} z \mathrm{~d} y\right)
\end{aligned}
\end{equation*}
with $f_{Y, Z}$ denoting the joint density of  $Y, Z$, where $f_Z$ denoting the density function of $Z$, $f_{Z \mid Y}$ denotes the conditional density of $Z$ given $Y$, and where
\begin{equation*}
\begin{aligned}
T_{b_0,2}(\theta):=W_2^2  \partial_{b_0}\left(\int_{-\frac{f(b_0)}{W_0}}^{\infty}\left( \sigma_2^2\left(f\left(W_1\right)\left(W_0 z+f\left(b_0\right)\right)+b_1\right) \right) f_Z(z) \mathrm{d} z + \int_{-\infty}^{-\frac{f(b_0)}{W_0}}\left( \sigma_2^2\left(b_1\right) \right) f_Z(z) \mathrm{d} z \right).
\end{aligned}
\end{equation*}
For any $\theta \in \mathbb{R}^d$, we have
\begin{equation}\label{1b_0, 1}
\begin{aligned}
&T_{b_0,1}(\theta)\\ & =
-2 W_2 \partial_{b_0}\left( \int_{-\infty}^{\infty}y \left(\int_{-\infty}^{\infty} \sigma_2\left(f\left(W_1\right) \sigma_1\left(W_0 z+f\left(b_0\right)\right)+b_1\right) f_{Z|Y}(z) \mathrm{d} z\right) f_{Y}(y) \mathrm{~d} y\right)\\ & =
-2 W_2 \partial_{b_0}\bigg( \int_{-\infty}^{\infty} \!y \bigg(\int_{-\frac{f(b_0)}{W_0}}^{\infty} \!\sigma_2\left(f\left(W_1\right) \left(W_0 z+f\left(b_0\right)\right)+b_1\right) f_{Z|Y}(z) \mathrm{d} z + \int_{-\infty}^{-\frac{f(b_0)}{W_0}} \!\sigma_2\left( b_1\right) f_{Z|Y}(z) \mathrm{d} z \!\bigg) f_{Y}(y) \mathrm{~d} y\!\bigg).
\end{aligned}
\end{equation}
Denote by $\sigma_2:=\sigma_2\left(f\left(W_1\right)\left(W_0 z+f\left(b_0\right)\right)+b_1\right)$. By using $\left\lvert \sigma_2\left(1 - \sigma_2\right)\right\rvert \leq 1$, $\left\lvert f(v)\right\rvert\leq c$, and $\left\lvert f^{\prime}(v)\right\rvert\leq 1$, we obtain, for any $b_0 \in \mathbb{R}$, that
\begin{equation*}
\begin{aligned}
\inf _{\delta \in(0, \infty)} \int_{-\infty}^{\infty} \sup _{\zeta \in[-\delta, \delta]}\left|\int_{-\frac{f(b_0+\zeta)}{W_0}}^{\infty} y \sigma_2\left(1 - \sigma_2\right)f(W_1)f^{\prime}(b_0 + \zeta) f_{Z \mid Y}(z) \mathrm{d} z\right| f_Y(y) \mathrm{d} y \leq c \int_{-\infty}^{\infty}|y| f_Y(y) \mathrm{d} y<\infty.
\end{aligned}
\end{equation*}
Thus, by using~\cite[Theorem A.5.3]{durrett2019probability}, we have
\begin{equation}\label{b_0, 1}
\begin{aligned}
T_{b_0,1}(\theta) & =
-2 W_2 \int_{-\infty}^{\infty} y \bigg(\int_{-\frac{f(b_0)}{W_0}}^{\infty} \sigma_2\left(1 - \sigma_2\right)f(W_1)f^{\prime}(b_0) f_{Z \mid Y}(z) \mathrm{d} z \bigg) f_{Y}(y) \mathrm{~d} y \\ & =
-2 \mathbb{E}\left[Y W_2 \sigma_2\left(1-\sigma_2\right) f\left(W_1\right) f^{\prime}\left(b_0\right) \mathbbm{1}_{\left\{ W_0 Z + f\left(b_0\right)>0\right\}}\right].
\end{aligned}
\end{equation}
Similarly, one obtains
\begin{equation}\label{b_0, 2}
\begin{aligned}
T_{b_0,2}(\theta) & = 2 W_2^2   \int_{-\frac{f(b_0)}{W_0}}^{\infty} \sigma_2^2\left(1 - \sigma_2\right)f(W_1)f^{\prime}(b_0) f_{Z \mid Y}(z) \mathrm{d} z =
2 \mathbb{E}\left[\mathfrak{N}(\theta, Z) W_2 \sigma_2 \left(1 - \sigma_2\right) f(W_1)f^{\prime}(b_0)\right].
\end{aligned}
\end{equation}
Substituting~\eqref{b_0, 1} and~\eqref{b_0, 2} into~\eqref{b_0} yields
$$
\partial_{b_0} u(\theta) = -2 \mathbb{E}\left[\left(Y-\mathfrak{N}(\theta, Z)\right) W_2 \sigma_2\left(1-\sigma_2\right) f\left(W_1\right) f^{\prime}\left(b_0\right) \mathbbm{1}_{\left\{W_0 Z + f\left(b_0\right)>0\right\}}\right] + 2 \lambda_r b_0.
$$
In addition, since $\left(X_n\right)_{n \in \mathbb{N}_0}$ is a sequence of i.i.d.\ random variables with probability law $\mathcal{L}(X)$, by the definitions of $F, G$ given in~\eqref{h=f+g} and as $H:=F+G$, we observe that $h(\theta):=\nabla u(\theta)=\mathbb{E}\left[H\left(\theta, X_0\right)\right]$, for all $\theta \in \mathbb{R}$. Thus, Assumption~\ref{asm:A2} holds. Assumption~\ref{asm:A3} holds with $\rho = 0$, $L_1 = 2\lambda_r$, $L_2 = 0$. Indeed, we have, for any $\theta, \theta^{\prime} \in \mathbb{R}^d$, $x, x^{\prime} \in \mathbb{R}^m$, that
$$
\left\lvert F(\theta, x) - F(\theta^{\prime}, x^{\prime})\right\rvert \leq 2 \lambda_r \left\lvert \theta - \theta^{\prime} \right\rvert.
$$
We then proceed to show that Assumption~\ref{asm:A4} holds. To this end, we note that, for any $j=1, \cdots, m_2$ and $(\theta, z) \in \mathbb{R}^d \times \mathbb{R}^{m_1}$, that
\begin{equation}\label{nn_upper}
\lvert \mathfrak{N}^j(\theta, z)\rvert \leq  \sum_{n=1}^{d_2} \left\lvert W_2^{j n} \right\rvert \left\lvert\sigma_2\left(\sum_{k=1}^{d_1}\left[f(W_1^{n k})\sigma_1\left(\left\langle W_0^{k \cdot}, z\right\rangle+f(b_0^k)\right)\right]+b_1^n\right)\right\rvert
\leq d_2 c_{W_{2}},
\end{equation}
where we recall that $c_{W_2}:=\max\limits_{i, j}\left\{W_2^{i j}\right\}$. By using~\eqref{nn_upper} together with the fact that ${\left\lvert \sigma_2(v)(1 - \sigma_2(v))\right\rvert \leq 1}$, we obtain, for any $(\theta, x)\in \mathbb{R}^d \times \mathbb{R}^m$, $K=1, \cdots, d_1$ and $N=1, \cdots, d_2$,
\begin{equation*}
\begin{aligned}
&\left\lvert G_{b_0^{K}}(\theta, x)\right\rvert \\ & \leq  2 \sum_{j=1}^{m_2}\sum_{n=1}^{d_2}\lvert W_2^{j n}\rvert\left\lvert f(W_1^{n K})\right\rvert \left(\lvert y^j\rvert  \lvert\sigma_2^{n}\left(1 - \sigma_2^{n}\right)\rvert+ \lvert \mathfrak{N}^j(\theta, z)\rvert \lvert \sigma_2^{n}\left(1 - \sigma_2^{n}\right)\rvert \right)\left\lvert f^{\prime}(b_0)\right\rvert \mathbbm{1}_{\left\{\left\langle W_0^{K \cdot}, z\right\rangle+f(b_0^K) > 0\right\}}\\
&\leq  2\sum_{j=1}^{m_2}\sum_{n=1}^{d_2}c \left( c_{W_{2}} \lvert y^K\rvert +  c_{W_{2}} \lvert \mathfrak{N}^j(\theta, z)\rvert \right) \mathbbm{1}_{\left\{\left\langle W_0^{K \cdot}, z\right\rangle+b_0^K > 0\right\}}
\\
& \leq  2 \sum_{j=1}^{m_2}\sum_{n=1}^{d_2}\left(c c_{W_{2}} \lvert y^j\rvert +  d_2 c c_{W_{2}}^2\right) \leq 
C_{G_{b_0}}\left(1 + \left\lvert x \right\rvert\right),
\end{aligned}
\end{equation*}
where $C_{G_{b_0}}:= 2 m_2 d_2 c c_{W_2} \left( 1 + d_2 c_{W_2}\right)$. Similarly, we have that
\begin{equation*}
\begin{aligned}
\left\lvert G_{b_1^N}(\theta, x)\right\rvert &\leq 2 \sum_{j=1}^{m_2}\left(\left\lvert y^j\right\rvert \left\lvert W_2^{j, N} \right\rvert\left\lvert\sigma_2^{N}\left(1 - \sigma_2^{N} \right)\right\rvert + \left\lvert\mathfrak{N}^j(\theta, z)\right\rvert \left\lvert W_2^{j, N} \right\rvert \left\lvert\sigma_2^{N}\left(1 - \sigma_2^{N} \right)\right\rvert\right)\\
& \leq 2 \sum_{j=1}^{m_2}\left( c_{W_{2}} \left\lvert y^j\right\rvert + d_2 c_{W_{2}}^2\right) \leq 
C_{G_{b_1}}\left(1 + \left\lvert x \right\rvert\right),
\end{aligned}
\end{equation*}
where $C_{G_{b_1}}:= 2m_2 c_{W_2} \left(1 + d_2 c_{W_2}\right)$. By using~\eqref{nn_upper} together with $\left\lvert W_0^{K, i}\right\rvert \leq c_{W_0}$, ${\left\lvert W_2^{j, N}\right\rvert\leq c_{W_2}}$ for $K=1, \cdots, d_1$, $i=1,\cdots, m_1$, $N=1, \cdots, d_2$, and $j=1,\cdots, m_2$, and that $\left\lvert f^{\prime}(v)\right\rvert \leq 1$ for any $v \in \mathbb{R}$, we obtain, for any $\left(\theta, x\right) \in \mathbb{R}^d \times \mathbb{R}^m$, that
\begin{equation*}
\begin{aligned}
&\left\lvert G_{W_1^{N K}}(\theta, x)\right\rvert \\ &\leq  2 \sum_{j=1}^{m_2}\left(\left\lvert y^j\right\rvert \left\lvert W_2^{j, N} \right\rvert\left\lvert\sigma_2^{N}\left(1 - \sigma_2^{N} \right)\right\rvert + \left\lvert\mathfrak{N}^j(\theta, z)\right\rvert \left\lvert W_2^{j, N} \right\rvert \left\lvert\sigma_2^{N}\left(1 - \sigma_2^{N} \right)\right\rvert\right) \left\lvert \sigma_1\left(\left\langle W_0^{K\cdot}, z\right\rangle+f(b_0^K)\right) \right\rvert\left\lvert f^{\prime}(W_1^{N K})\right\rvert
\\ &\leq 2\sum_{j=1}^{m_2} \left( c_{W_{2}} \left\lvert y^j\right\rvert + d_2 c_{W_{2}}^2\right) \left\lvert \left(\left\langle W_0^{K\cdot}, z\right\rangle+f(b_0^K)\right) \mathbbm{1}_{\left\{\left\langle W_0^{K \cdot}, z\right\rangle+f(b_0^K) > 0\right\}} \right\rvert\\
& \leq 2 m_2 c_{W_2} \left(1 + d_2 c_{W_2}\right) \left(1 + \left\lvert y\right\rvert\right)\left(m_1 c_{W_0} + c\right)\left(1 + \left\lvert z\right\rvert\right)\leq C_{G_{W_1}}\left(1 + \left\lvert x \right\rvert\right)^2,
\end{aligned}
\end{equation*}
where $C_{G_{W_1}}:= 2 m_2 c_{W_2} \left(1 + d_2 c_{W_2}\right)\left(m_1 c_{W_0} + c\right)$. Thus, for any $\left(\theta, x\right) \in \mathbb{R}^d \times \mathbb{R}^m$, we have that $\left\lvert G(\theta, x) \right\rvert\leq d_1 d_2 C_{G_{W_1}}\left(1 + \left\lvert x \right\rvert\right)^2 + d_1 C_{G_{b_0}}\left(1 + \left\lvert x \right\rvert\right) + d_2 C_{G_{b_1}}\left(1 + \left\lvert x \right\rvert\right)=: \bar{K}_1(x)$, thus Assumption~\ref{asm:A3} is satisfied. Next we show Assumption~\ref{asm:A4} holds. To this end, note that for any $\theta, \bar{\theta}:=\left(\left[\bar{W}_1\right], \bar{b}_0, \bar{b}_1\right) \in \mathbb{R}^d$, we have
\begin{equation}\label{H=F+G}
\begin{aligned}
&\mathbb{E}\left[\lvert H(\theta, X) - H(\bar{\theta}, X)\rvert\right] \\& \leq 2 \lambda_r \lvert \theta - \bar{\theta}\rvert + \mathbb{E}\left[\lvert G_{b_1}(\theta, X) - G_{b_1}(\bar{\theta}, X)\rvert\right] + \mathbb{E}\left[\lvert G_{b_0}(\theta, X) - G_{b_0}(\bar{\theta}, X)\rvert\right] + \mathbb{E}\left[\lvert G_{W_1}(\theta, X) - G_{W_1}(\bar{\theta}, X)\rvert\right].
\end{aligned}
\end{equation}
For each $j=1,\cdots,m_2$, we denote by 
$$
\mathfrak{N}^j(\bar{\theta}, z):=\sum_{n=1}^{d_2} W_2^{j n} \sigma_2\left(\sum_{k=1}^{d_1}\left[f(\bar{W}_1^{n k}) \sigma_1\left(\left\langle W_0^{k \cdot}, z\right\rangle+f(\bar{b}_0^k)\right)\right]+\bar{b}_1^n\right),
$$
and for any $N=1, \cdots, d_2$, we denote $$\bar{\sigma}_2^{N} := \sigma_2\left(\sum_{k=1}^{d_1}\left[f(\bar{W}_1^{N k}) \sigma_1\left(\left\langle W_0^{k \cdot}, z\right\rangle+f(\bar{b}_0^k)\right)\right]+\bar{b}_1^N\right).$$ To upper bound the second term on the RHS of~\eqref{H=F+G},  for any $j = 1, \cdots, m_2$, one writes
\begin{equation}\label{G_b_1_1}
\begin{aligned}
&\mathbb{E}\left[\left|G_{b_1^N}(\theta, X)-G_{b_1^N}(\bar{\theta}, X)\right|\right]  \\ & \leq
2\mathbb{E}\left[\left\lvert \sum_{j=1}^{m_2}\left(Y^j-\mathfrak{N}^j(\theta, Z)\right) W_2^{j N} \sigma_2^{N}\left(1 - \sigma_2^{N}\right) - \sum_{j=1}^{m_2}\left(Y^j-\mathfrak{N}^j(\bar{\theta}, Z)\right) W_2^{j N} \bar{\sigma}_2^{N}\left(1 - \bar{\sigma}_2^{N}\right) \right\rvert\right] \\ 
& \leq 2 \mathbb{E}\left[\sum_{j=1}^{m_2}\left\lvert Y^j\right\rvert \left\lvert W_2^{j N}\right\rvert\left\lvert \sigma_2^{N}\left(1 - \sigma_2^{N}\right) - \bar{\sigma}_2^{N}\left(1 - \bar{\sigma}_2^{N}\right) \right\rvert \right]
\\ & \quad + 2 \mathbb{E}\left[\sum_{j=1}^{m_2} \left\lvert W_2^{j N}\right\rvert\left\lvert\mathfrak{N}^j(\theta, Z) \sigma_2^{N}\left(1 - \sigma_2^{N}\right) - \mathfrak{N}^j(\bar{\theta}, Z)\bar{\sigma}_2^{N}\left(1 - \bar{\sigma}_2^{N}\right) \right\rvert\right]\\ 
& \leq 2 c_{W_2}\sum_{j=1}^{m_2}\bigg(\underbrace{\mathbb{E}\left[\left\lvert Y^j\right\rvert\left\lvert \sigma_2^{N}\left(1 - \sigma_2^{N}\right) - \bar{\sigma}_2^{N}\left(1 - \bar{\sigma}_2^{N}\right) \right\rvert \right]}_{\mathfrak{T}_1}+ \underbrace{\mathbb{E}\left[\left\lvert\mathfrak{N}^j(\theta, Z) \sigma_2^{N}\left(1 - \sigma_2^{N}\right) - \mathfrak{N}^j(\bar{\theta}, z)\bar{\sigma}_2^{N}\left(1 - \bar{\sigma}_2^{N}\right) \right\rvert\right]}_{\mathfrak{T}_2}\bigg).
\end{aligned}
\end{equation}

To obtain an upper bound for $\mathfrak{T}_1$ and $\mathfrak{T}_2$, we proceed by calculating estimates for several quantities that will be useful throughout the proof. To this end, recall that we assume at least one element in each row of the fixed input matrix $W_0 \in \mathbb{R}^{d_1 \times m_1}$ is nonzero. For each $k=1, \cdots, d_1$, denote by $v_k:=\min \left\{i \in\left\{1, \ldots, m_1\right\} \mid W_0^{k i} \neq 0\right\}$, then $W_0^{k v_k}$ denotes the first nonzero element in the $k$-th row of $W_0$. Assume without loss of generality that $W_0^{k v_k}>0$, and $b_0^k \leq \bar{b}_0^k$, which implies that $f(b_0^k)\leq f(\bar{b}_0^k)$ since $f$ is a monotone increasing function. Moreover, for any $i=1, \ldots, m_1, j=1, \ldots, m_2, z \in \mathbb{R}^{m_1}, y \in \mathbb{R}^{m_2}$, denote by 
$$
z_{-i}:=\left(z^1, \ldots, z^{i-1}, z^{i+1}, \ldots, z^{m_1}\right) \in \mathbb{R}^{m_1-1}, \quad y_{-j}:=\left(y^1, \ldots, y^{j-1}, y^{j+1}, \ldots, y^{m_2}\right) \in \mathbb{R}^{m_2-1},
$$
whereas for each $k=1, \cdots,  d_1$, we denote by 
\begin{equation}\label{A_k}
A_k:=\left\{z \in \mathbb{R}^{m_1} \bigg| \left\langle W_0^{k \cdot}, z\right\rangle+f(b_0^k) > 0\right\}, \quad \bar{A}_k:=\left\{z \in \mathbb{R}^{m_1} \bigg| \left\langle W_0^{k \cdot}, z\right\rangle+f(\bar{b}_0^k) > 0\right\}.
\end{equation}
Then we have, for each $k=1, \ldots, d_1 $, $j=1, \cdots, m_2$, that
$$
\begin{aligned}
& \mathbb{E}\left[\left\lvert \mathbbm{1}_{A_k}(Z) - \mathbbm{1}_{\bar{A}_k}(Z) \right\rvert \right]\\ & = 
\mathbb{E}\left[\mathbbm{1}_{\left\{\left(-f(\bar{b}_0^k)-\sum_{i \neq v_k} W_0^{k i} Z^i\right) / W_0^{k v_k} \leq Z^{v_k}<\left(-f(b_0^k)-\sum_{i \neq v_K} W_0^{K i} Z^i\right) / W_0^{k v_k}\right\}}\right] \\ & = 
\int_{\mathbb{R}} \int_{\mathbb{R}^{m_1-1}}\int^{\frac{-f(b_0^k)-\sum_{i \neq v_k} W_0^{k i} z^i}{W_0^{k v_k}}}_{\frac{-f(\bar{b}_0^k)-\sum_{i \neq v_K} W_0^{k i} z^i}{W_0^{k v_k}}} f_{Z^{v_k} \mid Z_{-v_k}, Y^j}\left(z^{v_k} \mid z_{-v_k}, y^j\right) \mathrm{d} z^{v_k} f_{Z_{-v_k}, Y^j}\left(z_{-v_k}, y^j\right) \mathrm{d} z_{-v_k} \mathrm{d} y^j.
\end{aligned}
$$
This implies that 
\begin{equation}\label{mathbb1}
\mathbb{E}\left[\left|\mathbbm{1}_{A_k}\left(Z\right) - \mathbbm{1}_{\bar{A}_k}\left(Z\right)\right|\right] \leq \frac{C_{Z^{v_k}}}{W_0^{k v_k}}\left|f(b_0^k) - f(\bar{b}_0^k)\right| \leq C_{\mathbbm{1}, \max}|\theta-\bar{\theta}|,
\end{equation}
where the last inequality holds due to $\left\lvert f^{\prime}(v)\right\rvert \leq 1$, where we recall $C_{Z^{v_k}}$ is defined in~\eqref{prop:6.2:c} and $ C_{\mathbbm{1}, \max}:= \max\limits_{k} \left\{ \frac{C_{Z^{v_k}}}{ W_0^{k v_k}} \right\}$. Furthermore, we have, for each $k=1, \cdots, d_1$ and $j=1, \cdots, m_2$, that
$$
\begin{aligned}
& \mathbb{E}\left[\left|Z\right|^2\left|\mathbbm{1}_{A_k}(Z)-\mathbbm{1}_{\bar{A}_k}(Z)\right|\right] \\ & = \mathbb{E}\left[\left(\left|Z^{v_k}\right|^2+\left|Z_{-v_k}\right|^2\right) \mathbbm{1}_{\left(-f(\bar{b}_0^k)-\sum_{i \neq v_k} W_0^{k i} Z^i\right) / W_0^{k v_k} \leq Z^{v_k}<\left(-f(b_0^k)-\sum_{i \neq v_k} W_0^{k i} Z^i\right) / W_0^{k v_k}}\right] \\ & = 
\int_{\mathbb{R}} \int_{\mathbb{R}^{m_1-1}}\int^\frac{-f(b_0^k)-\sum_{i \neq v_k} W_0^{k i} z^i}{W_0^{k v_k}}_\frac{-f(\bar{b}_0^k)-\sum_{i \neq v_k} W_0^{k i} z^i}{W_0^{k v_k}} \left\lvert z^{v_k} \right\rvert^2 f_{Z^{v_k} \mid Z_{-v_k}, Y^j}\left(z^{v_k} \mid z_{-v_k}, y^j\right) \mathrm{d} z^{v_k} f_{Z_{-v_k}, Y^j}\left(z_{-v_k}, y^j\right) \mathrm{d} z_{-v_k} \mathrm{d} y^j\\ & \quad + 
\int_{\mathbb{R}} \int_{\mathbb{R}^{m_1-1}}\int^\frac{-f(b_0^k)-\sum_{i \neq v_k} W_0^{k i} z^i}{W_0^{k v_k}}_\frac{-f(\bar{b}_0^k)-\sum_{i \neq v_k} W_0^{k i} z^i}{W_0^{k v_k}} f_{Z^{v_k} \mid Z_{-v_k}, Y^j}\left(z^{v_k} \mid z_{-v_k}, y^j\right) \mathrm{d} z^{v_k}\left\lvert Z_{-v_k} \right\rvert^2 f_{Z_{-v_k}, Y^j}\left(z_{-v_k}, y^j\right) \mathrm{d} z_{-v_k} \mathrm{d} y^j.
\end{aligned}
$$
This implies that
\begin{equation}\label{mathbbz}
\begin{aligned}
\mathbb{E}\left[|Z|^2\left|\mathbbm{1}_{A_k}(Z)-\mathbbm{1}_{\bar{A}_k}(Z)\right|\right] & \leq \frac{\bar{C}_{Z^{v_k}}}{W_0^{k v_k}}\left|f(b_0) - f(\bar{b}_0^k)\right|+\frac{C_{Z^{v_k}}}{W_0^{v_k}}\mathbb{E}\left[\left|Z_{-v_k}\right|^2\right]\left|f(b_0) - f(\bar{b}_0^k)\right|  \\ & 
\leq C_{Z, \max}\left\lvert\theta - \bar{\theta}\right\rvert,
\end{aligned}
\end{equation}
where we recall $C_{Z^{v_k}}$ and $\bar{C}_{Z^{v_k}}$ are defined in~\eqref{prop:6.2:c} and $C_{Z, \max}:= \max\limits_{k}\left\{\frac{1}{W_0^{k v_k}}\left(\bar{C}_{Z^{v_k}} + C_{Z^{v_k}}\right)\right\}\left(1 + \mathbb{E}\left[\left\lvert Z\right\rvert^2\right]\right)$. Similar calculations yield
$$
\begin{aligned}
& \mathbb{E}\left[\left|Y^j\right|\left|\mathbbm{1}_{A_k}(Z)-\mathbbm{1}_{\bar{A}_k}(Z)\right|\right] \\
&= \mathbb{E}\left[\left|Y^j\right| \mathbbm{1}_{\left\{\left(-f(\bar{b}_0^k)-\sum_{i \neq v_k} W_{0}^{k i} Z^k\right) / W_{0}^{k v_k} \leq Z^{v_k}<\left(-f(b_0^k)-\sum_{i \neq v_k} W_{0}^{k i} Z^k\right) / W_{0}^{k v_k}\right\}}\right] \\
&= \int_{\mathbb{R}} \int_{\mathbb{R}^{m_1-1}} \int_{\frac{-f(\bar{b}_0^k)-\sum_{i \neq v_k} W_{0}^{k v_k}}{W_{0}^{k v_k}}}^{\frac{-f(b_0^k)-\sum_{i \neq v_k} W_{1}^{k i} z^i}{W_{0}^{k v_k}}} f_{Z^{v_k} \mid Z_{-v_k}, y^j}\left(z^{v_k} \mid z_{-v_k}, y^j\right) \mathrm{d} z^{v_k} \left|y^j\right| f_{Z_{-v_k}, y^j}\left(z_{-v_k}, y^j\right) \mathrm{d} z_{-v_k} \mathrm{~d} y^j.
\end{aligned}
$$
This implies that
\begin{equation}\label{cont_Y}
\begin{aligned}
\mathbb{E}{\left[\left|Y^j\right|\left|\mathbbm{1}_{A_k}(Z)-\mathbbm{1}_{\bar{A}_k}(Z)\right|\right] }
& \leq \frac{C_{Z^{v_k}}}{W_0^{k v_k}}\left|\bar{b}_0^k-b_0^k\right| \int_{\mathbb{R}} \int_{\mathbb{R}^{m_1-1}}\left|y^j\right| f_{Z_{-v_k}, Y^j}\left(z_{-v_k}, y^j\right) \mathrm{d} z_{-v_k} \mathrm{~d} y^j \\
& \leq C_{Y, \max} \left\lvert \theta-\bar{\theta}\right\rvert, 
\end{aligned}
\end{equation}
where $C_{Y, \max }:=\max\limits_{k}\left\{\frac{C_{Z^{v_k}}}{W_{0}^{k v_k}}\right\}\mathbb{E}\left[\left|Y\right|\right] $. Then one writes, for each $k=1, \cdots, d_1$ and $j=1, \cdots, m_2$, that
\begin{align*}
& \mathbb{E}\left[ \left\lvert Y^j\right\rvert\left\lvert Z\right\rvert^2\left\lvert\mathbbm{1}_{A_k}(Z)-\mathbbm{1}_{\bar{A}_k}(Z)\right\rvert\right] \\ & = \mathbb{E}\left[\left\lvert Y^j\right\rvert\left(\left|Z^{v_k}\right|^2+\left|Z_{-v_k}\right|^2\right) \mathbbm{1}_{\left\{\left(-f(\bar{b}_0^k)-\sum_{i \neq v_k} W_0^{k i} Z^i\right) / W_0^{k v_{k}} \leq Z^{v_k}<\left(-f(b_0^k)-\sum_{i \neq v_k} W_0^{k i} Z^i\right) / W_0^{k v_k}\right\}}\right] \\ & = 
\int_{\mathbb{R}} \int_{\mathbb{R}^{m_1-1}}\int^\frac{-f(b_0^k)-\sum_{i \neq v_k} W_0^{k i} z^i}{W_0^{k v_k}}_\frac{-f(\bar{b}_0^k)-\sum_{i \neq v_k} W_0^{k i} z^i}{W_0^{k v_k}} \left\lvert z^{v_k} \right\rvert^2 f_{Z^{v_k} \mid Z_{-v_k}, Y^j}\left(z^{v_k} \mid z_{-v_k}, y^j\right) \mathrm{d} z^{v_k} \left|y^j\right| f_{Z_{-v_k}, Y^j}\left(z_{-v_k}, y^j\right) \mathrm{d} z_{-v_k} \mathrm{d} y^j\\ & \quad + 
\int_{\mathbb{R}} \int_{\mathbb{R}^{m_1-1}}\int^\frac{-f(b_0^k)-\sum_{i \neq v_k} W_0^{k i} z^i}{W_0^{k v_k}}_\frac{-f(\bar{b}_0^k)-\sum_{i \neq v_k} W_0^{k i} z^i}{W_0^{k v_k}}f_{Z^{v_k} \mid Z_{-v_k}, Y^j}\left(z^{v_k} \mid z_{-v_k}, y^j\right) \mathrm{d} z^{v_k}\left|y^i\right| \left\lvert Z_{-v_k} \right\rvert^2 f_{Z_{-v_k}, Y^j}\left(z_{-v_k}, y^j\right) \mathrm{d} z_{-v_k} \mathrm{d} y^j.
\end{align*}
This implies that
\begin{equation}\label{cont_YZ}
\begin{aligned}
\mathbb{E}\left[\left\lvert Y^j\right\rvert \left\lvert Z\right\rvert^2 \left|\mathbbm{1}_{A_k}(Z)-\mathbbm{1}_{\bar{A}_k}(Z)\right|\right] & \leq \frac{\bar{C}_{Z^{v_k}}}{W_0^{k v_k}}\mathbb{E}\left[\left| Y^j \right|\right]\left|f(b_0^k) - f(\bar{b}_0^k)\right| + \frac{C_{Z^{v_k}}}{W_0^{v_k}}\mathbb{E}\left[\left|Z_{-v_k}\right|^2 \left\lvert Y^j\right\rvert\right]\left|f(b_0^k) - f(\bar{b}_0^k)\right|  \\ & 
\leq C_{ZY, \max}\left\lvert\theta - \bar{\theta}\right\rvert,
\end{aligned}
\end{equation}
where $ C_{ZY, \max}:=\max\limits_{k}\left\{\frac{\bar{C}_{Z^{v_k}}}{W_0^{k v_k}} + \frac{C_{Z^{v_k}}}{W_0^{v_k}}\right\}\left(1 + \mathbb{E}\left[\left| Y \right|\right]\right)\left(1 + \mathbb{E}\left[ \left\lvert Y\right\rvert\left|Z\right|^2\right]\right)$. 

Now we proceed with upper bounding $\mathfrak{T}_1$ in~\eqref{G_b_1_1}. Since $\left\lvert \sigma_2^{\prime\prime}(\theta) \right\rvert \leq 2$ for any $\theta \in \mathbb{R}$, we have $\left\lvert \sigma_2^{\prime}(\theta) - \sigma_2^{\prime}(\bar{\theta})\right\rvert \leq 2 \left\lvert \theta - \bar{\theta} \right\rvert$. This together with the fact that $\left\lvert f(v)\right\rvert \leq c$ for any $v \in \mathbb{R}$ ensures that
\begin{equation}\label{sigma_2}
\begin{aligned}
&\left\lvert\sigma_2^N\left(1-\sigma_2^N\right)-\bar{\sigma}_2^N\left(1-\bar{\sigma}_2^N\right)\right\rvert 
\\ & \leq 
2 \left\lvert \sum_{k=1}^{d_1}\left[f(W_1^{N k}) \sigma_1\left(\left\langle W_0^{k \cdot}, z\right\rangle+f(b_0^k)\right)\right]+b_1^N - \sum_{k=1}^{d_1}\left[f(\bar{W}_1^{N k}) \sigma_1\left(\left\langle W_0^{k \cdot}, z\right\rangle+f(\bar{b}_0^k)\right)\right] - \bar{b}_1^N \right\rvert 
\\ & \leq
2 \left\lvert b_1^N - \bar{b}_1^N\right\rvert + 2 \sum_{k=1}^{d_1} \left\lvert f\left(W_1^{N k}\right) \right\rvert \left\lvert \sigma_1\left(\left\langle W_0^{k \cdot}, z\right\rangle+f\left(b_0^k\right)\right) -  \sigma_1\left(\left\langle W_0^{k \cdot}, z\right\rangle+f\left(\bar{b}_0^k\right)\right) \right\rvert \\ & \quad +  2 \sum_{k=1}^{d_1}\left\lvert \sigma_1\left(\left\langle W_0^{k \cdot}, z\right\rangle+f\left(\bar{b}_0^k\right)\right) \right\rvert \left\lvert f\left(W_1^{j k}\right)  - f\left(\bar{W}_1^{N k}\right)  \right\rvert.
\end{aligned}
\end{equation}
Then by using~\eqref{mathbb1} and~\eqref{mathbbz} together with the fact that $\left\lvert x\right\rvert \leq 1 + \left\lvert x\right\rvert^2$ for any $x \in \mathbb{R}$, we obtain, for each $N=1, \cdots, d_2$, that
\begin{equation}\label{mathbb1_sigma}
\begin{aligned}
& \mathbb{E}\left[\left|\sigma_2^N\left(1-\sigma_2^N\right)-\bar{\sigma}_2^N\left(1-\bar{\sigma}_2^N\right)\right|\right]  \\ & \leq 2\left\lvert\theta - \bar{\theta}\right\rvert + 2c \sum_{k=1}^{d_1} \bigg(\left\lvert W_0^{k \cdot}\right\rvert \mathbb{E}\left[\left\lvert Z\right\rvert\left\lvert \mathbbm{1}_{A_k}(Z) - \mathbbm{1}_{\bar{A}_k}(Z)\right\rvert \right] + c \mathbb{E}\left[\left\lvert \mathbbm{1}_{A_k}(Z)- \mathbbm{1}_{\bar{A}_k}(Z)\right\rvert\right]\bigg) \\ & \quad +  2 c \sum_{k=1}^{d_1} \mathbb{E}\left[\left\lvert\mathbbm{1}_{\bar{A}_k}(Z)\right\rvert\right] \left\lvert f\left(b_0^k\right) - f\left(\bar{b}_0^k\right) \right\rvert  + 2\sum_{k=1}^{d_1}\left(\left\lvert  W_0^{k \cdot}\right\rvert \mathbb{E}\left[\left\lvert Z\right\rvert\right] + c\right) \left\lvert W_1^{N k} - \bar{W}_1^{N k} \right\rvert
\\ & \leq
2\left(1 + c d_1\right)\left\lvert\theta - \bar{\theta}\right\rvert + 2c \sum_{k=1}^{d_1} \bigg(m_1 c_{W_0} \mathbb{E}\left[\left\lvert Z\right\rvert^2\left\lvert \mathbbm{1}_{A_k}(Z) - \mathbbm{1}_{\bar{A}_k}(Z)\right\rvert \right] + \left(c + m_1 c_{W_0}\right) \mathbb{E}\left[\left\lvert \mathbbm{1}_{A_k}(Z)- \mathbbm{1}_{\bar{A}_k}(Z)\right\rvert\right]\bigg)  \\ & \quad  + 2\sum_{k=1}^{d_1}\left(m_1 c_{W_0} \mathbb{E}\left[\left\lvert Z\right\rvert\right] + c\right) \left\lvert W_1^{N k} - \bar{W}_1^{N k} \right\rvert
\\& \leq
2\left(1 + c d_1\right)\left\lvert\theta - \bar{\theta}\right\rvert + 2c d_1 \bigg(m_1 c_{W_0} C_{Z, \max} + \left(c + m_1 c_{W_0}\right) C_{\mathbbm{1}, \max }\bigg)\left\lvert \theta - \bar{\theta}\right\rvert  + 2d_1\left(m_1 c_{W_0} \mathbb{E}\left[\left\lvert Z\right\rvert\right] + c\right) \left\lvert \theta - \bar{\theta} \right\rvert
\\ & :=
C_{\max , 1} \left\lvert\theta - \bar{\theta}\right\rvert,
\end{aligned}
\end{equation}
where $C_{\max , 1} := 2\left(1 + c d_1\right) + 2c d_1 \bigg(m_1 c_{W_0} C_{Z, \max} + \left(c + m_1 c_{W_0}\right) C_{\mathbbm{1}, \max }\bigg) + 2d_1\left(m_1 c_{W_0} \mathbb{E}\left[\left\lvert Z\right\rvert\right] + c\right) $. Moreover, by using the fact that $\left\lvert x\right\rvert \leq 1 + \left\lvert x\right\rvert^2$ holds for any $x \in \mathbb{R}$,~\eqref{cont_Y} and~\eqref{cont_YZ}, for $x \in \mathbb{R}$, we obtain
\begin{equation}\label{max_2}
\begin{aligned}
&\mathbb{E}\left[\left\lvert Y^j\right\rvert\left|\sigma_2^N\left(1-\sigma_2^N\right)-\bar{\sigma}_2^N\left(1-\bar{\sigma}_2^N\right)\right|\right] \\ & \leq 
2\mathbb{E}\left[\lvert Y^j\rvert\right]\left\lvert\theta - \bar{\theta}\right\rvert + 2c \sum_{k=1}^{d_1} \bigg(\left\lvert W_0^{k \cdot}\right\rvert \mathbb{E}\left[\left\lvert Y^j\right\rvert\left\lvert Z\right\rvert\left\lvert \mathbbm{1}_{A_k}(Z) - \mathbbm{1}_{\bar{A}_k}(Z)\right\rvert \right] + c \mathbb{E}\left[\left\lvert Y^j\right\rvert \left\lvert \mathbbm{1}_{A_k}(Z)- \mathbbm{1}_{\bar{A}_k}(Z)\right\rvert\right]\bigg) \\ & \quad +  2 c \sum_{k=1}^{d_1} \mathbb{E}\left[\left\lvert Y^j\right\rvert\left\lvert\mathbbm{1}_{\bar{A}_k}(Z)\right\rvert\right] \left\lvert f\left(b_0^k\right) - f\left(\bar{b}_0^k\right) \right\rvert  + 2\sum_{k=1}^{d_1}\left(\left\lvert  W_0^{k \cdot}\right\rvert \mathbb{E}\left[\left\lvert Y^j\right\rvert\left\lvert Z\right\rvert\right] + c\mathbb{E}\left[\left\lvert Y^j\right\rvert\right]\right) \left\lvert W_1^{N k} - \bar{W}_1^{N k} \right\rvert
\\ & \leq 
2\mathbb{E}\left[\lvert Y\rvert\right]\left\lvert\theta - \bar{\theta}\right\rvert + 2c \sum_{k=1}^{d_1} \bigg( m_1 c_{W_0} \mathbb{E}\left[\left\lvert Y^j\right\rvert\left\lvert Z\right\rvert\left\lvert \mathbbm{1}_{A_k}(Z) - \mathbbm{1}_{\bar{A}_k}(Z)\right\rvert \right] + c \mathbb{E}\left[\left\lvert Y^j\right\rvert \left\lvert \mathbbm{1}_{A_k}(Z) - \mathbbm{1}_{\bar{A}_k}(Z)\right\rvert\right]\bigg) \\ & \quad +  2 c d_1 \mathbb{E}\left[\left\lvert Y\right\rvert\right] \left\lvert \theta - \bar{\theta} \right\rvert  + 2 d_1\left(m_1 c_{W_0} \mathbb{E}\left[\left\lvert Y\right\rvert\left\lvert Z\right\rvert\right] + c\mathbb{E}\left[\left\lvert Y\right\rvert\right]\right) \left\lvert \theta - \bar{\theta} \right\rvert
\\ & \leq
C_{\max , 2} \left\lvert\theta - \bar{\theta}\right\rvert,
\end{aligned}
\end{equation}
where $$C_{\max , 2} := 2\mathbb{E}\left[\lvert Y\rvert\right]  + 2c d_1\! \left(m_1 c_{W_0}C_{Y, \max}  + \left(c + m_1 c_{W_0}\right)C_{ZY, \max}\right)  + 2c d_1 \mathbb{E}\left[\left\lvert Y\right\rvert\right]+ 2d_1\!\left(m_1 c_{W_0} \mathbb{E}\left[\left\lvert Y\right\rvert\left\lvert Z\right\rvert\right] + c\mathbb{E}\left[\left\lvert Y\right\rvert\right]\right).$$ Furthermore, by using~\eqref{mathbbz} and~\eqref{mathbb1}, we have
\begin{equation}\label{max_3}
\begin{aligned}
&\mathbb{E}\left[\left\lvert Z\right\rvert\left|\sigma_2^N\left(1-\sigma_2^N\right)-\bar{\sigma}_2^N\left(1-\bar{\sigma}_2^N\right)\right|\right] 
\\&\leq
2\mathbb{E}\left[\left\lvert Z\right\rvert\right]\left\lvert\theta - \bar{\theta}\right\rvert + 2c \sum_{k=1}^{d_1} \bigg(\left\lvert W_0^{k \cdot}\right\rvert \mathbb{E}\left[\left\lvert Z\right\rvert^2\left\lvert \mathbbm{1}_{A_k}(Z) - \mathbbm{1}_{\bar{A}_k}(Z)\right\rvert \right] + c \mathbb{E}\left[\left\lvert Z\right\rvert\left\lvert \mathbbm{1}_{A_k}(Z)- \mathbbm{1}_{\bar{A}_k}(Z)\right\rvert\right]\bigg) \\ & \quad  +  2 c \sum_{k=1}^{d_1} \mathbb{E}\left[\left\lvert Z\right\rvert\left\lvert\mathbbm{1}_{\bar{A}_k}(Z)\right\rvert\right] \left\lvert f\left(b_0^k\right) - f\left(\bar{b}_0^k\right) \right\rvert + 2\sum_{k=1}^{d_1}\left(\left\lvert  W_0^{k \cdot}\right\rvert \mathbb{E}\left[\left\lvert Z\right\rvert^2\right] + c\mathbb{E}\left[\left\lvert Z\right\rvert\right]\right) \left\lvert W_1^{N k} - \bar{W}_1^{N k} \right\rvert
\\ & \leq
C_{\max, 3} \left\lvert\theta - \bar{\theta}\right\rvert,
\end{aligned}
\end{equation}
where $$C_{\max, 3}:=2\mathbb{E}\left[\left\lvert Z\right\rvert\right] + 2c d_1 \bigg(\left(m_1 c_{W_0} + c\right) C_{Z, \max} + c C_{\mathbbm{1}, \max}\bigg) + 2 c \sum_{k=1}^{d_1} \mathbb{E}\left[\left\lvert Z\right\rvert\right] + 2 d_1\left(m_1 c_{W_0} \mathbb{E}\left[\left\lvert Z\right\rvert^2\right] + c\mathbb{E}\left[\left\lvert Z\right\rvert\right]\right).$$ To obtain upper bound for $\mathfrak{T}_2$, we proceed to calculate the upper estimates of the following quantities. By using the fact that $\left\lvert \sigma_2^{\prime}(v) \right\rvert\leq 1$ and $\left\lvert f^{\prime}(v) \right\rvert\leq 1$ for any $v \in \mathbb{R}$, we have, for each $j=1, \cdots, m_2$, that,
\begin{small}
\begin{align*}
& \left\lvert \mathfrak{N}^j(\theta, z) - \mathfrak{N}^j(\bar{\theta}, z)\right\rvert \\ & \leq
\sum_{n=1}^{d_2} \left\lvert W_2^{j n}\right\rvert \left\lvert \sigma_2\left(\sum_{k=1}^{d_1}\left[f(W_1^{n k}) \sigma_1\left(\left\langle W_0^{k \cdot}, z\right\rangle+f(b_0^k)\right)\right] + b_1^n\right) - \sigma_2\left(\sum_{k=1}^{d_1}\left[f(\bar{W}_1^{n k}) \sigma_1\left(\left\langle W_0^{k \cdot}, z\right\rangle+f(\bar{b}_0^k)\right)\right] + \bar{b}_1^n\right)\right\rvert \\ & \leq
\sum_{n=1}^{d_2} \left\lvert W_2^{j n} \right\rvert \left\lvert \sum_{k=1}^{d_1}\left[f(W_1^{n k}) \sigma_1\left(\left\langle W_0^{k \cdot}, z\right\rangle+f(b_0^k)\right)\right] + b_1^n - \sum_{k=1}^{d_1}\left[f(\bar{W}_1^{n k}) \sigma_1\left(\left\langle W_0^{k \cdot}, z\right\rangle+f(\bar{b}_0^k)\right)\right] + \bar{b}_1^n\right\rvert \\ & \leq
\sum_{n=1}^{d_2} \left\lvert W_2^{j n} \right\rvert \bigg( \left\lvert b_1^n - \bar{b}_1^n\right\rvert + \sum_{k=1}^{d_1}\left\lvert f(W_1^{n k})\right\rvert \left\lvert \sigma_1\left(\left\langle W_0^{k \cdot}, z\right\rangle+f\left(b_0^k\right)\right)- \sigma_1\left(\left\langle W_0^{k \cdot}, z\right\rangle+f\left(\bar{b}_0^k\right)\right)\right\rvert \\ & \quad + \sum_{k=1}^{d_1} \left\lvert \sigma_1\left(\left\langle W_0^{k \cdot}, z\right\rangle+f\left(\bar{b}_0^k\right)\right)\right\rvert \left\lvert f(W_1^{n k}) - f(\bar{W}_1^{n k})\right\rvert
\bigg)\\ & \leq
c_{W_2}\sum_{n=1}^{d_2} \bigg( \left\lvert b_1^n - \bar{b}_1^n \right\rvert + c \sum_{k=1}^{d_1} \left\langle W_0^{k\cdot}, z\right\rangle \left\lvert \mathbbm{1}_{A_k}(z)- \mathbbm{1}_{\bar{A}_k}(z) \right\rvert + c \sum_{k=1}^{d_1} \left\lvert f\left(b_0^k\right) \mathbbm{1}_{A_k}(z) - f\left(\bar{b}_0^k\right) \mathbbm{1}_{\bar{A}_k}(z)\right\rvert \\ & \quad + \sum_{k=1}^{d_1}\left|\left(\left\lvert W_0^{k\cdot}\right\rvert \left\lvert z\right\rvert + c\right)\right|\left|f\left(W_1^{n k}\right)-f\left(\bar{W}_1^{n k}\right)\right|
\bigg)\\ & \leq
c_{W_2} d_2 \left\lvert\theta- \bar{\theta}\right\rvert + c m_1 c_{W_0} c_{W_2}d_2\sum_{k=1}^{d_1}\left\lvert z\right\rvert\left\lvert \mathbbm{1}_{A_k}(z) - \mathbbm{1}_{\bar{A}_k}(z)\right\rvert  + 
c^2 d_2 c_{W_2}\sum_{k=1}^{d_1}\left\lvert \mathbbm{1}_{A_k}(z) - \mathbbm{1}_{\bar{A}_k}(z) \right\rvert
\\ & \quad  + c d_2 c_{W_2}\sum_{k=1}^{d_1}\left\lvert b_0^{k} - \bar{b}_0^{k} \right\rvert +c_{W_2} \sum_{n=1}^{d_2}\sum_{k=1}^{d_1}\left(m_1 c_{W_0}\left\lvert z\right\rvert + c \right) \left\lvert W_1^{n k} - \bar{W}_1^{n k}\right\rvert.
\end{align*}
\end{small}
This implies that
\begin{equation}\label{square_N}
\begin{aligned}
& \left\lvert \mathfrak{N}^j(\theta, z) - \mathfrak{N}^j(\bar{\theta}, z)\right\rvert \\ & \leq
c_{W_2} d_2 \left\lvert\theta- \bar{\theta}\right\rvert + c m_1 c_{W_0} c_{W_2}d_2\sum_{k=1}^{d_1}\left\lvert z\right\rvert\left\lvert \mathbbm{1}_{A_k}(z) - \mathbbm{1}_{\bar{A}_k}(z)\right\rvert  + 
c^2 d_2 c_{W_2}\sum_{k=1}^{d_1}\left\lvert \mathbbm{1}_{A_k}(z) - \mathbbm{1}_{\bar{A}_k}(z) \right\rvert
\\ & \quad  + c d_2 c_{W_2}\sum_{k=1}^{d_1}\left\lvert b_0^{k} - \bar{b}_0^{k} \right\rvert +c_{W_2} \sum_{n=1}^{d_2}\sum_{k=1}^{d_1}\left(m_1 c_{W_0}\left\lvert z\right\rvert + c \right) \left\lvert W_1^{n k} - \bar{W}_1^{n k}\right\rvert.
\end{aligned}
\end{equation}
By using~\eqref{mathbb1},~\eqref{mathbbz} and taking expectation on both sides, we obtain
\begin{equation}\label{mathbbn}
\begin{aligned}
&\mathbb{E}\left[\left\lvert \mathfrak{N}^j(\theta, Z) - \mathfrak{N}^j(\bar{\theta}, Z)\right\rvert\right] \\ & \leq 
c_{W_2} d_2 \left\lvert\theta- \bar{\theta}\right\rvert + c m_1c_{W_0}c_{W_2}d_2\sum_{k=1}^{d_1}\mathbb{E}\left[\left\lvert Z\right\rvert\left\lvert \mathbbm{1}_{A_k}(Z) - \mathbbm{1}_{\bar{A}_k}(Z)\right\rvert \right] + 
c^2 d_2 c_{W_2}\sum_{k=1}^{d_1}\mathbb{E}\left[\left\lvert \mathbbm{1}_{A_k}(Z) - \mathbbm{1}_{\bar{A}_k}(Z) \right\rvert\right]
\\ & \quad  + c d_2 c_{W_2}\sum_{k=1}^{d_1}\left\lvert b_0^{k} - \bar{b}_0^{k} \right\rvert + c_{W_2}\sum_{n=1}^{d_2}\sum_{k=1}^{d_1}\left(m_1 c_{W_0}\mathbb{E}\left[\left\lvert Z\right\rvert\right] + c \right) \left\lvert W_1^{n k} - \bar{W}_1^{n k}\right\rvert\\ & \leq
C_{\max, \mathfrak{N}}\left\lvert \theta - \bar{\theta}\right\rvert,
\end{aligned}
\end{equation}
where $C_{\max, \mathfrak{N}} := c_{W_2}d_1 + c m_1 c_{W_0}c_{W_2} d_1 d_2  C_{Z, \max}+c d_1 d_2 c_{W_2}\left(c + m_1 c_{W_0}\right) C_{\mathbbm{1}, \max} + 2 c d_1 d_2 c_{W_2}+ d_1 d_2 m_1 c_{W_0}c_{W_2} \mathbb{E}\left[\left\lvert Z\right\rvert\right]$. Furthermore, by using the fact that $\left\lvert x\right\rvert \leq 1 + \left\lvert x\right\rvert^2$ holds for any $x \in \mathbb{R}$, together with~\eqref{mathbb1},~\eqref{mathbbz} and~\eqref{square_N}, we obtain
\begin{small}
\begin{equation}\label{cont_zn}
\begin{aligned}
&\mathbb{E}\left[\left\lvert Z\right\rvert\left\lvert \mathfrak{N}^j(\theta, Z) - \mathfrak{N}^j(\bar{\theta}, Z)\right\rvert\right]\\ & \leq 
c_{W_2} d_2 \mathbb{E}\left[\left\lvert Z\right\rvert\right] \left\lvert\theta- \bar{\theta}\right\rvert + c m_1 c_{W_0}c_{W_2}d_2\sum_{k=1}^{d_1}\mathbb{E}\left[\left\lvert Z\right\rvert^2\left\lvert \mathbbm{1}_{A_k}(Z) - \mathbbm{1}_{\bar{A}_k}(Z)\right\rvert \right] + 
c^2 d_2 c_{W_2}\sum_{k=1}^{d_1}\mathbb{E}\left[\left\lvert Z\right\rvert\left\lvert \mathbbm{1}_{A_k}(Z) - \mathbbm{1}_{\bar{A}_k}(Z) \right\rvert\right]
\\ & \quad  + c d_2 c_{W_2}\sum_{k=1}^{d_1}\mathbb{E}\left[\left\lvert Z\right\rvert\right]\left\lvert b_0^{k} - \bar{b}_0^{k} \right\rvert + c_{W_2}\sum_{n=1}^{d_2}\sum_{k=1}^{d_1}\left(m_1 c_{W_0}\mathbb{E}\left[\left\lvert Z\right\rvert^2\right] + c \mathbb{E}\left[\left\lvert Z\right\rvert\right]\right) \left\lvert W_1^{n k} - \bar{W}_1^{n k}\right\rvert\\
\\ & \leq
C_{\max, Z\mathfrak{N}}\left\lvert \theta - \bar{\theta}\right\rvert,
\end{aligned}
\end{equation}
\end{small}
where $C_{\max, Z\mathfrak{N}} :=  c_{W_2}d_2\mathbb{E}\left[\left\lvert Z\right\rvert\right] + c d_1 d_2c_{W_2}\left( m_1 c_{W_0} + c \right)C_{Z, \max}+ c^2 d_1 d_2 c_{W_2}C_{\mathbbm{1}, \max} + 2c d_1 d_2 c_{W_2}\mathbb{E}\left[\left\lvert Z\right\rvert\right]+ d_1 d_2 m_1 c_{W_0}c_{W_2}\mathbb{E}[\left\lvert Z\right\rvert^2] $. By using~\eqref{nn_upper},~\eqref{mathbb1},~\eqref{mathbbn}, and the fact that $\left\lvert \sigma_2(v)\left(1 - \sigma_2(v)\right)\right\rvert\leq 1$, we have, for each $N=1, \cdots, d_2$ and $j=1, \cdots, m_2$, that
\begin{equation}\label{max_4}
\begin{aligned}
&\mathbb{E}\left[\left\lvert\mathfrak{N}^j(\theta, z) \sigma_2^{N}\left(1 - \sigma_2^{N}\right) - \mathfrak{N}^j(\bar{\theta}, z) \bar{\sigma}_2^{N}\left(1 - \bar{\sigma}_2^{N}\right) \right\rvert\right]\\
&\leq \mathbb{E}\left[\left\lvert\mathfrak{N}^j(\theta, z) \sigma_2^{N}\left(1 - \sigma_2^{N}\right) - \mathfrak{N}^j(\theta, z) \bar{\sigma}_2^{N}\left(1 - \bar{\sigma}_2^{N}\right) \right\rvert \right]+ \mathbb{E}\left[\left\lvert\mathfrak{N}^j(\theta, z) \bar{\sigma}_2^{N}\left(1 - \bar{\sigma}_2^{N}\right) - \mathfrak{N}^j(\bar{\theta}, z) \bar{\sigma}_2^{N}\left(1 - \bar{\sigma}_2^{N}\right) \right\rvert\right] \\
& \leq d_2 c_{W_{2}}\mathbb{E}\left[\left\lvert \sigma_2^{N}\left(1 - \sigma_2^{N}\right) - \bar{\sigma}_2^{N}\left(1 - \bar{\sigma}_2^{n}\right) \right\rvert\right] + \mathbb{E}\left[\left\lvert\mathfrak{N}^j(\theta, z) - \mathfrak{N}^j(\bar{\theta}, z) \right\rvert\right] \\
& \leq C_{\max, 4} \left\lvert \theta - \bar{\theta}\right\rvert,
\end{aligned}
\end{equation}
where $C_{\max, 4}:=d_2 c_{W_{2}}C_{\max, 1}  + C_{\max, \mathfrak{N}}$. Substituting~\eqref{max_2} and~\eqref{max_4} into~\eqref{G_b_1_1} yields
\begin{equation}\label{G_b_1}
\begin{aligned}
\mathbb{E}\left[\left|G_{b_1^N}(\theta, X)-G_{b_1^N}(\bar{\theta}, X)\right| \right] \leq C_{\max, b_1}\left\lvert\theta-\bar{\theta}\right\rvert,
\end{aligned}
\end{equation}
where $C_{\max, b_1}:=2 c_{W_2} m_2 C_{\max, 2} + 2c_{W_2} m_2 C_{\max, 4}$. Next, we calculate the upper bound for the second term of~\eqref{H=F+G}. We have that
$$
\begin{aligned}
&\mathbb{E}\left[\left\lvert G_{b_0^K}(\theta, X) - G_{b_0^K}(\bar{\theta}, X)\right\rvert \right] 
\\ & \leq 2 c_{W_2}\sum_{j=1}^{m_2}\sum_{n=1}^{d_2} \bigg(\mathbb{E}\left[\left\lvert Y^j\sigma_2^n(1 - \sigma_2^n) f\left(W_1^{n K}\right)f^{\prime}\left(b_0^{K}\right) \mathbbm{1}_{A_{K}}(Z)
 -  Y^j\bar{\sigma}_2^n(1 - \bar{\sigma}_2^n) f\left(\bar{W}_1^{n K}\right)f^{\prime}\left(\bar{b}_0^{K}\right)\mathbbm{1}_{\bar{A}_{K}}(Z)\right\rvert\right]
\\ & \quad + \mathbb{E}\left[\left\lvert \mathfrak{N}^j(\theta, z)\sigma_2^n(1 - \sigma_2^n) f\left(W_1^{n K}\right)f^{\prime}\left(b_0^{K}\right) \mathbbm{1}_{A_{K}}(Z)
 -  \mathfrak{N}^j(\bar{\theta}, Z)\bar{\sigma}_2^n(1 - \bar{\sigma}_2^n) f\left(\bar{W}_1^{n K}\right)f^{\prime}\left(\bar{b}_0^{K}\right) \mathbbm{1}_{\bar{A}_{K}}(Z)\right\rvert\right] \bigg)\\ & \leq
T_{b_0, 3} + T_{b_0, 4} + T_{b_0, 5} + T_{b_0, 6} + T_{b_0, 7},
\end{aligned}
$$
where
$$
\begin{aligned}
T_{b_0, 3} &:= 2 c_{W_2} \sum_{j=1}^{m_2} \sum_{n=1}^{d_2} \mathbb{E}\left[\left\lvert Y^j  f\left(W_1^{n K}\right) f^{\prime}\left(b_0^K\right)\right\rvert \left\lvert\sigma_2^n\left(1-\sigma_2^n\right) \mathbbm{1}_{A_K}(Z) -  \bar{\sigma}_2^n\left(1-\bar{\sigma}_2^n\right)1_{\bar{A}_K}\right\rvert\right],\\
T_{b_0, 4} &:= 2 c_{W_2} \sum_{j=1}^{m_2} \sum_{n=1}^{d_2} \mathbb{E}\left[\left\lvert Y^j\right\rvert\left\lvert\bar{\sigma}_2^n\left(1-\bar{\sigma}_2^n\right)\mathbbm{1}_{\bar{A}_K}(Z)\right\rvert\left\lvert f\left(W_1^{n K}\right) f^{\prime}\left(b_0^K\right) - f\left(\bar{W}_1^{n K}\right) f^{\prime}\left(\bar{b}_0^K \right) \right\rvert\right],\\
T_{b_0, 5} &:= 2 c_{W_2} \sum_{j=1}^{m_2} \sum_{n=1}^{d_2} \mathbb{E}\left[\left\lvert f\left(W_1^{n K}\right) f^{\prime}\left(b_0^K\right)\mathbbm{1}_{A_K}(Z) \right\rvert \left\lvert \mathfrak{N}^j(\theta, z)\sigma_2^n\left(1-\sigma_2^n\right)  -  \mathfrak{N}^j(\bar{\theta}, z)\bar{\sigma}_2^n\left(1-\bar{\sigma}_2^n\right)\right\rvert\right],
\\
T_{b_0, 6} &:= 2 c_{W_2} \sum_{j=1}^{m_2} \sum_{n=1}^{d_2} \mathbb{E}\left[\left\lvert \mathfrak{N}^j(\bar{\theta}, Z)\bar{\sigma}_2^n\left(1-\bar{\sigma}_2^n\right)\mathbbm{1}_{A_K}(Z) \right\rvert \left\lvert f\left(W_1^{n K}\right) f^{\prime}\left(b_0^K\right)  -  f\left(\bar{W}_1^{n K}\right) f^{\prime}\left(\bar{b}_0^K \right)\right\rvert\right],
\\
T_{b_0, 7} &:= 2 c_{W_2} \sum_{j=1}^{m_2} \sum_{n=1}^{d_2} \mathbb{E}\left[\left\lvert \mathfrak{N}^j(\bar{\theta}, Z)\bar{\sigma}_2^n\left(1-\bar{\sigma}_2^n\right)f\left(\bar{W}_1^{n K}\right) f^{\prime}\left(\bar{b}_0^K\right) \right\rvert \left\lvert \mathbbm{1}_{A_K}(Z)  - \mathbbm{1}_{\bar{A}_K}(Z) \right\rvert\right].
\end{aligned}
$$
To upper bound $T_{b_0, 3}$, we use $\left\lvert f(v)\right\rvert\leq c$ and $\left\lvert \sigma_2(v)\left(1 - \sigma_2(v)\right)\right\rvert\leq 1$ to obtain
$$
\begin{aligned}
T_{b_0, 3} & \leq 
2 c c_{W_2}  \sum_{j=1}^{m_2} \sum_{n=1}^{d_2}\bigg( \mathbb{E}\bigg[\left\lvert Y^j \right\rvert \left\lvert\sigma_2^n\left(1-\sigma_2^n\right) \mathbbm{1}_{A_K}(Z) - \sigma_2^n\left(1-\sigma_2^n\right)\mathbbm{1}_{\bar{A}_K}(Z) \right\rvert \\ & \quad + \left\lvert Y^j\right\rvert \left\lvert \mathbbm{1}_{\bar{A}_K}(Z) \right\rvert\left\lvert \sigma_2^n\left(1-\sigma_2^n\right) -  \bar{\sigma}_2^n\left(1-\bar{\sigma}_2^n\right)\right\rvert\bigg] \bigg)\\ & \leq 
2 c c_{W_2}  \sum_{j=1}^{m_2} \sum_{n=1}^{d_2} \bigg(\mathbb{E}\left[\left\lvert Y^j \right\rvert \left\lvert \mathbbm{1}_{A_K}(Z) - \mathbbm{1}_{\bar{A}_K}(Z) \right\rvert + \left\lvert Y^j\right\rvert\left\lvert \sigma_2^n\left(1 - \sigma_2^n\right) -  \bar{\sigma}_2^n\left(1-\bar{\sigma}_2^n\right)\right\rvert\right]\bigg)\\ & \leq 
2 c c_{W_2} d_2m_2\left(C_{Y, \max} + C_{\max, 2}\right)\left\lvert\theta -\bar{\theta}\right\rvert,
\end{aligned}
$$
where the last inequality holds due to~\eqref{cont_Y} and~\eqref{max_2}. For $T_{b_0, 4}$, by using $\left\lvert \bar{\sigma}_2\left(1 - \bar{\sigma}_2\right)\right\rvert \leq 1$, $\left\lvert f(x)\right\rvert\leq c$, $\left\lvert f^{\prime}(x)\right\rvert\leq 1$ and $\left\lvert f^{\prime \prime}(x)\right\rvert\leq 2 / c$ for any $x \in \mathbb{R}$, one writes
$$
\begin{aligned}
T_{b_0, 4} & \leq 
2 c_{W_2} \sum_{j=1}^{m_2} \sum_{n=1}^{d_2} \mathbb{E}\left[\left\lvert Y^j\right\rvert\left\lvert f\left(W_1^{n K}\right) f^{\prime}\left(b_0^K\right) - f\left(\bar{W}_1^{n K}\right) f^{\prime}\left(\bar{b}_0^K \right) \right\rvert\right]\\ &\leq
2 c_{W_2} \sum_{j=1}^{m_2} \sum_{n=1}^{d_2} \mathbb{E}\left[\left\lvert Y^j\right\rvert\left\lvert f\left(W_1^{n K}\right)\right\rvert\left\lvert  f^{\prime}\left(b_0^K\right) - f^{\prime}\left(\bar{b}_0^K \right)\right\rvert + \left\lvert Y^j\right\rvert\left\lvert f^{\prime}\left(\bar{b}_0^K\right)\right\rvert\left\lvert f\left(W_1^{n K}\right) - f\left(\bar{W}_1^{n K}\right) \right\rvert\right] \\ & \leq
6c_{W_2}m_2d_2\mathbb{E}\left[\left\lvert Y\right\rvert\right]\left\lvert\theta-\bar{\theta}\right\rvert.
\end{aligned}
$$
For $T_{b_0, 5}$, by using $\left\lvert f(v)\right\rvert\leq c$, $\left\lvert f^{\prime}(v)\right\rvert\leq 1$, and~\eqref{max_4}, we have that
$$
\begin{aligned}
T_{b_0, 5} &\leq 2c c_{W_2} \sum_{j=1}^{m_2} \sum_{n=1}^{d_2} \mathbb{E}\left[\left\lvert \mathfrak{N}^j(\theta, Z)\sigma_2^n\left(1-\sigma_2^n\right)  -  \mathfrak{N}^j(\bar{\theta}, Z)\bar{\sigma}_2^n\left(1-\bar{\sigma}_2^n\right)\right\rvert\right]\leq 4cc_{W_2}m_2d_2 C_{\max , 4}\left|\theta-\bar{\theta}\right|.
\end{aligned}
$$
For $T_{b_0, 6}$, by using $\left\lvert \bar{\sigma}_2\left(1 - \bar{\sigma}_2\right)\right\rvert \leq 1$ and~\eqref{nn_upper}, we obtain that
$$
\begin{aligned}
T_{b_0, 6} &\leq 2 d_2 c_{W_2}^2 \sum_{j=1}^{m_2} \sum_{n=1}^{d_2} \mathbb{E}\left[ \left\lvert f\left(W_1^{n K}\right) f^{\prime}\left(b_0^K\right)  -  f\left(\bar{W}_1^{n K}\right) f^{\prime}\left(\bar{b}_0^K\right)\right\rvert\right] \leq 6 m_2 d_2^2 c_{W_2}^2\left\lvert\theta-\bar{\theta}\right\rvert.
\end{aligned}
$$
For $T_{b_0, 7}$, by using $\left\lvert \bar{\sigma}_2\left(1 - \bar{\sigma}_2\right)\right\rvert \leq 1$, $\left\lvert f(v)\right\rvert\leq c$, $\left\lvert f^{\prime}(v)\right\rvert\leq 1$,~\eqref{nn_upper}, and~\eqref{mathbb1}, we have that
$$
\begin{aligned}
T_{b_0, 7} &\leq 2 c_{W_2} \sum_{j=1}^{m_2} \sum_{n=1}^{d_2} \mathbb{E}\left[\left\lvert \mathfrak{N}^j(\bar{\theta}, Z)\bar{\sigma}_2^n\left(1-\bar{\sigma}_2^n\right)f\left(\bar{W}_1^{n K}\right) f^{\prime}\left(\bar{b}_0^K\right) \right\rvert \left\lvert \mathbbm{1}_{A_K}(Z)  - \mathbbm{1}_{\bar{A}_K}(Z) \right\rvert\right] \\ & \leq 2c m_2 d_2^2 c_{W_2}^2 C_{\mathbbm{1}, \max}\left\lvert\theta-\bar{\theta}\right\rvert.
\end{aligned}
$$
By using the results above, we obtain
\begin{equation}\label{G_b_0}
\begin{aligned}
\mathbb{E}\left[\left|G_{b_0^K}(\theta, x)-G_{b_0^K}(\bar{\theta}, x)\right|\right] \leq C_{\max, b_0}\left\lvert\theta -\bar{\theta}\right\rvert,
\end{aligned}
\end{equation}
where $C_{\max, b_0}:= 2 c c_{W_2} d_2m_2\left(C_{Y, \max} + C_{\max, 2}\right) + 6c_{W_2}m_2d_2\mathbb{E}\left[\left\lvert Y^j\right\rvert\right] + 4cc_{W_2}m_2d_2 C_{\max , 4} + 6 m_2 d_2^2 c_{W_2}^2 + 2c m_2 d_2^2 c_{W_2}^2 C_{\mathbbm{1}, \max}$. Then, denoting by $\sigma_1^K:=\sigma_1\left(\left\langle W_0^{K \cdot}, Z\right\rangle+f\left(b_0^K\right)\right)$ and $\bar{\sigma}_1^K:=\sigma_1\left(\left\langle W_0^{K \cdot}, Z\right\rangle+f\left(\bar{b}_0^K\right)\right)$,  by using $\left\lvert f^{\prime}(v)\right\rvert \leq 1$, $\left\lvert f^{\prime \prime}(v)\right\rvert \leq 2 / c$ and $\left\lvert \sigma_1^K\right\rvert \leq m_1 c_{W_0}\left\lvert z\right\rvert+c$, we obtain
\begin{align*}
&\mathbb{E}\left[\left\lvert G_{W_1^{N K}}(\theta, x) - G_{W_1^{N K}}(\bar{\theta}, x)\right\rvert \right] 
\\& \leq 2 \sum_{j=1}^{m_2}\left\lvert W_2^{j N}\right\rvert\bigg( \mathbb{E}\left[\left\lvert Y^j\right\rvert \left\lvert\sigma_1^K\right\rvert \left\lvert \sigma_2^N\left(1 - \sigma_2^N\right)  - \bar{\sigma}_2^N\left(1 - \bar{\sigma}_2^N\right) \right\rvert\right] + \mathbb{E}\left[\left\lvert Y^j\right\rvert\left\lvert\bar{\sigma}_2^N\left(1 - \bar{\sigma}_2^N\right)\right\rvert \left\lvert\sigma_1^K -\bar{\sigma}_1^K\right\rvert\right] \\ & \quad + 
\mathbb{E}\left[\left\lvert \sigma_1^K\right\rvert\left\lvert\mathfrak{N}^i(\theta, Z) \sigma_2^N\left(1 - \sigma_2^N\right)  - \mathfrak{N}^j(\bar{\theta}, Z) \bar{\sigma}_2^N\left(1 - \bar{\sigma}_2^N\right)\right\rvert\right] +\mathbb{E}\left[\left\lvert\mathfrak{N}^j(\bar{\theta}, Z) \bar{\sigma}_2^N\left(1 - \bar{\sigma}_2^N\right)\right\rvert \left\lvert \sigma_1^K - \bar{\sigma}_1^K \right\rvert\right] \\ & \quad 
+ \mathbb{E}\left[\left\lvert Y^j\right\rvert\left\lvert\bar{\sigma}_2^N\left(1 - \bar{\sigma}_2^N\right)\right\rvert \left\lvert\bar{\sigma}_1^K\right\rvert \left\lvert f^{\prime}(W_1^{N K})\right\rvert - \left\lvert f^{\prime}(\bar{W}_1^{N K})\right\rvert\right]  \\ & \quad + \mathbb{E}\left[\left\lvert\mathfrak{N}^j(\bar{\theta}, Z) \bar{\sigma}_2^N\left(1 - \bar{\sigma}_2^N\right)\right\rvert \left\lvert \bar{\sigma}_1^K \right\rvert\left\lvert f^{\prime}(W_1^{N K}) -f^{\prime}(\bar{W}_1^{N K}) \right\rvert\right]\bigg)
\\ & \leq
2c_{W_2} \sum_{j=1}^{m_2} \bigg(m_1 c_{W_0}\underbrace{\mathbb{E}\left[\left\lvert Y^j\right\rvert 
\left\lvert Z\right\rvert\left\lvert \sigma_2^N\left(1 - \sigma_2^N\right)  - \bar{\sigma}_2^N\left(1 - \bar{\sigma}_2^N\right) \right\rvert\right]}_{\mathfrak{T}_3}
+c \underbrace{\mathbb{E}\left[\left\lvert Y^j\right\rvert\ \left\lvert \sigma_2^N\left(1 - \sigma_2^N\right)  - \bar{\sigma}_2^N\left(1 - \bar{\sigma}_2^N\right) \right\rvert\right]}_{\mathfrak{T}_4}
\\ & \quad + m_1 c_{W_0}\underbrace{\mathbb{E}\left[ \left\lvert Z\right\rvert\left|\mathfrak{N}^j(\theta, Z) \sigma_2^N\left(1-\sigma_2^N\right)-\mathfrak{N}^j(\bar{\theta}, Z) \bar{\sigma}_2^N\left(1-\bar{\sigma}_2^N\right)\right| \right]}_{\mathfrak{T}_{5}} \\ & \quad +  c\underbrace{\mathbb{E}\left[\left|\mathfrak{N}^i(\theta, Z) \sigma_2^N\left(1-\sigma_2^N\right)-\mathfrak{N}^i(\bar{\theta}, Z) \bar{\sigma}_2^N\left(1-\bar{\sigma}_2^N\right)\right| \right]}_{\mathfrak{T}_{6}} +\underbrace{\mathbb{E}\left[\left|Y^j\right|\bar{\sigma}_2^N\left(1-\bar{\sigma}_2^N\right)\left|\sigma_1^K-\bar{\sigma}_1^K\right|\right]}_{\mathfrak{T}_{7}} \\ & \quad + \underbrace{\mathbb{E}\left[\left\lvert \mathfrak{N}^j(\bar{\theta}, Z) \bar{\sigma}_2^N\left(1-\bar{\sigma}_2^N\right)\right\rvert\left\lvert \sigma_1^K-\bar{\sigma}_1^K\right\rvert\right]}_{\mathfrak{T}_{8}} \\ & \quad + 
 \frac{2}{c} \left( m_1 c_{W_0} \mathbb{E}\left[\left\lvert Y\right\rvert\left\lvert Z\right\rvert\right]  + c\mathbb{E}\left[\left\lvert Y\right\rvert\right] +  m_1 d_2 c_{W_0}c_{W_2} \mathbb{E}\left[\left\lvert Z\right\rvert\right]+ c d_2 c_{W_2} \right)\left\lvert \theta - \bar{\theta}\right\rvert\bigg).
\end{align*}
To upper bound $\mathfrak{T}_{3}$, by using~\eqref{sigma_2}, $|f(x)| \leq c,\left|f^{\prime}(x)\right| \leq 1$, and the fact that $\lvert x\rvert \leq 1 +\lvert x\rvert^2$ holds for any $x \in \mathbb{R}$, we have that
$$
\begin{aligned}
&\mathbb{E}\left[\left\lvert Y^j\right\rvert \left\lvert Z\right\rvert\left\lvert \sigma_2^N\left(1 - \sigma_2^N\right)  - \bar{\sigma}_2^N\left(1 - \bar{\sigma}_2^N\right) \right\rvert\right] \\ & \leq 2\mathbb{E}\left[\left\lvert Y^j\right\rvert\left\lvert Z\right\rvert\right]\left\lvert\theta - \bar{\theta}\right\rvert + 2 c \sum_{k=1}^{d_1}\bigg( m_1 c_{W_0}\mathbb{E}\left[\left\lvert Y^j\right\rvert \left\lvert Z\right\rvert^2 \left\lvert \mathbbm{1}_{A_k}(Z) - \mathbbm{1}_{\bar{A}_k}(Z)\right\rvert \right]  + c \mathbb{E}\left[\left\lvert Y^j\right\rvert\left\lvert \mathbbm{1}_{A_k}(Z)- \mathbbm{1}_{\bar{A}_k}(Z)\right\rvert\right]\\ & \quad + c \mathbb{E}\left[\left\lvert Y^j\right\rvert\left\lvert Z\right\rvert^2\left\lvert \mathbbm{1}_{A_k}(Z) - \mathbbm{1}_{\bar{A}_k}(Z)\right\rvert\right]\bigg) + 2 c \sum_{k=1}^{d_1} \mathbb{E}\left[\left\lvert Y^j\right\rvert \left\lvert Z\right\rvert \left|\mathbbm{1}_{\bar{A}_k}(Z)\right|\right]\left|f\left(b_0^k\right)-f\left(\bar{b}_0^k\right)\right|\\ & \quad +2 \sum_{k=1}^{d_1}\left(m_1 c_{W_0} \mathbb{E}\left[\left\lvert Y^j\right\rvert \left\lvert Z\right\rvert^2 \right]+c\mathbb{E}\left[\left\lvert Y^j\right\rvert \left\lvert Z\right\rvert \right]\right)\left|W_1^{N k}-\bar{W}_1^{N k}\right|
\\ & \leq 
 C_{T_3, \max}\left\lvert\theta - \bar{\theta}\right\rvert,
\end{aligned}
$$
where $C_{T_3, \max}:= 2\mathbb{E}\left[\left\lvert Y\right\rvert\left\lvert Z\right\rvert\right]+2 c d_1 \left(\left(m_1 c_{W_0} + c\right)C_{ZY, \max} + c C_{Y, \max}\right)+2cd_1\mathbb{E}\left[\left\lvert Y\right\rvert\left\lvert Z\right\rvert\right]$ \\ $+2d_1 \Big(m_1 c_{W_0}\mathbb{E}\left[\left\lvert Y\right\rvert\left\lvert Z\right\rvert^2\right]+c\mathbb{E}\left[\left\lvert Y\right\rvert \left\lvert Z\right\rvert \right]\Big)$. For $\mathfrak{T}_{4}$, by using~\eqref{max_2}, we derive that 
$$
\begin{aligned}
&\mathbb{E}\left[\left\lvert Y^j\right\rvert\ \left\lvert \sigma_2^N\left(1 - \sigma_2^N\right)  - \bar{\sigma}_2^N\left(1 - \bar{\sigma}_2^N\right) \right\rvert\right]\leq C_{\max, 2} \left\lvert \theta - \bar{\theta}\right\rvert.
\end{aligned}
$$
For $\mathfrak{T}_{5}$, by using~\eqref{nn_upper},~\eqref{max_3}, and~\eqref{cont_zn}, we deduce that
$$
\begin{aligned}
&\mathbb{E}\left[ \left\lvert Z\right\rvert\left|\mathfrak{N}^j(\theta, Z) \sigma_2^N\left(1-\sigma_2^N\right)-\mathfrak{N}^j(\bar{\theta}, Z) \bar{\sigma}_2^N\left(1-\bar{\sigma}_2^N\right)\right| \right]\\ &\leq 
\mathbb{E}\left[\left\lvert Z\right\rvert\left\lvert \mathfrak{N}^j(\theta, Z)\right\rvert\left|\sigma_2^N\left(1-\sigma_2^N\right) - \bar{\sigma}_2^N\left(1-\bar{\sigma}_2^N\right)\right|\right]+\mathbb{E}\left[\left\lvert Z\right\rvert\left\lvert \bar{\sigma}_2^N\left(1-\bar{\sigma}_2^N\right)\right\rvert \left|\mathfrak{N}^j(\theta, Z) - \mathfrak{N}^j(\bar{\theta}, Z)\right| \right] \\ & \leq
C_{T_{5}, \max} \left\lvert \theta-\bar{\theta}\right\rvert,
\end{aligned}
$$
where $C_{T_{5}, \max}:= d_2 c_{W_2} C_{\max, 3} + C_{\max , Z\mathfrak{N}}$. 
For $\mathfrak{T}_{6}$, we see that
$$
\begin{aligned}
\mathbb{E}\left[\left|\mathfrak{N}^j(\theta, Z) \sigma_2^N\left(1-\sigma_2^N\right)-\mathfrak{N}^j(\bar{\theta}, Z) \bar{\sigma}_2^N\left(1-\bar{\sigma}_2^N\right)\right| \right] \leq C_{T_{6}, \max}\left\lvert \theta - \bar{\theta}\right\rvert,
\end{aligned}
$$
where $C_{T_{6}, \max} := C_{\max, 4}$. For $\mathfrak{T}_{7}$, by using~\eqref{cont_YZ} and~\eqref{cont_Y}, we get
$$
\begin{aligned}
& \mathbb{E}\left[\left|Y^j\right|\left|\left(\left\langle W_0^{K \cdot}, Z\right\rangle+f\left(b_0^K\right)\right) \mathbbm{1}_{A_K}(Z)-\left(\left\langle W_0^{K \cdot}, Z\right\rangle+f\left(\bar{b}_0^K\right)\right) \mathbbm{1}_{\bar{A}_K}(Z)\right|\right]\\ & \leq 
m_1 c_{W_0}\mathbb{E}\left[\left|Y^j\right|\left\lvert Z\right\rvert\ \left|\mathbbm{1}_{A_K}(Z)- \mathbbm{1}_{\bar{A}_K}(Z)\right|\right] + c\mathbb{E}\left[\left|Y^j\right|\left| \mathbbm{1}_{A_K}(Z) - \mathbbm{1}_{\bar{A}_K}(Z)\right|\right] + \mathbb{E}\left[\left|Y^j\right|\left| f\left(b_0^K\right) - f\left(\bar{b}_0^K\right) \right|\right]\\ & \leq
m_1 c_{W_0}\mathbb{E}\left[\left|Y^j\right|\left\lvert Z\right\rvert^2 \left|\mathbbm{1}_{A_K}(Z)- \mathbbm{1}_{\bar{A}_K}(Z)\right|\right] + (m_1 c_{W_0} + c)\mathbb{E}\left[\left|Y^j\right|\left| \mathbbm{1}_{A_K}(Z) - \mathbbm{1}_{\bar{A}_K}(Z)\right|\right] \\ & \quad + \mathbb{E}\left[\left|Y^j\right|\left| f\left(\bar{b}_0^K\right) - f\left(\bar{b}_0^K\right) \right|\right]\\& \leq
C_{T_{7}, \max }\left\lvert \theta - \bar{\theta}\right\rvert,
\end{aligned}
$$
where $C_{T_{7}, \max } := c_{W_0} C_{YZ, \max} + (c_{W_0} + c)C_{Y, \max} + \mathbb{E}\left[\left|Y\right|\right]$. For $\mathfrak{T}_{8}$, by using $\left\lvert f(v)\right\rvert \leq c$, $\left\lvert f^{\prime}(v)\right\rvert \leq 1$,~\eqref{mathbb1}, and~\eqref{mathbbz}, we notice that 
$$
\begin{aligned}
& \mathbb{E}\left[\left\lvert\mathfrak{N}^j(\bar{\theta}, z) \bar{\sigma}_2^N\left(1-\bar{\sigma}_2^N\right)\right\rvert\left\lvert \sigma_1-\bar{\sigma}_1\right\rvert\right]
\\ & \leq d_2 c_{W_2} \mathbb{E}\left[\left\lvert \left(\left\langle W_0^{K\cdot}, Z\right\rangle+f\left(b_0^K\right)\right) \mathbbm{1}_{A_K}(Z)-\left(\left\langle W_0^{K\cdot}, Z\right\rangle+f\left(\bar{b}_0^K\right)\right) \mathbbm{1}_{\bar{A}_K}(Z)\right\rvert\right] \\ & \leq 
d_2 c_{W_2}\bigg(\!m_1 c_{W_0} \!\left(\mathbb{E}\left[\left\lvert \mathbbm{1}_{A_K}(Z) - \mathbbm{1}_{\bar{A}_K}(Z) \right\rvert \right] +  \mathbb{E}\left[ \left\lvert Z\right\rvert^2\left\lvert \mathbbm{1}_{A_K}(Z) - \mathbbm{1}_{\bar{A}_K}(Z) \right\rvert \right]\right) + \mathbb{E}\left[\left\lvert f(b_0^K)\right\rvert\left\lvert \mathbbm{1}_{A_K}(Z) - \mathbbm{1}_{\bar{A}_K}(Z) \right\rvert \right] \\ & \quad + \mathbb{E}\left[\left\lvert \mathbbm{1}_{\bar{A}_K}(Z) \right\rvert \left\lvert f(b_0^K) - f(\bar{b}_0^K)\right\rvert \right]\bigg)
\\ &\leq 
C_{T_{8}, \max}\left\lvert\theta-\bar{\theta}\right\rvert,
\end{aligned}
$$
where $C_{T_{8}, \max } := d_2 c_{W_2} \left(m_1 c_{W_0}(C_{\mathbbm{1}, \max} + C_{Z, \max}) + cC_{\mathbbm{1}, \max} + 1 \right)$. Finally, we obtain
\begin{equation}\label{G_W_1}
\begin{aligned}
&\mathbb{E}\left[\left\lvert G_{W_1^{N K}}(\theta, x) - G_{W_1^{N K}}(\bar{\theta}, x)\right\rvert \right]  & \leq
C_{\max, W_1} \left\lvert\theta - \bar{\theta}\right\rvert,
\end{aligned}
\end{equation}
where $C_{\max, W_1} := 2c_{W_2} m_2 \bigg(m_1 c_{W_0}C_{T_3, \max}
+c C_{T_4, \max} + m_1c_{W_0}C_{T_{5}, \max} + cC_{T_{6}, \max} + C_{T_{7}, \max} + C_{T_{8}, \max} + C_{T_{9}, \max}\bigg)$ and $C_{T_{9}, \max}:= \frac{2}{c} \left( m_1 c_{W_0} \mathbb{E}\left[\left\lvert Y\right\rvert\left\lvert Z\right\rvert\right]  + c\mathbb{E}\left[\left\lvert Y\right\rvert\right] +  m_1 d_2 c_{W_0}c_{W_2} \mathbb{E}\left[\left\lvert Z\right\rvert\right]+ c d_2 c_{W_2} \right) $. Then, Assumption~\ref{asm:A4} holds by substituting~\eqref{G_b_1},~\eqref{G_b_0}, and~\eqref{G_W_1} into~\eqref{H=F+G}. Furthermore, Assumption~\ref{asm:A5} holds with $A(x)=2 \lambda_r \mathbf{I}_d$ and $B(x)=0$, which implies $a=2 \lambda_r$ and $b=0$. Therefore, the optimization problem~\eqref{op:nn} satisfies Assumptions~\ref{asm:A3}-\ref{asm:A5}. 
\end{proof}
\begin{proof}[\textit{Proof of Corollary~\ref{corollary 7.3}}]
\label{pf_prop:7.3}
Let $\theta=\left(\left[W_1\right], b_0, b_1\right) \in \mathbb{R}^d$, $\bar{\theta}=\left(\left[\bar{W}_1\right], \bar{b}_0, \bar{b}_1\right) \in \mathbb{R}^d$, and let $A_K$ and $\bar{A}_K$ be defined in~\eqref{A_k}. One notes that under the assumptions in Proposition~\ref{prop:6.2}, the following result can be obtained. For each $k=1, \cdots, d_1$ and $j=1, \cdots, m_2$, we have
\begin{align*}
& \mathbb{E}\left[\left|Y^j\right|\left|\mathbbm{1}_{A_k}(Z)-\mathbbm{1}_{\bar{A}_k}(Z)\right|\right] \\
&= \mathbb{E}\left[\left|Y^j\right| \mathbbm{1}_{\left\{\left(-f(\bar{b}_0^k)-\sum_{i \neq v_k} W_{0}^{k i} Z^k\right) / W_{0}^{k v_k} \leq Z^{v_k}<\left(-f(b_0^k)-\sum_{i \neq v_k} W_{0}^{k i} Z^k\right) / W_{0}^{k v_k}\right\}}\right] \\
&= \int_{\mathbb{R}^{m_1-1}} \int_{\frac{-f(\bar{b}_0^k)-\sum_{i \neq v_k} W_{0}^{k v_k}}{W_{0}^{k v_k}}}^{\frac{-f(b_0^k)-\sum_{i \neq v_k} W_{1}^{k i} z^i}{W_{0}^{k v_k}}} \left|y^j(z)\right| f_{Z^{v_k} \mid Z_{-v_k}}\left(z^{v_k} \mid z_{-v_k}\right) \mathrm{d} z^{v_k} f_{Z_{-v_k}}\left(z_{-v_k}\right) \mathrm{d} z_{-v_k} \\ & \leq
\int_{\mathbb{R}^{m_1-1}} \int_{\frac{-f(\bar{b}_0^k)-\sum_{i \neq v_k} W_{0}^{k v_k}}{W_{0}^{k v_k}}}^{\frac{-f(b_0^k)-\sum_{i \neq v_k} W_{1}^{k i} z^i}{W_{0}^{k v_k}}} \left\lvert c_y\left(1+\left\lvert z\right\rvert^\rho\right)\right\rvert f_{Z^{v_k} \mid Z_{-v_k}}\left(z^{v_k} \mid z_{-v_k}\right) \mathrm{d} z^{v_k} f_{Z_{-v_k}}\left(z_{-v_k}\right) \mathrm{d} z_{-v_k}\\ & \leq
\int_{\mathbb{R}^{m_1-1}} \int_{\frac{-f(\bar{b}_0^k)-\sum_{i \neq v_k} W_{0}^{k v_k}}{W_{0}^{k v_k}}}^{\frac{-f(b_0^k)-\sum_{i \neq v_k} W_{1}^{k i} z^i}{W_{0}^{k v_k}}} \left\lvert c_y \right\rvert f_{Z^{v_k} \mid Z_{-v_k}}\left(z^{v_k} \mid z_{-v_k}\right) \mathrm{d} z^{v_k} f_{Z_{-v_k}}\left(z_{-v_k}\right) \mathrm{d} z_{-v_k}
\\ & \quad + 
\int_{\mathbb{R}^{m_1-1}} \int_{\frac{-f(\bar{b}_0^k)-\sum_{i \neq v_k} W_{0}^{k v_k}}{W_{0}^{k v_k}}}^{\frac{-f(b_0^k)-\sum_{i \neq v_k} W_{1}^{k i} z^i}{W_{0}^{k v_k}}}  \left\lvert c_y\right\rvert \left\lvert z\right\rvert^{\rho} f_{Z^{v_k} \mid Z_{-v_k}}\left(z^{v_k} \mid z_{-v_k}\right) \mathrm{d} z^{v_k} f_{Z_{-v_k}}\left(z_{-v_k}\right) \mathrm{d} z_{-v_k}.
\end{align*}
This implies that
\begin{align*}
&\mathbb{E}{\left[\left|Y^j\right|\left|\mathbbm{1}_{A_k}(Z)-\mathbbm{1}_{\bar{A}_k}(Z)\right|\right]}
\\ & \leq \left\lvert c_y\right\rvert\bigg(\frac{ C_{Z^{v_k}}}{W_0^{k v_k}}\left|\bar{b}_0^k-b_0^k\right| + 2^{\rho - 1} \int_{\mathbb{R}^{m_1-1}} \int_{\frac{-f(\bar{b}_0^k)-\sum_{i \neq v_k} W_{0}^{k v_k}}{W_{0}^{k v_k}}}^{\frac{-f(b_0^k)-\sum_{i \neq v_k} W_{1}^{k i} z^i}{W_{0}^{k v_k}}}  \left\lvert z^{v_k}\right\rvert^{\rho} f_{Z^{v_k} \mid Z_{-v_k}}\left(z^{v_k} \mid z_{-v_k}\right) \mathrm{d} z^{v_k} f_{Z_{-v_k}}\left(z_{-v_k}\right) \mathrm{d} z_{-v_k} \\ & \quad + 2^{\rho-1} \int_{\mathbb{R}^{m_1-1}} \int_{\frac{-f(\bar{b}_0^k)-\sum_{i \neq v_k} W_{0}^{k v_k}}{W_{0}^{k v_k}}}^{\frac{-f(b_0^k)-\sum_{i \neq v_k} W_{1}^{k i} z^i}{W_{0}^{k v_k}}} f_{Z^{v_k} \mid Z_{-v_k}}\left(z^{v_k} \mid z_{-v_k}\right) \mathrm{d} z^{v_k}  \left\lvert z_{-v_k}\right\rvert^{\rho} f_{Z_{-v_k}}\left(z_{-v_k}\right) \mathrm{d} z_{-v_k}\bigg)\\ & \leq
\left\lvert c_y\right\rvert\bigg(\frac{ C_{Z^{v_k}}}{W_0^{k v_k}}\left\lvert b_0^k - \bar{b}_0^k\right\rvert + 2^{\rho - 1}\frac{ \bar{C}_{Z^{v_k}}}{W_0^{k v_k}}\left\lvert b_0^k - \bar{b}_0^k\right\rvert + 2^{\rho - 1}\frac{ C_{Z^{v_k}}}{W_0^{k v_k}}\mathbb{E}\left[\left|Z_{-v_k}\right|^{\rho}\right]\left\lvert b_0^k - \bar{b}_0^k\right\rvert \bigg)
\\ & \leq C_{Y_Z, \max} \left\lvert \theta-\bar{\theta}\right\rvert, 
\end{align*}
where the first inequality holds due to $\left\lvert x + y \right\rvert^p \leq 2^{p-1} \left(\left\lvert x\right\rvert^p+\left\lvert y\right\rvert^p\right)$ for any $x, y \in \mathbb{R}$ and $p \geq 1$ and where $C_{Y_Z, \max }:=\max\limits_{k}\left\{\left(1 + 2^{\rho-1}\right)\frac{ C_{Z^{v_k}}}{W_0^{k v_k}} + 2^{\rho - 1}\frac{ \bar{C}_{Z^{v_k}}}{W_0^{k v_k}}\right\}\left\lvert c_y\right\rvert\left(1 + \mathbb{E}\left[\left\lvert Z\right\rvert^{\rho}\right]\right)$. Furthermore, by using~\eqref{mathbbz} and $\left\lvert x + y \right\rvert^p \leq 2^{p-1} \left(\left\lvert x\right\rvert^p+\left\lvert y\right\rvert^p\right)$ for any $x, y \in \mathbb{R}$ and $p \geq 1$, we derive, for each $k=1, \cdots, d_1$ and $j=1, \cdots, m_2$, that
\begin{align*}
& \mathbb{E}\left[\left\lvert Y^j\right\rvert \left\lvert Z\right\rvert^2 \left\lvert\mathbbm{1}_{A_k}(Z)-\mathbbm{1}_{\bar{A}_k}(Z)\right\rvert\right]  
\\ & = \mathbb{E}\left[\left\lvert Y^j\right\rvert\left(\left|Z\right|^2\right) \mathbbm{1}_{\left\{\left(-f(\bar{b}_0^k)-\sum_{i \neq v_k} W_0^{k i} Z^i\right) / W_0^{k v_{k}} \leq Z^{v_k}<\left(-f(b_0^k)-\sum_{i \neq v_k} W_0^{k i} Z^i\right) / W_0^{k v_k}\right\}}\right] 
\\ & \leq
\left\lvert c_y\right\rvert \int_{\mathbb{R}^{m_1-1}}\int^\frac{-f(b_0^k)-\sum_{i \neq v_k} W_0^{k i} z^i}{W_0^{k v_k}}_\frac{-f(\bar{b}_0^k)-\sum_{i \neq v_k} W_0^{k i} z^i}{W_0^{k v_k}} \left\lvert z\right\rvert^2 \left| 1+|z|^\rho\right| f_{Z^{v_k} \mid Z_{-v_k}}\left(z^{v_k} \mid z_{-v_k}\right) \mathrm{d} z^{v_k}  f_{Z_{-v_k}}\left(z_{-v_k}\right) \mathrm{d} z_{-v_k} \\ & \leq 
\left\lvert c_y\right\rvert \int_{\mathbb{R}^{m_1-1}}\int^\frac{-f(b_0^k)-\sum_{i \neq v_k} W_0^{k i} z^i}{W_0^{k v_k}}_\frac{-f(\bar{b}_0)^k-\sum_{i \neq v_k} W_0^{k i} z^i}{W_0^{k v_k}} \left\lvert z\right\rvert^2 f_{Z^{v_k} \mid Z_{-v_k}}\left(z^{v_k} \mid z_{-v_k}\right) \mathrm{d} z^{v_k}  f_{Z_{-v_k}}\left(z_{-v_k}\right) \mathrm{d} z_{-v_k} \\ & \quad +
\left\lvert c_y\right\rvert \int_{\mathbb{R}^{m_1-1}}\int^\frac{-f(b_0^k)-\sum_{i \neq v_k} W_0^{k i} z^i}{W_0^{k v_k}}_\frac{-f(\bar{b}_0^k)-\sum_{i \neq v_k} W_0^{k i} z^i}{W_0^{k v_k}} \left\lvert z\right\rvert^{\rho+2} f_{Z^{v_k} \mid Z_{-v_k}}\left(z^{v_k} \mid z_{-v_k}\right) \mathrm{d} z^{v_k}  f_{Z_{-v_k}}\left(z_{-v_k}\right) \mathrm{d} z_{-v_k}\\ & \leq
\left\lvert c_y\right\rvert \bigg(
\int_{\mathbb{R}^{m_1-1}}\int^\frac{-f(b_0^k)-\sum_{i \neq v_k} W_0^{k i} z^i}{W_0^{k v_k}}_\frac{-f(\bar{b}_0^k)-\sum_{i \neq v_k} W_0^{k i} z^i}{W_0^{k v_k}} \left\lvert z^{v_k}\right\rvert^{2} f_{Z^{v_k} \mid Z_{-v_k}}\left(z^{v_k} \mid z_{-v_k}\right) \mathrm{d} z^{v_k}  f_{Z_{-v_k}}\left(z_{-v_k}\right) \mathrm{d} z_{-v_k} \\&\quad  + 
\int_{\mathbb{R}^{m_1-1}}\int^\frac{-f(b_0^k)-\sum_{i \neq v_k} W_0^{k i} z^i}{W_0^{k v_k}}_\frac{-f(\bar{b}_0^k)-\sum_{i \neq v_k} W_0^{k i} z^i}{W_0^{k v_k}} f_{Z^{v_k} \mid Z_{-v_k}}\left(z^{v_k} \mid z_{-v_k}\right) \mathrm{d} z^{v_k}\left\lvert z_{-v_k}\right\rvert^{2}  f_{Z_{-v_k}}\left(z_{-v_k}\right) \mathrm{d} z_{-v_k} \\ & \quad +
2^{\rho + 1} \int_{\mathbb{R}^{m_1-1}}\int^\frac{-f(b_0^k)-\sum_{i \neq v_k} W_0^{k i} z^i}{W_0^{k v_k}}_\frac{-f(\bar{b}_0^k)-\sum_{i \neq v_k} W_0^{k i} z^i}{W_0^{k v_k}} \left\lvert z^{v_k}\right\rvert^{\rho+2} f_{Z^{v_k} \mid Z_{-v_k}}\left(z^{v_k} \mid z_{-v_k}\right) \mathrm{d} z^{v_k}  f_{Z_{-v_k}}\left(z_{-v_k}\right) \mathrm{d} z_{-v_k} \\ & \quad + 
2^{\rho + 1} \int_{\mathbb{R}^{m_1-1}}\int^\frac{-f(b_0^k)-\sum_{i \neq v_k} W_0^{k i} z^i}{W_0^{k v_k}}_\frac{-f(\bar{b}_0^k)-\sum_{i \neq v_k} W_0^{k i} z^i}{W_0^{k v_k}} f_{Z^{v_k} \mid Z_{-v_k}}\left(z^{v_k} \mid z_{-v_k}\right) \mathrm{d} z^{v_k}\left\lvert z_{-v_k}\right\rvert^{\rho+2}  f_{Z_{-v_k}}\left(z_{-v_k}\right) \mathrm{d} z_{-v_k} \bigg) \\ & \leq
\left\lvert c_y\right\rvert \bigg(\left(1 + 2^{\rho + 1}\right)\frac{\bar{C}_{Z^{v_k}}}{W_0^{k v_k}}+ 
\left(\mathbb{E}\left[\left|Z_{-v_k}\right|^{2}\right]  + 2^{\rho+1}\mathbb{E}\left[\left|Z_{-v_k}\right|^{\rho+2}\right]\right)\frac{C_{Z^{v_k}}}{W_0^{k v_k}} \bigg)\left\lvert \theta - \bar{\theta}\right\rvert \\& \leq
C_{Y_{Z^2}, \max} \left\lvert \theta - \bar{\theta}\right\rvert,
\end{align*}
where $ C_{Y_{Z^2}, \max}:= \left\lvert c_y\right\rvert \max\limits_{k}\left\{ \bigg(\left(1 + 2^{\rho + 1}\right)\frac{\bar{C}_{Z^{v_k}}}{W_0^{k v_k}}+ 
\left(\mathbb{E}\left[\left| Z \right|^{2}\right]  + 2^{\rho+1}\mathbb{E}\left[\left| Z \right|^{\rho+2}\right]\right)\frac{C_{Z^{v_k}}}{W_0^{k v_k}} \bigg)\right\}$. Then, the rest of the proof follows the similar lines as in the proof of Proposition~\ref{prop:6.2}.
\end{proof}
\bibliographystyle{plainnat}

\bibliography{references}

\begin{thebibliography}{54}
\providecommand{\natexlab}[1]{#1}
\providecommand{\url}[1]{\texttt{#1}}
\expandafter\ifx\csname urlstyle\endcsname\relax
  \providecommand{\doi}[1]{doi: #1}\else
  \providecommand{\doi}{doi: \begingroup \urlstyle{rm}\Url}\fi

\bibitem[Akyildiz and Sabanis(2024)]{akyildiz2020nonasymptotic}
O~Deniz Akyildiz and Sotirios Sabanis.
\newblock {Nonasymptotic analysis of Stochastic Gradient Hamiltonian Monte
  Carlo under local conditions for nonconvex optimization}.
\newblock \emph{Journal of Machine Learning Research}, 25\penalty0
  (113):\penalty0 1--34, 2024.

\bibitem[Austin and Schull(2003)]{austin2003quantile}
Peter~C Austin and Michael~J Schull.
\newblock Quantile regression: a statistical tool for out-of-hospital research.
\newblock \emph{Academic emergency medicine}, 10\penalty0 (7):\penalty0
  789--797, 2003.

\bibitem[Bardou et~al.(2009)Bardou, Frikha, and Pag{\`e}s]{bardou2009computing}
Olivier~Aj Bardou, Noufel~Noufel Frikha, and G~Pag{\`e}s.
\newblock {Computation of VaR and CVaR using stochastic approximations and
  unconstrained importance sampling.}
\newblock \emph{Monte Carlo Methods and Applications}, 15\penalty0
  (3):\penalty0 173--210, 2009.

\bibitem[Barkhagen et~al.(2021)Barkhagen, Chau, Moulines, R{\'a}sonyi, Sabanis,
  and Zhang]{barkhagen2021stochastic}
M~Barkhagen, NH~Chau, {\'E}~Moulines, M~R{\'a}sonyi, S~Sabanis, and Y~Zhang.
\newblock On stochastic gradient langevin dynamics with dependent data streams
  in the logconcave case.
\newblock \emph{Bernoulli}, 27\penalty0 (1):\penalty0 1--33, 2021.

\bibitem[Beck(2014)]{beck2014introduction}
Amir Beck.
\newblock \emph{Introduction to nonlinear optimization: {T}heory, algorithms,
  and applications with {MATLAB}}.
\newblock SIAM, 2014.

\bibitem[Chau and Rasonyi(2022)]{chau2022stochastic}
Huy~N Chau and Miklos Rasonyi.
\newblock {S}tochastic {G}radient {H}amiltonian {M}onte {C}arlo for
  {N}on-convex {L}earning.
\newblock \emph{Stochastic Processes and their Applications}, 149:\penalty0
  341--368, 2022.

\bibitem[Chau et~al.(2019)Chau, Kumar, R{\'a}sonyi, and Sabanis]{chau2019fixed}
Huy~N Chau, Chaman Kumar, Mikl{\'o}s R{\'a}sonyi, and Sotirios Sabanis.
\newblock On fixed gain recursive estimators with discontinuity in the
  parameters.
\newblock \emph{ESAIM: Probability and Statistics}, 23:\penalty0 217--244,
  2019.

\bibitem[Chau et~al.(2021)Chau, Moulines, R{\'a}sonyi, Sabanis, and
  Zhang]{chau2021stochastic}
Huy~N Chau, {\'E}ric Moulines, Miklos R{\'a}sonyi, Sotirios Sabanis, and Ying
  Zhang.
\newblock On {S}tochastic {G}radient {L}angevin {D}ynamics with {D}ependent
  {D}ata {S}treams: {T}he {F}ully {N}onconvex {C}ase.
\newblock \emph{SIAM Journal on Mathematics of Data Science}, 3\penalty0
  (3):\penalty0 959--986, 2021.

\bibitem[Chen et~al.(2015)Chen, Ding, and Carin]{chen2015convergence}
Changyou Chen, Nan Ding, and Lawrence Carin.
\newblock On the convergence of stochastic gradient {MCMC} algorithms with
  high-order integrators.
\newblock \emph{Advances in neural information processing systems}, 28, 2015.

\bibitem[Chen et~al.(2014)Chen, Fox, and Guestrin]{chen2014stochastic}
Tianqi Chen, Emily Fox, and Carlos Guestrin.
\newblock {S}tochastic {G}radient {H}amiltonian {M}onte {C}arlo.
\newblock In \emph{International conference on machine learning}, pages
  1683--1691. PMLR, 2014.

\bibitem[Chu and Tangpi(2024)]{chu2024nonasymptotic}
Jiarui Chu and Ludovic Tangpi.
\newblock {N}onasymptotic {E}stimation of {R}isk {M}easures {U}sing
  {S}tochastic {G}radient {L}angevin {D}ynamics.
\newblock \emph{SIAM Journal on Financial Mathematics}, 15\penalty0
  (2):\penalty0 503--536, 2024.

\bibitem[Cover(1999)]{cover1999elements}
Thomas~M Cover.
\newblock \emph{Elements of information theory}.
\newblock John Wiley \& Sons, 1999.

\bibitem[Dong and Nakayama(2020)]{dong2020tutorial}
Hui Dong and Marvin~K Nakayama.
\newblock A tutorial on quantile estimation via {M}onte {C}arlo.
\newblock \emph{Monte Carlo and Quasi-Monte Carlo Methods: MCQMC 2018, Rennes,
  France, July 1--6}, pages 3--30, 2020.

\bibitem[Durrett(2019)]{durrett2019probability}
Rick Durrett.
\newblock \emph{Probability: theory and examples}, volume~49.
\newblock Cambridge university press, 2019.

\bibitem[Eberle et~al.(2019)Eberle, Guillin, and Zimmer]{eberle2019couplings}
Andreas Eberle, Arnaud Guillin, and Raphael Zimmer.
\newblock Couplings and quantitative contraction rates for langevin dynamics.
\newblock \emph{Annals of Probability}, 47\penalty0 (4):\penalty0 1982--2010,
  2019.

\bibitem[Egloff and Leippold(2010)]{egloff2010quantile}
Daniel Egloff and Markus Leippold.
\newblock Quantile estimation with adaptive importance sampling.
\newblock \emph{The Annals of Statistics}, pages 1244--1278, 2010.

\bibitem[Eweda(1990)]{eweda1990analysis}
Eweda Eweda.
\newblock Analysis and design of a signed regressor {LMS} algorithm for
  stationary and nonstationary adaptive filtering with correlated {G}aussian
  data.
\newblock \emph{IEEE Transactions on Circuits and Systems}, 37\penalty0
  (11):\penalty0 1367--1374, 1990.

\bibitem[Eweda(1995)]{eweda1995convergence}
Eweda Eweda.
\newblock Convergence analysis of an adaptive filter equipped with the
  sign-sign algorithm.
\newblock \emph{IEEE transactions on automatic control}, 40\penalty0
  (10):\penalty0 1807--1811, 1995.

\bibitem[Farghly and Rebeschini(2021)]{farghly2021time}
Tyler Farghly and Patrick Rebeschini.
\newblock Time-independent generalization bounds for {SGLD} in non-convex
  settings.
\newblock \emph{Advances in Neural Information Processing Systems},
  34:\penalty0 19836--19846, 2021.

\bibitem[Fort et~al.(2016)Fort, Moulines, Schreck, and
  Vihola]{fort2016convergence}
Gersende Fort, Eric Moulines, Amandine Schreck, and Matti Vihola.
\newblock {C}onvergence of {M}arkovian {S}tochastic {A}pproximation with
  discontinuous dynamics.
\newblock \emph{SIAM Journal on Control and Optimization}, 54\penalty0
  (2):\penalty0 866--893, 2016.

\bibitem[Gao et~al.(2022)Gao, G\"{u}rb\"{u}zbalaban, and Zhu]{gao2022}
Xuefeng Gao, Mert G\"{u}rb\"{u}zbalaban, and Lingjiong Zhu.
\newblock {G}lobal {C}onvergence of {S}tochastic {G}radient {H}amiltonian
  {M}onte {C}arlo for {N}onconvex {S}tochastic {O}ptimization: {N}onasymptotic
  {P}erformance {B}ounds and {M}omentum-{B}ased {A}cceleration.
\newblock \emph{Operations Research}, 70\penalty0 (5):\penalty0 2931--2947,
  2022.

\bibitem[Glorot and Bengio(2010)]{glorot2010understanding}
Xavier Glorot and Yoshua Bengio.
\newblock Understanding the difficulty of training deep feedforward neural
  networks.
\newblock In \emph{Proceedings of the thirteenth international conference on
  artificial intelligence and statistics}, pages 249--256. JMLR Workshop and
  Conference Proceedings, 2010.

\bibitem[Goodfellow et~al.(2016)Goodfellow, Bengio, and
  Courville]{goodfellow2016deep}
Ian Goodfellow, Yoshua Bengio, and Aaron Courville.
\newblock \emph{Deep learning}.
\newblock MIT press, 2016.

\bibitem[Hu et~al.(2020)Hu, Wang, Gao, Gurbuzbalaban, and Zhu]{hu2020non}
Yuanhan Hu, Xiaoyu Wang, Xuefeng Gao, Mert Gurbuzbalaban, and Lingjiong Zhu.
\newblock {N}on-convex optimization via non-reversible {S}tochastic {G}radient
  {L}angevin {D}ynamics.
\newblock \emph{arXiv preprint arXiv:2004.02823}, 2020.

\bibitem[Hwang(1980)]{hwang1980laplace}
Chii-Ruey Hwang.
\newblock {L}aplace's method revisited: weak convergence of probability
  measures.
\newblock \emph{The Annals of Probability}, pages 1177--1182, 1980.

\bibitem[Jorion(2007)]{jorion2007value}
Philippe Jorion.
\newblock \emph{{V}alue at {R}isk: the new benchmark for managing financial
  risk}.
\newblock McGraw-Hill, 2007.

\bibitem[Karatzas et~al.(1998)Karatzas, Shreve, Karatzas, and
  Shreve]{karatzas1998methods}
Ioannis Karatzas, Steven~E Shreve, I~Karatzas, and Steven~E Shreve.
\newblock \emph{Methods of mathematical finance}, volume~39.
\newblock Springer, 1998.

\bibitem[Kingma and Ba(2014)]{kingma2014adam}
Diederik~P Kingma and Jimmy Ba.
\newblock {A}dam: {A} {M}ethod for {S}tochastic {O}ptimization.
\newblock \emph{arXiv preprint arXiv:1412.6980}, 2014.

\bibitem[Li et~al.(2016)Li, Chen, Fan, and Carin]{li2016high}
Chunyuan Li, Changyou Chen, Kai Fan, and Lawrence Carin.
\newblock {H}igh-order stochastic gradient thermostats for {B}ayesian learning
  of deep models.
\newblock In \emph{Proceedings of the AAAI Conference on Artificial
  Intelligence}, volume~30, 2016.

\bibitem[Lim et~al.(2024)Lim, Neufeld, Sabanis, and Zhang]{lim2024non}
Dong-Young Lim, Ariel Neufeld, Sotirios Sabanis, and Ying Zhang.
\newblock Non-asymptotic estimates for {TUSLA} algorithm for non-convex
  learning with applications to neural networks with {R}e{LU} activation
  function.
\newblock \emph{IMA Journal of numerical analysis}, 44\penalty0 (3):\penalty0
  1464--1559, 2024.

\bibitem[Liu et~al.(2020)Liu, Gao, and Yin]{liu2020improved}
Yanli Liu, Yuan Gao, and Wotao Yin.
\newblock An improved analysis of stochastic gradient descent with momentum.
\newblock \emph{Advances in Neural Information Processing Systems},
  33:\penalty0 18261--18271, 2020.

\bibitem[Lovas et~al.(2023)Lovas, Lytras, R{\'a}sonyi, and
  Sabanis]{lovas2023taming}
Attila Lovas, Iosif Lytras, Mikl{\'o}s R{\'a}sonyi, and Sotirios Sabanis.
\newblock Taming {N}eural {N}etworks with {TUSLA}: {N}onconvex {L}earning via
  adaptive {S}tochastic {G}radient {L}angevin {A}lgorithms.
\newblock \emph{SIAM Journal on Mathematics of Data Science}, 5\penalty0
  (2):\penalty0 323--345, 2023.

\bibitem[Meyer(2023)]{meyer2023reachability}
Pierre-Jean Meyer.
\newblock {R}eachability {A}nalysis of {N}eural {N}etworks with {U}ncertain
  {P}arameters.
\newblock \emph{arXiv preprint arXiv:2303.07917}, 2023.

\bibitem[Minar and Naher(2018)]{minar2018recent}
Matiur~Rahman Minar and Jibon Naher.
\newblock Recent advances in deep learning: An overview.
\newblock \emph{arXiv preprint arXiv:1807.08169}, 2018.

\bibitem[Neufeld et~al.(2022)Neufeld, En, and Zhang]{neufeld2022non}
Ariel Neufeld, Matthew Ng~Cheng En, and Ying Zhang.
\newblock Non-asymptotic convergence bounds for modified tamed unadjusted
  {L}angevin algorithm in non-convex setting.
\newblock \emph{arXiv preprint arXiv:2207.02600}, 2022.

\bibitem[Neufeld et~al.(2024)Neufeld, En, and Zhang]{neufeld2024robust}
Ariel Neufeld, Matthew Ng~Cheng En, and Ying Zhang.
\newblock {R}obust {SGLD} algorithm for solving non-convex distributionally
  robust optimisation problems.
\newblock \emph{arXiv preprint arXiv:2403.09532}, 2024.

\bibitem[Oksendal(2013)]{oksendal2013stochastic}
Bernt Oksendal.
\newblock \emph{{S}tochastic {D}ifferential {E}quations: {A}n {I}ntroduction
  with {A}pplications}.
\newblock Springer Science \& Business Media, 2013.

\bibitem[Pavliotis(2014)]{pavliotis2014stochastic}
Grigorios~A Pavliotis.
\newblock Stochastic processes and applications.
\newblock \emph{Texts in Applied Mathematics}, 60, 2014.

\bibitem[Polyanskiy and Wu(2016)]{polyanskiy2016wasserstein}
Yury Polyanskiy and Yihong Wu.
\newblock {W}asserstein continuity of entropy and outer bounds for interference
  channels.
\newblock \emph{IEEE Transactions on Information Theory}, 62\penalty0
  (7):\penalty0 3992--4002, 2016.

\bibitem[Raginsky et~al.(2017)Raginsky, Rakhlin, and
  Telgarsky]{raginsky2017non}
Maxim Raginsky, Alexander Rakhlin, and Matus Telgarsky.
\newblock {N}on-convex {L}earning via {S}tochastic {G}radient {L}angevin
  {D}ynamics: {A} {N}onasymptotic {A}nalysis.
\newblock In \emph{Conference on Learning Theory}, pages 1674--1703. PMLR,
  2017.

\bibitem[Rao and Protopopescu(1998)]{rao1998function}
Nageswara~SV Rao and Vladimir Protopopescu.
\newblock Function estimation by feedforward sigmoidal networks with bounded
  weights.
\newblock \emph{Neural Processing Letters}, 7:\penalty0 125--131, 1998.

\bibitem[Sabanis and Zhang(2020)]{sabanis2020fully}
Sotirios Sabanis and Ying Zhang.
\newblock {A} fully data-driven approach to minimizing {CV}a{R} for portfolio
  of assets via {SGLD} with discontinuous updating.
\newblock \emph{arXiv preprint arXiv:2007.01672}, 2020.

\bibitem[Stuart and Humphries(1998)]{stuart1998dynamical}
Andrew Stuart and Anthony~R Humphries.
\newblock \emph{Dynamical systems and numerical analysis}, volume~2.
\newblock Cambridge University Press, 1998.

\bibitem[Tan et~al.(2019)Tan, Yin, Liu, and Wan]{tan2019convergence}
Tao Tan, Shiqun Yin, Kunling Liu, and Man Wan.
\newblock {O}n the {C}onvergence {S}peed of {AMSGRAD} and {B}eyond.
\newblock In \emph{2019 IEEE 31st International Conference on Tools with
  Artificial Intelligence (ICTAI)}, pages 464--470. IEEE, 2019.

\bibitem[Tsang and Wong(2020)]{tsang2020deep}
Ka~Ho Tsang and Hoi~Ying Wong.
\newblock {D}eep-{L}earning {S}olution to {P}ortfolio {S}election with
  {S}erially {D}ependent {R}eturns.
\newblock \emph{SIAM Journal on Financial Mathematics}, 11\penalty0
  (2):\penalty0 593--619, 2020.

\bibitem[Villani(2003)]{villani2003topics}
C{\'e}dric Villani.
\newblock Topics in optimal transportation.
\newblock \emph{Graduate Studies in Mathematics}, 50, 2003.

\bibitem[Wang et~al.()Wang, Chen, Song, and Zhang]{wangenhancing}
Ziyi Wang, Yujie Chen, Qifan Song, and Ruqi Zhang.
\newblock {E}nhancing {L}ow-{P}recision {S}ampling via {S}tochastic {G}radient
  {H}amiltonian {M}onte {C}arlo.
\newblock \emph{Transactions on Machine Learning Research}.

\bibitem[Welling and Teh(2011)]{welling2011bayesian}
Max Welling and Yee~W Teh.
\newblock {B}ayesian {L}earning via {S}tochastic {G}radient {L}angevin
  {D}ynamics.
\newblock In \emph{Proceedings of the 28th international conference on machine
  learning (ICML-11)}, pages 681--688, 2011.

\bibitem[Wu(2005)]{wu2005bahadur}
Wei~Biao Wu.
\newblock On the bahadur representation of sample quantiles for dependent
  sequences.
\newblock \emph{The Annals of Statistics}, 33\penalty0 (4):\penalty0 1934,
  2005.

\bibitem[Xiao et~al.(2017)Xiao, Rasul, and Vollgraf]{xiao2017fashion}
Han Xiao, Kashif Rasul, and Roland Vollgraf.
\newblock {F}ashion-{MNIST}: a {N}ovel {I}mage {D}ataset for {B}enchmarking
  {M}achine {L}earning {A}lgorithms.
\newblock \emph{arXiv preprint arXiv:1708.07747}, 2017.

\bibitem[Xu et~al.(2021)Xu, Zhang, Zhang, and Mandic]{xu2021convergence}
Dongpo Xu, Shengdong Zhang, Huisheng Zhang, and Danilo~P Mandic.
\newblock Convergence of the {RMSP}rop deep learning method with penalty for
  nonconvex optimization.
\newblock \emph{Neural Networks}, 139:\penalty0 17--23, 2021.

\bibitem[Xu et~al.(2018)Xu, Chen, Zou, and Gu]{xu2018global}
Pan Xu, Jinghui Chen, Difan Zou, and Quanquan Gu.
\newblock Global convergence of {L}angevin {D}ynamics based algorithms for
  nonconvex optimization.
\newblock \emph{Advances in Neural Information Processing Systems}, 31, 2018.

\bibitem[Yeh(1998)]{yeh1998modeling}
I-C Yeh.
\newblock Modeling of strength of high-performance concrete using artificial
  neural networks.
\newblock \emph{Cement and Concrete research}, 28\penalty0 (12):\penalty0
  1797--1808, 1998.

\bibitem[Zhang et~al.(2023)Zhang, Akyildiz, Damoulas, and
  Sabanis]{zhang2023nonasymptotic}
Ying Zhang, {\"O}mer~Deniz Akyildiz, Theodoros Damoulas, and Sotirios Sabanis.
\newblock {N}onasymptotic estimates for {S}tochastic {G}radient {L}angevin
  {D}ynamics under local conditions in nonconvex optimization.
\newblock \emph{Applied Mathematics \& Optimization}, 87\penalty0 (2):\penalty0
  25, 2023.

\end{thebibliography}

\end{document}